\documentclass[10pt]{article}
\usepackage[a4paper,margin=1in]{geometry}
\usepackage{amsmath,amssymb,amsthm,mathtools}
\usepackage[T1]{fontenc}
\usepackage{lmodern}
\usepackage{microtype}
\usepackage{booktabs}
\usepackage{enumitem}
\usepackage{hyperref}
\hypersetup{colorlinks=true,linkcolor=blue,citecolor=blue,urlcolor=blue}
\setlist{nosep}
\allowdisplaybreaks
\newtheorem{theorem}{Theorem}[section]
\newtheorem{lemma}[theorem]{Lemma}
\newtheorem{proposition}[theorem]{Proposition}
\newtheorem{corollary}[theorem]{Corollary}
\newtheorem{assumption}[theorem]{Assumption}
\theoremstyle{remark}

\numberwithin{equation}{section}
\title{Sharp Critical Minimax Laws and No-Learning Thresholds\\
in Continuous-Time Adaptive Control}
\author{Chen Jia\\
\texttt{jiachenwestlake@gmail.com}}
\date{September 25, 2026}
\begin{document}
\maketitle
\begin{abstract}
We study episodic continuous-time control with an unknown vector control gain,
a scalar state, quadratic action cost, and a smooth convex terminal cost.
In the scalar Gaussian experiment, let $\Delta(H)$ be the minimax
improvement over zero control on $[-H,H]$ and set $\delta=\sqrt2H^2-1$.
We prove the matching critical law
$\Delta(H)\asymp\delta^4\sqrt{\log(1/\delta)}$ as $\delta\downarrow0$.
The upper bound follows from the uniform deficit--energy inequality
$\mathcal S_H(a)\ge cQ^{4/3}/[\log(e/Q)]^{1/6}$ for every admissible
algorithm with $0<Q\le1$, uniformly over $H\in[2^{-1/4},1]$,
where $Q$ is expected action energy.
A time-dependent fourth-moment test function proves this inequality
without restricting actions to Markov feedback or imposing an amplitude cap.
A moving soft threshold attains the critical improvement over the entire
parameter interval and shows that the logarithmic correction is necessary.
For parameters of the form $\theta=N^{-1/4}h$, $\|h\|\le H$, we compare the
local minimax expected regret over $N$ episodes with a fixed-horizon Gaussian
sequential control problem. For every finite parameter dimension, the
normalized values differ by at most $K_{H,g}/\sqrt N$, with no additional
dimension factor. The terminal task enters the limiting value only through
$c_g=\operatorname{Var}(g(Z))$, where $Z$ is standard normal. The Gaussian
problem has one shared scalar observation channel, and its zero-action
policy is minimax if and only if $H^4\le d^2/2$. This gives the exact
asymptotic no-learning boundary $c_gH^4=d^2/2$ for the control tasks.
The comparison allows full within-episode adaptation and unbounded action
amplitudes. Direct finite-sample bounds locate the undecided region
between exact zero-control optimality and strict improvement within
$O(N^{-1/2})$ of this boundary. At every fixed strictly subcritical
radius, zero control is exactly minimax for all sufficiently large $N$,
so the finite-sample value and its cumulant expansion are identified.
For one episode the exact boundary instead is
$H^4\|g'\|_\infty^2=d^2/2$; rare observable states make the extremal
task slope relevant.
\end{abstract}

\section{Introduction}
An adaptive controller pays for information through the same actions that
determine its performance. Near a parameter at which the optimal action
generates no information, the benefit of learning depends jointly on the
parameter uncertainty, the observation channel, and the task. Regret rates
alone do not determine whether a finite local experiment should acquire
information at all, nor do they identify the task-dependent constant at
which experimentation first improves the worst-case risk.

This paper gives an exact local decision problem for a class of nonquadratic
control tasks. We consider a scalar diffusion whose drift is the inner
product of an unknown vector and a chosen vector action. The diffusion is
observed continuously and restarts at zero between episodes. The terminal
cost is convex, smooth, and Lipschitz, while actions incur a quadratic cost.
Although the state is scalar, the unknown gain and the control are
$d$-dimensional, and all directions share the same observation channel.

Our main results determine both the local decision problem and the
order at which learning becomes beneficial above its scalar threshold.
The connection to terminal control tasks is a quantitative comparison
of minimax \emph{values}.
After normalization by $\sqrt N$, the error is at most
$K_{H,g}/\sqrt N$ on the entire local parameter ball
$\theta=N^{-1/4}h$, $\|h\|\le H$. Thus the error in cumulative minimax
regret is bounded independently of $N$ for a fixed local radius.
The comparison is uniform over the original history-dependent control
class, including controls that concentrate their energy in short intervals.
It does not require a process limit for every nearly optimal policy.

The limiting experiment is
\[
 dY_t=\langle h,a_t\rangle\,dt+dB_t,\qquad
 \frac12\mathbb E_h\int_0^1\|a_t+h\|^2\,dt.
\]
We prove that its zero-action policy is minimax exactly when
$H^4\le d^2/2$. A product prior on sphere-supported vertices gives the
lower bound. Revealing the other coordinates only within the proof reduces
each posterior estimate to a two-point calculation on the original
observation record. A random initial direction and a bounded scalar
feedback give strict improvement above the boundary over the whole
parameter ball.

We also determine finite-sample decisions more sharply than an additive
value comparison permits. After subtracting the exact zero-control
baseline, we freeze a coordinate posterior at each episode boundary.
The resulting approximation error is proportional to the actual
control energy. This gives exact zero-control optimality below a
finite-sample sufficient boundary. A bounded implementation of an explicit
second-variation direction gives strict improvement on the other side.
The two conditions leave an undecided region of width $O(N^{-1/2})$
around the asymptotic boundary, without specifying a sharp transition
width or the size of an improvement inside that region.
At every fixed strictly subcritical radius, the minimax value eventually
equals a log-Laplace expression exactly. Its next terms depend on
task cumulants beyond $c_g$.

For a single episode we obtain an exact, different answer:
zero control is minimax if and only if $H^4\|g'\|_\infty^2\le d^2/2$.
A controller may first wait without acquiring parameter information and
then start a small experiment on a rare observed state where the task
gradient is close to an extremal slope. A second-variation argument
turns this mechanism into strict improvement over the whole parameter
ball. For every nonaffine task in our class, this boundary is
strictly below the asymptotic boundary governed by $c_g$.

We determine the precise order of the improvement near the scalar boundary:
for $\delta=\sqrt2H^2-1\downarrow0$,
\[
 \Delta(H)\asymp\delta^4\sqrt{\log(1/\delta)}.
\]
The upper bound rests on a uniform deficit--energy law with a necessary
logarithmic correction. The deficit separates into feedback misalignment
and nonlinear posterior saturation. We retain when these losses occur
and construct a truncated fourth-moment test function whose profile
depends on the remaining logarithmic time. Its spatial equation compensates
the drift produced by a slowly moving threshold. Completing the square
for every real action then bounds all admissible rules, including
randomized and non-Markov rules with no common amplitude bound.
For the lower bound, a soft posterior threshold grows as $t^{7/4}$.
The same time exponent makes a fourth-moment decay estimate pay for
feedback misalignment and converts a tilted diffusion into a controlled
occupation-time estimate. A separate likelihood argument places the
worst-case risk at the endpoints of the full parameter interval.

The construction improves by an unbounded logarithmic factor over the
fourth-order guarantee obtained by a specified untruncated
probe-and-hyperbolic-tangent family. We retain a sharpness theorem for that
family and a quantified failure result for permanent stopping.
Together with the all-algorithm upper bound, the moving feedback attains
the optimal critical order. Its actual expected energy also attains the
necessary order $\delta^3\sqrt{\log(1/\delta)}$ for positively improving
rules. The same construction shows that the factor
$[\log(e/Q)]^{-1/6}$ in the uniform deficit law cannot be removed
or replaced by a strictly larger asymptotic factor.

\subsection{Related work and scope of the contribution}
Continuous-time episodic learning in linear-convex systems provides the
control-theoretic setting for this work. Szpruch, Treetanthiploet, and Zhang
\cite{STZ} study the exploration--exploitation tradeoff in a substantially
broader linear-convex model. Our model is more specialized: the state and
noise are scalar, the unknown quantity is a control gain, and there is no
state-dependent running cost. Within this model, our focus is the complete
local minimax value, an exact finite-horizon experimentation boundary,
and a matching critical improvement law,
rather than a general regret-rate guarantee.

Controlling the intensity of experimentation and choosing not to learn
are classical ideas. Moscarini and Smith \cite{MS} study optimally chosen
observation intensity with endogenous stopping and convex information
costs. Keller and Rady \cite{KR,KRworking} establish regimes of experimentation,
including a fixed-state case in which a confounding action can sustain
nonlearning. Their regime boundary need not depend on the particular
action interval once it contains the myopic optima. Accordingly, neither
the existence of a no-learning regime nor the absence of an amplitude cap
is, by itself, our contribution.
For an unknown-gain integrator, {\AA}str\"om and Helmersson \cite{AH}
already derive a rationally attenuated one-stage cautious control and
study the role of active probing. Their normalized rule has the form
$-\eta\beta/(1+\beta^2)$. Our feedback has the same algebraic shape under
a different normalization; the rational denominator itself is not new.

The classical Bene\v{s}--Rishel problems share our unknown-gain
observation dynamics. Bene\v{s}, Karatzas, and Rishel \cite{BKR}
study infinite-horizon discounted Bayesian regulation with quadratic
state cost and controls in $[-1,1]$, and obtain a bang--bang optimal
law in their wide-sense class. Karatzas and Ocone \cite{KO92}
develop the associated resolvent analysis and extend the
state-cost treatment beyond the quadratic case.
Their finite-horizon treatment \cite{KO93} solves a degenerate
parabolic HJB equation, as described in the authors' abstract and
in \cite[Section~1.2 and Remark~3.7]{CKM}.
In a related problem with discretionary stopping, Karatzas and
Ocone \cite{KO02} study running and termination costs depending on
distance from the origin, with both known and unknown control gain.
They obtain explicit stopping solutions for power-law costs in
the fully observed case and lower bounds on the stopping region in
the partially observed case. Our description of \cite{KO02} is based
on the authors' and institutional abstracts.

These state-regulation formulations differ from our
interval-minimax action-loss problem \eqref{eq:canonical}.
In the material examined, we have not identified a theorem
that directly yields our no-learning threshold or critical bounds.
The full text of \cite{KO93} was unavailable to us; our comparison
for that paper relies on its authors' abstract as reproduced in
bibliographic records, the primary predecessor \cite{BKR}, and
\cite[Section~1.2 and Remark~3.7]{CKM}.
This comparison does not exclude unexamined results or possible
reductions to our problem.

The distinction is in the quantitative decision problem. In our two-point
Bayesian comparison, the action-dependent part of instantaneous loss is
$a^2/2+ma$, where $m$ is the posterior mean. It combines sensing and
exploitation over a deterministic physical horizon. It is not a
parameter-independent purchase price for a terminal experiment.
Ekstr\"om and Karatzas \cite{EK} allow quadratic observation costs and obtain
optimal constant-intensity observation with discretionary stopping for a
terminal estimation objective. Their Bernoulli example also has an exact
no-observation threshold. That objective does not identify the sensing
action with the continuously evaluated estimation action used here.
The confounding-demand model of Keller and Rady additionally has a
state-dependent quadratic payoff coefficient and an infinite discounted
horizon. These differences are retained in our formulation.

General finite-horizon Bayesian control with separable drift uncertainty
is treated by Cohen, Knochenhauer, and Merkel \cite{CKM}.
Their Theorem~5.8 gives a viscosity HJB characterization, equality of
strong and weak values, and piecewise-constant nearly optimal controls.
It includes bounded-action Bayesian versions of our running-cost problem:
for any fixed $M<\infty$, take $U=[-M,M]$, hidden parameter $\lambda=h$,
$b=u$, $\sigma=1$, zero terminal cost, and
$k(t,y,u,\ell)=(u+\chi_H(\ell))^2/2$, where
$\chi_H(\ell)=(-H)\vee(\ell\wedge H)$.
This agrees with our loss on the prior support and meets their global
growth condition. Thus neither this bounded-action Bayesian model nor
its finite-dimensional posterior representation is new.
The general HJB result does not supply the explicit critical improvement
scale, the uniform deficit--energy remainder, or the full-interval risk
guarantee proved here, and it does not
by itself pass to the unrestricted common-well-posedness class uniformly
as $M$ increases.

Adusumilli \cite{Wald} derives minimax sequential sampling and stopping
policies, together with local asymptotic results for general outcome
distributions. The risk there is terminal treatment choice plus sampling
cost. Adusumilli \cite{Representation} develops continuous-time
representations of adaptive finite-arm experiments, including equivalence
of in-sample regret for arbitrary policy sequences under the stated
assumptions. The allocation clocks in that formulation are 1-Lipschitz in
physical time. An energy bound on our
unbounded actions does not imply such a restriction on the information
clock. We therefore compare values directly and retain the complete
likelihood after time change. The auxiliary approximation arguments use
classical stochastic-control tools; we do not claim a general new
representation theorem.

The coordinate-conditioning argument used for our vector lower bound is
related to product-prior lower bounds for adaptive sensing
\cite{ACD}. Those results concern terminal estimation or support recovery
under measurement budgets. Here the posterior estimate must also control
the continuous action--parameter cross term sharply enough to match a
whole-ball upper bound. The task-value comparison and exact threshold
identify the canonical experiment; the critical results give a matching
bound for all adaptive rules and a uniform logarithmic deficit remainder.
The proof uses classical It\^o verification, completion of squares,
convexity, stochastic exponentials, and time change.
The model-specific step in the upper bound is a family of fourth-moment
profiles that simultaneously compensates threshold drift and bounds
curvature against both actual deficit measures, uniformly over all
states and remaining time scales. This yields
$q\lesssim s^3\sqrt{\log(e/s)}$ for normalized energy $q$ and relative
deficit $s$. We do not claim the general verification method or
the use of time-dependent test functions as new.
Inverse-square-root decay under Brownian half-line occupation
penalisation is classical; see Roynette, Vallois, and Yor \cite{RVY},
Theorem~6.1 and Example~6.4. The occupation distribution used below
is L\'evy's arcsine law; see, for example, Salminen and Stenlund
\cite{SS}, Corollary~6.9. Our adapted-clock estimate is proved by
reducing to a deterministic-time Brownian occupation event. This
reduction is included because the clock depends on the control and may
have flat intervals. We make no separate novelty claim for the
inverse-square-root exponent or for the Brownian identities.
State-dependent attenuation of fluctuations also has direct control
precedents. Ankirchner, Blanchet-Scalliet, and Jeanblanc \cite{ABJ}
maximize the expected occupation time of an exponential martingale
above a fixed threshold. Their optimal feedback uses the smaller
volatility above the threshold and the larger volatility below it.
That result optimizes a linear occupation reward over volatilities
bounded between two positive constants. Our feedback instead pays
an action-alignment cost, and the exponential loss-clock estimate is
used to bound posterior moments rather than a linear occupation reward.

\section{Model and decision rules}\label{sec:model}
Let $d,N\ge1$. In episode $k$, the state starts at $X_{k,0}=0$ and obeys
\begin{equation}\label{eq:task}
 dX_{k,t}=\langle\theta,U_{k,t}\rangle\,dt+dW_{k,t},
 \qquad 0\le t\le1,\quad \theta,U_{k,t}\in\mathbb R^d.
\end{equation}
The scalar Brownian motions $W_k$ are independent across episodes.
The controller observes the state path, its past actions, and its own
randomization. A decision rule is a parameter-independent nonanticipative
rule on this record; actions have a predictable version. All private
randomization can be included in an initial seed independent of the
parameter and of the Brownian drivers. The controller does not observe the
drivers separately.

\begin{assumption}[Admissible experiments]\label{ass:control}
For every parameter in the comparison set, the same decision rule defines
a nonexplosive experiment with a unique law. This also holds for the rule
that permanently sets the action to zero when its cumulative squared
Euclidean norm first reaches any fixed finite cap. The laws are defined
on a common parameter-independent record space, with the usual filtration
conditions. No parameter-dependent completion may add observations.
There is no deterministic amplitude bound on admissible rules.
\end{assumption}
We evaluate controls with finite worst-case risk; controls with infinite
worst-case risk cannot improve the minimax value. Coercivity proved below
implies finite expected total energy for every finite-risk control.
Assumption~\ref{ass:control} supplies the identification of stopped
likelihoods without imposing a global Novikov condition. For Bayesian
arguments, the record laws and the rule are measurable in the parameter,
as in the usual canonical formulation of controlled diffusions.

\begin{assumption}[Terminal task]\label{ass:g}
The function $g:\mathbb R\to\mathbb R$ is nonconstant, convex, and $C^4$.
For finite constants $L>0$ and $M\ge0$,
\[
 \|g'\|_\infty\le L,\qquad 0\le g''\le M,
\]
and $g^{(3)},g^{(4)}$ are bounded.
\end{assumption}
The assumptions allow affine functions with nonzero slope, asymmetric
convex functions, and functions unbounded below. They exclude quadratic
terminal costs. A constant terminal cost has minimax regret zero and will
be treated separately.

For known $\theta$, let $V_\theta(t,x)$ denote the optimal remaining cost
with running cost $\|U\|^2/2$ and terminal cost $g$. The expected cumulative
pseudo-regret of a rule $\mathcal A$ is
\begin{equation}\label{eq:regret}
 R_N^g(\mathcal A,\theta)=
 \mathbb E_\theta\sum_{k=1}^N
 \left[\frac12\int_0^1\|U_{k,t}\|^2\,dt+g(X_{k,1})\right]
 -NV_\theta(0,0).
\end{equation}
This is an expectation, not a realized-regret statement. For $H<\infty$,
define the local normalized value
\begin{equation}\label{eq:CN}
 C_{N,d}^g(H)=\inf_{\mathcal A}\sup_{\|h\|\le H}
 \frac{R_N^g(\mathcal A,N^{-1/4}h)}{\sqrt N}.
\end{equation}
If the original parameter domain is restricted to
$\|\theta\|\le\bar\theta$, statements involving $N,H$ require
$N^{-1/4}H\le\bar\theta$. On the whole space there is no additional
restriction on $N$.

For $T>0$, the canonical value is
\begin{equation}\label{eq:canonical}
\begin{split}
 dY_t&=\langle h,a_t\rangle\,dt+dB_t,\quad 0\le t\le T,\\
 \mathcal J_{T,h}(a)&=\frac12\mathbb E_h\int_0^T\|a_t+h\|^2\,dt,\qquad
 C_d^T(H)=\inf_a\sup_{\|h\|\le H}\mathcal J_{T,h}(a).
\end{split}
\end{equation}
The standard decision rules in \eqref{eq:canonical} satisfy the same
record, randomization, and stopped-well-posedness conventions.
There is one scalar $Y$, irrespective of $d$. In the scalar, unit-horizon
case write $\mathcal L_h=\mathcal J_{1,h}$ and
$\Delta(H)=H^2/2-C_1^1(H)$.
The notation $\mathbb E_\pi$ includes the draw of a parameter from $\pi$.

\section{Main results}\label{sec:results}
\begin{theorem}[Quantitative task-value comparison]\label{thm:value}
Under Assumptions~\ref{ass:control}--\ref{ass:g}, put
$c_g=\operatorname{Var}(g(Z))$, $Z\sim N(0,1)$. Then $0<c_g\le L^2$.
For every finite $d\ge1$ and every admissible choice of $N,H$,
\begin{equation}\label{eq:value}
 \left|C_{N,d}^g(H)-\sqrt{c_g}\,C_d^1(c_g^{1/4}H)\right|
 \le \frac{K_{H,L,M}}{\sqrt N},
\end{equation}
where
\begin{equation}\label{eq:K}
 K_{H,L,M}=
 2H^3LM\sqrt{4H^2L^2+4}+H^4L^2M+\frac12H^2L^2.
\end{equation}
The displayed constant has no additional dependence on $d$ or on the
amplitude, grid, or energy cap used to implement an approximating policy.
\end{theorem}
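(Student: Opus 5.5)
The plan is to compute the per-episode regret of an arbitrary rule in the local regime by a second-order expansion of the known-parameter value, and to recognize the leading term as the canonical running cost after a time change that strings the $N$ episodes into one experiment of unit horizon.

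\textbf{Step 1: the bound on $c_g$.} Nonconstancy and convexity force $g$ to be non-affine-constant, so $g(Z)$ is nondegenerate and $c_g>0$. Since $\|g'\|_\infty\le L$, the Gaussian Poincaré inequality gives $c_g\le \mathbb E g'(Z)^2\le L^2$.

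\textbf{Step 2: expansion of the known-parameter problem.} For $\theta$ small, write $V_\theta(t,x)$ via HJB:
\[
\partial_tV+\tfrac12\partial_{xx}V-\tfrac12\|\theta\|^2(\partial_xV)^2=0 .
\]
Set $u(t,x)=\mathbb E\,g(x+W_{1-t})$. Then $V_\theta=u-\tfrac12\|\theta\|^2 w+O(\|\theta\|^4)$, where $w(0,0)=\int_0^1\mathbb E(\partial_xu(t,W_t))^2dt=\operatorname{Var}g(W_1)=c_g$ by the Clark--Ocone/Itô isometry. For an arbitrary action $U$, Itô's formula applied to $u(t,X_t)$ gives
\[
\mathbb E\Big[\tfrac12\!\int\|U\|^2+g(X_1)\Big]-u(0,0)=\mathbb E\int_0^1\Big[\tfrac12\|U_t\|^2+\langle\theta,U_t\rangle\partial_xu(t,X_t)\Big]dt .
\]
Completing the square, the per-episode regret equals
\[
\tfrac12\mathbb E\int_0^1\|U_t+\theta\,\partial_xu(t,X_t)\|^2dt\;-\;\tfrac12\|\theta\|^2\big(\mathbb E\textstyle\int(\partial_xu)^2-c_g\big)+\text{(remainder)} .
\]
Under the drift the law of $X$ differs from Brownian motion. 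I would bound the resulting discrepancy in $\mathbb E\int(\partial_xu)^2$ using $\|u_{xx}\|\le M$, $|u_x|\le L$ and the energy $Q=\mathbb E\int\|U\|^2$. Finite risk gives coercivity, $Q\lesssim H^2L^2N^{-1/2}$ per episode on average, and these estimates produce exactly terms like $H^3LM\sqrt{\cdot}$ and $H^4L^2M$ in $K$.

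\textbf{Step 3: reduction to the canonical experiment.} Put $\theta=N^{-1/4}h$, and in episode $k$ let $\phi_{k,t}=\partial_xu(t,X_{k,t})$. Define $a=N^{1/4}U/\phi$, heuristically, or better the scaled direction component. The likelihood for $h$ is that of the observation $dX=N^{-1/4}\langle h,U\rangle dt+dW$. Time-change the concatenated episodes with clock $s=\frac1N\int\phi^2$, whose expected total length is about $c_g$. The regret sum divided by $\sqrt N$ then becomes $\frac12\mathbb E\int\|a_s+h\|^2ds$ over a horizon of roughly $c_g$. Scaling $(h,T)\mapsto(c_g^{1/4}h,1)$ gives $\sqrt{c_g}\,C^1_d(c_g^{1/4}H)$. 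Because the clock is random and may be degenerate where $\phi=0$, I would not time-change the controls directly. Instead I prove the two inequalities separately.

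\textbf{Upper bound on $C^g_{N,d}$.} Take a near-optimal canonical policy, truncated by energy cap and grid, which Assumption~\ref{ass:control} permits. Implement it with $U=-N^{-1/4}\phi\,\tilde a$ driven by the observed statistic $\int\phi\,dX$; its constant enters only through $H,L,M$.

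\textbf{Lower bound.} Given any task rule, project $U$ onto $\phi$ to build a canonical rule on the information clock. The orthogonal part only adds cost. The clock mismatch between random $\frac1N\sum\int\phi^2$ and $c_g$ is handled by a law-of-large-numbers variance bound of order $L^2/\sqrt N$ (the term $\tfrac12H^2L^2$). This step uses the monotonicity of $C_d^T$ in $T$ and stopping at cumulative energy caps.

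\textbf{Main obstacle.} The difficulty is the lower bound with unbounded, history-dependent actions. The clock and the induced canonical rule are defined through the state-dependent weight $\phi$, which varies with the control itself. Keeping the error linear in actual energy, and hence free of $d$ and of amplitude, requires comparing likelihoods after the time change without Novikov. I would first try the energy-stopped likelihood identification granted by Assumption~\ref{ass:control}, and then let the cap tend to infinity using coercivity.
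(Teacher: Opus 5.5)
Your overall route is the paper's: the task clock $N^{-1}\int S^2$ with $S=P_{1-t}g'$, the implementation $U=N^{-1/4}S\,a_{\tau(\cdot)}$ through the observable statistic $\int S\,dX$, an LLN-plus-coupling bound for the clock, two one-sided comparisons, and the scaling $C_d^T(H)=\sqrt T\,C_d^1(T^{1/4}H)$. Your Step~2 differs slightly. You complete the square around $\theta S$ and subtract the zero-control baseline exactly:
\[
 r_N=\widetilde r_N-\tfrac12\|h\|^2\bigl(\mathbb E_hD_N-c_g\bigr)+\bigl(z_N-\tfrac12\|h\|^2c_g\bigr).
\]
The paper instead uses the exact Bellman identity in $p_\theta=V_{\theta,x}$ together with $|p_\theta-S|\le\|\theta\|^2LM(1-t)$. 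Your version works and gives the same error orders. It still needs a quantitative bound $|\mu-V_\theta(0,0)-\tfrac12\|\theta\|^2c_g|\lesssim L^2M\|\theta\|^4$, and in practice that bound comes from the same gradient estimate.

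The genuine gap is in the lower bound. The time-changed control $\alpha_s=N^{1/4}U_{\rho_s}/S_{\rho_s}$ is predictable only for $\mathcal G_s=\bar{\mathcal F}_{\rho_s}$. It uses the whole physical record (the path $\bar X$ itself, not just $Y=N^{-1/2}\int S\,d\bar X$, plus the clock and the resets), and it runs to the random time $D_N$, which need not be a stopping time of $Y$. So it is not a decision rule in the class defining $C_d^T(H)$, and ``its risk is at least $C_d^T(H)$'' does not follow. Two ingredients are missing.

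\begin{enumerate}
 \item You need the exact full-record density in task time,
 \[
 \frac{d\mathbb P_h}{d\mathbb P_0}\Big|_{\mathcal G_s}=\exp\Bigl\{h^\top\!\int_0^s\alpha\,dY-\tfrac12h^\top\Bigl(\int_0^s\alpha\alpha^\top du\Bigr)h\Bigr\}.
 \]
 The paper gets it by optional sampling of the physical density at the bounded stopping times $\rho_s$. It shows the auxiliary record carries no extra parameter information.
 \item You need a weak-to-standard lemma: strategies in a larger filtration, relative to which $Y$ is Brownian with exactly this density, have the same minimax value as standard rules. The paper proves this by a one-sided energy cap (using nonnegative losses), $L^2$ grid approximation with likelihood convergence uniform over the ball via a net bound, and causal sampling from reference conditional kernels.
\end{enumerate}

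The random horizon is handled by cutting at $D_N\wedge T$ and splicing an independent Brownian motion inside this weak class. Monotonicity of $C_d^T$ in a deterministic $T$ does not do this.

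Your claim that ``the orthogonal part only adds cost'' is wrong in principle. Energy spent where $S=0$ would buy information visible in the original record but absent from $Y$. It is harmless only because $\{S_r=0\}$ is Lebesgue-null pathwise, which must be proved: $S(t,\cdot)$ has at most one zero for nonaffine $g$, and the states are atomless. Energy-stopped likelihoods, which you name as the main obstacle, address only the first ingredient above. The same weak-to-standard lemma is also what justifies your upper-bound step of replacing a near-optimal canonical policy by a bounded capped grid policy at cost $\eta$ uniformly over the ball. Assumption~\ref{ass:control} supplies only well-posedness of such rules, not their near-optimality.
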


The theorem characterizes the entire limiting value for every fixed
radius, including the region where experimentation is beneficial.
It does not supply a closed form for $C_d^1$ above the threshold.
Dimension-free comparison error does not assert dimension-free
computational complexity. If $H$ grows with dimension, the displayed
constant can of course grow through $H$.

\begin{theorem}[Exact Gaussian no-learning boundary]\label{thm:threshold}
For every finite $d\ge1$, $H\ge0$, and $T>0$,
\begin{equation}\label{eq:threshold}
 C_d^T(H)=\frac{TH^2}{2}
 \quad\Longleftrightarrow\quad H^4T\le\frac{d^2}{2}.
\end{equation}
The equality case belongs to the zero-action region. If the inequality
fails, a bounded, randomized, observation-adapted policy has worst-case
risk strictly below $TH^2/2$ over the whole Euclidean ball.
\end{theorem}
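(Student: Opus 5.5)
The plan has three parts: a scaling reduction to $T=1$, a Bayesian lower bound that certifies zero control when $H^4\le d^2/2$, and a perturbative construction that beats zero control above the boundary.

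\textbf{Reduction to unit horizon.} Set $s=t/T$ and $Y'_s=T^{-1/2}Y_{sT}$, so that $B'$ is again a standard Brownian motion. Put $h'=T^{1/4}h$ and $a'_s=T^{1/4}a_{sT}$. Then $dY'_s=\langle h',a'_s\rangle ds+dB'_s$ and $\mathcal J_{T,h}(a)=\sqrt T\,\mathcal J_{1,h'}(a')$. This map is a bijection of admissible rules, so $C_d^T(H)=\sqrt T\,C_d^1(T^{1/4}H)$. Since $(T^{1/4}H)^4=TH^4$, both sides of \eqref{eq:threshold} transform consistently, and it suffices to take $T=1$. The bound $C_d^1(H)\le H^2/2$ is immediate from $a\equiv0$.

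\textbf{Lower bound for $H^4\le d^2/2$.} Take the product prior $h=c\,(\varepsilon_1,\dots,\varepsilon_d)$ with i.i.d.\ Rademacher signs and $c=H/\sqrt d$, so $\|h\|=H$. The Bayes risk equals $H^2/2+\sum_i\mathbb E\int_0^1\bigl(a_{i,t}^2/2+h_ia_{i,t}\bigr)dt$, and I will show each coordinate sum is nonnegative. Fix $i$ and reveal $\varepsilon_{-i}$ inside the proof only. Given $\varepsilon_{-i}$ and the record, the posterior of $\varepsilon_i$ is a two-point law with mean $m_{i,t}$. The innovation form gives $dm_{i,t}=c\,a_{i,t}(1-m_{i,t}^2)\,d\bar B_t$, hence $\mathbb E m_{i,t}^2\le c^2F_i(t)$ with $F_i(t)=\mathbb E\int_0^t a_{i,s}^2ds$. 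Since $a_{i,t}$ is measurable with respect to the enlarged information, $\mathbb E[h_ia_{i,t}]=c\,\mathbb E[a_{i,t}m_{i,t}]$. Cauchy--Schwarz in $\omega$ then gives
\[
\Bigl|\mathbb E\!\int_0^1 h_ia_{i,t}dt\Bigr|\le c^2\int_0^1\sqrt{F_i'(t)F_i(t)}\,dt.
\]
The key step is the sharp Hardy-type inequality $\int_0^1\sqrt{F'F}\,dt\le F(1)/\sqrt2$ for nondecreasing $F$ with $F(0)=0$. I will prove it by the weighted AM--GM bound $\sqrt{F'F}\le\tfrac12\bigl(\sqrt{2t}\,F'+F/\sqrt{2t}\bigr)$ followed by integration by parts on the second term; equality holds for $F\propto\sqrt t$. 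Writing $q_i=F_i(1)$, the coordinate sum is at least $q_i/2-c^2q_i/\sqrt2\ge0$ exactly when $c^4\le1/2$, that is, $H^4\le d^2/2$. Hence $C_d^1(H)\ge H^2/2$, and this includes the equality case.

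\textbf{Strict improvement for $H^4>d^2/2$.} Draw a uniformly random signed coordinate direction $e$ at time $0$. Use the scalar policy $a_t=e\,u_t$ with $u_t=\varepsilon p(t)-k(t)\,\chi(Y_t)$, where $p$ is a probe, $k$ a feedback gain, and $\chi$ a bounded, truncated identity. Expand $\mathcal J_{1,h}(a)-\|h\|^2/2$ to second order in $(\varepsilon,k)$ with $k$ of order $\varepsilon$. The first-order cross term is $\langle h,e\rangle\mathbb E u_t$. Through the feedback, the drift of $Y$ is proportional to $\langle h,e\rangle$, so the cross term contributes $-\langle h,e\rangle^2\times(\text{a positive quadratic form in }(p,k))$. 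The cost term $\tfrac12\mathbb E\int u^2$ is the same quadratic order. Averaging over $e$ gives $\mathbb E_e\langle h,e\rangle^2=\|h\|^2/d$, and the fourth-order interaction brings in $\mathbb E_e\langle h,e\rangle^4=\sum_ih_i^4/d\ge\|h\|^4/d^2$, with equality on the vertex configurations. Choosing the gain profile to match the Hardy extremal $F\propto\sqrt t$ makes the second-order coefficient negative exactly when $H^4/d^2>1/2$, uniformly over $\|h\|=H$. For $\|h\|<H$ the risk is at most $\|h\|^2/2+O(\varepsilon^2)<H^2/2$ once $\varepsilon$ is small. Truncation by $\chi$ affects only higher orders, so the policy is bounded and observation-adapted.

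The main obstacle is this last step. I must identify the explicit second-variation direction attaining the sharp Hardy constant, handle the sign-symmetric prior in which the first-order gain appears only after feedback, and keep the remainder uniform over the whole ball. I would first solve the linearized Gaussian problem in closed form, taking $Y$ Gaussian under the linear feedback, and then control the truncation error by a bounded-coefficient Itô estimate.
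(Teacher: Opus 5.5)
Your scaling reduction and your lower bound are correct and follow the paper's route. You use coordinate-wise two-point posteriors in a filtration enlarged by $\varepsilon_{-i}$. Your weighted AM--GM proof of $\int_0^1\sqrt{FF'}\,dt\le F(1)/\sqrt2$ replaces the paper's Cauchy--Schwarz with $\int_0^1FF'\,dt=F(1)^2/2$.

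The gap is in the strict-improvement step. The family $u_t=\varepsilon p(t)-k(t)\chi(Y_t)$ with $k=\varepsilon\kappa$ feeds back the \emph{raw observation}, and it cannot reach the constant $1/2$. Put $v=\langle h,e\rangle$, $P(t)=\int_0^tp$, and take $\chi$ to be the identity. To order $\varepsilon^2$, the sign-averaged change of risk is
\[
 \varepsilon^2\Bigl\{\tfrac12\int_0^1\bigl(p^2+\kappa^2t\bigr)\,dt-v^2\int_0^1\kappa P\,dt\Bigr\}.
\]
Since $P(t)^2\le t\int_0^1p^2$, Cauchy--Schwarz gives $|\int_0^1\kappa P\,dt|\le(\int_0^1\kappa^2t\,dt)^{1/2}(\int_0^1p^2\,dt)^{1/2}\le\frac12\int_0^1(p^2+\kappa^2t)\,dt$. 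Hence the brace is at least $\frac12(1-v^2)\int_0^1(p^2+\kappa^2t)\,dt$. Averaging over $e$ turns $v^2$ into $\|h\|^2/d$. So for $d^2/2<H^4\le d^2$ the second-order coefficient at $\|h\|=H$ is nonnegative for every $p,\kappa$, and your argument yields no improvement there.

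A bounded odd $\chi$ does not help. Gaussian integration by parts replaces $\kappa^2t$ by $\kappa^2\,\mathbb E\chi(B_t)^2$ and $P$ by $P\,\mathbb E\chi'(B_t)$, and $(\mathbb E\chi'(B_t))^2\le\mathbb E\chi(B_t)^2/t$. The structural reason is that $Y_t$ is not a function of the likelihood statistic. The feedback energy $\int_0^1k(t)^2\,\mathbb E Y_t^2\,dt$ also buys no information at this order. So the equality case of your own Hardy bound, where the action is aligned with the posterior mean and energy grows like $\sqrt t$, is out of reach.

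The missing idea is to feed back the score $\Xi_t=\int_0^tu_s\,dY_s$, which determines the two-point posterior mean $H\tanh(H\Xi_t)$, with an order-one gain. The paper uses a sign-randomized probe $\varepsilon\rho/\sqrt\tau$ on $[0,\tau]$, followed by $u_t=-\tanh(H\Xi_t)/(H\sqrt{2t})$. Because $\langle\Xi\rangle_t=\int_0^tu^2$, the feedback's own energy is converted into information. One has exactly $\mathbb E_v\Xi_t=vq_v(t)$, with $q_v(t)=\varepsilon^2\sqrt{t/\tau}+O(\varepsilon^3)$ realizing the Hardy extremal, and
\[
 \mathcal L_v=\frac{v^2}{2}+\frac{\varepsilon^2}{\sqrt\tau}\Bigl[\frac12-\frac{v^2(1-\tau)}{\sqrt2}\Bigr]+O_{H,\tau}(\varepsilon^3)
\]
uniformly in $|v|\le H$.

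Several consequences differ from your sketch:
\begin{itemize}
\item The expansion is linear in $v^2$, so averaging over a random coordinate gives exactly $\|h\|^2/d$. Your $\mathbb E_e\langle h,e\rangle^4$ term does not enter at this order.
\item Because $\Xi=O(\varepsilon)$, the $\tanh$ truncation really is higher order. Your $\chi(Y_t)$ acts on an $O(1)$ quantity, so its truncation is not.
\item The score is not Gaussian under this feedback, so its moments are controlled by It\^o's formula and Gronwall rather than a Gaussian closed form.
\item Your step for $\|h\|<H$ is not uniform as $\|h\|\uparrow H$. For small $\varepsilon$ the explicit part is increasing in $\|h\|^2$; with the uniform $O(\varepsilon^3)$ remainder, this puts the maximum at $\|h\|=H$.
\end{itemize}
Choosing $\tau<1-d/(\sqrt2H^2)$ then gives strict improvement over the whole ball.
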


\begin{corollary}[Task-dependent boundary]\label{cor:taskboundary}
For fixed $d,H,g$ as above,
\[
 \lim_{N\to\infty}C_{N,d}^g(H)=\frac{c_gH^2}{2}
 \quad\Longleftrightarrow\quad c_gH^4\le\frac{d^2}{2}.
\]
In this region zero control attains the limiting minimax value.
Above the boundary the limiting minimax value is strictly smaller.
\end{corollary}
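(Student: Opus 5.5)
The corollary follows by combining Theorem~\ref{thm:value} with Theorem~\ref{thm:threshold} at horizon $T=1$. First, let $N\to\infty$ in \eqref{eq:value}. For fixed $H,L,M$ the constant $K_{H,L,M}$ does not depend on $N$, and the local ball $\|N^{-1/4}h\|\le N^{-1/4}H$ is admissible for all large $N$ (for every $N$ on the whole space). Hence the limit exists and
\[
 \lim_{N\to\infty}C_{N,d}^g(H)=\sqrt{c_g}\,C_d^1(c_g^{1/4}H).
\]
Theorem~\ref{thm:value} also gives $0<c_g\le L^2$, so the rescaling by $c_g^{1/4}$ is legitimate and nondegenerate.

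Next, apply Theorem~\ref{thm:threshold} with $T=1$ and radius $H'=c_g^{1/4}H$. It gives $C_d^1(H')=H'^2/2$ if and only if $H'^4\le d^2/2$. Since $H'^2=\sqrt{c_g}H^2$, this reads
\[
 \sqrt{c_g}\,C_d^1(c_g^{1/4}H)=\frac{c_gH^2}{2}\iff c_gH^4\le \frac{d^2}{2},
\]
which is the displayed equivalence.

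For the strict-inequality claim, note that zero action is always feasible in \eqref{eq:canonical}, with risk $\mathcal J_{1,h}(0)=\|h\|^2/2$. Therefore $C_d^1(H')\le H'^2/2$ always holds. When the equivalence fails, the inequality is strict, and the limiting minimax value is strictly below $c_gH^2/2$.

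It remains to show that zero control attains the limiting value in the region $c_gH^4\le d^2/2$. Under zero control the state satisfies $X_{k,1}=W_{k,1}\sim N(0,1)$, so the risk is $N\mathbb E g(Z)$ irrespective of $\theta$. I would then expand $V_\theta(0,0)$ for small $\theta$. Writing the HJB equation in $x$ with optimal feedback $U=-\theta\,\partial_xV$, one gets to second order
\[
 V_\theta(0,0)=\mathbb E g(Z)-\frac{\|\theta\|^2}{2}\int_0^1 \mathbb E\big[(\partial_x u(t,W_t))^2\big]\,dt+O(\|\theta\|^4),
\]
where $u(t,x)=\mathbb E g(x+W_{1-t}-W_t)$. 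Set $M_t=u(t,W_t)$, a martingale with $dM_t=\partial_xu(t,W_t)\,dW_t$. By the martingale representation, the integral above equals $\operatorname{Var}(M_1)=\operatorname{Var}(g(Z))=c_g$. Thus
\[
 \frac{R_N^g(0,N^{-1/4}h)}{\sqrt N}=\frac{c_g\|h\|^2}{2}+O(N^{-1/2}),
\]
uniformly on $\|h\|\le H$, and the supremum tends to $c_gH^2/2$, which equals the limit in this region. This expansion is presumably already part of the proof of Theorem~\ref{thm:value}: the baseline identity there is what produces the factor $c_g$.

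The main obstacle is this expansion: one needs a uniform quartic remainder, controlled by the bounds on $g''$, $g^{(3)}$, and $g^{(4)}$ in Assumption~\ref{ass:g}. Everything else is bookkeeping once the two theorems are assumed.
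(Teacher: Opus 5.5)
Your proposal is correct and is essentially the paper's argument: let $N\to\infty$ in Theorem~\ref{thm:value}, apply Theorem~\ref{thm:threshold} at $T=1$ and radius $c_g^{1/4}H$, and use convergence of the zero-control baseline $Z_{N,d}^g(H)\to c_gH^2/2$. The step you flag as the main obstacle needs no HJB perturbation argument, and there is one slip to fix: the Cole--Hopf formula $V_\theta(0,0)=-\lambda^{-1}\log\mathbb E e^{-\lambda g(Z)}$, $\lambda=\|\theta\|^2$, gives the exact baseline \eqref{eq:finite-baseline}, and since $g$ is Lipschitz, $g(Z)$ has all exponential moments, so $\psi_g(u)=c_gu^2/2+O(u^3)$ yields the uniform $O(N^{-1/2})$ error at once (the paper's bound $|Z_{N,d}^g(H)-c_gH^2/2|\le H^4L^2M/\sqrt N$ from \eqref{val:gradient} also gives it), while your $u(t,x)$ should be $\mathbb E g(x+W_1-W_t)$, not $\mathbb E g(x+W_{1-t}-W_t)$.
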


\subsection{Matching scalar critical and deficit--energy laws}
Put $H_c=2^{-1/4}$ and $\delta=\sqrt2H^2-1$. In the scalar experiment,
let $\pi_H=(\delta_{-H}+\delta_H)/2$,
$m_t=\mathbb E_{\pi_H}[h\mid\mathcal F_t]
=H\tanh(H\int_0^ta_s\,dY_s)$, and
\[
 Q=\mathbb E_{\pi_H}\int_0^1a_t^2\,dt,\qquad
 \mathcal S_H(a)=\frac{H^2Q}{\sqrt2}
           +\mathbb E_{\pi_H}\int_0^1m_ta_t\,dt .
\]
Proposition~\ref{crit:prop:identity} expresses $\mathcal S_H$ as the sum
of two nonnegative losses. The endpoint Bayes improvement is
$b_H(a)=\delta Q/2-\mathcal S_H(a)$, and only the direction
$\Delta(H)\le\sup_a b_H(a)$ is used for the all-algorithm bound.

\begin{theorem}[Matching unrestricted critical law]\label{thm:critical-new}
There are absolute constants $c,C,\delta_0>0$ such that
\begin{equation}\label{eq:critical-new}
 c\delta^4\sqrt{\log(1/\delta)}
 \le\Delta(H)\le C\delta^4\sqrt{\log(1/\delta)},
 \qquad 0<\delta\le\delta_0.
\end{equation}
The upper bound covers the complete class in Assumption~\ref{ass:control},
including randomized rules, adaptive stopping and restarting, and
unbounded action amplitudes. The moving feedback below attains the lower
bound over $[-H,H]$. The statement determines the order, not a leading
asymptotic constant.
\end{theorem}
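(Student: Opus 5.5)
The proof has two halves. The upper bound combines the deficit--energy inequality stated in the abstract with a one-variable optimisation. The lower bound comes from the moving soft threshold together with a risk comparison over the whole interval.

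\emph{Upper bound.} By Sion-type reasoning, or simply because the two-point prior $\pi_H$ is a valid prior on $[-H,H]$, we have $C_1^1(H)\ge \mathbb E_{\pi_H}$-Bayes risk. The first step is to check the Bayes identity
\[
\tfrac12\mathbb E_{\pi_H}\int_0^1(a_t+h)^2\,dt = \tfrac{H^2}{2}+\tfrac Q2+\mathbb E_{\pi_H}\int_0^1 m_ta_t\,dt .
\]
This follows by conditioning $h$ on $\mathcal F_t$. Substituting $\tfrac Q2=\tfrac{H^2Q}{\sqrt2}-\tfrac{\delta Q}{2}$, which uses $\sqrt2H^2=1+\delta$, the identity becomes $\tfrac{H^2}{2}-b_H(a)$. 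Hence $\Delta(H)\le\sup_a b_H(a)$.

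Next I invoke the uniform deficit--energy inequality $\mathcal S_H(a)\ge cQ^{4/3}[\log(e/Q)]^{-1/6}$ for $0<Q\le1$, uniformly on $H\in[H_c,1]$. This is the result proved via the time-dependent fourth-moment test function. Large energies $Q>1$ are handled separately: there $\mathcal S_H(a)\ge cQ$, say from the same inequality applied to a stopped version of the rule, or from coercivity. This gives $b_H<0$ once $\delta$ is small. It then remains to maximise $\phi(Q)=\delta Q/2-cQ^{4/3}[\log(e/Q)]^{-1/6}$ over $Q\in(0,1]$. The first-order condition gives $Q^{1/3}\asymp\delta[\log(e/Q)]^{1/6}$, so $Q\asymp\delta^3\sqrt{\log(1/\delta)}$ and $\phi_{\max}\asymp\delta Q\asymp\delta^4\sqrt{\log(1/\delta)}$. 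This is the claimed upper bound, and it holds for every admissible rule because the deficit law does.

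\emph{Lower bound.} Here I would exhibit the moving-threshold feedback. Take a short initial probe to make $\int a\,dY$ nondegenerate, then use the feedback $a_t=-\eta\,\beta_t/(1+\beta_t^2)$, where the soft posterior threshold scales as $t^{7/4}$. The amplitude $\eta$ and the time scale are tuned so that the expected energy is $Q\asymp\delta^3\sqrt{\log(1/\delta)}$. Two estimates are then needed:
\begin{enumerate}
\item the misalignment loss is bounded by a fourth-moment decay estimate;
\item the saturation loss is bounded by an occupation-time estimate for the tilted diffusion, using an arcsine/Roynette--Vallois--Yor type inverse-square-root bound on the adapted clock.
\end{enumerate}
Together these should show $\mathcal S_H(a)\le \tfrac14\delta Q$. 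That yields Bayes improvement $\gtrsim\delta^4\sqrt{\log(1/\delta)}$ under $\pi_H$.

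The Bayes improvement must then be upgraded to worst-case risk over $[-H,H]$. For this I use a likelihood argument. Since the rule is odd-symmetric, $\mathcal L_h(a)$ is symmetric in $h$. Writing the risk as a function of $h$ through the likelihood ratio against $h=0$, one shows that $h\mapsto \mathcal L_h(a)-h^2/2$ is maximised at $h=\pm H$, for instance via convexity in $h^2$ of the relevant exponential-family expression. Hence $\sup_{|h|\le H}\mathcal L_h=\mathcal L_{\pm H}$, and the Bayes gain becomes a minimax gain.

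\emph{Main obstacle.} The hardest step is the lower-bound construction: making the saturation loss genuinely smaller than $\delta Q$ by the $\sqrt{\log}$ factor. The occupation estimate for a control-dependent clock with flat intervals needs the reduction to a deterministic-time Brownian occupation event. I would try to do this first by a time change to Brownian motion, followed by a comparison of the stopped events.
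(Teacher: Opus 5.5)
Your upper bound is essentially the paper's argument. The paper applies the relative-deficit form $q\le Cs^3\sqrt{\log(e/s)}$ with $s<\delta$ whenever $b_H(a)>0$, which is equivalent to your optimisation of $\delta Q/2-cQ^{4/3}[\log(e/Q)]^{-1/6}$.

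Do not handle the case $Q>1$ through a stopped rule. Stopping adds post-stopping alignment loss to $S_1$, so $\mathcal S_H$ is not monotone under stopping. The case is nonetheless harmless: Lemma~\ref{crit:lem:potential} shows that $b_H(a)>0$ forces $Q<9\delta/[H^2(1-2\delta)]<1$, and the relative-deficit lemma does not assume $q\le1$.

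Your outline of the construction also matches the paper: a probe, a soft threshold $\sqrt k\,t^{7/4}$, decay of $t^{-3}\mathbb E_\pi X_t^4$ paying the alignment loss, and a tilted occupation bound for the saturation loss. Two ingredients are missing from it. The phrase ``tuned so that $Q\asymp\delta^3\sqrt{\log(1/\delta)}$'' hides a required lower bound on the \emph{actual} energy after saturation; the paper proves $Q\ge f\tau^3/2$, which needs $k/f$ large. You also omit the probe's own alignment cost, of order $e\sqrt\tau$, which caps $\tau$ at order $\delta$.

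The genuine gap is the passage from the endpoint Bayes gain to a minimax gain over $[-H,H]$. Your intermediate claim that $h\mapsto\mathcal L_h(a)-h^2/2$ is maximised at $\pm H$ is false for every improving rule. Indeed $\mathcal L_0(a)=\tfrac12\mathbb E_0\int_0^1a_t^2\,dt>0$, whereas evenness gives $\mathcal L_{\pm H}(a)-H^2/2=-b_H(a)<0$.

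No convexity in $h$ or $h^2$ is available either. The family $\mathbb E_hF=\mathbb E_0[F\exp\{h\Xi_1-h^2\mathcal Q/2\}]$ is curved, with random $\mathcal Q$. Nor can the interior near $|h|=H$ be handled by subtracting an error term. The available uniform bound on $\tfrac12\mathbb E_h\mathcal Q+h\mathbb E_h\mathcal A$ is $U/2+H\sqrt U$, of order $\tau^{11/8}$ up to logarithms. That is far above the gain, which is of order $\tau^4\sqrt{\log(1/\tau)}$.

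The paper instead locates the maximum exactly, without subtracting anything.
\begin{enumerate}
\item The deterministic cap $\mathcal Q\le\Lambda=e+L/(2H^2)$ follows from $|X_t|g_t(X_t)\le1$. It gives $d\mathbb P_h/d\mathbb P_\pi\le2\exp(H^2e/2)\tau^{-1/4}$ for $|h|\le H$.
\item Hence $\sup_h\mathbb E_h\mathcal Q\le U=2\exp(H^2e/2)\,e\,\tau^{-3/4}$.
\item Differentiating under the likelihood with score $R_h=\Xi_1-h\mathcal Q$ gives $|\mathcal L_h'-h|\le\sqrt U+(H+\sqrt\Lambda/2)U<H/2$. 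So $\mathcal L_h$ is strictly increasing on $[H/2,H]$.
\item On the central interval, $\sup_{|h|\le H/2}\mathcal L_h\le H^2/8+U/2+H\sqrt U<H^2/2-H\sqrt U\le\mathcal L_H$.
\end{enumerate}
Together with evenness this yields $\sup_{|h|\le H}\mathcal L_h=\mathcal L_{\pm H}=H^2/2-b_H(a)$. Only then does the endpoint Bayes gain become a lower bound on $\Delta(H)$.
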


\begin{theorem}[Uniform logarithmic deficit--energy law]\label{thm:energy-new}
There is an absolute $c_0>0$ such that, uniformly over $H\in[H_c,1]$
and all admissible algorithms with $0<Q\le1$,
\begin{equation}\label{eq:energy-new}
 \mathcal S_H(a)\ge
 c_0\frac{Q^{4/3}}{[\log(e/Q)]^{1/6}}.
\end{equation}
There are also absolute $C_0,\delta_1>0$ such that every algorithm
with $b_H(a)>0$ and $0<\delta\le\delta_1$ satisfies
\begin{equation}\label{eq:positive-energy-sharp}
 Q\le C_0\delta^3\sqrt{\log(e/\delta)}.
\end{equation}
The first assertion does not require positive improvement.
The second concerns mixture expected energy, not a pathwise cap.
\end{theorem}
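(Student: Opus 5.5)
The plan is to prove \eqref{eq:energy-new} by a verification argument with a time-dependent test function on the posterior log-likelihood, and then to deduce \eqref{eq:positive-energy-sharp} by optimizing the improvement against it.

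\textbf{Posterior coordinate and deficit.} Write $m_t=H\tanh(L_t)$ with $L_t=H\int_0^ta_s\,dY_s$. Under the mixture $\pi_H$, the innovation $d\bar B_t=dY_t-m_ta_t\,dt$ is a Brownian motion in the observer filtration, so
\[
dL_t=Ha_t\,d\bar B_t+Hm_ta_t^2\,dt .
\]
By Proposition~\ref{crit:prop:identity}, $\mathcal S_H$ splits into two nonnegative losses. The first is a feedback-misalignment term, which measures the gap between $a_t$ and the antisymmetric feedback $-\kappa\,m_t$. The second is a saturation term of order $\mathbb E\int a_t^2(H^2-m_t^2)$-type, which measures the nonlinearity of $\tanh$ away from its linear regime. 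Near the scalar threshold, each unit of energy $a^2$ spent at posterior level $|m|$ costs, to leading order, the saturation term $\asymp m^2a^2$ and a misalignment term.

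\textbf{Heuristic scale.} Energy $Q$ moves $L$ by a quadratic variation of order $Q$. The drift term $Hma^2$ pushes $L$ outward, while alignment forces $a\approx -\kappa m$. Balancing the fourth-moment growth of $m$ against these two deficits gives $\mathcal S\gtrsim Q^{4/3}$. The logarithm enters because a slowly moving threshold can keep the process near the boundary of the linear regime for a logarithmically long effective time.

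\textbf{Main step (expected obstacle).} I would construct $\Phi(t,\ell)$, a truncated quartic in $\ell$ whose profile depends on $\log$ of the remaining information time, with $\Phi(1,\cdot)\ge0$ and $\Phi(0,0)=0$. The target is the pointwise inequality
\[
\partial_t\Phi+a\,Hm\,\partial_\ell\Phi\,a+\tfrac12H^2a^2\partial_{\ell\ell}\Phi
\le \lambda\Bigl(\tfrac{H^2}{\sqrt2}a^2+ma\Bigr)+\mu a^2-\nu\,\frac{Q^{1/3}}{[\log(e/Q)]^{1/6}}a^2,
\]
required for \emph{every real} $a$. This is obtained by completing the square in $a$, which is why no Markov or amplitude restriction is needed. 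Integrating with Itô's formula and taking expectations (stopped at the energy cap allowed by Assumption~\ref{ass:control}, then removing the cap by monotone convergence) yields $\mathcal S_H\ge c_0Q^{4/3}[\log(e/Q)]^{-1/6}$. The constants are chosen after normalizing the information scale by $Q^{1/3}$ and optimizing over $\lambda,\mu,\nu$. Uniformity over $H\in[H_c,1]$ comes from all coefficients being continuous in $H$ on this compact interval.

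The difficulty is making the quartic's spatial equation compensate the threshold drift uniformly in the state and in the remaining-time scale. I would first try $\Phi=\alpha(t)\,\psi(\ell/r(t))$, with $\psi$ a quartic cut off at $|\ell|\sim1$ and $r(t)$ chosen so that $\dot r/r$ matches the logarithmic time. I would then check that the discriminant of the quadratic in $a$ is nonpositive.

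\textbf{Energy bound.} If $b_H(a)=\delta Q/2-\mathcal S_H(a)>0$, then
\[
c_0\,\frac{Q^{4/3}}{[\log(e/Q)]^{1/6}}<\frac{\delta Q}{2},
\]
which gives $Q^{1/3}\lesssim\delta[\log(e/Q)]^{1/6}$. First, $Q\le1$ is needed. I would show that $Q>1$ cannot give positive improvement, either by coercivity (a crude quadratic lower bound on $\mathcal S_H$ for large $Q$) or by the monotone scaling of the same test-function argument. Solving for $Q$ then gives $Q\lesssim\delta^3[\log(e/Q)]^{1/2}$. Iterating once, with $\log(e/Q)\asymp\log(1/\delta)$ in this range, yields $Q\le C_0\delta^3\sqrt{\log(e/\delta)}$ for $\delta\le\delta_1$.
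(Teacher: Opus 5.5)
Your ingredients (a truncated quartic with a logarithmic-time profile, and completion of the square for every real action) are the right ones, but the certificate you propose cannot work in the form stated. The inequality is pointwise in $(t,\ell)$ and holds for every real $a$. So once $\Phi$ is fixed, integrating it applies to every admissible rule $a'$, not only to the rule whose energy $Q$ fixed the constants. With $\Phi(0,0)=0$ and $\Phi(1,\cdot)\ge0$ it gives $\lambda\mathcal S_H(a')\ge(\nu c_Q-\mu)Q(a')$ for all $a'$, where $c_Q=Q^{1/3}[\log(e/Q)]^{-1/6}$ is now a fixed number.

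But at each fixed $H\in[H_c,1]$ the ratio $\mathcal S_H/Q$ has infimum zero. Take the rule \eqref{crit:policy} with $e=\tau^3$. Lemma~\ref{crit:lem:moments} and $b_H=\delta Q/2-\mathcal S_H$ give $\mathcal S_H\le H^2e\sqrt\tau/\sqrt2+K_He^2\tau^{-3}$ and $Q\ge e/(2\sqrt\tau)$, so $\mathcal S_H/Q\lesssim\tau+\sqrt\tau\to0$. Hence $\nu c_Q\le\mu$, and your conclusion is vacuous.

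The obstruction is also visible locally. Taking $a=0$ forces $\Phi_t\le0$, hence $\Phi(t,\cdot)\ge\Phi(t,0)=0$. Completing the square in $a$ near $\ell=0$ then yields a Riccati inequality for $\Phi_{\ell\ell}(t,0)$ that is solvable on $[0,1]$ only if $\mu\ge\nu c_Q$. The reason is that well-aligned energy spent near $m=0$ costs almost no deficit: $S_2$ has weight $2m^2/H^2-m^4/H^4$, which vanishes at $m=0$, not a weight like $H^2-m^2$. So no uniform per-unit-energy penalty $\nu c_Qa^2$ can be certified from zero initial mass. A bound superlinear in the rule's own energy needs a source term that is itself superlinear in that energy.

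The paper supplies that source from the rule's own energy profile, and this is the idea missing from your plan. Set $X=m/H$, $u=Ha$, $q=H^2Q$ and $s=\mathcal S_H/q$. Lemma~\ref{sharp:lem:shape} shows that a small relative deficit pins $|q(t)-q\sqrt t|\le2\sqrt{\mathcal S_Hq}$ and bounds $q(t)-\mathbb E_\pi X_t^2$ by a multiple of $S_2$. Hence $\mathbb E_\pi X_\tau^2\ge q\sqrt\tau/2$ at the deterministic analysis time $\tau=Ds$. The same lemma gives $\mathbb E_\pi X_{1/2}^4\le K_0S_2$.

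The test function $\varepsilon x^4t^{-3}h(R(t),z(t,x))$ uses threshold scale $k=q/\tau^3$ and the drift-compensating profile of Lemma~\ref{sharp:lem:profile}. It satisfies $\Phi_t+\frac12u^2\Phi_{xx}+K\sqrt t\,(u+x/\sqrt{2t})^2\ge0$ for every real $u$, with the true diffusion reduction paid by $S_2$. It is run from $\tau$ to $T=1/2$, not from $0$ to $1$, because the weight $1-\sqrt t$ of $S_2$ vanishes at $t=1$.

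Jensen turns the second-moment bound into initial mass $\gtrsim q^2/(\tau^2\sqrt{R_0})$, quadratic in $q$, which gives \eqref{sharp:absorb}. Taking $\tau=Ds$ absorbs the term $A_2q\tau^{-3}\mathcal S_H$ and yields $q\lesssim s^3\sqrt{\log(e/s)}$, which inverts to \eqref{eq:energy-new}.

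Your passage from \eqref{eq:energy-new} to \eqref{eq:positive-energy-sharp} is fine in outline; $Q\le1$ for small $\delta$ is available from Lemma~\ref{crit:lem:potential}. However, $Q/\sqrt{\log(e/Q)}\lesssim\delta^3$ should be solved by monotonicity, since $\log(e/Q)\asymp\log(1/\delta)$ fails when $Q\ll\delta^3$. The paper avoids this by inserting $s<\delta$ directly into the relative bound.
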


For the constructive result, define
\begin{equation}\label{eq:moving-design}
 \begin{gathered}
 \tau=\delta/8,\quad L_\tau=\log(1/\tau),\quad
 \kappa=\frac1{128(1+\log24)},\\
 f=\kappa\sqrt{1+L_\tau},\qquad e=f\tau^{7/2},\qquad k=24f .
 \end{gathered}
\end{equation}
With an independent uniform sign $\rho$, set
\begin{equation}\label{eq:moving-policy}
 a_t^{\mathrm{mov}}=
 \begin{cases}
  \rho\sqrt{e/\tau},&0\le t\le\tau,\\[1mm]
  -\dfrac{m_t}{H^2\sqrt{2t}}\,
       \dfrac1{1+(m_t/H)^2/(kt^{7/2})},&\tau<t\le1 .
 \end{cases}
\end{equation}
The design depends on the known radius $H$ and observable score, not on
the unknown $h$. For interior parameters, $m_t$ in this formula means the
same score functional, not the posterior under the fixed-parameter law.
The soft threshold for $|m_t|/H$ is $\sqrt{k}\,t^{7/4}$; the action remains
nonzero whenever $m_t\ne0$, including beyond that threshold.

\begin{theorem}[Full-interval moving-feedback guarantee]\label{thm:moving}
For every $0<\delta\le1/16$, \eqref{eq:moving-policy} belongs to the
admissible class and
\[
 \sup_{|h|\le H}\mathcal L_h(a^{\mathrm{mov}})
 =\mathcal L_H(a^{\mathrm{mov}})
 =\mathcal L_{-H}(a^{\mathrm{mov}})
 \le\frac{H^2}{2}-c_1\delta^4\sqrt{\log(1/\delta)},
 \qquad c_1=\frac1{2^{20}(1+\log24)} .
\]
In particular, $\Delta(H)/\delta^4\to\infty$ as $\delta\downarrow0$.
\end{theorem}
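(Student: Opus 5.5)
We will prove Theorem~\ref{thm:moving} in three stages: admissibility, a reduction of the full-interval supremum to the endpoints, and an endpoint Bayes computation under $\pi_H$.

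\textbf{Admissibility.} On $[0,\tau]$ the action is a bounded constant with a random sign. On $(\tau,1]$ the feedback is a bounded continuous function of the score $\ell_t=\int_0^ta_s\,dY_s$: since $x/(1+x^2/c)\le\sqrt c/2$, we have $|a_t|\le \sqrt{k}\,t^{7/4}/(2H\sqrt{2t})$. Hence the closed-loop equation for $\ell$ (drift $ha_t^2$, diffusion $a_t$) has Lipschitz coefficients for $t\ge\tau$. It therefore has a unique strong solution, and the energy-capped variants demanded by Assumption~\ref{ass:control} are also well posed.

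\textbf{Endpoint reduction.} We expand $\mathcal L_h(a)=\tfrac{h^2}{2}+\mathbb E_h\int_0^1(a_t^2/2+ha_t)\,dt$. By symmetry of $\rho$ and oddness of the feedback in $\ell$, the law of $(a_t)$ under $-h$ is the reflection of its law under $h$, so $\mathcal L_h=\mathcal L_{-h}$. It then suffices to show that $h\mapsto\mathcal L_h$ is nondecreasing on $[0,H]$. We plan to write $\mathbb E_h$ through the Girsanov density relative to $h=0$, namely $\exp(h\ell_1-\tfrac{h^2}{2}\int_0^1a^2)$. We then differentiate in $h$ and use that the loss is an even function of the path under reflection. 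The expected form is that $\partial_h\mathcal L_h$ equals $h$ minus a covariance-type term bounded by $\mathbb E_h\int a^2$. Since the energy is $O(\delta^3\sqrt{\log})\ll1$, this derivative is positive for $h$ away from $0$. Near $h=0$ the risk is at most about $h^2/2+O(Q)$, which is far below $H^2/2-c_1\delta^4$ because $H^2\approx 2^{-1/2}$. This reduction is a separate likelihood argument and we expect it to be routine.

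\textbf{Endpoint bound.} At $\pm H$, by the reflection symmetry, $\mathcal L_H=\mathbb E_{\pi_H}$ of the loss. This equals $H^2/2-b_H(a)=H^2/2-\delta Q/2+\mathcal S_H(a)$. We use the decomposition of Proposition~\ref{crit:prop:identity}, writing $\mathcal S_H$ as misalignment plus saturation:
\begin{itemize}
\item The probe on $[0,\tau]$ contributes energy $e$ and costs $O(e)$ in $\mathcal S_H$, since $m$ is small there. Its role is to start $m_\tau$ at size about $H^2\sqrt e$.
\item On $(\tau,1]$, write $x_t=m_t/H$, and note that $\tanh$-dynamics give $dm=H^2(1-x^2)a\,d\bar B$ under the innovation. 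The unsaturated action $-m/(H^2\sqrt{2t})$ exactly cancels the linear part of $\mathcal S_H$: the key identity is $H^2/\sqrt2=$ the critical weight.
\item The residual losses are then of two kinds. The saturation loss is of order $\mathbb E\int x^4 a^2$, and the threshold misalignment loss is of order $\mathbb E\int a^2\,\mathbf 1\{x^2>kt^{7/2}\}$.
\end{itemize}
We will bound both by a fourth-moment decay estimate for $x$. The time change $t\mapsto\int a^2$ turns $x$ into a bounded-variance martingale, and the $t^{7/4}$ threshold makes the fourth moment decay like $t^{7/2}$ times the energy. A Brownian occupation-time estimate (arcsine law, reduced to deterministic time as in the introduction) then gives the $\sqrt{L_\tau}$ gain in the energy. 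The choice of constants is designed so that the total energy is $Q\asymp f\tau^3$ and the total deficit is at most $\delta Q/4$. This would yield an improvement of at least $\delta Q/4\gtrsim\kappa\delta^4\sqrt{\log(1/\delta)}$, which must then be tracked to the explicit $c_1$. The limit $\Delta/\delta^4\to\infty$ follows immediately.

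\textbf{Main obstacle.} We expect the hardest step to be the occupation/fourth-moment estimate. The clock is control-dependent and has flat intervals, and the cross term must be controlled simultaneously with the saturation. We would first try to prove $\mathbb E x_t^4\lesssim t^{7/2}$ times the running energy via Itô's formula applied to $x^4$ with a time-dependent weight. If that is not sharp enough to capture the logarithm, we would fall back on the tilted-diffusion occupation bound.
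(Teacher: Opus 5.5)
Your architecture matches the paper's: a bounded Lipschitz post-probe feedback for admissibility, evenness, a likelihood-derivative argument placing the maximum at $\pm H$, the endpoint identity $b_H=\delta Q/2-S_1-S_2$, and a tilt/occupation argument for the logarithm. There is, however, a genuine gap in the endpoint loss budget, which is the heart of the theorem: it is misstated at exactly the scale that matters. For the prescribed design the gain is $\delta Q/2\asymp\delta e/\sqrt\tau\asymp f\tau^4$.

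\emph{Probe cost.} The probe does not cost $O(e)$ ``because $m$ is small.'' In $S_1$ its misalignment is pointwise large, since $a_t^2=e/\tau$ against a tiny linear target. Only the weight $\sqrt t\le\sqrt\tau$, together with the exact vanishing of the probe cross term under the random sign, gives $S_{1,\mathrm{pr}}\le\frac{H^2}{\sqrt2}e\sqrt\tau$. That is a fixed fraction (about $\tau/\delta$) of the gain, which is why $\tau=\delta/8$. A bound $O(e)$ would exceed the gain by a factor $\asymp\delta^{-1/2}$.

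\emph{Saturation loss.} On the feedback interval it is $\frac1{2\sqrt2}\int_\tau^1\frac{1-\sqrt t}{t}\mathbb E_\pi[(2X_t^4-X_t^6)g_t(X_t)^2]\,dt$. This is a fourth moment against $dt/t$, not $\mathbb E\int x^4a^2$.

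\emph{Alignment loss.} It is $\frac1{2\sqrt2}\int_\tau^1t^{-1/2}\mathbb E_\pi[X_t^2(1-g_t(X_t))^2]\,dt$, measured against the unattenuated action, not the actual $a^2$. The natural implementation of your exceedance bound (Markov with a fourth moment) gives $\frac1{2\sqrt2k}\int_\tau^1t^{-4}M(t)\,dt$. With $M(t)\lesssim e^2(t/\tau)^3$ this is of order $Lf\tau^4/A$. With the improved $1/\sqrt{1+\log(t/\tau)}$ decay it is still of order $(1+\log A)\sqrt L\,f\tau^4/A$. For the prescribed $k=24f$ either bound swamps the gain as $\delta\downarrow0$.

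The missing idea is that the alignment loss is paid exactly, with no logarithm, by the decay of $N(t)=t^{-3}M(t)$. Put $\alpha_t=(1-X_t^2)g_t(X_t)$ and $D_t=1-\alpha_t^2$; then $N'(t)=-3t^{-4}\mathbb E_\pi[X_t^4D_t]$. With $u^2=X^2/(kt^{7/2})$, the pointwise inequality $(1-g)^2/u^2=u^2/(1+u^2)^2\le(1-g^2)/2\le D/2$ gives $S_{1,\mathrm{fb}}\le N(\tau)/(12\sqrt2k)\le H^4e^2/(4\sqrt2k\tau^3)$. Your first-choice weighted It\^o computation for $x^4$ produces exactly this monotonicity, but no logarithmic decay, so it cannot handle $S_2$.

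For $S_2$ you need $M(t)\le32(1+\log A)e^2(t/\tau)^3/\sqrt{1+\log(t/\tau)}$, which is your fallback made precise. In log time $s=\log(t/\tau)$ there is an exact tilt $\mathbb E_x\widetilde X_s^4=x^4e^{3s}\mathbb E_x^{(4)}e^{-3K_s}$, with the loss clock $K_s=\int_0^s(1-\alpha_u^2)\,du$ rather than the energy clock. Under the tilt the threshold coordinate satisfies $Z_s=z+\frac1{\sqrt2}\int_0^s\alpha\,dW^{(4)}-\frac74K_s$, with $\alpha^2\le\frac14$ on $\{Z\ge0\}$. The adapted-clock occupation lemma then gives $\mathbb E e^{-3K_s}\le8(1+z_-)/\sqrt{1+s}$, and integrating $z_-$ over the probe yields the factor $1+\log A$.

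Two further steps are asserted rather than proved.
\begin{enumerate}
\item \emph{Energy lower bound.} You need $Q\ge\frac12f\tau^3$, because attenuation and saturation remove energy. The paper derives it from $(v(t)/\sqrt t)'\ge-t^{-3/2}M(t)-k^{-1}t^{-5}M(t)$. This requires $3H^2/A$ to be small, and it is what prevents $A$ from being small.
\item \emph{Interior parameters.} Your energy estimate holds only under $\mathbb P_\pi$. Transferring it needs the pathwise cap $\int_0^1a_t^2\,dt\le e+L/(2H^2)$ and the bound $d\mathbb P_h/d\mathbb P_\pi\le2e^{H^2e/2}\tau^{-1/4}$. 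Also, $\mathcal L_h'-h$ contains $\mathbb E_h\int_0^1a_t\,dt$, which is controlled only by the square root of the energy. This still works because that energy is $O(f\tau^{11/4})$. What actually gets proved is monotonicity on $[H/2,H]$ plus a separate central bound, not monotonicity on all of $[0,H]$.
\end{enumerate}
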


\begin{corollary}[Sharpness of the logarithmic deficit correction]
\label{cor:no-four-thirds}
Along the policies \eqref{eq:moving-policy} as $\delta\downarrow0$,
\[
 Q\asymp\delta^3\sqrt{\log(1/\delta)},\qquad
 \mathcal S_H(a^{\mathrm{mov}})
 \asymp\frac{Q^{4/3}}{[\log(e/Q)]^{1/6}}.
\]
Thus the energy order in \eqref{eq:positive-energy-sharp} is attained,
and the right side of \eqref{eq:energy-new} cannot be multiplied by
any factor tending to infinity as $Q\downarrow0$, uniformly over a
critical neighborhood of $H_c$. In particular, no uniform positive
constant gives $\mathcal S_H(a)\ge cQ^{4/3}$ there.
\end{corollary}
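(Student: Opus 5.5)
The plan is to deduce Corollary~\ref{cor:no-four-thirds} from Theorem~\ref{thm:moving}, the deficit--energy law \eqref{eq:energy-new}, and a direct upper bound on the energy of \eqref{eq:moving-policy}.

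\emph{Energy order.} The probe phase contributes exactly $e=f\tau^{7/2}\asymp\delta^{7/2}\sqrt{\log(1/\delta)}$, which is negligible. For the feedback phase, the pointwise bound $u/(1+u^2/c)\le\sqrt c/2$ gives
\[
 |a_t|\le\frac{\sqrt{k}\,t^{7/4}}{2H\sqrt{2t}}\lesssim\sqrt f\,t^{5/4},
\]
so $\int_\tau^1a_t^2\,dt\lesssim f$. This is much too crude; it is only the uniform ceiling. To get the correct order I would use the posterior-moment estimates from the proof of Theorem~\ref{thm:moving}. Under $\pi_H$, the score stays of order $H^2\sqrt{e}$-driven size $|m_t|\lesssim H\sqrt{kt^{7/2}}$ only on a set of controlled occupation. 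The fourth-moment decay estimate then gives an energy bound $Q\lesssim\delta^3\sqrt{\log(1/\delta)}$.

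In fact the cleaner route for the upper bound on $Q$ avoids these estimates altogether. Theorem~\ref{thm:moving} shows $b_H(a^{\mathrm{mov}})\ge\Delta$-scale$>0$ for small $\delta$, because the endpoint risks equal $H^2/2-b_H$. Then \eqref{eq:positive-energy-sharp} yields $Q\le C_0\delta^3\sqrt{\log(e/\delta)}$ directly.

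For the lower bound on $Q$, use $b_H=\delta Q/2-\mathcal S_H\le\delta Q/2$ together with $b_H\ge c_1\delta^4\sqrt{\log(1/\delta)}$. This gives $Q\ge 2c_1\delta^3\sqrt{\log(1/\delta)}$. Hence $Q\asymp\delta^3\sqrt{\log(1/\delta)}$ and $\log(e/Q)\asymp\log(1/\delta)$.

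\emph{Deficit order.} The lower bound $\mathcal S_H\gtrsim Q^{4/3}[\log(e/Q)]^{-1/6}$ is \eqref{eq:energy-new}, valid since $H\in[H_c,1]$ and $Q\le1$ for small $\delta$. For the upper bound, positivity of $b_H$ gives $\mathcal S_H<\delta Q/2\lesssim\delta^4\sqrt{\log(1/\delta)}$. Substituting the two-sided order of $Q$,
\[
 \frac{Q^{4/3}}{[\log(e/Q)]^{1/6}}\asymp\frac{\delta^4(\log(1/\delta))^{2/3}}{(\log(1/\delta))^{1/6}}=\delta^4\sqrt{\log(1/\delta)} .
\]
So both sides match and $\mathcal S_H(a^{\mathrm{mov}})\asymp Q^{4/3}[\log(e/Q)]^{-1/6}$.

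\emph{Consequences.} Suppose \eqref{eq:energy-new} held with an extra factor $\phi(Q)\to\infty$ as $Q\downarrow0$, uniformly over a neighborhood of $H_c$. Along $a^{\mathrm{mov}}$ with $\delta\downarrow0$ we have $Q\to0$ and $H\to H_c$. The bound $\mathcal S_H\lesssim Q^{4/3}[\log(e/Q)]^{-1/6}$ would then force $\phi(Q)$ to stay bounded, a contradiction. Taking $\phi(Q)=[\log(e/Q)]^{1/6}$ rules out $\mathcal S_H\ge cQ^{4/3}$. The attainment claim for \eqref{eq:positive-energy-sharp} is just the lower bound on $Q$.

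\emph{Main obstacle.} The only delicate point is checking that $b_H(a^{\mathrm{mov}})$, the endpoint Bayes improvement under $\pi_H$, equals $H^2/2-\mathcal L_{\pm H}$. This should follow from averaging the two equal endpoint risks in Theorem~\ref{thm:moving} and expanding $\tfrac12\mathbb E(a+h)^2$ with $\mathbb E[ha_t]=\mathbb E[m_ta_t]$. The identity $H^2/2-\delta/2=H^2/\sqrt2\cdot$ must be checked against the definition of $\delta$; I expect only bookkeeping here.
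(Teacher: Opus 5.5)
Your proof is correct, but it takes a different route from the paper's.

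\textbf{The paper's route.} It does not use Theorem~\ref{thm:moving} as a black box. It returns to the explicit estimates of Proposition~\ref{move:prop:joint} for the design \eqref{move:design}, namely $\tfrac12 f\tau^3\le Q\le f\tau^3$ and $\mathcal S_H(a^{\mathrm{mov}})=S_1+S_2\le\tfrac54 f\tau^4$ in \eqref{move:counterexample-costs}. These give $Q\asymp\delta^3\sqrt{\log(1/\delta)}$ and the explicit decay $\mathcal S_H/Q^{4/3}\le\tfrac54\,2^{4/3}\kappa^{-1/3}(1+\log(8/\delta))^{-1/6}$. Theorem~\ref{thm:energy-new} is used only for the reverse inequality.

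\textbf{Your route.} You sandwich everything through the identity $b_H=\delta Q/2-\mathcal S_H$, with $\mathcal S_H\ge0$ from Proposition~\ref{crit:prop:identity}. The guarantee $b_H\ge c_1\delta^4\sqrt{\log(1/\delta)}$ gives both $Q\ge2c_1\delta^3\sqrt{\log(1/\delta)}$ and $\mathcal S_H<\delta Q/2$. The all-algorithm cap \eqref{eq:positive-energy-sharp} gives the upper order of $Q$.

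\textbf{What each buys.} Your argument is more general. It shows that every admissible rule whose endpoint Bayes improvement has the critical order $\delta^4\sqrt{\log(1/\delta)}$ automatically has $Q\asymp\delta^3\sqrt{\log(1/\delta)}$ and saturates \eqref{eq:energy-new}. So the sharpness is not a feature of the particular feedback. The paper's route gives explicit constants. It also keeps the negative statement independent of the test-function argument of Appendix~\ref{sharp:sec:proof}, which your upper bound on $Q$ invokes through \eqref{eq:positive-energy-sharp}. You could avoid that dependence by using $Q\le f\tau^3$ from Proposition~\ref{move:prop:joint} instead. In fact, the bound $\mathcal S_H\lesssim Q^{4/3}/[\log(e/Q)]^{1/6}$ needs only your lower bound on $Q$ together with $Q\le1$, because $Q\mapsto Q^{1/3}/[\log(e/Q)]^{1/6}$ is increasing on $(0,1]$.

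\textbf{Superfluous parts.} Your exploratory paragraph on posterior occupation is not needed. Neither is the ``main obstacle'': $b_H=H^2/2-\mathcal L_{\pm H}$ is just the definition \eqref{crit:bayes-direction} combined with $\mathcal L_H=\mathcal L_{-H}$. The garbled bookkeeping line amounts to $-\tfrac12=\tfrac\delta2-\tfrac{H^2}{\sqrt2}$, which is exactly $\sqrt2H^2=1+\delta$.
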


\subsection{Exact finite-sample decisions and task corrections}

Write $\mu_g=\mathbb E g(Z)$, $c=c_g$, and let $\kappa_j$ be the
$j$th cumulant of $g(Z)$. In particular,
$\kappa_3=\mathbb E(g(Z)-\mu_g)^3$ and
$\kappa_4=\mathbb E(g(Z)-\mu_g)^4-3c^2$.
Set
\[
 \psi_g(u)=\log\mathbb E e^{-u(g(Z)-\mu_g)},\qquad
 Z_{N,d}^g(H)=\sup_{\|h\|\le H}
          \frac{R_N^g(0,N^{-1/4}h)}{\sqrt N}.
\]
The exact zero-control baseline is
\begin{equation}
 Z_{N,d}^g(H)=
 \sqrt N\,\frac{\psi_g(H^2/\sqrt N)}{H^2/\sqrt N},
 \label{eq:finite-baseline}
\end{equation}
with continuous value zero at $H=0$. It is independent of $d$
for fixed $H$ and strictly increasing in $H>0$.

\begin{theorem}[Finite-sample exact zero-control regions]
\label{thm:finite-zero}
Under Assumptions~\ref{ass:control}--\ref{ass:g}, for every finite
$d,N$ and admissible $H$, one has
$C_{N,d}^g(H)=Z_{N,d}^g(H)$ if either
\begin{equation}
 H^4L^2\le\frac{d^2}{2},
 \label{eq:finite-zero-global}
\end{equation}
or
\begin{equation}
 \frac{H^2}{d}\sqrt{\frac c2}
 +\frac{H^2}{\sqrt N}
      \left(\frac{L}{d\sqrt2}+\frac M2\right)
 \le\frac12 .
 \label{eq:finite-zero-local}
\end{equation}
Both equality cases are included. The lower bound covers the entire
admissible class, including unbounded action amplitudes.
\end{theorem}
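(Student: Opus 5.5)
The upper bound $C_{N,d}^g(H)\le Z_{N,d}^g(H)$ is immediate: zero control is admissible, and $Z_{N,d}^g(H)$ is by definition its worst-case normalized regret. The work is the matching lower bound over the entire admissible class. I would prove it by exhibiting, under either condition, a prior on the local ball whose Bayes normalized regret is at least $Z_{N,d}^g(H)$ for every rule.

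\emph{Step 1: the baseline and an equalizing prior.} For known $\theta$, the log transform $V_\theta=-\|\theta\|^{-2}\log \mathbb E\exp(-\|\theta\|^2 g(x+\sqrt{1-t}Z))$ linearizes the HJB equation. With $V_\theta$ in this form, the zero-control regret per episode is $\mu_g+\|\theta\|^{-2}\psi_g(\|\theta\|^2)$. This gives \eqref{eq:finite-baseline}, and the regret depends only on $\|\theta\|$ and is increasing in it. I would take the product prior on sphere-supported vertices, $h_i=\pm H/\sqrt d$ independently and uniformly, as in the proof of Theorem~\ref{thm:threshold}. Every support point has $\|h\|=H$, so zero control equalizes the risk on the support. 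It therefore suffices to show that, for every admissible rule, the Bayes regret under this prior is at least the zero-control Bayes regret.

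\emph{Step 2: completion of squares.} The optimal known-$\theta$ feedback is $U^*=-\theta\,\partial_xV_\theta$, and It\^o's formula on $V_\theta(t,X_{k,t})$ gives
\[
R_N^g(\mathcal A,\theta)=\tfrac12\,\mathbb E_\theta\sum_k\int_0^1\|U_{k,t}+\theta\,p_\theta(t,X_{k,t})\|^2dt,\qquad p_\theta=\partial_xV_\theta .
\]
Expanding the square, the term $\tfrac12\|\theta\|^2p_\theta^2$ is the zero-control regret, up to the change of the law of $X$ under the controlled measure. The excess over zero control is then
\[
\mathbb E\Big[\tfrac12\|U\|^2+p_\theta\langle\theta,U\rangle\Big]
\]
plus a correction. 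That correction comes from the state law changing with $U$, and it is controlled through $p_\theta'$, which is bounded by $M$ and by the task derivatives. I would localize the stopped likelihood with the energy caps from Assumption~\ref{ass:control}, so that no Novikov condition is needed. The goal is to show that the correction is at most $\frac{H^2M}{2\sqrt N}\,\mathbb E\|U\|^2$. This step produces the $M/2$ term in \eqref{eq:finite-zero-local}.

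\emph{Step 3: bounding the cross term by coordinate conditioning.} Condition on all coordinates except $\theta_i$, revealing them only inside the proof. The posterior of $\theta_i$ given the record is then two-point, with mean
\[
m_i=\alpha\tanh\!\Big(\alpha\textstyle\int U_i\,(dX-\langle\theta_{-i},U_{-i}\rangle dt)\Big),\qquad \alpha=N^{-1/4}H/\sqrt d .
\]
Freezing $p_\theta$ at each episode boundary, the cross term becomes $\mathbb E\sum_i\int p\,m_iU_i$. I would then apply the scalar two-point inequality behind the threshold theorem: for a two-point prior at level $\alpha$ and horizon $T$, $\mathbb E\int m\,a\ge-\alpha^2\sqrt{T/2}\,\mathbb E\int a^2$. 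This is the statement $\mathcal S_H\ge0$ at $H\le H_c$ after rescaling.
\begin{itemize}
\item For the global condition \eqref{eq:finite-zero-global}, use $|p|\le L$ and treat the $N$ episodes as one horizon $T=N$. This gives the cross term $\ge-\frac{H^2L}{d\sqrt2}\,\mathbb E\|U\|^2$, which is absorbed by $\tfrac12\|U\|^2$ exactly when $H^2L\sqrt2\le d$.
\item For the local condition, replace $p$ by its Gaussian average. Under the zero-information law, the variance of the accumulated score is governed by $\operatorname{Var} g(Z)=c$, giving the $\frac{H^2}{d}\sqrt{c/2}$ term. The fluctuation of $p$ around this average, of order $L$ per episode and not accumulating across episodes, gives the $\frac{H^2L}{d\sqrt{2N}}$ term.
\end{itemize}
Summing over $i$ and collecting terms, the total excess is a nonnegative multiple of the energy precisely under \eqref{eq:finite-zero-local}, with equality allowed.

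\emph{Main obstacle.} I expect the hardest step to be the local case: replacing the state-dependent $p_\theta(t,X)$ by a frozen quantity while keeping an error that is linear in the actual control energy. An additive error would destroy exactness. I would try episode-wise freezing of the coordinate posterior together with a Cauchy--Schwarz bound in which the posterior increment is itself bounded by the energy spent.
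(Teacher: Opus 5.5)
\textbf{There is a genuine gap, in Step~2.} The reduction to a Bayes comparison under the vertex prior is right, and so is the global-case Cauchy--Schwarz. The problem is the term you call a correction when you complete the square with $V_\theta$. With $\lambda=\|\theta\|^2$ and $S(t,x)=P_{1-t}g'(x)$, that term is exactly
\[
 \frac{\lambda}{2}\Bigl\{\mathbb E_\theta\int_0^1 p_\theta(t,X_t)^2\,dt-\mathbb E\int_0^1 p_\theta(t,W_t)^2\,dt\Bigr\}
 =\mathbb E_\theta\int_0^1(S-p_\theta)(t,X_t)\,\langle\theta,U_t\rangle\,dt .
\]
This is \emph{first order} in $U$. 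Take a fixed $\theta$ and a small deterministic control of size $\varepsilon$. The term is then of order $\varepsilon$, and generically nonzero: for nonaffine $g$ with $g'>0$ one has $S-p_\theta=\lambda\int_t^1P_{u-t}(p_\theta r_\theta)\,du>0$. The energy, however, is only $\varepsilon^2$. So the bound by $\frac{H^2M}{2\sqrt N}\mathbb E\|U\|^2$ that you aim for is false.

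Under the prior the term becomes quadratic in the energy only after projecting $\theta$ onto its posterior. Even then its coefficient is of order $H^4LM/(d\sqrt N)$, not $H^2M/(2\sqrt N)$. Two further problems follow from the same choice:
\begin{itemize}
\item Your global bullet silently drops this term. Keeping it would put an $M$-dependent term into a condition that has none.
\item The cross term you retain carries $p_\theta$. Its Gaussian second moment is $2\psi_g(\lambda)/\lambda^2=c-\kappa_3\lambda/3+\cdots$, not $c$.
\end{itemize}
So the route as written yields neither stated condition with its constants.

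\textbf{The missing idea.} Subtract the zero-control cost directly, by It\^o's formula for the heat function $F(t,x)=P_{1-t}g(x)$, which satisfies $F_t+\frac12F_{xx}=0$. This gives the exact identity
\[
 r_N(\mathcal A,h)-z_N(h)=\frac1{\sqrt N}\,\mathbb E_\theta\sum_{k=1}^N\int_0^1\Bigl\{\tfrac12\|U_{k,t}\|^2+S(t,X_{k,t})\langle\theta,U_{k,t}\rangle\Bigr\}dt .
\]
Here the oracle cancels and there is no state-law correction at all. Your correction is precisely the difference between this identity and the $V_\theta$ expansion. With $|S|\le L$, your global argument then goes through as stated.

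\textbf{The local condition.} Let $r=N^{-1/4}H$, $b=r/\sqrt d$, $A_{i,k}=\mathbb E\int_0^1U_{i,k,t}^2\,dt$ and $Q_{i,k}=\sum_{j\le k}A_{i,j}$. Project $\theta$ onto the coordinate posteriors $n_{i,k,t}$ and split the cross term into three parts, each linear in $Q$:
\begin{itemize}
\item[(a)] \emph{Frozen posterior at the driver.} Freeze the posterior at the start of episode $k$, and evaluate $S$ at the actual driver $W_{k,t}$ rather than at $X_{k,t}$. Since $W_k$ is independent of $n_{i,k,0}$, one gets $\mathbb E\int_0^1 n_{i,k,0}^2S(t,W_{k,t})^2\,dt=c\,\mathbb E n_{i,k,0}^2\le cb^4Q_{i,k-1}$. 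Together with $\sum_k\sqrt{Q_{i,k-1}A_{i,k}}\le Q_{i,N}\sqrt{N/2}$, this contributes $\frac{H^2}{d}\sqrt{c/2}$ to the coefficient of $Q/\sqrt N$.
\item[(b)] \emph{Within-episode posterior increment.} Bound the increment $n_{i,k,t}-n_{i,k,0}$ using $|S|\le L$ and $\mathbb E(n_{i,k,t}-n_{i,k,0})^2\le b^4\,\mathbb E\int_0^tU_{i,k,u}^2\,du$. This contributes $\frac{H^2L}{d\sqrt{2N}}$.
\item[(c)] \emph{Replacing the state by the driver.} The error from replacing $S(t,X_{k,t})$ by $S(t,W_{k,t})$ is bounded by $\bigl|\sum_i n_{i,k,0}U_{i,k,t}\bigr|\le r\|U_{k,t}\|$ times $|S(t,X_{k,t})-S(t,W_{k,t})|\le Mr\int_0^t\|U_{k,u}\|\,du$. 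This is a product of two factors each linear in $U$, and gives $\frac{Mr^2}{2}Q$, i.e.\ the $\frac{H^2M}{2\sqrt N}$ term.
\end{itemize}
So the $M/2$ term comes from (c), not from a state-law correction. The $L$ term comes from posterior increments, not from fluctuations of $p$. What gets frozen is the posterior, never $S$ or $p_\theta$: freezing $p$ would lose the identity $\mathbb E\int_0^1S(t,W_t)^2\,dt=c$ that produces the task variance.

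A minor slip in Step~1: the per-episode zero-control regret is $\lambda^{-1}\psi_g(\lambda)$, without the extra $\mu_g$.
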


For the opposite direction, define for $N\ge2$
\begin{equation}
 \beta_g=\frac{\kappa_3}{3c},\qquad
 a_N=\frac1N\sum_{k=2}^N
 \sqrt{\frac{2\sqrt{k-1}}{\sqrt k+\sqrt{k-1}}}.
 \label{eq:finite-design-numbers}
\end{equation}
The deterministic number $a_N$ is unrelated to the canonical
action process $a_t$.

\begin{theorem}[Finite-sample strict improvement]
\label{thm:finite-positive}
Under the same assumptions, if $N\ge2$ and
\begin{equation}
 \frac{H^2}{d}
 \left(\sqrt{\frac c2}\,a_N-\frac{\beta_g}{\sqrt N}\right)
 >\frac12 ,
 \label{eq:finite-positive}
\end{equation}
then a bounded, randomized rule, adapted to the original observations,
has worst-case normalized regret strictly smaller than $Z_{N,d}^g(H)$
over the entire parameter ball. In particular,
$C_{N,d}^g(H)<Z_{N,d}^g(H)$.
\end{theorem}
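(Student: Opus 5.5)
We construct an explicit bounded randomized rule and show, by a second-variation computation around zero control, that its worst-case normalized regret falls below $Z_{N,d}^g(H)$ on the whole ball. The rule uses a small amplitude $\varepsilon$ in a random coordinate direction.

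\emph{Rule.} Draw a uniform random coordinate $j$ and sign $\rho$. In episode $1$ we probe along $\rho e_j$ with a tiny constant action $\varepsilon$. In episode $k\ge2$ we apply a bounded linear feedback along $e_j$. Its gain is proportional to the accumulated score of coordinate $j$, namely the integral of the past actions against $dX$. The gain is weighted by $\sqrt{2\sqrt{k-1}/(\sqrt k+\sqrt{k-1})}$, so that averaging over $k$ produces $a_N$. The feedback is truncated at a fixed level, so the rule is bounded and admissible. Randomizing over $j$ and $\rho$ makes the per-coordinate cross terms average to $\|h\|^2/d$. This is the source of the factor $H^2/d$, as in the vector lower-bound construction behind Theorem~\ref{thm:threshold}.

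\emph{Exact baseline and expansion.} The starting point is the exact zero-control formula \eqref{eq:finite-baseline}. Under zero control the episode costs are i.i.d.\ copies of $g(Z)$. The baseline risk at radius $\|h\|$ is the log-Laplace expression $\psi_g$, which is strictly increasing, so the worst case is at $\|h\|=H$.

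For the perturbed rule, write the regret difference $R_N^g(\mathcal A_\varepsilon,\theta)-R_N^g(0,\theta)$ per episode. It has two parts. The first is the action cost $\tfrac12\int\|U\|^2$. The second is the change in $\mathbb E g(X_{k,1})$, expanded to second order in the drift $\langle\theta,U\rangle$ using $g',g''$ and the bounded $g^{(3)},g^{(4)}$. The first-order term pairs $\langle\theta,U_k\rangle$ with $\mathbb E g'(W_1)$ and with the information already acquired about $\theta_j$. Its Gaussian-integration-by-parts reduction produces $-\sqrt{c/2}\,a_N H^2/d$ times the second-order energy scale. The correction $\beta_g/\sqrt N=\kappa_3/(3c\sqrt N)$ arises from the skewness of $g(Z)$ at the next order in $N^{-1/4}$. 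Collecting terms, we aim for
\[
R_N^g(\mathcal A_\varepsilon,\theta)-R_N^g(0,\theta)
=\varepsilon^2\sqrt N\Bigl[\tfrac12-\tfrac{\|h\|^2}{d}\Bigl(\sqrt{\tfrac c2}\,a_N-\tfrac{\beta_g}{\sqrt N}\Bigr)\Bigr]+O(\varepsilon^3).
\]
The $O(\varepsilon^3)$ term is uniform over $\|h\|\le H$, by boundedness of the rule and of $g^{(3)}$.

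\emph{Whole-ball comparison and main obstacle.} At $\|h\|=H$, condition \eqref{eq:finite-positive} makes the bracket negative, so the regret there drops strictly below the baseline for small $\varepsilon$. Interior points are the hard part. The baseline at radius $r<H$ is strictly smaller than at $H$, but the rule's risk at radius $r$ must also stay below $Z_{N,d}^g(H)$.

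We handle the interior with a continuity-and-compactness argument. On $\{\|h\|\le r_0\}$, with $r_0<H$, the gap $Z(H)-Z(r_0)>0$ absorbs the $O(\varepsilon^2)$ perturbation. On $\{r_0\le\|h\|\le H\}$, we choose $r_0$ close enough to $H$ that \eqref{eq:finite-positive} still holds with $H$ replaced by $r_0$, by strict inequality and continuity; the bracket is then negative throughout the annulus.

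The main obstacle is the second-order computation itself. It must be exact in $N$, giving the precise weights $a_N$ and the cumulant $\kappa_3$, rather than only asymptotic. We would first attempt it by conditioning on episode boundaries and applying Stein's identity to $g(W_1+\text{drift})$ in each episode, treating the accumulated score as an independent Gaussian to leading order.
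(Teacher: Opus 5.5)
The whole-ball step is fine and matches the paper: a thin annulus on which the bracket stays negative, and the strictly increasing zero-control baseline inside it. The gap is the core computation. The rule you describe cannot produce the bracket you aim for.

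For a sign-randomized perturbation $\varepsilon\rho v$, Lemma~\ref{fin:lem:variation} gives the cost change $\varepsilon^2\{\tfrac12Q(v)+\theta^2K(v)\}+O(\varepsilon^4)$, where $K(v)=\mathbb E_0\int_0^NS_sM_sv_s\,ds$, $S(t,x)=P_{1-t}g'(x)$, and $M=\int v\,d\mathsf W$ is taken under the reference law. Write $v_{k,t}=M_{k-1}\phi_t$ in episode $k$. The leading cross term is then $q_{k-1}\mathbb E\int_0^1S\phi\,dt$, against energy $q_{k-1}\mathbb E\int_0^1\phi^2\,dt$. By Cauchy--Schwarz, the coupling constant $\sqrt c$ is reached only when $\phi\propto S(t,X_{k,t})$ pathwise. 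This is why the paper uses $v_{1,t}=S(t,X_{1,t})$ and $v_{k,t}=-b_kM_{k-1}S(t,X_{k,t})$, with the observable score update $\int_0^1S(t,X_{k,t})\,dX_{k,t}=g(X_{k,1})-\mu$.

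Your constant probe and score-proportional feedback have no factor $S$. Your remark that the first-order term pairs with $\mathbb E g'(W_1)$ already shows they couple to the mean slope instead. Concretely:
\begin{itemize}
\item $v_{k,t}=-\beta_kM_{k-1}$ contributes $-\beta_kq_{k-1}[\mathbb E g'(Z)-\tfrac12\beta_k\mathbb E g''(Z)]$ to $K$.
\item The constant probe contributes $\tfrac12\mathbb E g''(Z)$ rather than $\kappa_3/3$.
\item Since $(\mathbb E g'(Z))^2=\operatorname{Cov}(Z,g(Z))^2<c$ for every nonaffine $g$, such a rule needs a strictly stronger hypothesis than \eqref{eq:finite-positive}.
\item For even $g$ such as $\log\cosh$, $\mathbb E g'(Z)=0$, so it gives no improvement at all.
\end{itemize}

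Your gains are also misidentified. The expression $\sqrt{2\sqrt{k-1}/(\sqrt k+\sqrt{k-1})}$ is the normalized per-episode cross term $b_kq_{k-1}/\sqrt{c/2}$, not a gain. The paper's gain is $b_k=[(\sqrt{k/(k-1)}-1)/c]^{1/2}\approx(2ck)^{-1/2}$, chosen so that $q_k=\mathbb E_0M_k^2=c\sqrt k$ exactly. This is the discrete form of the $\sqrt t$ energy profile that makes the lower-bound Cauchy--Schwarz step tight. With gains of order one, $q_k$ grows geometrically, and the cross term relative to $Q$ stays bounded in $N$ instead of equaling $\sqrt{Nc/2}\,a_N$.

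Your planned computation would also lose $\beta_g$. It is not a next-order effect in $N^{-1/4}$ or in a Taylor expansion of $g$. It enters at the same order $\varepsilon^2$, through the non-Gaussian within-episode evolution of the score, $M_{k-1}(1-b_kY_t)$ with $Y_t=F(t,W_t)-\mu$. The relevant quantity is $j_g=\mathbb E_0\int_0^1Y_tS(t,W_t)^2\,dt=\kappa_3/3$, obtained from It\^o's formula for $Y^3$. Episode $1$ contributes $j_g$ and episode $k$ contributes $q_{k-1}(-b_kc+b_k^2j_g)$, so $K(v)/Q(v)=\beta_g-\sqrt{Nc/2}\,a_N$ exactly. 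Treating the accumulated score as Gaussian discards precisely this term. Expanding $\mathbb E g(X_{k,1})$ in the drift also does not account for your feedback depending on the perturbed state.

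The paper avoids both problems by working on the full record under $\theta=0$. The sign-averaged likelihood satisfies $e^{-\theta^2\varepsilon^2A_N/2}\cosh(\theta\varepsilon M_N)=1+\tfrac12\theta^2\varepsilon^2(M_N^2-A_N)+O(\varepsilon^4)$ in every $L^p$. Combined with $M_N^2-A_N=2\int Mv\,d\mathsf W$ and $G=\int S\,d\mathsf W$, this gives $\mathbb E_0[G(M_N^2-A_N)]=2K(v)$. Boundedness comes from truncating the score with $\chi_R(x)=R\tanh(x/R)$ and proving $Q(v^R)\to Q(v)$ and $K(v^R)\to K(v)$. The random coordinate then replaces $\theta^2$ by $\|h\|^2/(d\sqrt N)$. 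The choices are made in the order $R$, then the annulus, then $\varepsilon$.
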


These are exact finite-$N$ decisions, with distinct sufficient
conditions. To state their localization, put
$\delta_d=\sqrt{2c}\,H^2/d-1$ and
$\mathsf H_n=\sum_{j=1}^n j^{-1}$. The explicit estimate
\[
 0<1-a_N\le\frac{1+\mathsf H_{N-1}/4}{N}
\]
shows that, for fixed $g,d$ and bounded radii, the region not classified
by \eqref{eq:finite-zero-local} and \eqref{eq:finite-positive}
lies within $O(N^{-1/2})$ of $\delta_d=0$.
This does not assert a sharp window, a nonzero threshold shift,
monotonicity of the whole finite-sample zero-control set, or a
uniform size of the strict improvement. The coefficient $\beta_g$
belongs to our explicit direction and need not be an optimal shift.

\begin{corollary}[Exact subcritical value and cumulant expansion]
\label{cor:finite-expansion}
Fix $d,g$ and $H$ with $cH^4<d^2/2$, and put
$\gamma(H)=1/2-(H^2/d)\sqrt{c/2}>0$.
Whenever the parameter domain permits and
\[
 N\ge
 \left[
 \frac{H^2\{L/(d\sqrt2)+M/2\}}{\gamma(H)}
 \right]^2,
\]
the minimax value is exactly \eqref{eq:finite-baseline}.
Consequently, uniformly on every fixed interval
$0\le H\le H_*<(d^2/(2c))^{1/4}$,
\begin{equation}
 C_{N,d}^g(H)=\frac{cH^2}{2}
  -\frac{\kappa_3H^4}{6\sqrt N}
  +\frac{\kappa_4H^6}{24N}
  +O_{g,H_*}(N^{-3/2}).
 \label{eq:finite-expansion}
\end{equation}
\end{corollary}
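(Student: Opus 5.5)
The argument has two parts. The first is a direct specialization of Theorem~\ref{thm:finite-zero}. The second is a Taylor expansion of the exact baseline \eqref{eq:finite-baseline}.

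\emph{Step 1 (exactness).} Condition \eqref{eq:finite-zero-local} can be rewritten as
\[
\frac{H^2}{\sqrt N}\Bigl(\frac{L}{d\sqrt2}+\frac M2\Bigr)\le \frac12-\frac{H^2}{d}\sqrt{\frac c2}=\gamma(H).
\]
Since $cH^4<d^2/2$, we have $\gamma(H)>0$. The displayed condition is then equivalent to
\[
\sqrt N\ge H^2\{L/(d\sqrt2)+M/2\}/\gamma(H),
\]
which is exactly the stated lower bound on $N$. Theorem~\ref{thm:finite-zero} therefore applies and gives $C_{N,d}^g(H)=Z_{N,d}^g(H)$, with the equality case included.

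For uniformity on $[0,H_*]$, note that $\gamma$ is decreasing in $H$ and $H^2/\gamma(H)$ is increasing. A single threshold $N_*(H_*)$ therefore works for the whole interval. For $N<N_*$, the claimed $O(N^{-3/2})$ remainder is a bounded quantity on a finite set of $N$, and can be absorbed into the constant. This uses the bounds $0\le C_{N,d}^g\le Z_{N,d}^g\le cH^2/2+\ldots$, together with the bound $c\le L^2$ from Theorem~\ref{thm:value}.

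\emph{Step 2 (expansion).} Put $u=H^2/\sqrt N$, so that $Z=\psi_g(u)/u^2\cdot H^2$. Because $g$ is Lipschitz, $g(Z)-\mu_g$ has Gaussian tails. Hence $\psi_g$ is finite and real-analytic on all of $\mathbb R$, and its cumulant series is
\[
\psi_g(u)=\frac{c u^2}{2}-\frac{\kappa_3u^3}{6}+\frac{\kappa_4u^4}{24}+R(u),
\]
where the linear term vanishes because the variable is centered. Taylor's theorem gives $|R(u)|\le \sup_{|v|\le u_*}|\psi_g^{(5)}(v)|\,u^5/120$ for $0\le u\le u_*=H_*^2$. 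Dividing by $u/H^2\cdot\sqrt N^{-1}$, that is, computing $\sqrt N\psi_g(u)/u=H^2\psi_g(u)/u^2$, yields
\[
\frac{cH^2}{2}-\frac{\kappa_3H^4}{6\sqrt N}+\frac{\kappa_4H^6}{24N}+O(H^2u^3).
\]
Since $H^2u^3=H^8N^{-3/2}\le H_*^8N^{-3/2}$, this is the claimed remainder.

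\emph{Main obstacle.} The only delicate point is bounding $\psi_g^{(5)}$ uniformly on $[0,u_*]$. The fifth derivative is a fixed polynomial in the tilted moments $\mathbb E_v[(g-\mu)^j]$ for $j\le5$, taken under the exponential tilt $e^{-v(g-\mu)}$. Two facts control it. First, $|g(z)-g(0)|\le L|z|$ bounds $e^{-v(g-\mu)}$ by $e^{vL|z|+C}$. Second, the normalizing constant $\mathbb E e^{-v(g-\mu)}\ge 1$ by Jensen's inequality. Together these bound all tilted moments by constants depending only on $L$, $\mu_g$, $g(0)$ and $H_*$. These constants are fixed by $g$, which gives the dependence $O_{g,H_*}$.
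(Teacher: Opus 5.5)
Your proposal is correct and follows the paper's own argument. Exactness comes from the finite-sample zero-control condition \eqref{eq:finite-zero-local}, with a single $N$-threshold valid on all of $[0,H_*]$, and the expansion is the fifth-order Taylor expansion of $\psi_g$ substituted into $H^2\psi_g(u)/u^2$ with $u=H^2/\sqrt N$. Your absorption of the finitely many $N$ below the threshold and your explicit tilted-moment bound on $\psi_g^{(5)}$ are harmless refinements of what the paper leaves implicit. The first rests on $0\le C_{N,d}^g(H)\le Z_{N,d}^g(H)\le H^2L^2/2$, which is the precise form of your vaguer ``$cH^2/2+\ldots$'' bound. For the second, continuity of $\psi_g^{(5)}$ on the compact interval $[0,H_*^2]$ already suffices.
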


Thus $c_g$ determines the leading value but not its finite-sample
corrections. An explicit comparison makes this distinction concrete.

\begin{proposition}[Equal task variance, different minimax corrections]
\label{prop:finite-nonuniversal}
Let $\varphi(x)=\log\cosh x$,
$c_\varphi=\operatorname{Var}(\varphi(Z))$, $g_0(x)=x$, and
\[
 g_\eta(x)=
       \frac{x+\eta\varphi(x)}{\sqrt{1+\eta^2c_\varphi}}.
\]
For every sufficiently small fixed $\eta>0$, $g_\eta$ satisfies
Assumption~\ref{ass:g}, is nonaffine, and has $c_{g_\eta}=c_{g_0}=1$
and $\kappa_3(g_\eta(Z))>0$.
For every fixed $0<H^4<d^2/2$, both tasks have zero control exactly
minimax for all sufficiently large $N$, and
\[
 C_{N,d}^{g_\eta}(H)-C_{N,d}^{g_0}(H)
 =-\frac{\kappa_3(g_\eta(Z))H^4}{6\sqrt N}
       +O(N^{-1})<0.
\]
Each regret is measured relative to its own task oracle.
\end{proposition}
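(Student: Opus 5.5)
We prove Proposition~\ref{prop:finite-nonuniversal} by reduction to Corollary~\ref{cor:finite-expansion}, applied once to each task. The work is to check the hypotheses of that corollary for $g_\eta$ and to compute the sign of the cubic cumulant.

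\emph{Step 1: the task class.} The function $\varphi=\log\cosh$ is convex and $C^\infty$. Its derivatives satisfy $\varphi'=\tanh$, with $|\varphi'|\le1$, and $0\le\varphi''=\operatorname{sech}^2\le1$; all higher derivatives are bounded. Hence for every $\eta\ge0$ the function $g_\eta$ is convex, nonconstant, and $C^4$. Its bounds are $\|g_\eta'\|_\infty\le(1+\eta)/\sqrt{1+\eta^2c_\varphi}$ and $g_\eta''\le\eta$, and $g_\eta^{(3)},g_\eta^{(4)}$ are bounded. So Assumption~\ref{ass:g} holds, and $g_\eta$ is nonaffine for $\eta>0$ because $\varphi''>0$.

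\emph{Step 2: the variance normalization.} The function $\varphi$ is even and $x$ is odd, so $\operatorname{Cov}(Z,\varphi(Z))=\mathbb E[Z\varphi(Z)]=0$. This gives $\operatorname{Var}(Z+\eta\varphi(Z))=1+\eta^2c_\varphi$, hence $c_{g_\eta}=1=c_{g_0}$.

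\emph{Step 3: the third cumulant.} Put $W=\varphi(Z)-\mathbb E\varphi(Z)$. Then
\[
\kappa_3(g_\eta(Z))=(1+\eta^2c_\varphi)^{-3/2}\,\mathbb E(Z+\eta W)^3 .
\]
Expanding the cube, the $\eta^0$ term vanishes and the $\eta^2$ term $3\eta^2\mathbb E[ZW^2]$ vanishes by parity, leaving
\[
\mathbb E(Z+\eta W)^3=3\eta\,\mathbb E[Z^2W]+\eta^3\mathbb E W^3 .
\]
The leading coefficient is $\mathbb E[Z^2W]=\operatorname{Cov}(Z^2,\varphi(Z))$. Both $Z^2$ and $\varphi(Z)$ are strictly increasing functions of $|Z|$, so this covariance is strictly positive, by Chebyshev's association inequality applied to the nondegenerate variable $|Z|$. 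Therefore $\kappa_3(g_\eta(Z))=3\eta\operatorname{Cov}(Z^2,\varphi(Z))+O(\eta^3)>0$ for all sufficiently small $\eta>0$. We expect this sign verification to be the only genuinely new point, and the association argument settles it without computing any integrals.

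\emph{Step 4: the finite-sample values.} For $g_0$ and for $g_\eta$ we have $c=1$. Since $H^4<d^2/2$, the quantity $\gamma(H)$ in Corollary~\ref{cor:finite-expansion} is positive. For all $N$ beyond the explicit threshold in that corollary (the larger of the two task thresholds), zero control is exactly minimax for both tasks, and the expansion \eqref{eq:finite-expansion} applies to each.

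The leading terms $H^2/2$ are equal. The task $g_0$ has $g_0(Z)=Z$, so $\kappa_3(g_0(Z))=0$. Subtracting the two expansions gives
\[
C_{N,d}^{g_\eta}(H)-C_{N,d}^{g_0}(H)=-\frac{\kappa_3(g_\eta(Z))H^4}{6\sqrt N}+O(N^{-1}),
\]
where the $O(N^{-1})$ term absorbs the $\kappa_4$ contributions and the remainders. The right-hand side is strictly negative for large $N$ by Step 3.

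Each regret is taken relative to its own oracle $V_\theta$, exactly as in the definition \eqref{eq:CN} on which the corollary is based, so no further comparison of oracles is needed.
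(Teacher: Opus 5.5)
Your proposal is correct and follows the paper's proof. You use the same parity reduction of the third centered moment to $3\eta\,\mathbb E[Z^2W]+\eta^3\mathbb E W^3$, followed by the exact subcritical zero-control value and its cumulant expansion for both tasks. The differences are minor: you get $\mathbb E[Z^2W]>0$ from Chebyshev's association inequality, whereas the paper uses Gaussian integration by parts to identify it as $\mathbb E\operatorname{sech}^2Z$, which also gives an explicit admissible range of $\eta$. For $g_0$, the paper uses the global bound with $L=1$ together with $\psi_{g_0}(u)=u^2/2$, giving $C_{N,d}^{g_0}(H)=H^2/2$ exactly for every $N$ rather than only for large $N$.
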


\begin{theorem}[Exact single-episode boundary]
\label{thm:one-episode}
For $N=1$ and every finite $d\ge1$, let $\ell=\|g'\|_\infty$ be the smallest
Lipschitz constant of $g$. Then
\begin{equation}
 C_{1,d}^g(H)=Z_{1,d}^g(H)
 \quad\Longleftrightarrow\quad H^4\ell^2\le\frac{d^2}{2}.
 \label{eq:one-episode}
\end{equation}
If the inequality fails, a bounded rule using a finite deterministic
grid, a random coordinate direction, and an independent uniform sign
strictly improves the worst-case risk over the whole parameter ball.
If $g$ is nonaffine, then $c_g<\ell^2$,
so this exact single-episode boundary differs strictly from
the asymptotic task boundary.
\end{theorem}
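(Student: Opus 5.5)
Both directions compare with the exact zero-control baseline $Z_{1,d}^g(H)$. For $N=1$ the parameter is $\theta=h$, the state is $X_1=W_1+\int_0^1\langle h,U_t\rangle dt$, and the oracle value is $V_\theta(0,0)$. The proof has four steps: an explicit baseline, a lower bound matching it, a strict-improvement construction above the boundary, and a strict inequality $c_g<\ell^2$ for nonaffine $g$.

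\textbf{Baseline.} For zero control, $R_1^g(0,h)=\mathbb E g(Z)-V_h(0,0)$. Write the oracle by the Cole--Hopf (log-transform) formula:
\[
 V_h(0,0)=-\|h\|^{-2}\log\mathbb E\,e^{-\|h\|^2 g(Z)},
\]
since the effective scalar control $\langle h,U\rangle$ costs $|u|^2/(2\|h\|^2)$. This gives the baseline \eqref{eq:finite-baseline} with $N=1$.

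\textbf{Lower bound (no-learning region).} Assume $H^4\ell^2\le d^2/2$. Use the product prior on vertices $h=(H/\sqrt d)(\pm1,\dots,\pm1)$ with independent uniform signs.
\begin{enumerate}
 \item Fix any rule. Reveal all coordinates but one, as in the proof of Theorem~\ref{thm:threshold}. Each coordinate posterior then becomes a two-point tanh computation on the observed record.
 \item Use a verification argument with test function $V_h(t,X_t)$ plus a correction quadratic in the posterior means. The action-dependent part of the Bayes risk is then $\frac12\|U\|^2+\langle U, m_t\,\partial_xV\rangle$-type terms.
 \item Bound $|\partial_xV_h|\le\ell$ uniformly. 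This holds because the gradient of the value function is the conditional expectation of $g'$ under the optimal tilted law.
 \item Complete the square for every real action, exactly as in the $\sqrt N$-free condition \eqref{eq:finite-zero-global} of Theorem~\ref{thm:finite-zero}, replacing $L$ by the sharp constant $\ell$.
\end{enumerate}
Indeed \eqref{eq:finite-zero-global} is already the statement with $L$ in place of $\ell$. Since $\ell=\|g'\|_\infty$ is itself a valid $L$, this direction follows directly from Theorem~\ref{thm:finite-zero} applied with $L=\ell$.

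\textbf{Strict improvement (above the boundary).} Assume $H^4\ell^2>d^2/2$. The plan is a wait-then-probe rule.
\begin{enumerate}
 \item Choose a deterministic grid time $t_0<1$. Use zero control on $[0,t_0)$, so $X_{t_0}=W_{t_0}$ carries no parameter information.
 \item At $t_0$, if $X_{t_0}$ lies in a window where $g'$ of the conditional terminal law is within $\epsilon$ of the extremal slope $\pm\ell$, start a small probe of size $\varepsilon$ along a random coordinate $e_J$ with an independent uniform sign.
 \item Follow the probe with a bounded feedback in the conditional mean of $\langle h,e_J\rangle$, mimicking the scalar construction of Theorem~\ref{thm:threshold} with effective gain $\ell$.
 \item Take $t_0\uparrow1$ and a small window. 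Conditionally on the window, the remaining problem looks like the Gaussian experiment of Theorem~\ref{thm:threshold} with horizon $1-t_0$ and task slope nearly $\ell$. The local no-learning index is then $H^4\ell^2/d^2\cdot(\text{const})$, which exceeds $1/2$.
\end{enumerate}
A second-variation expansion in $\varepsilon$ shows the first-order term vanishes by sign symmetry. The second-order term is strictly negative uniformly over $\|h\|\le H$. Small $\varepsilon$ then gives strict improvement, and continuity plus compactness of the ball makes it uniform.

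\textbf{Strict inequality $c_g<\ell^2$.} By the Gaussian Poincaré inequality, $\operatorname{Var} g(Z)\le\mathbb E g'(Z)^2\le\ell^2$. Equality in the Poincaré inequality forces $g$ affine. Hence $c_g<\ell^2$ for nonaffine $g$.

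\textbf{Main obstacle.} The hardest step is the uniform second-variation computation in the improvement direction. It requires exact second-order expansion of the oracle-relative regret under a perturbation localized on a rare state event. It also requires showing the sign holds at every interior $h$, not only at vertices; I would use the likelihood/convexity argument as in Theorem~\ref{thm:threshold} to reduce to the extremal norm.
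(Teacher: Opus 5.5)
Your sufficient direction (Theorem~\ref{thm:finite-zero} with $L=\ell$) is exactly the paper's. Your Gaussian Poincar\'e argument for $c_g<\ell^2$ is a valid alternative to the paper's pointwise bound $|P_{1-t}g'(x)|<\ell$.

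The strict-improvement construction, however, has a step that fails as written: sending $t_0\uparrow1$ is the wrong limit. A rule that is silent on $[0,t_0)$ spends all its expected energy $Q$ in a window of length $1-t_0$. The projection and Cauchy--Schwarz argument behind \eqref{fin:global-energy} uses $|S|\le\ell$ and $\int_{t_0}^1\sqrt{q(s)q'(s)}\,ds\le\sqrt{(1-t_0)/2}\,Q$. It shows that every such rule has Bayes risk under your vertex prior at least $Z_{1,d}^g(H)+\bigl[\tfrac12-\tfrac{H^2\ell}{d}\sqrt{(1-t_0)/2}\bigr]Q$. Hence once $H^4\ell^2(1-t_0)\le d^2/2$, which holds for all $t_0$ near $1$, no wait-then-probe rule improves at all. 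Your ``local index'' carries the factor $1-t_0$ and drops below the threshold.

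You must instead fix $t_0$ small enough that $H^4\ell^2(1-t_0)>d^2/2$, which is possible by strictness. The extremal slope should come from a tail trigger $E_R=\{X_{t_0}>R\}$ or $\{X_{t_0}<-R\}$ with $R\to\infty$, not from a short horizon. For nonaffine $g$ one has $|S(t,x)|<\ell$ at every finite $x$, and what is needed is $S(t,X_t)\approx b$ along the entire post-trigger path of length $1-t_0$. The paper obtains this by dominated convergence as the trigger state goes to the tail.

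The paper then takes a fixed bounded grid approximation $\nu$ of the linear score direction: $\eta^{-1/2}$ on $[0,\eta]$, then $-\operatorname{sgn}(b)M_s/\sqrt{2s}$. This gives $\tfrac12Q(\nu)+A^2b\,\mathbb E\int_0^r M^\nu_s\nu_s\,ds<0$ with $r=1-t_0$. The perturbation $\varepsilon\rho Dv$ then has exactly the form required by Lemma~\ref{fin:lem:variation}, whereas your tanh feedback would need a separate direct expansion.

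Second, your claim that the $\varepsilon^2$ coefficient is strictly negative uniformly over $\|h\|\le H$ is false. After averaging over the random coordinate it is $F_h=\tfrac12Q(v)+\tfrac{\|h\|^2}{d}K(v)$, which equals $\tfrac12Q(v)>0$ at $h=0$. No likelihood or convexity reduction can make it negative in the interior.

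The improvement must instead be measured against the worst-case baseline $Z_{1,d}^g(H)=B(He_1)$, where $B(h)=R_1^g(0,h)$ is the zero-control risk, rather than pointwise against $B(h)$. One has $F_h\le-c$ only on an annulus $H-\zeta\le\|h\|\le H$. On the inner ball, the strict radial monotonicity \eqref{fin:radial} of the zero-control baseline supplies a fixed gap $B(He_1)-B((H-\zeta)e_1)>0$. That gap absorbs the $O(\varepsilon^2)$ perturbation once $\varepsilon$ is small.
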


\subsection{Comparison with untruncated feedback}
For comparison and for the threshold proof, use the scalar policy
\begin{equation}\label{eq:tanh}
 a_t^{\tau,e}=
 \begin{cases}
 \sqrt{e/\tau}\,\rho,&0\le t\le\tau,\\[1mm]
 -\dfrac{\tanh(H\Xi_t)}{H\sqrt{2t}},&\tau<t\le1,
 \end{cases}
 \qquad \Xi_t=\int_0^t a_s\,dY_s,
\end{equation}
where $\rho$ is an independent uniform sign, $0<\tau<1$, and $e>0$.
The design uses $H$, not the unknown $h$. Its energy is at most
$e+\log(1/\tau)/(2H^2)$ on every path.

\begin{theorem}[Elementary critical bounds]\label{thm:deficit}
Let $\delta=\sqrt2H^2-1$. For $0<\delta\le1/16$,
\begin{equation}\label{eq:deficit}
 \frac{35\sqrt2}{1536(1+\delta)^4}\,\delta^4
 \le\Delta(H)\le
 \frac{9}{8H^2(1-2\delta)}\,\delta^2.
\end{equation}
The upper bound alone holds for $0<\delta<1/2$ and all admissible
policies. The lower bound is attained as a guarantee by
\eqref{eq:tanh} with
\[
 \tau=\frac{\delta}{2(1+\delta)},\quad
 e=\frac{\delta\tau^{5/2}}{8K_H},\quad
 K_H=\frac{12H^4}{35\sqrt2}.
\]
\end{theorem}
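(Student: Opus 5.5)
We prove the two inequalities in \eqref{eq:deficit} separately, both through the endpoint prior $\pi_H$ and the quantities $Q$, $m_t$, $\mathcal S_H$.

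\emph{Upper bound.} Since $\pi_H$ is supported on $\{\pm H\}$, the minimax risk is at least the Bayes risk under $\pi_H$. This gives $\Delta(H)\le\sup_a b_H(a)$ with $b_H(a)=\delta Q/2-\mathcal S_H(a)$. Indeed,
\[
 \mathbb E_{\pi_H}\tfrac12\int_0^1(a_t+h)^2dt
 =\tfrac{H^2}{2}+\tfrac Q2+\mathbb E_{\pi_H}\int_0^1 m_ta_t\,dt,
\]
and $\tfrac Q2=\tfrac{H^2Q}{\sqrt2}-\tfrac{\delta Q}{2}$. It therefore suffices to show $\mathcal S_H(a)\ge\delta Q/2-\frac{9\delta^2}{8H^2(1-2\delta)}$.

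I would use the crude deficit bound $\mathcal S_H(a)\ge \alpha Q^2$ with an explicit $\alpha$ of order $H^2(1-2\delta)$. Maximizing $\delta Q/2-\alpha Q^2$ over $Q$ then gives $\delta^2/(16\alpha)$, which is the $\delta^2$ form. To get $\mathcal S_H\ge\alpha Q^2$, I would apply It\^o verification to $\phi(t,m)=\lambda(t)\,(H^2-m^2)$-type functionals of the posterior. Here $dm_t=(H^2-m_t^2)a_t\,d\bar B_t$ in the innovation $\bar B$, so $\mathbb E(H^2-m_1^2)$ decreases by exactly $\mathbb E\int(H^2-m^2)^2a^2$. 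Completing the square in $a$ pointwise then bounds $m a+H^2a^2/\sqrt2$ below by a multiple of the information gain. Combining this with Cauchy--Schwarz, using $Q\le$ the total information gain times a constant, yields the quadratic bound. The condition $\delta<1/2$ enters to keep the completed-square coefficient $1-2\delta$ positive. This bound holds for every admissible policy, since completing the square is pointwise in the action.

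\emph{Lower bound.} I would evaluate the policy \eqref{eq:tanh} with the stated $\tau,e$ at all $|h|\le H$. The first step is to show that the endpoints are worst. The risk $\mathcal L_h$ is $h^2/2$ plus $\mathbb E_h\int(a^2/2+ha)$. Using the likelihood ratio between $\mathbb P_h$ and $\mathbb P_{\pm H}$ and the sign symmetry from $\rho$, I would show that $h\mapsto\mathcal L_h-h^2/2$ is even and controlled so that the supremum is attained at $\pm H$. I expect this endpoint step to be the most delicate part.

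At $h=H$, I would compute the Bayes improvement. The probe phase costs about $e$ but produces an initial score of variance of order $e$. In the feedback phase, the rule $-\tanh(H\Xi)/(H\sqrt{2t})$ matches the optimal linear feedback $-m/(H^2\sqrt{2t})$ up to a cubic saturation error of order $m^4$. A fourth-moment estimate then bounds this error by $K_H\,e^2\tau^{-5/2}$, with $K_H=12H^4/(35\sqrt2)$ coming from $\mathbb E\tanh^4$-type Gaussian moments. The gain from the probe is of order $\delta e$ times a constant. Plugging in $e=\delta\tau^{5/2}/(8K_H)$ and $\tau\asymp\delta$, the gain minus the loss is of order $\delta^2\tau^{5/2}/K_H\asymp\delta^{4.5}$. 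Tracking the constants, and absorbing the half-power into the $(1+\delta)^{-4}$ factor with the $\tau^{1/2}$ scaling, should give the stated constant $35\sqrt2/1536$. The main obstacle is the fourth-moment bound on the saturation error uniformly over the time-varying gain $1/\sqrt{2t}$; I would handle it with the time-dependent test function $t^{-5/2}m^4$, checking that its drift is nonpositive.
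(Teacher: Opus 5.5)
Your reduction $\Delta(H)\le\sup_a b_H(a)$ with $b_H=\delta Q/2-\mathcal S_H$ is correct. So is your last step: a penalty $\alpha Q^2$ with $\alpha=H^2(1-2\delta)/18$ gives exactly $9\delta^2/(8H^2(1-2\delta))$. The gap is in how you obtain the quadratic penalty.

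\textbf{Upper bound.} Completing the square against a time-weighted $m^2$ (equivalently $H^2-m^2$) functional gives at best the exact split of $\mathcal S_H$ into a nonnegative alignment loss and a nonnegative saturation loss (Proposition~\ref{crit:prop:identity}). Combined with Cauchy--Schwarz and $(H^2-m^2)^2\le H^4$, i.e.\ $\mathbb E_\pi m_t^2\le H^4q(t)$, it yields only $\mathcal S_H\ge0$, hence $b_H\le\delta Q/2$, which bounds nothing. To see why more is needed, linearize to $dm_t=H^2a_t\,dI_t$. There a probe of length $\tau$ followed by $-m_t/(H^2\sqrt{2t})$ makes $H^2Q/\sqrt2+\mathbb E\int ma=O(\tau Q)$ at fixed $Q$. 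So the penalty must come quantitatively from posterior saturation.

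Both of your intermediate claims are also false.
\begin{itemize}
\item $Q\le C\,\mathbb E_\pi m_1^2$ fails for $a\equiv A$: then $Q=A^2$ while $\mathbb E_\pi m_1^2\le H^2$.
\item $\mathcal S_H\ge c\,\mathbb E_\pi m_1^2$, with $c$ bounded below near $H_c$, fails for the tanh policy of this very theorem. It has $\mathcal S_H<\delta Q/2$ but $\mathbb E_\pi m_1^2=H^4Q(1-O(\delta))$.
\item No unconditional $\mathcal S_H\ge\alpha Q^2$ can hold either, since $\mathcal S_H\le H^2Q/\sqrt2+H\sqrt Q$.
\end{itemize}

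The missing lemma is the saturation bound $\mathbb E_\pi m_t^2\le H^4q(t)/(1+H^2q(t)/3)$ (Lemma~\ref{crit:lem:potential}). Put $x=m/H$ and $f(x)=x\operatorname{atanh}x$. Then $\tfrac12 f''(x)H^2(1-x^2)^2a^2=H^2a^2$ for every action, so localization and Fatou give $\mathbb E_\pi f(x_t)\le H^2q(t)$. The inequality $f(x)\ge x^2/(1-x^2/3)$ and Jensen then give the bound. Cauchy--Schwarz yields $b_H\le3\sqrt{z-\log(1+z)}-Q/2\le\frac Q2\{\delta-\frac{(1+\delta)z}{3(1+z)}\}$ with $z=H^2Q/3$. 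The factor $1-2\delta$ enters only when positivity of $b_H$ forces $(1-2\delta)z<3\delta$. It is not a completed-square coefficient.

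\textbf{Lower bound.} Your bookkeeping gives only $\delta^{9/2}$, and a factor $\delta^{1/2}$ cannot be absorbed into the bounded factor $(1+\delta)^{-4}$. Write $M_j(t)=\mathbb E_\pi m_t^j$. Three corrections are needed.
\begin{itemize}
\item \emph{No feedback mismatch.} $-\tanh(H\Xi_t)/(H\sqrt{2t})$ is identically $-m_t/(H^2\sqrt{2t})$. The loss comes from the factor $H^2-m_t^2$ in $dm_t$. After the probe this gives $M_2'=M_2/(2t)-M_4/(H^2t)+M_6/(2H^4t)$, which pushes $M_2$ below its linear prediction $H^4e\sqrt{t/\tau}$.
\item \emph{The gain is not of order $\delta e$.} The exact identity
\[
 \frac{\mathcal L_H+\mathcal L_{-H}}2-\frac{H^2}2
 =\frac e2+\int_\tau^1\Bigl(\frac1{4H^4t}-\frac1{H^2\sqrt{2t}}\Bigr)M_2(t)\,dt
\]
turns the linear prediction into $\frac e{\sqrt\tau}\{\frac12-\frac{H^2(1-\tau)}{\sqrt2}\}=-\frac{\delta e}{4\sqrt\tau}$ at the stated $\tau$. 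The factor $\tau^{-1/2}$ is the amplification of the probe energy by the feedback.
\item \emph{The fourth-moment normalization is $t^{-3}$, not $t^{-5/2}$.} Under the feedback, $t^{-p}m_t^4$ has drift $t^{-p-1}m_t^4\{3(1-m_t^2/H^2)^2-p\}$. For $p=5/2$ this is positive near $m=0$, so your drift check fails. For $p=3$ it gives $M_4(t)\le3H^8e^2(t/\tau)^3$, hence $0\le H^4e\sqrt{t/\tau}-M_2(t)\le\frac65H^6e^2\tau^{-3}t^3$. The resulting error is $K_He^2\tau^{-3}$ with $K_H=\frac{6H^4}{5\sqrt2}\int_0^1t^{5/2}\,dt$.
\end{itemize}
At $e=\delta\tau^{5/2}/(8K_H)$ the total $-\frac{\delta e}{4\sqrt\tau}+K_He^2\tau^{-3}$ equals $-\frac{\delta^2\tau^2}{64K_H}=-\frac{35\sqrt2}{1536(1+\delta)^4}\delta^4$, as stated.

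Your endpoint-maximality outline is in the right direction. The paper completes it with the pathwise cap $Q_1\le e+\log(1/\tau)/(2H^2)$ and the bound $d\mathbb P_h/d\mathbb P_\pi\le2\exp(H^2e/2)\tau^{-1/4}$. A derivative estimate $|\mathcal L_h'-h|<H/2$ gives monotonicity on $[H/2,H]$, and a crude bound handles $[-H/2,H/2]$.
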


\begin{theorem}[Sharp order within the specified feedback family]\label{thm:restricted}
For $0<\delta=\sqrt2H^2-1<1$, define
\[
 C_{\mathrm{tanh}}(H)=
 \inf_{\substack{\delta/3\le\tau\le2\delta/3\\ e>0}}
       \sup_{|h|\le H}\mathcal L_h(a^{\tau,e}).
\]
There exist $c,C,\delta_0>0$ such that
\[
 c\delta^4\le H^2/2-C_{\mathrm{tanh}}(H)\le C\delta^4,
 \qquad 0<\delta\le\delta_0.
\]
\end{theorem}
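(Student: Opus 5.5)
Both bounds will be proved separately. The lower bound on $H^2/2-C_{\mathrm{tanh}}$ follows from Theorem~\ref{thm:deficit}, after adjusting $\tau$. The upper bound requires showing that no choice of $(\tau,e)$ in the window gains more than order $\delta^4$. The guiding picture is the endpoint Bayes decomposition $b_H(a)=\delta Q/2-\mathcal S_H(a)$ under $\pi_H$ (Proposition~\ref{crit:prop:identity}). For the untruncated family, the saturation loss contained in $\mathcal S_H$ scales like $Q^{4/3}$ with no logarithmic relief, because the tanh feedback keeps running after the posterior has saturated.

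\emph{Lower bound.} Theorem~\ref{thm:deficit} uses $\tau=\delta/(2(1+\delta))$, which lies in $[\delta/3,2\delta/3]$ once $\delta\le1/2$. It therefore directly gives $\sup_{|h|\le H}\mathcal L_h(a^{\tau,e})\le H^2/2-c\delta^4$ for a suitable $e$, and taking the infimum yields $c\delta^4\le H^2/2-C_{\mathrm{tanh}}(H)$.

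\emph{Upper bound.} I restrict the supremum to $h=\pm H$, which only decreases it. Then $\sup_h\mathcal L_h\ge\mathbb E_{\pi_H}\mathcal L_h(a^{\tau,e})=H^2/2-b_H(a^{\tau,e})$, so it suffices to show $b_H(a^{\tau,e})\le C\delta^4$ uniformly in $\tau\in[\delta/3,2\delta/3]$ and $e>0$. I write $\Xi_t$ for the score, so that $m_t=H\tanh(H\Xi_t)$ and the feedback is $a_t=-m_t/(H^2\sqrt{2t})$ on $(\tau,1]$. Under $\pi_H$, $d\Xi=a(h\,dt\ldots)$ has innovation form $d\Xi_t=a_tm_t\,dt+a_t\,d\bar B_t$. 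The cross term satisfies $m_ta_t=-m_t^2/(H^2\sqrt{2t})$ and the energy density is $a_t^2=m_t^2/(2H^4t)$, so on $(\tau,1]$ the integrand of $b_H$ is
\[
\frac{m_t^2}{H^2\sqrt{2t}}-\frac{H^2 m_t^2}{\sqrt2\cdot 2H^4t}
=\frac{m_t^2}{H^2\sqrt2}\Bigl(\frac1{\sqrt t}-\frac1{2t}\Bigr).
\]
This is negative for $t<1/4$ and positive afterwards. The probe phase adds $\delta e/2$ minus a saturation penalty. I will then analyze $u_t=\mathbb E_{\pi_H}m_t^2/H^2$. By It\^o, with $\bar m=m/H=\tanh(H\Xi)$, $d\bar m^2$ has drift $H^2a^2(1-\bar m^2)^2(\ldots)$, which is approximately $\bar m^2(1-\bar m^2)^2\cdot(1/(\sqrt2 t))\cdot(1+\delta)$ to leading order. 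For small $\bar m$ this gives $u_t\approx u_\tau(t/\tau)^{(1+\delta)/\sqrt2\cdot\ldots}$, essentially linear growth $u_t\sim u_\tau t/\tau$ at criticality, with a cubic correction $-c\,\bar m^4/t$ from saturation.

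The plan is therefore to obtain a two-sided moment recursion: $\frac{d}{dt}u\le \frac{1+\delta}{t}u-\frac{c}{t}v$, where $v=\mathbb E\bar m^4\ge u^2$ by Jensen. From it I will conclude $u_t\lesssim \min(u_\tau t/\tau,\ \delta)$, with the saturation ceiling reached at level $u\sim\delta$. Integrating the signed integrand then bounds the gain by $\int(\delta\, u_t/\sqrt t)\,dt$ minus the loss $\int v/\sqrt t\,dt$. The probe term is controlled by $u_\tau\lesssim H^2 e$, and its cost $\tau\cdot e/\tau\ldots$ balances the gain. Optimizing over $e$ gives $\sup_e b_H\lesssim \delta\cdot\delta^{3}$, using $\tau\asymp\delta$: the information bought at time $\tau$ grows only linearly, the useful window has length $O(1)$, and the ceiling $u\lesssim\delta$ together with the mandatory negative segment on $(\tau,1/4)$ and quadratic saturation loss caps the net gain.

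\emph{Main obstacle.} The hard step is making the saturation loss quantitative against the linear gain so that no logarithm appears. Concretely, I need $v_t\ge c\,u_t^2$ to enter with the correct $1/t$ weight and the exact critical coefficient, since the linear coefficient is $1+\delta$ over the naive value. I will first try a Riccati comparison, $u'\le(1+\delta)u/t-c u^2/t$, which gives $u_t\le (1+\delta)/c$ times a logistic curve. Plugging in yields $b_H\le \delta\int_\tau^1 u\,dt/\sqrt t-c\int u^2$ and then $\le C\delta^4$ after maximizing over the initial level. If the Jensen route loses the critical constant, I will instead use the exact two-point identity of Proposition~\ref{crit:prop:identity}, which separates misalignment (zero here, since the feedback is exactly aligned on $(\tau,1]$) from saturation. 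This reduces everything to bounding the energy $Q\lesssim\delta^3$ and invoking $\mathcal S_H\gtrsim Q^{4/3}$ for this family.
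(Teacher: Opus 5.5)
Your lower bound is the paper's own argument: the design \eqref{crit:design} has $\tau\in[\delta/3,\delta/2]$, and its full-interval guarantee \eqref{crit:endpoint-gain} gives $c\delta^4$. The upper bound, however, has a genuine gap.

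First, the moment dynamics are misstated. After the probe, $\bar m=m/H$ satisfies exactly $d\bar m=-(1-\bar m^2)\bar m\,dI/\sqrt{2t}$. Hence $\frac{d}{dt}\mathbb E_\pi\bar m_t^2=\frac1{2t}\mathbb E_\pi\bar m_t^2(1-\bar m_t^2)^2$, and the second moment grows like $\sqrt{t/\tau}$, not linearly. Moreover, $\delta$ does not enter these dynamics at all; it enters only the gain $\delta Q/2$. So nothing caps $u$ at level $\delta$, and your own Riccati comparison caps it only at $(1+\delta)/c$.

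More importantly, no Riccati or Jensen argument built on $\mathbb E\bar m^4\ge(\mathbb E\bar m^2)^2$ can reach $\delta^4$. With $\mathbb E_\pi\bar m_t^2\approx H^2\bar Q\sqrt t$ and $\bar Q=e/\sqrt\tau$, your inequality reads roughly $b_H\lesssim\delta\bar Q-c\bar Q^2$. Its maximum is of order $\delta^2$, which only reproduces Lemma~\ref{crit:lem:potential}. The missing factor is intermittency. In logarithmic time $s=\log(t/\tau)$ the linearized posterior is a geometric Brownian motion, so before saturation $\mathbb E_\pi m_t^4\asymp e^2(t/\tau)^3$, whereas $(\mathbb E_\pi m_t^2)^2\asymp e^2t/\tau$. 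At $t\asymp1$ the ratio is $\tau^{-2}\asymp\delta^{-2}$. By \eqref{crit:fourth-penalty}, the saturation loss is therefore of order $e^2\tau^{-3}=\bar Q^2\tau^{-2}$, not $\bar Q^2$. Balancing this against $\delta\bar Q$ is what forces $\bar Q\lesssim\delta^3$ and $b_H\lesssim\delta^4$.

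The real work is a lower bound on $\mathbb E_\pi m_t^4$ for $t\in[1/4,1/2]$ of order $e^2\tau^{-3}$. It must hold uniformly over all probe energies, including large $e$, where paths saturate near $|m|=H$ and the $(t/\tau)^3$ growth is cut off. The paper proves $\inf_{t\in[1/4,1/2]}\mathbb E_\pi m_t^4\ge ce^2\tau^{-3}\Psi_L(f)$, with $f=e\tau^{-7/2}$, in Proposition~\ref{crit:prop:fourth-lower}. The ingredients are the exact fourth-moment Girsanov tilt \eqref{crit:exact-tilt} of the original nonlinear diffusion, a drifted-reflection barrier estimate, and the tail coercivity \eqref{crit:tail-coercivity}. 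The argument then closes as follows:
\begin{itemize}
\item \eqref{crit:positive-energy} restricts attention to $f\le\tau^{-3}$.
\item If $f<1$, then directly $b_H\le\delta\tau^3f/2=O(\delta^4)$.
\item If $f\ge1$, then $b_H\le\delta\tau^3f/2-c\tau^4f^2\Psi_L(f)$. Positivity forces $f\Psi_L(f)\le C\delta/\tau\le C'$, hence $f=O(1)$ and again $b_H=O(\delta^4)$.
\end{itemize}

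Your fallback does not close the gap, because there is no available bound $\mathcal S_H\gtrsim Q^{4/3}$ for this family. Theorem~\ref{thm:energy-new} carries the factor $[\log(e/Q)]^{-1/6}$, which yields only $\delta^4\sqrt{\log(1/\delta)}$. That factor cannot be removed in general (Corollary~\ref{cor:no-four-thirds}). A log-free law for the $\tanh$ family, or equivalently $Q\lesssim\delta^3$ under positive improvement, presupposes exactly the saturation-robust fourth-moment lower bound that constitutes the proof.
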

Theorems~\ref{thm:critical-new} and \ref{thm:restricted} have different
quantifiers. The latter rules out an unbounded improvement factor obtained
solely by retuning the probe energy in its specified duration range.
The moving feedback changes the feedback law and lies outside that family.
The elementary bounds in Theorem~\ref{thm:deficit} remain useful as explicit
calibration and as ingredients of the stronger results.

\subsection{Proof structure}
The heat-semigroup gradient
$S(t,x)=\mathbb E[g'(x+\sqrt{1-t}Z)]$ defines a task clock. An exact
Bellman identity first converts regret into squared action error.
Replacing the oracle gradient by $S$ incurs a uniform controlled-energy
error. Time change then yields the canonical experiment up to a random
terminal time close to $c_g$. The complete likelihood, rather than only
the marginal Brownian property, survives the change of clock.

A one-sided energy truncation and a causal conditional-kernel construction
identify the weak and standard canonical values uniformly over the
parameter ball. Nonnegative losses allow comparison of the random terminal
time with $c_g$ without controlling the energy in a small final interval.
This proves Theorem~\ref{thm:value}. The threshold follows from two-point
posterior estimates and a random-direction policy.
For the finite-sample results we instead subtract the exact zero-control
cost before comparing tasks. Freezing each coordinate posterior at
episode starts makes the error proportional to the expected energy.
An explicit second-variation direction, truncated before choosing its
small amplitude, yields the opposite finite-sample condition.
The single-episode construction starts that variation on a rare state
event where the task gradient approaches an extremal slope.
Appendices~\ref{fin:sec:proof} and \ref{one:sec:proof} give these arguments.
For Theorem~\ref{thm:energy-new}, normalize the posterior to $X=m/H$
and the action to $u=Ha$, and put $q=\mathbb E\int u^2$ and
$s=\mathcal S_H/q$. Expected-energy shape and a convex truncated-moment
inequality supply the initial mass of a fourth-moment test function.
An explicit profile compensates a small threshold drift and gives an
inequality for every action, with the true posterior nonlinearity
paid by the saturation deficit. The resulting estimate is
\[
 \frac{q^2}{\tau^2}
 \le A_1\sqrt{1+\log(T/\tau)}\,\mathcal S_H
       +A_2\frac q{\tau^3}\mathcal S_H,\qquad T=1/2.
\]
Choosing the deterministic analysis time $\tau=Ds$, after fixing
all constants, absorbs the second term. This proves
$q\lesssim s^3\sqrt{\log(e/s)}$, the uniform deficit law, and the
critical upper bound. The analysis time is not a new control policy.
For Theorem~\ref{thm:moving}, the decay of a normalized fourth moment
pays the alignment loss, while a fourth-moment change of measure and
an occupation-time estimate reduce the nonlinear loss. The actual second
moment is controlled separately to retain the useful energy.
The likelihood then locates the maximum risk over the full interval.
Appendices~\ref{sharp:sec:proof} and \ref{new:sec:moving} give these proofs.

\section{Examples and statistical interpretation}\label{sec:examples}
For $g(x)=b+mx$, $m\ne0$, the task clock is deterministic and the gradient
replacement is exact. For every valid $N$,
\[
 C_{N,d}^g(H)=C_d^{m^2}(H)=|m|\,C_d^1(|m|^{1/2}H).
\]
Thus the $N^{-1/4}$ scaling is already an exact change of units in the
affine example; it is not itself an asymptotic representation result.
For $g(x)=\log\cosh x$, $L=M=1$ is valid and numerical Gaussian integration
gives $c_g\approx0.1897674492$. The boundary is
$H_{\mathrm{crit},d}=(2c_g)^{-1/4}\sqrt d\approx1.2740517570\sqrt d$.
The numerical value is illustrative; the theorems use the exact variance.
For this same task with $d=N=1$, Theorem~\ref{thm:one-episode}
gives the smaller exact critical radius $2^{-1/4}\approx0.8408964153$.
The variance comparison and the single-episode boundary answer
different finite-sample questions.

The value comparison also quantifies the size of a finite-sample
improvement away from the boundary. In the scalar case let
$Z_N^g(H)=\sup_{|h|\le H}R_N^g(0,N^{-1/4}h)/\sqrt N$ and
$G_N^g(H)=Z_N^g(H)-C_{N,1}^g(H)\ge0$. Then
\begin{equation}\label{eq:finite-transfer}
 \left|G_N^g(H)-\sqrt{c_g}\Delta(c_g^{1/4}H)\right|
 \le\frac{K_{H,L,M}+H^4L^2M}{\sqrt N}.
\end{equation}
Indeed the same gradient bound used for Theorem~\ref{thm:value} gives
$|Z_N^g(H)-c_gH^2/2|\le H^4L^2M/\sqrt N$, and subtraction proves
\eqref{eq:finite-transfer}. A canonical improvement gives a positive
finite-$N$ value gap whenever its scaled lower bound exceeds the displayed
comparison error. This statement does not identify a necessary width of a
finite-sample critical window or an optimal finite-$N$ implementation rate.
Theorems~\ref{thm:finite-zero} and \ref{thm:finite-positive} give the
stronger $O(N^{-1/2})$ localization of exact zero versus strict improvement
by a separate argument; their small-amplitude construction does not
supply the corresponding improvement magnitude.

\section{Limitations and open problems}\label{sec:discussion}
The results rely on a known scalar diffusion coefficient, independent
resets, a gain-only uncertainty, and quadratic action cost.
The terminal-cost class is nonquadratic but does not include unbounded
gradients. The comparison is local in the parameter and does not establish
constant global regret. It also does not compute the supercritical
canonical value or prove optimality of a random fixed direction.

The finite-sample localization conditions are sufficient and generally
do not coincide. We have not determined the exact boundary for arbitrary
$N>1$, its optimal shift, or a sharp critical window. The cumulant
expansion identifies the minimax corrections only at fixed strictly
subcritical radii. The exact one-episode theorem does not quantify
the potentially very small improvement obtained on a rare state event.

The scalar critical power and square-root logarithmic factor in
\eqref{eq:critical-new} are determined up to fixed multiplicative constants.
We have not identified a leading asymptotic constant, an exactly optimal
feedback, or the critical improvement law in higher dimension.
Other threshold curves or multiple probes can still affect constants,
but cannot exceed the proved scalar order within the stated algorithm class.
The endpoint maximum proved for the moving feedback does not make
the endpoint prior least favorable for the entire supercritical game.

\appendix

\section{Posterior estimates and the exact threshold}\label{sec:thresholdproof}
We use classical Girsanov, It\^o, and martingale time-change arguments
in their usual filtered-space forms \cite{KS}. Each likelihood below
is a likelihood of the entire observation and randomization record.

\begin{lemma}[Two-point posterior on the complete record]\label{lemma:posterior}
In the scalar canonical experiment, put
$\pi_H=(\delta_{-H}+\delta_H)/2$, $H>0$, and assume finite risk at both
endpoints. With $\Xi_t=\int_0^t a_s\,dY_s$ and
$Q_t=\int_0^t a_s^2\,ds$,
\begin{equation}\label{eq:posterior}
 m_t=\mathbb E_{\pi_H}[h\mid\mathcal F_t]=H\tanh(H\Xi_t),\qquad
 dm_t=(H^2-m_t^2)a_t\,dI_t,\quad m_0=0,
\end{equation}
where $I_t=Y_t-\int_0^t m_sa_s\,ds$ is Brownian in the complete
record filtration under the mixture law.
\end{lemma}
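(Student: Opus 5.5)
The plan is to compute the likelihood ratio of the two endpoint laws on the complete record, read off $m_t$ from Bayes' rule, and then apply It\^o's formula; the only delicate step is justifying the likelihood without a global Novikov condition.

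\textbf{Likelihood ratio.} Under $P_h$ the record consists of the seed and the path $Y$, with $dY_t=ha_t\,dt+dB_t$. Let $P_0$ be the reference law in which $Y$ is Brownian, the seed has its own law, and actions are produced by the same rule. By Girsanov's theorem, applied on the complete record filtration,
\[
 \frac{dP_h}{dP_0}\Big|_{\mathcal F_t}
 =\exp\Bigl(h\,\Xi_t-\tfrac12h^2Q_t\Bigr),
 \qquad \Xi_t=\int_0^t a_s\,dY_s,\quad Q_t=\int_0^t a_s^2\,ds .
\]
The seed density cancels because its law does not depend on $h$. I would not assume Novikov. Instead I would localize at $\sigma_n=\inf\{t:Q_t\ge n\}$. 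On $[0,\sigma_n]$ the exponential is a true martingale, since Novikov holds with the bound $e^{h^2n/2}$. Assumption~\ref{ass:control} guarantees that the capped rule defines a unique law on the common record space. Hence $P_h$ restricted to $\mathcal F_{t\wedge\sigma_n}$ coincides with the Girsanov-tilted law of the stopped experiment.

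Finite risk at both endpoints gives $\mathbb E_{\pm H}Q_1<\infty$, by coercivity, since $a^2/2\le (a+h)^2+h^2$. Therefore $\sigma_n\to\infty$ almost surely under both endpoint laws. The density identity then holds on $\mathcal F_t\cap\{\sigma_n>t\}$ for every $n$, and hence on all of $\mathcal F_t$. This localization is the step I expect to be the main obstacle: the rule may be unbounded and non-Markov, so it is the only point where real care is needed.

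\textbf{Posterior mean.} Bayes' rule with the uniform prior on $\{-H,H\}$ gives
\[
 P(h=H\mid\mathcal F_t)=\frac{e^{H\Xi_t}}{e^{H\Xi_t}+e^{-H\Xi_t}} .
\]
The factor $e^{-H^2Q_t/2}$ is common to both hypotheses and cancels. Consequently $m_t=H\tanh(H\Xi_t)$, and $m_0=0$ because $\Xi_0=0$.

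\textbf{Innovation process.} Define $I_t=Y_t-\int_0^t m_sa_s\,ds$. It is continuous with quadratic variation $t$. For $s<t$,
\[
 \mathbb E_{\pi_H}\bigl[Y_t-Y_s-\textstyle\int_s^t h a_u\,du\mid\mathcal F_s\bigr]=0 .
\]
Moreover $\mathbb E[h a_u\mid\mathcal F_u]=m_ua_u$, because $a_u$ is predictable. Integrability follows from $\mathbb E Q_1<\infty$ and $|h|\le H$. Hence $I$ is an $\mathcal F$-martingale, and by L\'evy's characterization it is Brownian under the mixture law.

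\textbf{Posterior dynamics.} Write $dY=m a\,dt+dI$, so that $d\Xi=m a^2\,dt+a\,dI$. Set $\phi(x)=H\tanh(Hx)$. Then $\phi'(x)=H^2-\phi^2$ and $\phi''=-2\phi\phi'$. It\^o's formula gives
\[
 dm=(H^2-m^2)\bigl(m a^2\,dt+a\,dI\bigr)-m(H^2-m^2)a^2\,dt=(H^2-m^2)a\,dI .
\]
This is \eqref{eq:posterior}.
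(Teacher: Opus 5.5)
Your proposal is correct and follows the paper's proof essentially step by step: you localize with an energy cap so that Girsanov uses a true martingale, and you identify the law by uniqueness of the stopped experiment. You then apply Bayes' formula on the complete record, remove the cap using finite path energy, obtain the innovation by drift projection and L\'evy's characterization, and finish with It\^o's formula for $H\tanh(H\Xi)$. The remaining difference is cosmetic. You write each endpoint density against the Brownian reference law $P_0$; strictly, your limiting identity there carries the indicator of $\{Q_t<\infty\}$, which is harmless for Bayes because that event has full mixture probability. The paper instead works directly with the ratio $\exp(2H\Xi_{t\wedge\sigma_K})$ between the two endpoint laws.
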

\begin{proof}
Finite risk implies
$\mathbb E_hQ_T\le4\mathcal J_{T,h}(a)+2H^2T<\infty$
at the two endpoints. Stop actions at $Q=K$. The stopped record laws
are equivalent, with likelihood ratio
$\exp(2H\Xi_{t\wedge\sigma_K})$ between $H$ and $-H$:
Girsanov is a true-martingale change of measure since the stopped
drift difference has bounded energy. Uniqueness of the stopped
experiment identifies the resulting law. Bayes' formula on the complete
stopped record gives the claimed hyperbolic tangent. The initial seed
has the same law under both parameters.

The stopped and original rules agree on the stopped record.
Finite path energy ensures that, almost surely, sufficiently large
caps do not stop before $T$. Boundedness of the posterior and optional
sampling therefore identify the same formula in the original experiment.
In particular $\Xi$ is continuous and finite on the time interval.
The process $I$ is a continuous martingale in the record filtration:
testing increments against past-measurable bounded variables projects
the drift onto $m_sa_s$. The drift is integrable by the energy bound.
Its quadratic variation is $t$, so L\'evy's characterization makes it
Brownian in that filtration. Applying It\^o to $H\tanh(H\Xi)$,
with $d\Xi=m a^2dt+a\,dI$, cancels the two drift terms and proves
\eqref{eq:posterior}. Energy localization followed by bounded-posterior
convergence gives the asserted identity for unbounded controls.
\end{proof}

\subsection{A vector lower bound with one observation channel}
We first take $T=1$. Set $c=H/\sqrt d$ and draw independent uniform
signs $\sigma_i$, with $h_i=c\sigma_i$. All prior support points have
norm $H$. Let $m_t=\mathbb E[h\mid\mathcal F_t]$. For coordinate $i$,
enlarge the filtration only in the proof:
\[
 \mathcal G_t^{(i)}=\mathcal F_t\vee\sigma(h_{-i}),\qquad
 n_{i,t}=\mathbb E[h_i\mid\mathcal G_t^{(i)}].
\]
The original action is unchanged and does not have access to the
revealed coordinates.

Here is the likelihood justification for using a scalar posterior in
this larger filtration. Fix $h_{-i}=k$ and write
$g_t=\sum_{j\ne i}k_ja_{j,t}$.
For the rule stopped at total energy
$\int_0^t\|a_s\|^2ds=K$, use the zero-vector parameter as reference.
The two conditional record densities are
\[
 Z_t^\pm=\exp\left\{\int_0^t(g_s\pm ca_{i,s})\,dY_s
       -\frac12\int_0^t(g_s\pm ca_{i,s})^2ds\right\}.
\]
Their drift energies are at most $H^2K$, so the densities are true
martingales and identify the stopped laws. Their log ratio is
\[
 2cS_{i,t},\qquad
 S_{i,t}=\int_0^t a_{i,s}\,dY_s-\int_0^t a_{i,s}g_s\,ds.
\]
Consequently $n_{i,t}=c\tanh(cS_{i,t})$.
This calculation is on the original complete record plus $k$.
It does not require recovering the original control from the
drift-subtracted process $Y-\int g\,dt$.

The same stopped-record argument as in Lemma~\ref{lemma:posterior}
removes the cap. Drift projection in $\mathcal G^{(i)}$ gives an
innovation Brownian motion $I^{(i)}$, and
\[
 dn_{i,t}=(c^2-n_{i,t}^2)a_{i,t}\,dI_t^{(i)},\qquad n_{i,0}=0.
\]
Different coordinates use different conditional filtrations; their
innovations are not asserted to be independent. It\^o isometry and
conditional Jensen imply
\begin{equation}\label{eq:vector-posterior}
 \mathbb E\|m_t\|^2
 \le\sum_i\mathbb E n_{i,t}^2
 \le c^4\mathbb E\int_0^t\|a_s\|^2ds.
\end{equation}
For the localization step, the left sides converge by boundedness and
the nonnegative stochastic-integral energies by monotone convergence.

Write $q(t)=\mathbb E\int_0^t\|a_s\|^2ds$. This is an absolutely
continuous deterministic function, with $q(0)=0$ and
$q'(t)=\mathbb E\|a_t\|^2$ almost everywhere. Twice applying
Cauchy--Schwarz gives
\[
 \mathbb E\int_0^1\langle m_t,a_t\rangle dt
 \ge-c^2\int_0^1\sqrt{q(t)q'(t)}\,dt
 \ge-\frac{c^2}{\sqrt2}q(1),
\]
because $\int_0^1qq'\,dt=q(1)^2/2$. Thus
\begin{equation}\label{eq:bayes-threshold}
 \mathbb E_\pi\mathcal J_{1,h}(a)
 \ge \frac{H^2}{2}
       +\left(\frac12-\frac{H^2}{d\sqrt2}\right)q(1).
\end{equation}
This proves the lower half of Theorem~\ref{thm:threshold}, including
the equality case. Zero action attains $H^2/2$ over the ball.

\subsection{Strict improvement over the entire ball}
The following uniform calculation uses the scalar policy
\eqref{eq:tanh}. Throughout this subsection $H,\tau$ are fixed and
$e=\varepsilon^2\downarrow0$; no joint small-$\tau$ limit is used.
For any true scalar parameter $v\in[-H,H]$,
\[
 \Xi_\tau=v\varepsilon^2+\varepsilon\rho Z,\qquad
 d\Xi_t=vf(t,\Xi_t)^2dt+f(t,\Xi_t)dB_t,\quad
 f(t,x)=-\frac{\tanh(Hx)}{H\sqrt{2t}},\ t>\tau .
\]
The feedback and drift are globally Lipschitz and bounded on
$[\tau,1]$, uniformly in $v$. The experiment is strongly well posed,
and its stopped versions are also well posed.
For $p=2,3,4$, It\^o localization and Gronwall give
\[
 \sup_{|v|\le H}\sup_{\tau\le t\le1}
 \mathbb E_v|\Xi_t|^p\le C_{p,H,\tau}\varepsilon^p.
\]
Indeed $|f|\le C|x|$, $|f|\le C$, and $|vf^2|\le C|x|$.
The global inequalities
\[
 \left|f(t,x)+\frac{x}{\sqrt{2t}}\right|
 \le\frac{H^2|x|^3}{3\sqrt{2t}},\qquad
 \left|f(t,x)^2-\frac{x^2}{2t}\right|
 \le\frac{H^2|x|^4}{3t}
\]
then yield, for $F_v(t)=\mathbb E_v\Xi_t^2$ and
$q_v(t)=\mathbb E_v\int_0^t a_s^2ds$,
\[
 F_v'=\frac{F_v}{2t}+O_{H,\tau}(\varepsilon^3),\quad
 F_v(\tau)=\varepsilon^2+v^2\varepsilon^4,\quad
 q_v'=\frac{F_v}{2t}+O_{H,\tau}(\varepsilon^4).
\]
Hence both $F_v(t)$ and $q_v(t)$ equal
$\varepsilon^2\sqrt{t/\tau}+O_{H,\tau}(\varepsilon^3)$ uniformly.
The exact identity $\mathbb E_v\Xi_t=vq_v(t)$ gives
$\mathbb E_va_t=-v\varepsilon^2/\sqrt{2\tau}
 +O_{H,\tau}(\varepsilon^3)$ for $t>\tau$.
The probe cross term is exactly zero because its sign is randomized.
Therefore
\begin{equation}\label{eq:uniform-tanh}
 \mathcal L_v(a^{\tau,\varepsilon^2})
 =\frac{v^2}{2}+\frac{\varepsilon^2}{\sqrt\tau}
   \left[\frac12-\frac{v^2(1-\tau)}{\sqrt2}\right]
   +O_{H,\tau}(\varepsilon^3),
\end{equation}
with a remainder uniform on $[-H,H]$.

Choose an independent random unit vector $D$ satisfying
$\mathbb E DD^\top=I_d/d$; choosing a coordinate axis uniformly suffices.
Use $a_t=Db_t$, where $b$ is \eqref{eq:tanh} applied to the same
scalar observation. Conditional on $D=u$, the scalar parameter is
$v=\langle h,u\rangle$. Exactly,
\[
 \frac12\|ub_t+h\|^2
 =\frac{\|h\|^2-v^2}{2}+\frac12(b_t+v)^2 .
\]
Averaging \eqref{eq:uniform-tanh} gives, uniformly on the whole ball,
\[
 \mathcal J_{1,h}(a)
 =\frac{\|h\|^2}{2}+\frac{\varepsilon^2}{\sqrt\tau}
   \left[\frac12-\frac{\|h\|^2(1-\tau)}{d\sqrt2}\right]
   +O_{H,\tau}(\varepsilon^3).
\]
If $H^4>d^2/2$, first fix
$0<\tau<1-d/(\sqrt2H^2)$ and put
$\eta=H^2(1-\tau)/(d\sqrt2)-1/2>0$.
For sufficiently small $\varepsilon$, the coefficient of $\|h\|^2$
in the explicit part is positive and the remainder is at most
$\eta\varepsilon^2/(2\sqrt\tau)$. The worst-case risk is then at most
$H^2/2-\eta\varepsilon^2/(2\sqrt\tau)$.
This proves strict minimax improvement, not just Bayesian improvement.

Finally, the deterministic change of variables
$\widehat Y_s=T^{-1/2}Y_{Ts}$, $b_s=T^{1/4}a_{Ts}$,
$v=T^{1/4}h$ preserves the record and gives
\begin{equation}\label{eq:scaling}
 C_d^T(H)=\sqrt T\,C_d^1(T^{1/4}H).
\end{equation}
This completes the proof of Theorem~\ref{thm:threshold}.

\subsection{A stopped pulse and the role of the full record}
An energy bound does not make the final score Gaussian conditional on its
final energy. In the scalar canonical experiment choose, for fixed $b>0$,
\[
 \sigma=\inf\{t:Y_t=b\}\wedge1,\qquad a_t=1_{\{t<\sigma\}}.
\]
This bounded rule is admissible and has energy $Q_1=\sigma$.
At $h=0$ its score is $\Xi_1=B_\sigma$, and
$\Xi_1=b$ on $\{\sigma<1\}$, an event of positive probability.
Thus conditioning on the energy over this event gives a point mass, not
$N(0,Q_1)$. Its likelihood is nonetheless exactly
$\exp\{hY_\sigma-h^2\sigma/2\}$.

Likewise a control of amplitude $\sqrt n$ over an interval of length
$1/n$ has energy one. The energy measures of such controls may concentrate
at a single physical time. All subsequent arguments retain these controls:
the task clock is distinct from the information clock, and likelihoods
are calculated before any conditioning on a final Gram matrix.

\section{Quantitative comparison of the control tasks}
\label{sec:valueproof}

We prove Theorem~\ref{thm:value}, including its approximation and full-filtration arguments.
Fix integers \(d,N\ge1\), \(0\le H<\infty\), and a nonconstant convex \(g\in C^4(\mathbb R)\)
with \(\|g'\|_\infty\le L\), \(0\le g''\le M\), and bounded \(g^{(3)},g^{(4)}\).
All vector norms are Euclidean.  The local parameters are
\(\theta=N^{-1/4}h\), \(\|h\|\le H\), and must belong to the model's parameter domain.
Each episode starts at zero and satisfies
\[
 dX_{k,t}=\langle\theta,U_{k,t}\rangle\,dt+dW_{k,t},
 \qquad 0\le t\le1,\quad U_{k,t}\in\mathbb R^d.
\]
The scalar Brownian drivers are independent across resets.  Admissible rules use
the entire observed past and a parameter-independent seed, independent of the driver,
which is not separately observed.  Each rule and all its cumulative energy-stopped
versions have nonexplosive unique laws at every comparison parameter.
Records use a common measurable space without parameter-dependent completion.
These conventions also apply to the canonical experiment
\[
 \begin{gathered}
 dY_s=\langle h,a_s\rangle\,ds+dB_s,\\
 \mathcal J_{T,h}(a)=\frac12\mathbb E_h\int_0^T\|a_s+h\|^2\,ds,\qquad
 C_d^T(H)=\inf_a\sup_{\|h\|\le H}\mathcal J_{T,h}(a).
 \end{gathered}
\]
The observation is scalar even when \(d>1\).  We write \(V_\theta(0,0)\) for the
known-parameter, one-episode optimal cost and set
\[
 \begin{gathered}
 r_N(\mathcal A,h)=\frac1{\sqrt N}
 \left\{\mathbb E_h\sum_{k=1}^N
 \left[\frac12\int_0^1\|U_{k,t}\|^2dt+g(X_{k,1})\right]
 -NV_{N^{-1/4}h}(0,0)\right\},\\
 C_{N,d}^g(H)=\inf_{\mathcal A}\sup_{\|h\|\le H}r_N(\mathcal A,h).
 \end{gathered}
\]

\subsection{Verification and uniform analytic estimates}

Write \(P_vf(x)=\mathbb E f(x+\sqrt v Z)\), where \(Z\sim N(0,1)\).
For \(A_k=\int_0^1\|U_{k,t}\|^2dt<\infty\), the Lipschitz bound on \(g\) gives
\begin{equation}\label{val:coercivity}
 \begin{split}
 \frac12A_k+g(X_{k,1})&\ge\frac12A_k+g(0)-L|W_{k,1}|-L\|\theta\|\sqrt{A_k}\\
 &\ge\frac14A_k+g(0)-L|W_{k,1}|-L^2\|\theta\|^2.
 \end{split}
\end{equation}
Thus the cost has an integrable lower bound, even if \(g\) is unbounded below.
Infinite expected energy implies infinite expected cost; infinite path energy
already gives infinite running cost when the nonexplosive terminal state is finite.
Conversely, finite expected energy makes the terminal cost absolutely integrable.
Zero control has finite cost and the oracle is bounded below, so finite-energy analysis discards no finite-risk algorithm.

Put \(\lambda=\|\theta\|^2\).  For \(\lambda>0\), define
\[
 V_\theta(t,x)=-\lambda^{-1}\log P_{1-t}(e^{-\lambda g})(x);
 \qquad V_0(t,x)=P_{1-t}g(x).
\]
Linear growth gives Gaussian exponential integrability; the four bounded derivatives justify the classical equation
\begin{equation}\label{val:hjb}
 V_t+\tfrac12V_{xx}-\tfrac{\lambda}{2}V_x^2=0,\qquad V(1,x)=g(x),
 \quad
 \inf_u\{\tfrac12\|u\|^2+p\langle\theta,u\rangle\}
 =-\tfrac{\lambda}{2}p^2.
\end{equation}
Let \(p_\theta=V_{\theta,x}\) and \(r_\theta=V_{\theta,xx}\).
Under the probability tilt proportional to
\(e^{-\lambda g(x+\sqrt{1-t}Z)}\), differentiation gives
\[
 p_\theta=\mathbb E_\lambda g',\qquad
 r_\theta=\mathbb E_\lambda g''-\lambda\operatorname{Var}_\lambda(g'),
 \qquad |p_\theta|\le L,\quad |r_\theta|\le M+\lambda L^2.
\]
These bounds make \(-\lambda p_\theta\) globally Lipschitz.  Twice differentiating \eqref{val:hjb} gives
\[
 r_t+\tfrac12r_{xx}-\lambda p_\theta r_x-\lambda r^2=0.
\]
For \(d\Xi_u=-\lambda p_\theta(u,\Xi_u)du+dB_u\), bounded-coefficient
Feynman--Kac, applied with the already bounded potential \(-\lambda r\), yields
\[
 r_\theta(t,x)=\mathbb E_{t,x}
 \left[e^{-\lambda\int_t^1r_\theta(u,\Xi_u)du}g''(\Xi_1)\right].
\]
First this proves \(r_\theta\ge0\); then the exponential is at most one, giving \(r_\theta\le M\) for every finite parameter.
With \(S(t,x)=P_{1-t}g'(x)\), the once-differentiated equation gives
\begin{equation}\label{val:gradient}
 \begin{split}
 p_\theta(t,x)-S(t,x)
 &=-\lambda\int_t^1P_{u-t}(p_\theta(u,\cdot)r_\theta(u,\cdot))(x)\,du,\\
 |S|&\le L,\quad 0\le S_x\le M,\quad
 |p_\theta-S|\le\|\theta\|^2LM(1-t).
 \end{split}
\end{equation}
For any finite-energy history-dependent control, It\^o's formula in each episode
has an integrable drift and a square-integrable martingale, since \(|p_\theta|\le L\).
The linear growth of \(V_\theta\) and
\(\mathbb E\sup_{t\le1}|X_{k,t}|^2<\infty\) justify removal of localization.
Completing the square and taking expectations gives
\begin{equation}\label{val:regret}
 r_N(h)=\frac1{2\sqrt N}\mathbb E_h\sum_k\int_0^1
 \|U_{k,t}+N^{-1/4}h\,p_{N^{-1/4}h}(t,X_{k,t})\|^2dt.
\end{equation}
The bounded Lipschitz feedback \(U=-\theta p_\theta\) attains zero excess cost, verifying the oracle in the full class.
Conditioning on preceding episodes proves the identity for adaptive resets.

Define
\[
 e_N=N^{-1/2}\sum_k\int_0^1\|U_{k,t}\|^2dt,\quad B_h=\mathbb E_he_N,\quad
 D_N=N^{-1}\sum_k\int_0^1S(t,X_{k,t})^2dt,
\]
\[
 \widetilde r_N(h)=\frac1{2\sqrt N}\mathbb E_h\sum_k\int_0^1
 \|U_{k,t}+N^{-1/4}hS(t,X_{k,t})\|^2dt.
\]
The inequality \(\|u\|^2\le2\|u+\theta p_\theta\|^2+2\|\theta\|^2L^2\)
and expansion of the two squares, using
\(\mathbb E_h\sum_k\int\|U_{k,t}\|dt\le N^{3/4}\sqrt{B_h}\), imply
\begin{equation}\label{val:errors}
 \begin{split}
 B_h&\le4r_N(h)+2\|h\|^2L^2,\\
 |r_N(h)-\widetilde r_N(h)|&\le\frac{\|h\|^3LM\sqrt{B_h}+\|h\|^4L^2M}{\sqrt N}.
 \end{split}
\end{equation}
The heat martingale and It\^o isometry identify the task constant:
\[
 g(W_1)-\mathbb E g(W_1)=\int_0^1S(t,W_t)dW_t,\qquad
 c_g=\operatorname{Var}(g(Z))=\mathbb E\int_0^1S(t,W_t)^2dt\in(0,L^2].
\]
Positivity follows from continuity, nonconstancy, and full Gaussian support.  Coupling to the actual drivers gives
\[
 |X_{k,t}-W_{k,t}|
 \le N^{-1/4}\|h\|\int_0^t\|U_{k,u}\|du
\]
and \(S^2\) is \(2LM\)-Lipschitz, the expected difference between \(D_N\)
and \(N^{-1}\sum_k\int_0^1S(t,W_{k,t})^2dt\) is at most
\(2LM\|h\|\sqrt{B_h}/\sqrt N\).
The latter summands are independent, lie in \([0,L^2]\), and have mean \(c_g\).
Their variance bound and Cauchy--Schwarz therefore give
\begin{equation}\label{val:clock-concentration}
 \mathbb E_h|D_N-c_g|
 \le\frac{L^2+2LM\|h\|\sqrt{B_h}}{\sqrt N}.
\end{equation}
Only the first total-energy moment was used; individual episodes and short intervals need not have small energy.

\subsection{Weak controls and causal standard implementations}

Let \(Y\) be scalar Brownian motion relative to the \emph{entire} reference filtration \((\mathcal F_s)\).
For predictable \(v\in\mathbb R^d\), write
\[
 \begin{gathered}
 \mathsf M_t^v=\int_0^tv_s\,dY_s,\quad
 \mathsf Q_t^v=\int_0^tv_sv_s^\top ds,\quad q_t^v=\operatorname{tr}\mathsf Q_t^v,\\
 L_h^v(t)=\exp\{h^\top\mathsf M_t^v-\tfrac12h^\top\mathsf Q_t^vh\}.
 \end{gathered}
\]
A weak strategy requires, for every \(\|h\|\le H\), finite expected energy,
finite worst-case risk, and the full-record density
\(\left.d\mathbb P_h^v/d\mathbb P_0\right|_{\mathcal F_t}=L_h^v(t)\),
where \(L_h^v\) is a true martingale.  Auxiliary information must obey this density
and Brownian property, supplying neither another parameter likelihood nor future noise.

\begin{lemma}[Weak-to-standard comparison]\label{val:weak}
For every weak strategy \(a\) on \([0,T]\) and \(\eta>0\), there exists a bounded
standard strategy \(b\), constant on a finite deterministic grid apart from one total-energy cap, such that
\(\sup_{\|h\|\le H}\{\mathcal J_{T,h}(b)-\mathcal J_{T,h}(a)\}<\eta\).
The weak, standard, and bounded grid-with-cap classes have the same minimax value.
\end{lemma}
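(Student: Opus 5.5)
The plan is to approximate the weak strategy in three stages, each costing at most $\eta/3$ uniformly over the compact ball $\|h\|\le H$. Throughout, the loss will be written through the likelihood representation,
\[
 \mathcal J_{T,h}(v)=\tfrac12\mathbb E_0\Bigl[L_h^v(T)\int_0^T\|v_s+h\|^2ds\Bigr],
\]
which is available for weak strategies by the defining full-record density. Every approximation will be measured under the single reference measure $\mathbb P_0$, where $Y$ is Brownian in the whole filtration.

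\textbf{Stage 1: one-sided energy cap.} Replace $a$ by $a^K=a\,1_{\{q_s^a\le K\}}$, i.e.\ switch off once cumulative energy reaches $K$. Since the losses are nonnegative, the only change is on $\{q_T^a>K\}$. There the capped loss is at most $\frac12\int_0^T\|a_s+h\|^2ds+\frac12 T\|h\|^2$, the second term being the zero-action tail.

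\begin{itemize}
\item The stopped likelihood is $L_h^a$ stopped at the cap time, and this is a true martingale (bounded energy, Novikov).
\item Finite expected energy at each $h$ gives $\mathbb P_h(q_T^a>K)\to0$.
\item To make this uniform in $h$, I will use continuity of $h\mapsto\mathbb P_h(A)$ for fixed events (via $L^1(\mathbb P_0)$ continuity of $L_h^a(T)$, Scheffé) together with Dini/compactness. Uniform integrability of the energy under $\mathbb P_h$ across $h$ then follows from the bound $B_h\le4\mathcal J+2H^2T$ and monotone convergence argued with compactness.
\end{itemize}

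\textbf{Stage 2: amplitude bound and grid discretization.} The capped $a^K$ has bounded energy, so it lies in $L^2(\mathbb P_0\otimes ds)$.

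\begin{itemize}
\item By standard density of simple predictable processes, approximate it by $\tilde a$, piecewise constant on a deterministic grid and bounded by some $R$. Each value is measurable with respect to $\mathcal F_{t_j}$ of the reference filtration, possibly including auxiliary information.
\item Reimpose the energy cap $K+1$.
\item Under $\mathbb P_0$, $\mathsf M^v$ and $\mathsf Q^v$ depend $L^2$-continuously on $v$, and likelihoods with bounded energy have uniformly bounded $L^p$ moments. Hence $L_h^{\tilde a}(T)\to L_h^{a^K}(T)$ in $L^1(\mathbb P_0)$ uniformly in $\|h\|\le H$, and the losses converge uniformly by Hölder's inequality.
\end{itemize}

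\textbf{Stage 3: causal conditional kernel.} The grid values of $\tilde a$ may depend on auxiliary information beyond the record of $Y$. The proposed construction is as follows.

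\begin{itemize}
\item At grid time $t_j$, draw $\tilde a_{t_j}$ from its regular conditional law given the observed $Y$-path on $[0,t_j]$ and the previously drawn values. The randomization comes from an initial uniform seed.
\item Because $Y$ is $\mathcal F$-Brownian, future increments of $Y$ are independent of $\mathcal F_{t_j}$ under $\mathbb P_0$. So the joint law of ($Y$, action sequence) under $\mathbb P_0$ is reproduced exactly.
\item The likelihood $L_h^v$ and the loss are functionals of that joint law (they involve only $v$ and $Y$). Hence $\mathcal J_{T,h}$ is unchanged for every $h$.
\end{itemize}

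The resulting rule is bounded, constant on a deterministic grid apart from the energy cap, and standard: it has a unique nonexplosive law given by the density. Taking infima over strategies gives equality of the three minimax values. The inequalities standard $\ge$ weak $\ge$ bounded-grid are trivial inclusions in the appropriate direction once standard rules are checked to be weak strategies, which follows by Lemma~\ref{lemma:posterior}-type stopped-likelihood identification.

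\textbf{Main obstacle.} I expect Stage 3, with the weak-to-standard reduction interacting with Stage 1's uniformity, to be hardest. One must verify carefully that the conditional-kernel rule induces exactly the same joint law under every $\mathbb P_h$, not only under $\mathbb P_0$. I will argue this through the density, since $\mathbb P_h$ is determined by $\mathbb P_0$ and a functional of the same pair.
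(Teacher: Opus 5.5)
Your proposal is correct and follows essentially the same route as the paper. It uses a one-sided energy cap that costs only the zero-action tail, then an $L^2(\mathbb P_0\otimes ds)$ approximation by bounded grid processes recapped at $K+1$ with likelihood and loss continuity uniform in $h$, and finally a causal conditional-kernel implementation that reproduces the reference joint law of the observation and the candidate coefficients, with the parameter laws recovered by the same likelihood tilt. Two points can be simplified or corrected. The uniform tail bound in Stage~1 follows at once from Markov's inequality and $\sup_{\|h\|\le H}\mathbb E_h q_T^a\le 4\sup_h\mathcal J_{T,h}(a)+2H^2T$, so you need neither Dini nor any uniform integrability. Also, your closing chain of inequalities has a direction slip: the class inclusions give (weak value) $\le$ (standard value) $\le$ (grid value), and the approximation supplies the reverse inequality.
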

\begin{proof}
First, standard strategies belong to the weak class.  For
\(\sigma_\kappa=\inf\{t:q_t^a\ge\kappa\}\wedge T\), the stopped stochastic logarithm
has bracket at most \(H^2\kappa\), so its exponential is a true martingale.
Girsanov and uniqueness of the energy-stopped experiment identify its law on
\(\mathcal F_{\sigma_\kappa}\) with the corresponding original parameter law.
The seed distribution is unchanged, since the density has conditional mean one
given \(\mathcal F_0\); the new Brownian driver is independent of that seed.
For \(A\in\mathcal F_T\), \(A\cap\{q_T^a<\kappa\}\in\mathcal F_{\sigma_\kappa}\):
its intersection with \(\{\sigma_\kappa\le t\}\) is empty for \(t<T\).  Consequently
\[
 \mathbb P_h(A\cap\{q_T^a<\kappa\})
 =\mathbb E_0[1_A1_{\{q_T^a<\kappa\}}L_h^a(T)].
\]
Finite path energy under both laws and monotone convergence give the full density with mean one.
The nonnegative local martingale loses no mass and is a true martingale;
its positive terminal value gives equivalent laws and permits a common completion.

For a weak strategy, set \(a^\kappa=a1_{\{s\le\sigma_\kappa\}}\) and
\emph{redefine} its laws by \(d\mathbb P_h^{a^\kappa}=L_h^a(\sigma_\kappa)d\mathbb P_0\).
Optional stopping identifies only the stopped records with those of \(a\).
Writing \(R=\sup_h\mathcal J_{T,h}(a)\) and \(\mathfrak M=4R+2H^2T\), we have
\(\sup_h\mathbb E_h^a q_T^a\le\mathfrak M\).  The original tail loss is nonnegative,
whereas the new zero-action tail is at most \(H^2T/2\); thus
\begin{equation}\label{val:cap}
 \mathcal J_{T,h}(a^\kappa)\le\mathcal J_{T,h}(a)
 +\tfrac12H^2T\,\mathbb P_h^a(q_T^a\ge\kappa)
 \le\mathcal J_{T,h}(a)+\frac{H^2T\mathfrak M}{2\kappa}.
\end{equation}
This is a one-sided comparison, requiring no uniform integrability of original energies.

Fix \(\kappa\), put \(v=a^\kappa\), and \(C=\kappa+1\).
Predictable rectangles \((s,t]\times A\), \(A\in\mathcal F_s\), generate the predictable sigma-field.
Finite simple approximation and truncation give bounded grid processes \(u^n\) with
\(\mathbb E_0\int\|u^n-v\|^2ds\to0\).
Stop \(u^n\) at its \emph{own} energy \(C\), obtaining \(c^n\).
Then \(q_T^{u^n}\to q_T^v\) in \(L^1\), so
\(\mathbb P_0(E_n)\to0\) for \(E_n=\{q_T^{u^n}\ge C\}\), and
\[
 q_T^{c^n}\le C,\qquad
 \mathbb E_0\int\|c^n-v\|^2ds
 \le\mathbb E_0\int\|u^n-v\|^2ds+(2C+2\kappa)\mathbb P_0(E_n)\longrightarrow0.
\]
For every new control \(w=v,c^n\), use the new law
\(d\mathbb P_h^w=L_h^w(T)d\mathbb P_0\); the deterministic cap guarantees its validity.
Vector It\^o isometry and the operator norm inequality imply
\[
 \begin{gathered}
 \mathbb E_0\|\mathsf M_T^{c^n}-\mathsf M_T^v\|^2
 =\mathbb E_0\int\|c^n-v\|^2ds\to0,\\
 \|\mathsf Q_T^{c^n}-\mathsf Q_T^v\|_{\rm op}
 \le(\sqrt C+\sqrt\kappa)\left(\int\|c^n-v\|^2ds\right)^{1/2}.
 \end{gathered}
\]
The supremum log-likelihood difference is at most
\(H\|\Delta\mathsf M\|+\tfrac12H^2\|\Delta\mathsf Q\|_{\rm op}\to0\) in probability.
Choose a \(1/2\)-net \(\mathcal N\) of the unit sphere with
\(|\mathcal N|\le5^d\): a maximal separated set has disjoint radius-\(1/4\)
balls inside the radius-\(5/4\) ball.  Then \(\|x\|\le2\max_{u\in\mathcal N}u^\top x\).
For \(q_T^w\le C\) and \(p>1\), unit-projection exponential martingales, with brackets bounded by \(C\), give
\begin{equation}\label{val:net}
 \begin{split}
 \mathbb E_0\sup_{\|h\|\le H}(L_h^w(T))^p
 &\le\mathbb E_0e^{pH\|\mathsf M_T^w\|}\\
 &\le\sum_{u\in\mathcal N}\mathbb E_0e^{2pH u^\top\mathsf M_T^w}
 \le5^d e^{2p^2H^2C}.
 \end{split}
\end{equation}
Without conditioning on the random Gram matrix, tight log-likelihood suprema,
continuity of the exponential, and \eqref{val:net} give
\(\mathbb E_0\sup_h|L_h^{c^n}(T)-L_h^v(T)|\to0\).
Moreover
\[
 \begin{gathered}
 \Gamma_h(w)=\tfrac12q_T^w+h^\top\int_0^Tw_sds+\tfrac12\|h\|^2T
 \in[0,C+H^2T],\\
 \Delta_n:=\sup_h|\Gamma_h(c^n)-\Gamma_h(v)|\longrightarrow0
 \end{gathered}
\]
in every finite \(L^p(\mathbb P_0)\), by the preceding norm approximation and boundedness.
With \(D=C+H^2T\), splitting the likelihood and loss differences gives
\[
 \sup_h|\mathcal J_{T,h}(c^n)-\mathcal J_{T,h}(v)|
 \le D\mathbb E_0\sup_h|L_h^{c^n}-L_h^v|
 +\|\sup_hL_h^v\|_{L^2}\|\Delta_n\|_{L^2}\longrightarrow0.
\]

To implement a fixed \(c^n\), write its candidate coefficients as \(\beta_j\in\mathbb R^d\)
on \((t_j,t_{j+1}]\).  The standard Borel history space admits reference conditional kernels
\[
 K_j(db\mid\mathsf H_j),\qquad
 \mathsf H_j=(Y_{[0,t_j]},\beta_0,\ldots,\beta_{j-1}).
\]
Choose bounded versions on null histories.  Independence of the next Brownian segment
from \(\mathcal F_{t_j}\) factors the joint law into these kernels and Wiener segment laws.
Preload independent uniforms; measurable kernel sampling at \(t_j\) uses its own uniform and only \(\mathsf H_j\).
Induction over segments reproduces the joint law of \((Y,\beta_0,\ldots,\beta_{m-1})\).
If \(q_j\) is the energy already spent, the execution endpoint \(r_j\) is \(t_j\)
when \(q_j=C\), is \(t_{j+1}\) when \(q_j<C,\beta_j=0\), and otherwise is
\[
 r_j=t_j+\min\{t_{j+1}-t_j,(C-q_j)/\|\beta_j\|^2\}.
\]
It is known at \(t_j\).  After the cap, still sample candidates internally to preserve the reference record, but execute zero.
On this record the score, Gram matrix, and action integral are the Borel functions
\[
 \mathsf M_T=\sum_j\beta_j(Y_{r_j}-Y_{t_j}),\quad
 \mathsf Q_T=\sum_j\beta_j\beta_j^\top(r_j-t_j),\quad
 \int_0^Tc_sds=\sum_j\beta_j(r_j-t_j).
\]
Tilting the common reference law by these same likelihood and loss functions matches every parameter risk.
In the standard filtration \(\mathcal G_t=\sigma(\text{entire seed},Y_{[0,t]})\),
the capped density is a true martingale and
\(\mathbb E_0[L_h(T)\mid\mathcal G_0]=1\).
The entire seed retains its product-uniform law, and Girsanov makes
\(Y-\int_0^\cdot\langle h,c_s\rangle ds\) Brownian relative to \(\mathcal G\),
hence independent of that entire seed.  The observation itself need not be independent.
Given seed and driver, each segment has its chosen constant drift up to \(r_j\), then zero drift.
Recursion gives a unique nonexplosive strong solution, also under further caps; measurable kernels suffice.
Finally choose \(\kappa\) large in \eqref{val:cap}, then \(n\) large, to obtain
the stated one-sided approximation.  The class inclusions and this approximation
give equality of infima without a minimizer or a general weak-to-strong representation.
\end{proof}

\subsection{The task clock and its full information}

Fix an original algorithm of finite worst-case risk.  Concatenate episode
increments into \(\bar X_r\), \(0\le r\le N\), retaining the full record filtration
\(\mathcal F_r\); the reset state is \(X_{k,t}=\bar X_{k-1+t}-\bar X_{k-1}\).
The localization proof in Lemma~\ref{val:weak}, applied on \([0,N]\) with
integrand \(N^{-1/4}U_r\), gives the true full-record density
\begin{equation}\label{val:physical-density}
 \Lambda_h(r)=\exp\left\{
 h^\top N^{-1/4}\int_0^rU_u\,d\bar X_u
 -\tfrac12h^\top\left(N^{-1/2}\int_0^rU_uU_u^\top du\right)h\right\}.
\end{equation}
Equation~\eqref{val:errors} gives finite energies at \(h\) and zero; stopped uniqueness is assumed.
Thus \(\mathbb P_h\) and \(\mathbb P_0\) are equivalent on the full record.
If \(g\) is nonaffine, \(S_x(t,x)=P_{1-t}g''(x)>0\) for \(t<1\);
the positive Gaussian kernel integrates a nonzero nonnegative continuous function.
Thus \(S(t,\cdot)\) has at most one zero.  Reset states at \(t>0\) are atomless
under \(\mathbb P_0\), hence under every \(h\).  Tonelli, ignoring finitely many endpoints, gives
\[
 \int_0^N1_{\{S_r=0\}}dr=0,\qquad
 \int_0^N1_{\{S_r=0\}}\|U_r\|^2dr=0,\qquad S_r=S(t(r),X_r^{\rm ep}).
\]
Finite path energy gives the second equality; zero bracket and drift integral make the score there vanish.
For nonconstant affine \(g\), \(S\) is a nonzero constant and the same conclusions hold.
Consequently \(\tau(r)=N^{-1}\int_0^rS_u^2du\) is continuous and strictly increasing,
with \(\tau(N)=D_N\le L^2\); no lower bound on \(|S|\) is needed.

Add an independent scalar Brownian \(V\) only after physical time \(N\), setting
\[
 \bar\tau(r)=
 \begin{cases}\tau(r),&r\le N,\\D_N+r-N,&r>N,\end{cases}
 \quad
 \mathcal Y_r=
 \begin{cases}N^{-1/2}\int_0^rS_u\,d\bar X_u,&r\le N,\\
 \mathcal Y_N+V_{r-N},&r>N.\end{cases}
\]
The extended filtration is \(\bar{\mathcal F}_r=\mathcal F_r\) for \(r\le N\),
and \(\mathcal F_N\vee\sigma(V_u:u\le r-N)\) thereafter; it never reveals
\(\mathcal F_N\) early.  Define
\[
 \rho_s=\inf\{r:\bar\tau(r)>s\},\qquad
 \mathcal G_s=\bar{\mathcal F}_{\rho_s},\qquad Y_s=\mathcal Y_{\rho_s}.
\]
Continuity and strict increase give \(\bar\tau(\rho_s)=s\) and
\(\{\rho_s\le r\}=\{\bar\tau(r)\ge s\}\); hence \(\rho_s\le N+s\) is a bounded
physical stopping time.  Also
\(\{D_N\le s\}=\{\rho_s\ge N\}\in\mathcal G_s\), so \(D_N\) is a task stopping time.
The physical martingale
\(\mathcal Y_r-N^{-3/4}\int_0^{r\wedge N}S_u\langle h,U_u\rangle du\)
has bracket \(\bar\tau(r)\).  Time change and L\'evy's characterization therefore
give a Brownian \(B^h\) relative to the \emph{whole} \(\mathcal G\), independent of its initial seed.
Define \(\alpha_s=N^{1/4}U_{\rho_s}/S_{\rho_s}\) for \(s\le D_N,S_{\rho_s}\ne0\),
and zero otherwise.  Continuous stopping-time change preserves predictability;
the stochastic substitution follows first for simple integrands and then by local
\(L^2(d\tau)\) approximation.  The zero-set calculation prevents lost energy or score.
For every \(s\), substitution gives
\begin{equation}\label{val:substitution}
 \begin{split}
 dY_s&=\langle h,\alpha_s\rangle ds+dB_s^h,\\
 \int_0^s\alpha_u\,dY_u&=N^{-1/4}\int_0^{\rho_s\wedge N}U_r\,d\bar X_r,\\
 \int_0^s\alpha_u\alpha_u^\top du
 &=N^{-1/2}\int_0^{\rho_s\wedge N}U_rU_r^\top dr.
 \end{split}
\end{equation}
In particular \(\int_0^{D_N}\|\alpha_s\|^2ds=e_N\) and
\(\widetilde r_N(h)=\tfrac12\mathbb E_h\int_0^{D_N}\|\alpha_s+h\|^2ds\).
Optional sampling of the extended density \(\Lambda_h(r\wedge N)\) at
the bounded physical stopping time \(\rho_s\) proves
\begin{equation}\label{val:task-density}
 \left.\frac{d\mathbb P_h}{d\mathbb P_0}\right|_{\mathcal G_s}
 =\Lambda_h(\rho_s\wedge N)
 =\exp\left\{h^\top\int_0^s\alpha_u\,dY_u
 -\tfrac12h^\top\left(\int_0^s\alpha_u\alpha_u^\top du\right)h\right\}.
\end{equation}
This true density covers inverse physical time, the termination flag, and all retained observations.
It asserts neither Gaussianity conditional on a random Gram matrix nor equality with the natural filtration of \(Y\).

\subsection{Two comparisons and the uniform constant}

\begin{proof}[Completion of the proof of Theorem~\ref{thm:value}]
Put \(T=c_g>0\).  For the lower comparison, set \(\zeta=D_N\wedge T\),
take a further independent Brownian \(\beta\), and use precisely
\[
 \begin{gathered}
 \mathcal H_s=\mathcal G_{s\wedge\zeta}
 \vee\sigma\{\beta_{(u-\zeta)_+}:0\le u\le s\},\\
 Y_s^{\rm cut}=Y_{s\wedge\zeta}+\beta_{(s-\zeta)_+},\qquad
 \alpha_s^{\rm cut}=\alpha_s1_{\{s\le\zeta\}},\quad 0\le s\le T.
 \end{gathered}
\]
Use common usual augmentation.  All original information freezes at \(\zeta\), or physical time \(\rho_\zeta\le N\).
The spliced innovation is a continuous martingale of bracket
\(s\wedge\zeta+(s-\zeta)_+=s\), hence Brownian relative to \(\mathcal H\).
Independent splicing adds no likelihood: \eqref{val:task-density} stops at
\(\Lambda_h(\rho_{s\wedge\zeta})\), exactly the score/Gram exponential of \(\alpha^{\rm cut}\).
Thus this is a weak strategy, with energy at most \(e_N\) and risk at most
\(B_h+H^2T\).  Nonnegative discarded loss and the zero-action extension give
\begin{equation}\label{val:lower-prefix}
 \mathcal J_{T,h}(\alpha^{\rm cut})
 \le\widetilde r_N(h)+\tfrac12\|h\|^2\mathbb E_h(T-D_N)_+.
\end{equation}
Lemma~\ref{val:weak} now bounds \(C_d^T(H)\) by the supremum of this right-hand side.

For the upper comparison, choose by that lemma a bounded grid-with-cap standard
strategy \(a\) with worst-case risk at most \(C_d^T(H)+\eta\), \(0<\eta\le1\),
and extend its action by zero after \(T\).  Execute in physical time
\begin{equation}\label{val:recovery}
 U_r=N^{-1/4}S_r a_{\tau(r)}.
\end{equation}
This uses only past observations: with \(F(t,x)=P_{1-t}g(x)\) and \(\mu=F(0,0)\),
It\^o's formula gives the parameter-independent observable path functional
\[
 \mathcal Y_{k-1+t}=N^{-1/2}
 \left[\sum_{j<k}(g(X_{j,1})-\mu)+F(t,X_{k,t})-\mu\right].
\]
At each task-grid hitting time, the available past path and seed select the next coefficient \(b\).
Until the next grid point, reset, or energy cap, the scalar state drift is
\(N^{-1/2}\langle h,b\rangle S(t,x)\), bounded and globally Lipschitz in \(x\).
Finite successive strong constructions give nonexplosion and pathwise uniqueness, also under further energy stopping.
The finite bound on \(b\) proves admissibility only, not the eventual error estimate.
On the proof space, solve the standard grid strategy on all of \([0,T]\) using
the time-changed \(B^h\) and the same independent seed, obtaining \((Y^{\rm G},a^{\rm G})\).
Segmentwise uniqueness identifies it with the recovered path up to \(D_N\wedge T\).
If \(D_N<T\), continue with the independent post-endpoint Brownian, not supplied to the original algorithm.
The complete Gaussian law follows from the driver and seed, without conditioning on \(D_N\)
or requiring it to be a natural-filtration stopping time.
Pathwise prefix comparisons consequently yield
\begin{equation}\label{val:upper-prefix}
 \begin{split}
 e_N&=\int_0^{D_N\wedge T}\|a_s^{\rm G}\|^2ds
       \le\int_0^T\|a_s^{\rm G}\|^2ds,\\
 \widetilde r_N(h)&\le\mathcal J_{T,h}(a)
       +\tfrac12\|h\|^2\mathbb E_h(D_N-T)_+.
 \end{split}
\end{equation}
Only the proxy loss has this exact task-time interpretation.

Zero control bounds both values by \(H^2L^2/2\).
For the lower comparison, consider rules satisfying
\[
 \sup_h r_N(h)\le H^2L^2/2+1.
\]
Larger risks already satisfy the desired lower bound.
Equation~\eqref{val:errors} gives \(B_h\le B_*\), where
\[
 B_*:=4H^2L^2+4.
\]
For the recovery, its complete Gaussian energy satisfies independently
\[
 \mathbb E_h\int_0^T\|a_s\|^2ds
 \le4\mathcal J_{T,h}(a)+2H^2T
 \le4(C_d^T(H)+1)+2H^2T\le B_*.
\]
Thus \eqref{val:upper-prefix} bounds original energy without circular use of original risk.
Adding \eqref{val:errors} and half \(H^2\) times \eqref{val:clock-concentration} gives, in either direction,
\[
 \begin{split}
 K_{H,L,M}&=H^3LM\sqrt{B_*}+H^4L^2M+\tfrac12H^2(L^2+2LMH\sqrt{B_*})\\
 &=2H^3LM\sqrt{B_*}+H^4L^2M+\tfrac12H^2L^2.
 \end{split}
\]
Taking infima in \eqref{val:lower-prefix} and \eqref{val:upper-prefix}, and then
letting \(\eta\downarrow0\) for each fixed \((d,N)\), proves
\[
 |C_{N,d}^g(H)-C_d^{c_g}(H)|\le K_{H,L,M}/\sqrt N.
\]
The net constant affects only approximation precision, introducing no dimension, grid, amplitude, or cap factor into \(K_{H,L,M}\).
No minimax interchange, optimizer, uniform energy integrability, or trajectory limit is used.
Finally, the invertible deterministic change
\[
 \widehat Y_t=T^{-1/2}Y_{Tt},\qquad
 \widehat a_t=T^{1/4}a_{Tt},\qquad \widehat h=T^{1/4}h
\]
preserves the scalar observation, admissibility, and energy stopping.
Its loss is multiplied by \(\sqrt T\), so
\(C_d^T(H)=\sqrt T\,C_d^1(T^{1/4}H)\), completing the claimed comparison.
For \(H=0\) both values are zero.  If \(g(x)=b+mx\), \(m\ne0\), then
\(p_\theta=S=m\) and \(D_N=c_g=m^2\), so both error sources vanish and
\(C_{N,d}^g(H)=C_d^{m^2}(H)\) exactly.  Constant \(g\) instead gives zero minimax
value directly from the action cost; no division by \(S\) is made in that case.
\end{proof}

\section{Near-Critical Bounds and the Full Parameter Interval}
\label{crit:sec:bounds}

We work in the scalar canonical experiment
\[
 dY_t=ha_t\,dt+dB_t,\qquad
 \mathcal L_h(a)=\frac12 \mathbb E_h\int_0^1(a_t+h)^2\,dt,\qquad
 C_1^1(H)=\inf_a\sup_{|h|\le H}\mathcal L_h(a).
\]
The admissible class retains the full observation history, independent parameter-free
randomization, and common well-posedness for the original and energy-stopped experiments.
No common amplitude bound is imposed on this class.  Write
\begin{equation}
 H_c=2^{-1/4},\qquad \delta=\sqrt2H^2-1,\qquad
 \Delta(H)=H^2/2-C_1^1(H),\qquad
 \pi_H=\tfrac12(\delta_{-H}+\delta_H).
 \label{crit:notation}
\end{equation}
Expectations without a parameter subscript are under $\mathbb P_\pi=\tfrac12(\mathbb P_H+\mathbb P_{-H})$.
Set $Q_t=\int_0^t a_s^2\,ds$, $q(t)=\mathbb E_\pi Q_t$, and $Q=q(1)$;
thus $Q_t$ is random, whereas $Q$ is a number.
Finite endpoint risks imply $Q<\infty$, since $\mathbb E_hQ_1\le4\mathcal L_h(a)+2h^2$.
The posterior identity in Lemma~\ref{lemma:posterior} reads
\begin{equation}
 \Xi_t=\int_0^t a_s\,dY_s,\qquad
 m_t=H\tanh(H\Xi_t),\qquad
 dm_t=(H^2-m_t^2)a_t\,dI_t,\qquad m_0=0,
 \label{crit:posterior}
\end{equation}
Here $I$ is Brownian motion in the full mixture filtration, including the seed.
Define the endpoint Bayes improvement
\begin{equation}
 \begin{aligned}
 b_H(a)=\frac{H^2}{2}-\frac{\mathcal L_H(a)+\mathcal L_{-H}(a)}2
       &=-\frac Q2-\mathbb E_\pi\int_0^1m_ta_t\,dt,\\
 \Delta(H)&\le\sup_a b_H(a).
 \end{aligned}
 \label{crit:bayes-direction}
\end{equation}
The last inequality does not assert that the endpoint prior is least favorable.

\subsection{A Potential Bound for All Admissible Algorithms}
\label{crit:subsec:potential}

\begin{lemma}
\label{crit:lem:potential}
For every admissible algorithm with finite endpoint risks, let $v(t)=\mathbb E_\pi m_t^2$.  Then
\begin{equation}
 v(t)\le\kappa_H(q(t)),\qquad
 \kappa_H(u)=\frac{H^4u}{1+H^2u/3},
 \label{crit:potential-bound}
\end{equation}
and, with $z=H^2Q/3$,
\begin{equation}
 b_H(a)\le3\sqrt{z-\log(1+z)}-\frac Q2.
 \label{crit:scalar-energy-bound}
\end{equation}
If $0<\delta<1/2$ and $b_H(a)>0$, then
\begin{equation}
 \begin{gathered}
 Q<\frac{9\delta}{H^2(1-2\delta)},\\
 b_H(a)\le\frac{\delta Q}{2}
       -\frac{H^2(1-2\delta)}{18}Q^2
       \le\frac{9\delta^2}{8H^2(1-2\delta)}.
 \end{gathered}
 \label{crit:positive-energy}
\end{equation}
\end{lemma}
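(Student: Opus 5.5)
The plan for \eqref{crit:potential-bound} is to compare $m_t^2$ with $q(t)$ through a convex potential, namely the inverse of $\kappa_H$. Put
\[
 \psi_H(x)=\kappa_H^{-1}(x)=\frac{x}{H^4-H^2x/3},\qquad 0\le x\le H^2 .
\]
This function is increasing and convex on $[0,H^2]$, and $\psi_H(x)<\infty$ there because $H^2<3H^2$.

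\emph{Drift of the potential.} From \eqref{crit:posterior}, $x_t=m_t^2$ satisfies
\[
 dx_t=2m_t\,dm_t+(H^2-x_t)^2a_t^2\,dt .
\]
Hence the drift of $\psi_H(x_t)$ is
\[
 (H^2-x)^2a_t^2\bigl[\psi_H'(x)+2x\psi_H''(x)\bigr].
\]
By the scaling $x=H^2y$ it suffices to take $H=1$. Then $\psi=3y/(3-y)$, $\psi'=9/(3-y)^2$ and $\psi''=18/(3-y)^3$, so the bracket times $(1-y)^2$ equals
\[
 \frac{27(1-y)^2(1+y)}{(3-y)^3}.
\]
This equals $1$ at $y=0$. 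I will check that it is at most $1$ on $[0,1]$; this is an elementary one-variable inequality, and it is the only genuinely computational point in the first part. Granting it, the drift of $\psi_H(m_t^2)$ is at most $a_t^2$.

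\emph{Deducing the bound.} The martingale part is a stochastic integral with bounded integrand times $a$, so it has zero mean once we localize by energy stopping as in Lemma~\ref{lemma:posterior}. Boundedness of $m$ and monotone convergence then remove the localization. This gives $\mathbb E_\pi\psi_H(m_t^2)\le q(t)$. Jensen's inequality for the convex function $\psi_H$ yields $\psi_H(v(t))\le q(t)$, and inverting the increasing map gives $v(t)\le\kappa_H(q(t))$.

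\emph{The energy bound \eqref{crit:scalar-energy-bound}.} Recall $b_H(a)=-Q/2-\mathbb E_\pi\int m_ta_t\,dt$ from \eqref{crit:bayes-direction}. I bound the cross term exactly as in the derivation of \eqref{eq:bayes-threshold}: Cauchy--Schwarz in $\omega$ gives
\[
 -\mathbb E\int_0^1 m_ta_t\,dt\le\int_0^1\sqrt{\kappa_H(q(t))\,q'(t)}\,dt .
\]
A second Cauchy--Schwarz on $[0,1]$ against the constant $1$ bounds this by $\bigl(\int_0^Q\kappa_H(u)\,du\bigr)^{1/2}$. Substituting $u=3w/H^2$ gives
\[
 \int_0^Q\kappa_H(u)\,du=9\int_0^z\frac{w}{1+w}\,dw=9\bigl(z-\log(1+z)\bigr),
\]
which is \eqref{crit:scalar-energy-bound}.

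\emph{The positive-energy estimates \eqref{crit:positive-energy}.} I use
\[
 z-\log(1+z)\le \tfrac{z^2}{2}\bigl(1-\tfrac{2z}{3}+\tfrac{z^2}{2}\bigr)\le\tfrac{z^2}{2}\bigl(1-\tfrac{z}{3}\bigr)\quad\text{for } 0\le z\le1,
\]
together with $\sqrt{1-y}\le1-y/2$. This gives
\[
 3\sqrt{z-\log(1+z)}\le\frac{H^2Q}{\sqrt2}\Bigl(1-\frac{H^2Q}{18}\Bigr),
\]
and since $H^2/\sqrt2=(1+\delta)/2$ this yields
\[
 b_H\le\frac{\delta Q}{2}-\frac{(1+\delta)H^2}{36}Q^2 .
\]
To reach the stated coefficient $H^2(1-2\delta)/18$, I will redo the expansion with one more Taylor term, or use the exact bound $z-\log(1+z)\le z^2/(2(1+z/3))$ directly. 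For larger $z$ I will note that $3\sqrt{z-\log(1+z)}-Q/2<0$ already, since the left side grows like $\sqrt z$ while $Q/2$ is linear in $z$. That forces $b_H\le0$, so positivity confines us to the small-$z$ regime.

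From $b_H>0$ and the quadratic bound, $Q<9\delta/(H^2(1-2\delta))$. Maximizing $\delta Q/2-AQ^2$ over $Q$ gives $\delta^2/(16A)=9\delta^2/(8H^2(1-2\delta))$.

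\emph{Main obstacle.} I expect the main difficulty to be tuning the constants in this final expansion so that exactly the factor $(1-2\delta)/18$ appears. The potential argument itself should be routine once the inequality $27(1-y)^2(1+y)\le(3-y)^3$ on $[0,1]$ is verified.
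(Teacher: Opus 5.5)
Your potential argument and the energy bound \eqref{crit:scalar-energy-bound} are correct. The inequality you defer holds because $(3-y)^3-27(1-y)^2(1+y)=4y^2(9-7y)\ge0$ on $[0,1]$, so $\psi_H(m_t^2)$ has drift at most $a_t^2$. This is a legitimate variant of the paper's route. The paper applies It\^o to $x\operatorname{atanh}x$ with $x=m/H$, whose drift is exactly $H^2a^2$. That potential is unbounded at $|x|=1$, so the paper needs stopping near the boundary and Fatou. It then uses $x\operatorname{atanh}x\ge x^2/(1-x^2/3)$ and Jensen with the same convex function you use. Your $\psi_H$ is bounded with bounded derivatives on $[0,H^2]$, so you need no boundary localization.

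The gap is in \eqref{crit:positive-energy}, and it is not a matter of tuning constants.
\begin{itemize}
\item Your chain $\frac{z^2}{2}(1-\frac{2z}{3}+\frac{z^2}{2})\le\frac{z^2}{2}(1-\frac z3)$ is false for $z>2/3$. Even where it is valid, it discards half of the cubic term. That is why you get $(1+\delta)/36$, about half of $(1-2\delta)/18$ for small $\delta$.
\item Your proposed repair $z-\log(1+z)\le z^2/(2(1+z/3))$ is true for all $z\ge0$. But it expands as $z^2/2-z^3/6+\cdots$, again with half the true cubic coefficient $1/3$. It gives only $b_H\le\frac Q2\{(1+\delta)(1+z/3)^{-1/2}-1\}$, and positivity then yields $z<3\delta(2+\delta)$. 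For small $\delta$ this is about twice the claimed energy bound.
\item Positivity does not confine you to small $z$. At $z=3/2$ the right side of \eqref{crit:scalar-energy-bound} is positive for every $\delta\in[0.39,1/2)$. This matches the fact that the claimed bound $Q<9\delta/(H^2(1-2\delta))$ blows up as $\delta\uparrow1/2$. So an expansion valid only for $z\le1$ cannot cover the stated range $0<\delta<1/2$.
\end{itemize}

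What is missing is an estimate that holds for all $z$ and keeps the full cubic coefficient:
\[
 z-\log(1+z)=\frac{z^2}{2}-\int_0^z\frac{u^2}{1+u}\,du
 \le\frac{z^2}{2}-\frac{z^3}{3(1+z)},\qquad z\ge0 .
\]
Apply $\sqrt{1-w}\le1-w/2$ with $w=2z/(3(1+z))$, and use $H^2/\sqrt2=(1+\delta)/2$. This gives $b_H\le\frac Q2\{\delta-\frac{(1+\delta)z}{3(1+z)}\}$ for every $z\ge0$. Positivity then forces $(1-2\delta)z<3\delta$, which is exactly $Q<9\delta/(H^2(1-2\delta))$. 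It follows that $(1+\delta)/(1+z)>1-2\delta$. Substituting this back gives $b_H\le\frac{\delta Q}{2}-\frac{H^2(1-2\delta)}{18}Q^2$, and your maximization finishes the proof.

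Note the order here. The energy bound must be proved first, because it is what produces the factor $1-2\delta$. You cannot deduce the energy bound from a quadratic bound you have not yet established.
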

\begin{proof}
Put $x_t=m_t/H$ and $f(x)=x\operatorname{atanh}x$.
Then $f\ge0$, $f''(x)=2/(1-x^2)^2$, and
$dx_t=H(1-x_t^2)a_t\,dI_t$.
Stop at $T_n=\inf\{t:Q_t\ge n\text{ or }|x_t|\ge1-1/n\}\wedge1$, for $n\ge2$.
The stopped stochastic integral in It\^o's formula is square integrable:
$f'$ is bounded on the stopped state interval and $Q_{T_n}\le n$.  Thus
\[
 \mathbb E_\pi f(x_{t\wedge T_n})=H^2\mathbb E_\pi Q_{t\wedge T_n}.
\]
Finite path energy makes $\Xi$ finite and continuous on $[0,1]$, so
$\sup_{t\le1}|x_t|<1$ almost surely and $T_n=1$ eventually on each such path.
Fatou's lemma gives $\mathbb E_\pi f(x_t)\le H^2q(t)$; no uniform integrability
of the stopped potentials is asserted.
Coefficient comparison using $3^{n-1}\ge2n-1$ gives
\[
 f(x)=\sum_{n\ge1}\frac{x^{2n}}{2n-1}
       \ge\frac{x^2}{1-x^2/3}.
\]
Convexity of $y\mapsto y/(1-y/3)$ on $[0,1]$ and Jensen's inequality yield
$\{v(t)/H^2\}/\{1-v(t)/(3H^2)\}\le H^2q(t)$,
which is \eqref{crit:potential-bound}.
Since $q$ is absolutely continuous with $q'(t)=\mathbb E_\pi a_t^2$ almost everywhere,
two Cauchy--Schwarz inequalities give
\[
 b_H(a)\le-\frac Q2+\int_0^1\sqrt{\kappa_H(q(t))q'(t)}\,dt
 \le-\frac Q2+\sqrt{\int_0^Q\kappa_H(u)\,du}.
\]
The last integral equals $9\{z-\log(1+z)\}$, by the chain rule for absolutely
continuous $q$; strict monotonicity is unnecessary.
For $z\ge0$,
\[
 z-\log(1+z)
 =\frac{z^2}{2}-\int_0^z\frac{u^2}{1+u}\,du
 \le\frac{z^2}{2}-\frac{z^3}{3(1+z)}.
\]
Applying $\sqrt{1-w}\le1-w/2$ with $w=2z/[3(1+z)]$ gives
\begin{equation}
 b_H(a)\le\frac Q2
       \left\{\delta-\frac{(1+\delta)z}{3(1+z)}\right\}.
 \label{crit:energy-penalty}
\end{equation}
Positive improvement forces $(1-2\delta)z<3\delta$.
It follows that $(1+\delta)/(1+z)>1-2\delta$, proving the quadratic
bound in \eqref{crit:positive-energy}.
Its maximum occurs at $Q=9\delta/[2H^2(1-2\delta)]$.
Algorithms with $b_H(a)\le0$ automatically satisfy the final positive
upper bound; infinite endpoint risk cannot improve the baseline.
No global Novikov condition or conditioning on $Q_1$ was used.
\end{proof}

\subsection{Uniform Two-Parameter Bounds for the Constructive Policy}
\label{crit:subsec:moments}

The policy of \eqref{eq:tanh}, with arbitrary $0<\tau<1$ and $e>0$, is
\begin{equation}
 a_t^{\tau,e}=
 \begin{cases}
  \sqrt{e/\tau}\,\rho,&0\le t\le\tau,\\
  -\dfrac{\tanh(H\Xi_t)}{H\sqrt{2t}}
  =-\dfrac{m_t}{H^2\sqrt{2t}},&\tau<t\le1,
 \end{cases}
 \label{crit:policy}
\end{equation}
Here $\rho$ is an independent, symmetric Rademacher seed.
For interior parameters, $m_t$ denotes the same observable function, not the
posterior mean under $\mathbb P_h$.  For fixed design parameters the score equation has
bounded, globally Lipschitz coefficients after $\tau$, so the policy is strongly
well posed, including after energy stopping.  Pathwise,
\begin{equation}
 Q_1\le\Lambda:=e+\frac{\log(1/\tau)}{2H^2}.
 \label{crit:path-cap}
\end{equation}
The probe cross term vanishes for every $h$:
\[
 \mathbb E_h\int_0^\tau a_t\,dt=\sqrt{e\tau}\mathbb E\rho=0.
\]
Changing $(h,\rho,\Xi,a)$ to $(-h,-\rho,-\Xi,-a)$ leaves the observation
equation and the seed law unchanged.  Thus $\mathcal L_h(a^{\tau,e})$ is even in $h$.

\begin{lemma}
\label{crit:lem:moments}
For all $e>0$, $0<\tau<1$, and $t\in[\tau,1]$, set $M_j(t)=\mathbb E_\pi m_t^j$ for even $j$ and
$D(t)=H^4e\sqrt{t/\tau}-M_2(t)$.  Then
\begin{align}
 0\le H^4e-M_2(\tau)&\le H^6e^2,&
 M_4(\tau)&\le3H^8e^2, \label{crit:probe-moments}\\
 M_4(t)&\le3H^8e^2(t/\tau)^3,&
 0\le D(t)&\le\frac65H^6e^2\tau^{-3}t^3. \label{crit:joint-moments}
\end{align}
In particular, $Q\le e/\sqrt\tau$, and, with $K_H=12H^4/(35\sqrt2)$,
\begin{equation}
 \begin{aligned}
 \mathcal L_H(a^{\tau,e})&=\mathcal L_{-H}(a^{\tau,e})\\
 &\le\frac{H^2}{2}+\frac e{\sqrt\tau}
       \left\{\frac12-\frac{H^2(1-\tau)}{\sqrt2}\right\}
       +K_He^2\tau^{-3}.
 \end{aligned}
 \label{crit:joint-risk}
\end{equation}
\end{lemma}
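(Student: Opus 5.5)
We work under the mixture $\mathbb P_\pi$ throughout. The plan is to use the posterior SDE \eqref{crit:posterior} from Lemma~\ref{lemma:posterior} to get moment equations for $m_t$, and then to read the endpoint risk off the Bayes identity \eqref{crit:bayes-direction}.

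\textbf{Probe phase.} On $[0,\tau]$ the action is $\rho\sqrt{e/\tau}$. Hence $\Xi_\tau=\sqrt{e/\tau}\,\rho Y_\tau$. Conditional on $h$, $\Xi_\tau\sim N(he,e)$ up to the sign $\rho$, so $H\Xi_\tau$ is an exact Gaussian with explicit law. We compute $M_2(\tau)=H^2\mathbb E\tanh^2(H\Xi_\tau)$ directly. Alternatively we use It\^o on $[0,\tau]$: $dM_2=\mathbb E(H^2-m^2)^2a^2\,dt$, which gives $M_2(\tau)\le H^4e$. For the lower bound, $(H^2-m^2)^2\ge H^4-2H^2m^2$ together with $M_2\le H^4 q$ gives $M_2(\tau)\ge H^4e-H^6e^2$. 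For the fourth moment, $dM_4=6\mathbb E m^2(H^2-m^2)^2a^2dt\le 6H^4(e/\tau)M_2\,dt$, which integrates to $M_4(\tau)\le 3H^8e^2$.

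\textbf{Feedback phase.} For $t>\tau$ we have $a_t^2=m_t^2/(2H^4t)$. It\^o then gives
\[
M_2'=\frac{\mathbb E m^2(H^2-m^2)^2}{2H^4t},\qquad M_4'=\frac{6\,\mathbb E m^4(H^2-m^2)^2}{2H^4t}\le\frac{3M_4}{t}.
\]
Gronwall yields $M_4(t)\le M_4(\tau)(t/\tau)^3$. Next, $M_2'\ge M_2/(2t)-M_4/(H^2t)$ and $M_2'\le M_2/(2t)$. The upper inequality gives $M_2(t)\le H^4e\sqrt{t/\tau}$, i.e.\ $D\ge0$. Multiplying by $\sqrt{\tau/t}$ and integrating the lower inequality gives
\[
D(t)\le\sqrt{t/\tau}\Bigl[H^6e^2+\int_\tau^t \sqrt{\tau/s}\,\tfrac{3H^6e^2 s^2}{\tau^3}\,ds\Bigr].
\]
The integral is at most $\frac{6}{5}H^6e^2t^{5/2}\tau^{-5/2}$. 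Combining the two terms, and using $\tau\le t$ to absorb $H^6e^2\sqrt{t/\tau}$ into the $t^3/\tau^3$ scale, should give the stated $\frac65$ constant. This constant bookkeeping is the delicate point: the probe contribution may need to be combined with the integral more carefully, possibly by using the exact initial value instead of a crude bound. Energy follows from $q(1)=e+\int_\tau^1 M_2/(2H^4t)\,dt\le e+e(\sqrt{1/\tau}-1)\le e/\sqrt\tau$.

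\textbf{Risk.} By evenness (the symmetry argument before the lemma), $\mathcal L_{\pm H}$ both equal the Bayes risk $H^2/2+Q/2+\mathbb E\int m_ta_t\,dt$. The probe cross term vanishes. For $t>\tau$, $\mathbb E m_ta_t=-M_2/(H^2\sqrt{2t})$. Write $M_2=H^4e\sqrt{t/\tau}-D$. The main parts give
\[
\frac{e}{2\sqrt\tau}-\frac{H^2e(1-\tau)}{\sqrt{2\tau}}
\]
after using the exact energy $Q=e+\int_\tau^1(H^4e\sqrt{t/\tau}-D)/(2H^4t)\,dt$. The $D$ terms enter with coefficients $-1/(4H^4t)$, which is favourable, and $+1/(H^2\sqrt{2t})$. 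Bounding the latter by $\int_\tau^1 \frac65H^4e^2\tau^{-3}t^{5/2}/\sqrt2\,dt\le \frac{12H^4}{35\sqrt2}e^2\tau^{-3}$ yields $K_H$. I expect the main obstacle to be the justification of It\^o moment identities for stochastic integrals. This should be routine, since $m$ is bounded and the path energy is capped by \eqref{crit:path-cap}.
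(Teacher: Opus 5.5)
Your argument follows the paper's proof step for step: the probe bounds via It\^o with $(H^2-m^2)^2\ge H^4-2H^2m^2$ and $M_2(t)\le H^4et/\tau$, the feedback equations $M_4'\le 3M_4/t$ and $M_2/(2t)-M_4/(H^2t)\le M_2'\le M_2/(2t)$, variation of constants for $D$, and the exact endpoint identity with $M_2=H^4e\sqrt{t/\tau}-D$. Your energy bound and the linear term $\frac{e}{\sqrt\tau}\{\frac12-\frac{H^2(1-\tau)}{\sqrt2}\}$ are correct.

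The one step that fails as written is the constant in the bound on $D$. You replace $\int_\tau^t s^{3/2}\,ds$ by $\frac25t^{5/2}$ and then absorb $H^6e^2\sqrt{t/\tau}\le H^6e^2(t/\tau)^3$. This gives $D(t)\le\frac{11}{5}H^6e^2\tau^{-3}t^3$, not $\frac65$. Your final integral would then produce $22H^4/(35\sqrt2)$ in place of $K_H$. Using the exact value of $D(\tau)$ does not rescue this, since $D(\tau)=\frac{e}{\tau}\int_0^\tau\mathbb E_\pi(2H^2m_t^2-m_t^4)\,dt$ is genuinely of order $H^6e^2$.

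The fix is to keep the lower limit of the integral:
\[
\sqrt{t/\tau}\cdot 3H^6e^2\tau^{-5/2}\cdot\tfrac25\bigl(t^{5/2}-\tau^{5/2}\bigr)=\tfrac65H^6e^2\tau^{-3}t^3-\tfrac65H^6e^2\sqrt{t/\tau}.
\]
The negative term more than cancels the probe contribution $+H^6e^2\sqrt{t/\tau}$. This leaves $D(t)\le\frac65H^6e^2\tau^{-3}t^3-\frac15H^6e^2\sqrt{t/\tau}$, which is exactly the paper's computation. With it, your bound $\int_\tau^1 D(t)/(H^2\sqrt{2t})\,dt\le K_He^2\tau^{-3}$ holds with the stated $K_H$.
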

\begin{proof}
During the probe, It\^o's formula gives
$M_2'(t)=(e/\tau)\mathbb E_\pi(H^2-m_t^2)^2\le H^4e/\tau$.  Consequently,
\[
 H^4e-M_2(\tau)
 =\frac e\tau\int_0^\tau \mathbb E_\pi(2H^2m_t^2-m_t^4)\,dt
 \le\frac{2H^2e}{\tau}\int_0^\tau\frac{H^4et}{\tau}\,dt
 =H^6e^2.
\]
Also $M_4'(t)\le6H^4(e/\tau)M_2(t)$ during the probe,
which proves the second assertion of \eqref{crit:probe-moments}.
After the probe the exact equations are
\[
 M_2'=\frac{M_2}{2t}-\frac{M_4}{H^2t}+\frac{M_6}{2H^4t},
 \qquad
 M_4'=\frac3t\mathbb E_\pi[m_t^4(1-m_t^2/H^2)^2]\le\frac3tM_4.
\]
Since $m_t^2\le H^2$, $N(t)=M_4(t)/(H^2t)-M_6(t)/(2H^4t)$ lies between zero and
$M_4(t)/(H^2t)$.  Gronwall and variation of constants yield
\[
 \begin{split}
 0\le D(t)
 &=\sqrt{t/\tau}D(\tau)+\sqrt t\int_\tau^tN(s)s^{-1/2}\,ds\\
 &\le H^6e^2\sqrt{t/\tau}
   +\frac{6H^6e^2}{5\tau^3}(t^3-\sqrt t\,\tau^{5/2})
 \le\frac65H^6e^2\tau^{-3}t^3.
 \end{split}
\]
Thus $M_2(t)\le H^4e\sqrt{t/\tau}$, and integration of the feedback
energy gives $Q\le e+e(1-\sqrt\tau)/\sqrt\tau=e/\sqrt\tau$.
The exact endpoint risk identity is
\begin{equation}
 \frac{\mathcal L_H+\mathcal L_{-H}}2-\frac{H^2}{2}
 =\frac e2+\int_\tau^1
   \left(\frac1{4H^4t}-\frac1{H^2\sqrt{2t}}\right)M_2(t)\,dt.
 \label{crit:exact-policy-risk}
\end{equation}
Insert $M_2=H^4e\sqrt{t/\tau}-D$.
The linear contribution is the middle term in \eqref{crit:joint-risk};
the remaining contribution is at most
\[
 \int_\tau^1\frac{D(t)}{H^2\sqrt{2t}}\,dt
 \le\frac{6H^4e^2}{5\sqrt2\tau^3}\int_0^1t^{5/2}\,dt
 =K_He^2\tau^{-3}.
\]
All estimates hold simultaneously in $e$ and $\tau$.  The sixth moment is
controlled by its sign and the bounded posterior, not a Taylor approximation.
\end{proof}

\subsection{Endpoint Maximality with Explicit Constants}
\label{crit:subsec:endpoints}

Choose
\begin{equation}
 \tau=\frac{\delta}{2(1+\delta)},\qquad
 e=\frac{\delta\tau^{5/2}}{8K_H},\qquad 0<\delta\le\frac1{16}.
 \label{crit:design}
\end{equation}
The bracket in \eqref{crit:joint-risk} is $-\delta/4$.  Hence
\begin{equation}
 \mathcal L_H(a^{\tau,e})\le\frac{H^2}{2}
 -\frac{\delta^2\tau^2}{64K_H}
 =\frac{H^2}{2}
 -\frac{35\sqrt2}{1536(1+\delta)^4}\delta^4.
 \label{crit:endpoint-gain}
\end{equation}
To exclude larger interior risks, fix the design $H,\tau,e$ while varying $h$.
The cap \eqref{crit:path-cap} makes
$\exp\{h\Xi_1-h^2Q_1/2\}$ a true likelihood relative to $\mathbb P_0$.
Girsanov and uniqueness identify the law of the same policy under $h$.
For $|h|\le H$, using $e^{hx}\le2\cosh(Hx)$,
\[
 \frac{d\mathbb P_h}{d\mathbb P_\pi}
 =\frac{\exp\{h\Xi_1+(H^2-h^2)Q_1/2\}}{\cosh(H\Xi_1)}
 \le2\exp(H^2e/2)\tau^{-1/4}.
\]
Using $\mathbb E_\pi Q_1\le e/\sqrt\tau$ gives
\begin{equation}
 \sup_{|h|\le H}\mathbb E_hQ_1\le U:=2\exp(H^2e/2)e\tau^{-3/4}.
 \label{crit:interval-energy}
\end{equation}
Let $A=\int_0^1a_t\,dt$ and $S_h=\Xi_1-hQ_1=\int_0^1a_t\,dB_t$.
Then $A^2\le Q_1\le\Lambda$, $\mathbb E_hS_h^2=\mathbb E_hQ_1$, and
$\mathbb E_h\exp(\lambda S_h)\le\exp(\lambda^2\Lambda/2)$.
For a bounded parameter-independent path functional $F$,
\[
 \mathbb E_{h+u}F=\mathbb E_h[F\exp\{uS_h-u^2Q_1/2\}],
 \qquad \partial_h\mathbb E_hF=\mathbb E_h(FS_h).
\]
The exponential moment bound justifies differentiation by dominated convergence,
including for $F=A$ and $F=Q_1$.
Since $\mathcal L_h=h^2/2+\mathbb E_hQ_1/2+h\mathbb E_hA$, Cauchy--Schwarz gives
\begin{equation}
 |\mathcal L_h'-h|
 \le\frac12|\mathbb E_hQ_1S_h|+|\mathbb E_hA|+H|\mathbb E_hAS_h|
 \le\sqrt U+(H+\sqrt\Lambda/2)U=:V.
 \label{crit:risk-derivative}
\end{equation}
Here $\mathbb E_hQ_1^2\le\Lambda \mathbb E_hQ_1$ and $\mathbb E_hA^2\le \mathbb E_hQ_1$.
For all parameters in \eqref{crit:design},
\[
 \begin{gathered}
 \frac45<H<1,\quad \frac\delta3\le\tau\le\frac\delta2,\quad
 e<2\delta\tau^{5/2}\le2^{-31/2}<10^{-4},\\
 U<2^{5/4}\delta^{11/4}\le2^{-39/4}<\frac1{800},
 \qquad \Lambda\le1+\log(3/\delta).
 \end{gathered}
\]
Indeed, $1/(8K_H)=35\sqrt2/[48(1+\delta)^2]<2$.
The logarithmic derivative of $\delta^{11/4}\sqrt{1+\log(3/\delta)}$
with respect to $\log\delta$ is $11/4-1/[2(1+\log(3/\delta))]>0$.
Since $\log48<4$, we obtain
$U\sqrt\Lambda<\sqrt5/800<1/300$ and $\sqrt U<1/25$.
Consequently,
\[
 V<\frac1{25}+\frac1{800}+\frac1{600}<\frac H2,\qquad
 \frac U2+2H\sqrt U<\frac1{1600}+\frac2{25}<\frac{3H^2}{8}.
\]
The risk is strictly increasing on $[H/2,H]$ by
\eqref{crit:risk-derivative}.  In the central interval,
\[
 \sup_{|h|\le H/2}\mathcal L_h
 \le\frac{H^2}{8}+\frac U2+H\sqrt U
 <\frac{H^2}{2}-H\sqrt U\le \mathcal L_H.
\]
Evenness of the risk now proves
$\sup_{|h|\le H}\mathcal L_h(a^{\tau,e})=\mathcal L_H(a^{\tau,e})$.
These bounds locate the worst parameter, without adding error to the endpoint improvement.

\begin{proof}[Completion of the proof of Theorem~\ref{thm:deficit}]
Lemma~\ref{crit:lem:potential} and \eqref{crit:bayes-direction} give the all-algorithm
upper bound.  The policy \eqref{crit:design}, its endpoint maximality, and
\eqref{crit:endpoint-gain} give the lower bound:
\begin{equation}
 \frac{35\sqrt2}{1536(1+\delta)^4}\delta^4
 \le\Delta(H)\le\frac{9}{8H^2(1-2\delta)}\delta^2,
 \qquad 0<\delta\le\frac1{16}.
 \label{crit:deficit-bounds}
\end{equation}
The upper bound alone holds for $0<\delta<1/2$.
These explicit auxiliary estimates support the comparison with
untruncated feedback. The matching unrestricted order is established
by Theorem~\ref{thm:critical-new}.
\end{proof}

\section{Exact Deficit Decomposition and Expected Energy Shape}
\label{crit:sec:identity}

\begin{proposition}
\label{crit:prop:identity}
For every $H>0$ and every admissible algorithm with $Q<\infty$, define
$\mathcal S_H(a)=H^2Q/\sqrt2+\mathbb E_\pi\int_0^1m_ta_t\,dt$.
Then $\mathcal S_H=S_1+S_2$, where
\begin{align}
 S_1&=\frac{H^2}{\sqrt2}\mathbb E_\pi\int_0^1
       \sqrt t\left(a_t+\frac{m_t}{H^2\sqrt{2t}}\right)^2\,dt,
       \label{crit:alignment-part}\\
 S_2&=\frac{H^2}{\sqrt2}\mathbb E_\pi\int_0^1
       (1-\sqrt t)\left(\frac{2m_t^2}{H^2}-\frac{m_t^4}{H^4}\right)
       a_t^2\,dt. \label{crit:nonlinear-part}
\end{align}
Both terms are finite and nonnegative, and
\begin{equation}
 b_H(a)=\frac{\delta Q}{2}-\mathcal S_H(a),\qquad
 S_2\ge\frac1{12\sqrt2H^4}\int_0^1\frac{\mathbb E_\pi m_t^4}{\sqrt t}\,dt.
 \label{crit:fourth-penalty}
\end{equation}
\end{proposition}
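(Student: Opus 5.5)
Expanding the square in $S_1$ and comparing with $S_2$, the identity $\mathcal S_H=S_1+S_2$ is equivalent to a single statement about $\mathbb E_\pi m_t^2$. The cross term in $S_1$ is
\[
 \frac{H^2}{\sqrt2}\cdot\frac{2\sqrt t\,m_ta_t}{H^2\sqrt{2t}}=m_ta_t,
\]
which is exactly the cross term in $\mathcal S_H$. Thus $S_1=\frac{H^2}{\sqrt2}\mathbb E_\pi\int\sqrt t\,a_t^2\,dt+\mathbb E_\pi\int m_ta_t\,dt+\frac1{4\sqrt2H^2}\int_0^1 v(t)t^{-1/2}\,dt$, with $v(t)=\mathbb E_\pi m_t^2$. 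Adding $S_2$, the $a^2$ terms give
\[
 \frac{H^2}{\sqrt2}\mathbb E_\pi\int_0^1a_t^2\Big[\sqrt t+(1-\sqrt t)\Big(\frac{2m_t^2}{H^2}-\frac{m_t^4}{H^4}\Big)\Big]dt .
\]
Since $1-(2x^2-x^4)=(1-x^2)^2$ with $x=m/H$, this equals $\frac{H^2}{\sqrt2}Q-\frac{H^2}{\sqrt2}\mathbb E_\pi\int(1-\sqrt t)(1-m_t^2/H^2)^2a_t^2\,dt$. So the identity reduces to
\[
 \frac{1}{\sqrt2H^2}\mathbb E_\pi\int_0^1(1-\sqrt t)(H^2-m_t^2)^2a_t^2\,dt=\frac1{4\sqrt2H^2}\int_0^1\frac{v(t)}{\sqrt t}\,dt .
\]

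To prove this reduced identity, use Lemma~\ref{lemma:posterior}. It gives $dm=(H^2-m^2)a\,dI$, so after energy/boundary localization as in Lemma~\ref{crit:lem:potential}, $v'(t)=\mathbb E_\pi(H^2-m_t^2)^2a_t^2$ a.e. Here $v$ is absolutely continuous because the integrand is bounded by $H^4\mathbb E a_t^2$ and $Q<\infty$, with $v(0)=0$. Multiply by $1-\sqrt t$ and integrate by parts: $\int_0^1(1-\sqrt t)v'\,dt=[(1-\sqrt t)v]_0^1+\int_0^1 v/(2\sqrt t)\,dt$. The boundary terms vanish, since $(1-\sqrt t)$ is zero at $1$ and $v(0)=0$. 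Multiplying by $1/(\sqrt2H^2)$ gives exactly the right-hand side. This gives $\mathcal S_H=S_1+S_2$.

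For finiteness and nonnegativity, note that both integrands are nonnegative because $|m|\le H$ gives $2m^2/H^2-m^4/H^4\ge0$. Each term is finite: $S_2\le\sqrt2H^2Q$, and $S_1\le\sqrt2H^2\mathbb E\int(a^2+m^2/(2H^4t))\sqrt t\,dt<\infty$ since $m^2\le H^2$. The formula $b_H=\delta Q/2-\mathcal S_H$ is immediate from \eqref{crit:bayes-direction} and $\delta/2+1/2=H^2/\sqrt2$.

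For the fourth-moment lower bound, apply It\^o to $m^4$: $\frac{d}{dt}\mathbb E m_t^4=6\mathbb E m_t^2(H^2-m_t^2)^2a_t^2$. Pointwise, $m^2(H^2-m^2)^2\le H^2\,m^2(2H^2-m^2)$ because $(H^2-m^2)^2\le H^2(2H^2-m^2)$. Therefore
\[
 S_2\ge\frac{1}{\sqrt2H^4}\int(1-\sqrt t)\mathbb E[m^2(H^2-m^2)^2a^2]\,dt=\frac{1}{6\sqrt2H^4}\int(1-\sqrt t)\,\frac{d}{dt}M_4\,dt .
\]
The same integration by parts gives $\frac1{12\sqrt2H^4}\int M_4t^{-1/2}\,dt$, as required.

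The main obstacle is rigor rather than algebra: justifying $v'$ and $M_4'$ for unbounded, non-Markov actions. I would do this by stopping at $Q_t\ge n$, using the square-integrable stopped martingale to get the integrated form $v(t\wedge\cdot)=\mathbb E\int_0^{t\wedge T_n}(\dots)$. Then I would pass $n\to\infty$ by bounded convergence on the left (since $|m|\le H$) and monotone convergence on the right, obtaining the integrated identities needed for the integration by parts via Fubini.
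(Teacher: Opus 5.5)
Your route is the paper's. You expand the square in $S_1$ and collapse the $a_t^2$ coefficients with $(1-y)^2+2y-y^2=1$. You then convert the posterior-square term using $v'(t)=\mathbb E_\pi(H^2-m_t^2)^2a_t^2$ and the weight identity $\int_0^1 v(t)/(2\sqrt t)\,dt=\int_0^1(1-\sqrt t)v'(t)\,dt$; the paper gets this by Tonelli, you by integration by parts, and both are valid for absolutely continuous $v$ with $v(0)=0$. Your fourth-moment step via $(H^2-m^2)^2\le H^2(2H^2-m^2)$ is a harmless variant of the paper's $M_4'\le6H^4\mathbb E_\pi m_t^2a_t^2$ together with $2y-y^2\ge y$, and gives the same constant.

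There is one arithmetic slip to fix. The square term in the expansion of $S_1$ is
\[
 \frac{H^2}{\sqrt2}\sqrt t\cdot\frac{m_t^2}{2H^4t}=\frac{m_t^2}{2\sqrt2H^2\sqrt t},
\]
so its expectation is $\frac1{2\sqrt2H^2}\int_0^1v(t)t^{-1/2}\,dt$, not $\frac1{4\sqrt2H^2}\int_0^1v(t)t^{-1/2}\,dt$. As written, your reduced identity is off by a factor of two whenever $v\not\equiv0$. Your closing claim that multiplying the integration-by-parts formula by $1/(\sqrt2H^2)$ gives ``exactly the right-hand side'' is therefore false: it gives $\frac1{2\sqrt2H^2}\int_0^1v(t)t^{-1/2}\,dt$. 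Once you correct the coefficient in both places, the reduced identity is exactly that formula and the proof is complete.

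The localization you propose is unnecessary, though not wrong. Since $|H^2-m_t^2|\le H^2$, $|m_t|\le H$, and $Q<\infty$, both $\int(H^2-m^2)a\,dI$ and the stochastic part of $d(m_t^4)$ are already square-integrable martingales. So $v$ and $M_4$ have the stated derivatives directly.
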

\begin{proof}
Boundedness of the posterior and finite expected energy give square integrability, so
$v(t)=\int_0^t\mathbb E_\pi(H^2-m_s^2)^2a_s^2\,ds\le H^4q(t)$.
Tonelli's theorem gives the finite identity
\begin{equation}
 \int_0^1\frac{v(t)}{2\sqrt t}\,dt
 =\int_0^1(1-\sqrt s)v'(s)\,ds\le H^4Q.
 \label{crit:tonelli-weight}
\end{equation}
All terms in the expansion of $S_1$ are integrable, using this bound,
$\mathbb E_\pi\int\sqrt t\,a_t^2\,dt\le Q$, and
$\mathbb E_\pi\int|m_ta_t|\,dt\le H\sqrt Q$.
The cross term is $\mathbb E_\pi\int m_ta_t\,dt$.  Replace the posterior-square term by
\eqref{crit:tonelli-weight}.  For $y=m_t^2/H^2$, the resulting coefficient
of $a_t^2$, after adding $S_2$, is
$\sqrt t+(1-\sqrt t)\{(1-y)^2+2y-y^2\}=1$.
This proves the identity and integrability at zero; values at $t=0$ are immaterial.
Furthermore,
$M_4'(t)=6\mathbb E_\pi m_t^2(H^2-m_t^2)^2a_t^2
\le6H^4\mathbb E_\pi m_t^2a_t^2$.
Since $2y-y^2\ge y$ on $[0,1]$, another application of Tonelli gives
\[
 S_2\ge\frac1{\sqrt2}\int_0^1(1-\sqrt t)\mathbb E_\pi m_t^2a_t^2\,dt
 \ge\frac1{6\sqrt2H^4}\int_0^1(1-\sqrt t)M_4'(t)\,dt
 =\frac1{12\sqrt2H^4}\int_0^1\frac{M_4(t)}{\sqrt t}\,dt.
\]
This lower bound replaces only $S_2$ and can be retained together with $S_1$.
\end{proof}

\begin{corollary}
\label{crit:cor:shape}
If $0<\delta<1/2$ and $b_H(a)>0$, put $p(t)=q(t)/Q$ and $r(t)=\sqrt{2p(t)p'(t)}$.  Then
\begin{equation}
 \|r-1\|_2^2<\frac{2\delta}{1+\delta},\qquad
 \sup_{0\le t\le1}|p(t)^2-t|
 <2\sqrt{\frac{2\delta}{1+\delta}},
 \label{crit:shape}
\end{equation}
and $Q$ satisfies \eqref{crit:positive-energy}.
Writing $g(t)=-\mathbb E_\pi m_ta_t$, one also has
\begin{equation}
 \int_0^1\{\sqrt{v(t)q'(t)}-g(t)\}\,dt<\frac{\delta Q}{2}.
 \label{crit:mean-alignment}
\end{equation}
\end{corollary}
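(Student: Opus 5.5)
The plan is to bound $\mathcal S_H$ below by three nonnegative gaps, the middle one of which is $\tfrac{H^2Q}{2\sqrt2}\|r-1\|_2^2$. The hypothesis $b_H(a)>0$ together with Proposition~\ref{crit:prop:identity} then gives $\mathcal S_H(a)<\delta Q/2$, and the shape bounds follow from that single inequality.

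\emph{Setup.} Positive improvement forces $Q>0$, since $Q=0$ gives $a=0$ a.e.\ and $b_H=0$. The bound on $Q$ in \eqref{crit:positive-energy} is Lemma~\ref{crit:lem:potential}. Since $q$ is absolutely continuous and nondecreasing, $p=q/Q$ is absolutely continuous with $p(0)=0$, $p(1)=1$ and $p'\ge0$ a.e. Hence $r=\sqrt{2pp'}$ is well defined and
\[
 \int_0^1 r^2\,dt=\int_0^1 2pp'\,dt=p(1)^2=1 .
\]
Also $\|r-1\|_2^2=2-2\int_0^1 r\,dt$.

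\emph{Three-term decomposition.} Write $g(t)=-\mathbb E_\pi m_ta_t$. For each $t$, Cauchy--Schwarz gives $g(t)\le\sqrt{v(t)q'(t)}$, and \eqref{crit:potential-bound} gives $v\le\kappa_H(q)\le H^4q$. I then split
\[
 \mathcal S_H=\Bigl[\tfrac{H^2Q}{\sqrt2}-H^2\!\int_0^1\!\sqrt{qq'}\,dt\Bigr]
 +\Bigl[H^2\!\int_0^1\!\sqrt{qq'}\,dt-\int_0^1\!\sqrt{vq'}\,dt\Bigr]
 +\int_0^1\!\bigl\{\sqrt{vq'}-g\bigr\}\,dt .
\]
All three brackets are nonnegative. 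The second is nonnegative because $v\le H^4q$. The third is nonnegative pointwise by the Cauchy--Schwarz step. For the first, note that $H^2\sqrt{qq'}=\tfrac{H^2Q}{\sqrt2}\,r$, so the first bracket equals
\[
 \tfrac{H^2Q}{\sqrt2}\Bigl(1-\int_0^1 r\,dt\Bigr)=\tfrac{H^2Q}{2\sqrt2}\,\|r-1\|_2^2\ \ge0 .
\]

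\emph{Consequences.} Since each bracket is at most $\mathcal S_H<\delta Q/2$, dropping the other two brackets gives \eqref{crit:mean-alignment} directly from the third. From the first bracket,
\[
 \|r-1\|_2^2<\frac{\sqrt2\,\delta}{H^2}=\frac{2\delta}{1+\delta},
\]
using $H^2=(1+\delta)/\sqrt2$.

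\emph{Uniform bound.} For the uniform statement, write $p(t)^2-t=\int_0^t(r^2-1)\,ds=\int_0^t(r-1)(r+1)\,ds$. Cauchy--Schwarz and $\|r+1\|_2\le\|r\|_2+1=2$ then give
\[
 \sup_{0\le t\le1}|p(t)^2-t|\le2\|r-1\|_2<2\sqrt{\frac{2\delta}{1+\delta}} .
\]

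The only delicate point is the chain-rule identity $\int_0^1 2pp'\,dt=p(1)^2$ for a merely absolutely continuous $p$, which may be non-strictly monotone. This is handled exactly as in the proof of Lemma~\ref{crit:lem:potential}. Everything else is Cauchy--Schwarz.
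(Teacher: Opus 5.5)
Your proposal is correct and follows the paper's proof essentially step for step. It uses the same three nonnegative Cauchy losses summing to $\mathcal S_H=\delta Q/2-b_H<\delta Q/2$, the identity $\int_0^1 r^2\,dt=1$, and the bound $|p(t)^2-t|\le\|r-1\|_2\|r+1\|_2\le 2\|r-1\|_2$. The only cosmetic difference is that the paper phrases the first inequality as $0<b_H\le -Q/2+(H^2Q/\sqrt2)\int_0^1 r\,dt$, which is algebraically identical to your first-bracket estimate.
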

\begin{proof}
Absolute continuity gives $\int_0^1r^2\,dt=1$.
The bound $v\le H^4q$ and Cauchy--Schwarz imply
$0<b_H\le-Q/2+(H^2Q/\sqrt2)\int_0^1r\,dt$.
Thus $\int r>1/(1+\delta)$, which proves the first inequality.
For the second, use $p(t)^2-t=\int_0^t(r^2-1)\,ds$ and $\|r+1\|_2\le2$.
The three nonnegative Cauchy losses are
\[
 \frac{H^2Q}{\sqrt2}-H^2\int_0^1\sqrt{qq'}\,dt,\qquad
 \int_0^1(H^2\sqrt{qq'}-\sqrt{vq'})\,dt,\qquad
 \int_0^1(\sqrt{vq'}-g)\,dt.
\]
Their sum is $\mathcal S_H=\delta Q/2-b_H<\delta Q/2$.
This proves \eqref{crit:mean-alignment}; wherever $vq'>0$ its integrand is
$\tfrac12\sqrt{v/q'}\mathbb E_\pi(a_t+\sqrt{q'/v}\,m_t)^2$.
If $vq'=0$, Cauchy--Schwarz gives $g=0$ and the original integrand is zero.
These bounds concern expected energy and mean alignment, not pathwise concentration.
Positive worst-case improvement implies $b_H>0$, so they also hold for every
policy improving the full interval.
\end{proof}

\section{Fourth-Order Sharpness Within the Original Feedback Family}
\label{crit:sec:restricted}

\subsection{The Original Diffusion and Its Fourth-Moment Change of Measure}
\label{crit:subsec:tilt}

In this section $H\in[H_c,1]$ and the policy is \eqref{crit:policy}.
In logarithmic time $s=\log(t/\tau)$, put $X_s=m_{\tau\exp(s)}/H$.
Conditional on $X_0=x\in(0,1)$, reversing the innovation's sign gives
\begin{equation}
 dX_s=\frac{X_s(1-X_s^2)}{\sqrt2}\,dW_s,\qquad
 Z_s=\ell(X_s),\quad
 \ell(x)=\log x-\tfrac12\log(1-x^2).
 \label{crit:log-diffusion}
\end{equation}
The time-changed Brownian increments are independent of the probe filtration.
Since $\ell'(x)=1/[x(1-x^2)]$, It\^o's formula yields
\begin{equation}
 dZ_s=\frac{dW_s}{\sqrt2}
       +\left(-\frac14+\frac34X_s^2\right)\,ds.
 \label{crit:lamperti}
\end{equation}
After localization, constant diffusion and bounded drift exclude reaching either
endpoint in finite time, since $\ell$ diverges there.  For any finite horizon $S$, define
\[
 M_S=\frac{X_S^4}{x^4}
       \exp\left\{-3\int_0^S(1-X_u^2)^2\,du\right\}.
\]
It\^o's formula identifies $M$ as the stochastic exponential with bounded
integrand $2\sqrt2(1-X_s^2)$, hence a true martingale on each finite horizon.
Under $d\mathbb P_x^{(4)}=M_S\,d\mathbb P_x$, Girsanov's theorem gives
\[
 \begin{aligned}
 dX_s&=\frac{X_s(1-X_s^2)}{\sqrt2}\,dW_s^{(4)}
          +2X_s(1-X_s^2)^2\,ds,\\
 dZ_s&=\frac{dW_s^{(4)}}{\sqrt2}
          +\left(\frac74-\frac54X_s^2\right)\,ds.
 \end{aligned}
\]
In particular, $Z_s\le Z_0+W_s^{(4)}/\sqrt2+7s/4$, and exactly
\begin{equation}
 \mathbb E_xX_S^4=x^4\exp(3S)\mathbb E_x^{(4)}
       \exp\left\{-3\int_0^S(2X_u^2-X_u^4)\,du\right\}.
 \label{crit:exact-tilt}
\end{equation}
This identity retains the original nonlinear diffusion at all amplitudes.

\begin{proposition}
\label{crit:prop:fourth-lower}
There exist $c,C>0$ and $L_0<\infty$, uniform for $H\in[H_c,1]$, such that
$L=\log(1/\tau)\ge L_0$, $1\le f\le\exp(3L)$, and $e=f\tau^{7/2}$ imply
\begin{equation}
 \begin{gathered}
 \inf_{t\in[1/4,1/2]}\mathbb E_\pi m_t^4
 \ge ce^2\tau^{-3}\Psi_L(f),\\
 \Psi_L(f)=\Phi\left(-\frac{\log f+\log L+C}
                            {\sqrt{2(L-\log4)}}\right),
 \end{gathered}
 \label{crit:fourth-lower}
\end{equation}
where $\Phi$ is the standard normal distribution function.  After increasing $L_0$,
\begin{equation}
 f\Psi_L(f)\ge cf^{1/8},\qquad 1\le f\le\exp(3L).
 \label{crit:tail-coercivity}
\end{equation}
\end{proposition}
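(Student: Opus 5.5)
We will condition on the probe outcome and use the exact tilt identity \eqref{crit:exact-tilt}. By Lemma~\ref{crit:lem:moments}, at time $\tau$ the posterior satisfies $M_2(\tau)\approx H^4e$ and $M_4(\tau)\le3H^8e^2$. Since $m_\tau/H=\tanh(H\Xi_\tau)$ with $\Xi_\tau$ approximately $\sqrt e\,\rho$ times a standard Gaussian, the starting state $x=|X_0|$ has size of order $\sqrt e$. More precisely, on an event of probability bounded below we have $x\in[c\sqrt e,C\sqrt e]$, uniformly for $H\in[H_c,1]$. Because the law is symmetric, we may take $x>0$.

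For $t\in[1/4,1/2]$ put $S=\log(t/\tau)\in[L-\log4,L-\log2]$. Then \eqref{crit:exact-tilt} gives
\[
 \mathbb E_xX_S^4\ge x^4e^{3S}\,\mathbb E_x^{(4)}\Bigl[\exp\Bigl\{-3\int_0^S(2X_u^2-X_u^4)\,du\Bigr\}\Bigr].
\]
Since $x^4e^{3S}\asymp e^2(t/\tau)^3\asymp e^2\tau^{-3}$, it remains to bound the tilted expectation below by $c\Psi_L(f)$. We use $2X^2-X^4\le 2X^2$ and the Lamperti variable $Z=\ell(X)$, which gives $X^2\le 4e^{2Z}$ as long as $X\le1/\sqrt2$.

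Under $\mathbb P^{(4)}$, $Z$ has drift at least $1/2$, so it moves upward. We need the event that $Z_u$ stays below a moving barrier. That barrier should make $\int_0^S e^{2Z_u}\,du=O(1)$ and keep $X\le1/\sqrt2$.

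We have $Z_0\approx\log x\approx \tfrac12\log e=\tfrac12\log f-\tfrac74L$. The upper bound $Z_s\le Z_0+W_s^{(4)}/\sqrt2+7s/4$ then gives $Z_S\le \tfrac12\log f+W_S^{(4)}/\sqrt2+O(\log4)$. So we restrict to paths on which the Brownian motion $W^{(4)}/\sqrt2$ stays below the barrier
\[
 b(s)=-\tfrac12\log f-\tfrac12\log L-C'-\tfrac74(S-s)+\dots
\]
The barrier is chosen so that $Z_0+W_u/\sqrt2+7u/4\le -K-\tfrac12\log(\cdot)$ near the end. Then $\int_0^S e^{2Z_u}\,du\le\int_0^S e^{-2K-\text{(linear in }S-u)}\,du=O(1)$.

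The dominant constraint is at the final time: we need $W_S^{(4)}/\sqrt2\le-\tfrac12\log f-\tfrac12\log L-C$. Its probability is
\[
 \Phi\bigl(-(\log f+\log L+C)/\sqrt{2S}\bigr),
\]
up to the factor needed to also keep the path below an affine barrier at earlier times. That barrier has positive slope $7/4$ backwards from $S$, so the requirement is dominated by the endpoint up to constants. The $\log L$ term absorbs the $\int e^{2Z}$ contributions in the window where $Z$ is near $0$, which has length of order $\log L$. Taking $S\ge L-\log4$ yields the denominator $\sqrt{2(L-\log4)}$. The main obstacle is this barrier estimate: we need a lower bound on the probability that Brownian motion with drift $7/4$ stays below a line ending at a prescribed low level, uniform in $S$, together with control of the exponential occupation functional. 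For this I would use a reflection/Girsanov comparison that reduces to the Gaussian endpoint tail times a constant.

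Finally, \eqref{crit:tail-coercivity} is elementary. Write $y=(\log f+\log L+C)/\sqrt{2(L-\log4)}$. For $f\le\exp(3L)$ we have $y\le 3\sqrt{L}/\sqrt2+o(1)$. The Mills ratio gives $\Phi(-y)\ge c e^{-y^2/2}/(1+y)$, and $y^2/2\le(\log f)^2/(4L)+o(\log f)+O(1)$. Since $\log f\le 3L$, we get $(\log f)^2/(4L)\le\tfrac34\log f$. This yields $f\Psi_L(f)\ge cf^{1/4-o(1)}/\mathrm{polylog}$. Enlarging $L_0$ then gives $f\Psi_L(f)\ge cf^{1/8}$; the range of small $f$, where $\Psi_L$ is bounded below, is handled directly.
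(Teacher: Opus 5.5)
Your architecture is the paper's. You restrict to a probe event on which $X_\tau\asymp\sqrt e$, use the exact tilt \eqref{crit:exact-tilt}, control the occupation integral on a Brownian barrier event for the dominating process $Z_0+W^{(4)}_s/\sqrt2+\frac74 s$, and compare the barrier probability with a Gaussian tail. Your argument for \eqref{crit:tail-coercivity} also matches the paper and is fine: a bounded normal argument when $\log f\lesssim\sqrt L$, and otherwise the Mills ratio together with $(\log f)^2/(4L)\le\frac34\log f$. The gap is in the barrier step, which carries all the content of \eqref{crit:fourth-lower}.

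\textbf{The barrier is misspecified.} Your displayed barrier $b(s)=-\frac12\log f-\frac12\log L-C'-\frac74(S-s)$ has $b(0)<0=W^{(4)}_0/\sqrt2$, so the event is null. The version consistent with your phrase ``positive slope $7/4$ backwards from $S$'' is $\beta+\frac74(S-s)$ with $\beta=-\frac12(\log f+\log L+C)$. Equivalently, the drifted process $W^{(4)}_s/\sqrt2+\frac74 s$ stays below a single constant level, and up to constants this is the paper's event $A_L$. On that event you get $Z_s\le-\frac12\log L$ for every $s\le S$. This bound is constant in $s$, not decaying linearly in $S-s$. The occupation estimate is therefore $X_s^2\le 1/L$ and $3\int_0^S(2X_u^2-X_u^4)\,du\le 6S/L\le 6$. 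So the $\log L$ pays for a window of length $S\approx L$, not a window of length $\log L$. A bound on $Z$ that decays linearly backwards is possible, but only with backward slope smaller than roughly $\frac74-\frac{\log f}{2L}\ge\frac14$ (e.g.\ $\frac18$). The slope $\frac72$ implied by your formula is infeasible.

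\textbf{The endpoint comparison is the missing idea.} The claim ``dominated by the endpoint up to constants'' does not follow from the slope alone. By \eqref{crit:reflection}, $\mathbb P(A_L)=\Phi(-a)-e^{2\mu d/\sigma^2}\Phi(-b)$, with $\mu=\frac74$, $\sigma=2^{-1/2}$ and $d=\frac74L-\frac12(\log f+\log L)-C_0$. For $a\ge0$ the subtracted term is exactly $R(b)/R(a)$ times $\Phi(-a)$, where $R(u)=\Phi(-u)/\phi(u)$. This ratio stays uniformly below one only because the initial distance to the barrier is of order $L$:
\begin{itemize}
\item $\log f\le 3L$ gives $d\ge L/8$;
\item hence $a/b\le 13/15$, which together with $u/(1+u^2)\le R(u)\le 1/u$ handles $a\ge4$;
\item and $b\ge c\sqrt L$, which handles $0\le a\le4$;
\item the case $a<0$ is treated separately and gives $\mathbb P(A_L)\ge 1/4$.
\end{itemize}
This is where the hypothesis $f\le e^{3L}$ enters the first assertion, besides ensuring $e\le\tau^{1/2}$ for the probe linearization. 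Your sketch never uses it there. If instead $d=o(L)$, then $a\to\infty$ and $a/b\to1$, so $R(b)/R(a)\to1$ and the barrier probability is $o(\Phi(-a))$; the comparison with the endpoint tail would then fail.

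Supplying this reflection computation for the correctly specified event closes the gap.
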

\begin{proof}
At probe completion, $\Xi_\tau=he+\sqrt e\,N$, with $N$ standard normal under each
endpoint law.  Since $e\le\tau^{1/2}$, $E_0=\{1\le\Xi_\tau/\sqrt e\le2\}$
has probability at least a uniform $c_0>0$.  On this event, for large $L$,
\[
 c_1\sqrt e\le m_\tau\le c_2\sqrt e,\qquad
 Z_0\le\tfrac12\log e+C_0.
\]
These follow from $\tanh u/u\to1$ and compactness of the $H$ interval.
Fix $t\in[1/4,1/2]$, $S=L+\log t$, $b_L=-\tfrac12\log L$, and
$\bar z=\tfrac12\log e+C_0$.  Under each conditional tilted law, consider
\[
 A_L=\left\{\sup_{0\le s\le S}
       (\bar z+W_s^{(4)}/\sqrt2+7s/4)\le b_L\right\}.
\]
On $E_0$ this event implies $X_s^2\le1/L$ for $s\le S$,
so the integral in \eqref{crit:exact-tilt} is at most $6S/L\le6$.
Conditioning on the probe and then integrating gives
\begin{equation}
 \mathbb E_\pi m_t^4\ge ce^2(t/\tau)^3\mathbb P(A_L).
 \label{crit:barrier-moment}
\end{equation}
Here $\mathbb P(A_L)$ is the same Brownian barrier probability for all retained initial states.
Let $\mu=7/4$, $\sigma=1/\sqrt2$, and $d=b_L-\bar z=7L/4-(\log f+\log L)/2-C_0$.
Uniformly over the stated range, $d\ge L/8$ for sufficiently large $L$.
The reflection formula with drift is
\begin{equation}
 \begin{gathered}
 \mathbb P(A_L)=\Phi(-a)-\exp(2\mu d/\sigma^2)\Phi(-b),\\
 a=\frac{\mu S-d}{\sigma\sqrt S},\quad
 b=\frac{\mu S+d}{\sigma\sqrt S}.
 \end{gathered}
 \label{crit:reflection}
\end{equation}
Driftless reflection gives the killed terminal density $p_S(y)-p_S(2d-y)$
on $y<d$, where $p_S$ is the centered normal density with variance $\sigma^2S$.
Multiply by $\exp\{\mu y/\sigma^2-\mu^2S/(2\sigma^2)\}$ and integrate over $y<d$
to obtain \eqref{crit:reflection}.  We must control its subtraction.
Write $R(u)=\Phi(-u)/\phi(u)$, where $\phi$ is the normal density.
For $u>0$, integration by parts gives $u/(1+u^2)\le R(u)\le1/u$; also
$R(u)=\int_0^\infty e^{-uv-v^2/2}\,dv$ is decreasing.
For $a\ge0$, the reflected-to-first-term ratio is $R(b)/R(a)$, by density cancellation.
As $d\ge L/8$ and $\mu S\le7L/4$, one has $a/b\le13/15$ and $b\ge c\sqrt L$.
For $a\ge4$ the ratio is at most $(17/16)(13/15)<1$.
For $0\le a\le4$ it is at most $1/[bR(4)]$, tending uniformly to zero.
If $a<0$, decrease $d$ to $\mu S$; monotonicity and the same formula
give $\mathbb P(A_L)\ge1/2-O(S^{-1/2})\ge1/4$ for large $L$.
Thus in all cases $\mathbb P(A_L)\ge c\Phi(-a)$.
Finally, $a\le(\log f+\log L+C)/\sqrt{2(L-\log4)}$ after enlarging $C$.
Together with $t^3\ge1/64$, this proves \eqref{crit:fourth-lower}.
For \eqref{crit:tail-coercivity}, write $y=\log f$ and $k=\log L+C$.
For $0\le y\le\sqrt L$, the normal argument stays bounded, so
$f\Psi_L(f)\ge cf\ge cf^{1/8}$.
For $\sqrt L\le y\le3L$, the Mills lower bound gives
\[
 \log(f\Psi_L(f))\ge
 y-\frac{(y+k)^2}{4(L-\log4)}
 -\log\left(1+\frac{y+k}{\sqrt{2(L-\log4)}}\right)-C'.
\]
The quadratic subtraction divided by $y$ is at most
\[
 \frac{3L}{4(L-\log4)}
 +\frac{k}{2(L-\log4)}
 +\frac{k^2}{4(L-\log4)\sqrt L}=\frac34+o(1).
\]
The logarithmic subtraction divided by $y$ is bounded by
$L^{-1/2}\log(1+(3L+k)/\sqrt{2(L-\log4)})=o(1)$.
Uniformly for large $L$, these ratios sum to at most $7/8$, giving $y/8-C'$.
\end{proof}

\subsection{Optimization over Every Probe Energy}
\label{crit:subsec:family}

\begin{proof}[Proof of Theorem~\ref{thm:restricted}]
For $0<\delta<1$, write
\[
 \begin{gathered}
 C_{\mathrm{tanh}}(H)=
 \inf_{\substack{\delta/3\le\tau\le2\delta/3\\ e>0}}
       \sup_{|h|\le H}\mathcal L_h(a^{\tau,e}),\\
 \Delta_{\mathrm{tanh}}(H)=H^2/2-C_{\mathrm{tanh}}(H).
 \end{gathered}
\]
For members with $b_H(a)\le0$ the desired upper bound is immediate.
For $b_H(a)>0$, \eqref{crit:positive-energy} gives $e\le Q=O(\delta)$.
Hence $\bar Q=e/\sqrt\tau=O(\sqrt\delta)<1$, uniformly in the family.
Put $f=e\tau^{-7/2}=\bar Q/\tau^3$ and $L=\log(1/\tau)$.
Then $f\le\tau^{-3}$.  If $f<1$, nonnegativity of the exact losses gives
$b_H\le\delta\bar Q/2=\delta\tau^3f/2=O(\delta^4)$.
If $f\ge1$, integrate \eqref{crit:fourth-lower} over $[1/4,1/2]$
in \eqref{crit:fourth-penalty}, obtaining
\[
 b_H(a)\le\frac{\delta\tau^3f}{2}-c\tau^4f^2\Psi_L(f).
\]
Positivity forces $f\Psi_L(f)\le C\delta/\tau\le C'$.
By \eqref{crit:tail-coercivity}, $f$ is bounded independently of
$\delta,\tau,e$, and again $b_H(a)\le C\delta^4$.
Since $\sup_{|h|\le H}\mathcal L_h(a)\ge H^2/2-b_H(a)$, this proves
$\Delta_{\mathrm{tanh}}(H)\le C\delta^4$.
For the reverse bound, \eqref{crit:design} belongs to the stated family
and has full-interval improvement \eqref{crit:endpoint-gain}.
Thus $c\delta^4\le\Delta_{\mathrm{tanh}}(H)\le C\delta^4$ for small $\delta>0$.
The upper bound also works for any fixed $0<c_-\le\tau/\delta\le c_+<\infty$.
It excludes unbounded logarithmic improvement by retuning the probe energy
in this family, but imposes no such bound on other feedbacks.
\end{proof}

\section{Failure of Permanent-Stopping Truncation}
\label{crit:sec:stopping}

\begin{proposition}
\label{crit:prop:stopping}
Fix $c_\tau>0$, $\eta\in(0,1/2)$, $\kappa\ge0$, and $\beta>0$.
Set $\tau=c_\tau\delta$, $e/\sqrt\tau=\delta^\beta$, and $r_\delta=H\eta\delta^\kappa$.
Run the probe and then the original feedback until
$T=\inf\{t\ge\tau:|m_t|\ge r_\delta\}\wedge1$, and set $a_t=0$
permanently after $T$.  If the probe ends outside the threshold, set $T=\tau$.  Suppose
\begin{equation}
 \beta+\tfrac12>2\kappa,\qquad 0<\beta<3+2\kappa.
 \label{crit:stopping-range}
\end{equation}
For a constant $c>0$ depending only on the fixed parameters and all sufficiently small $\delta>0$,
\begin{equation}
 b_H(a)\le\frac12\delta^{\beta+1}
 -\frac{c}{\sqrt{\log(1/\delta)}}
       \delta^{\,2\kappa+(\beta+1-2\kappa)^2/4}<0.
 \label{crit:stopping-failure}
\end{equation}
Consequently $\sup_{|h|\le H}\mathcal L_h(a)>H^2/2$.
\end{proposition}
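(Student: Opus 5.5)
Write $b_H(a)=\delta Q/2-\mathcal S_H(a)$ with $\mathcal S_H=S_1+S_2$ as in Proposition~\ref{crit:prop:identity}. I will bound $Q$ from above and $\mathcal S_H$ from below.

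\emph{Energy.} The probe contributes energy $e$. After the probe, $m$ satisfies the same equations as in Lemma~\ref{crit:lem:moments}, stopped at $T$. Stopping only removes energy, so $Q\le e/\sqrt\tau=\delta^\beta$. Hence $\delta Q/2\le\frac12\delta^{\beta+1}$, which is the first term of \eqref{crit:stopping-failure}.

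\emph{Deficit.} The key observation is that after $T$ the action is zero while $m_T$ has reached the threshold $r_\delta$. The alignment loss $S_1$ then contains
\[
 \frac{H^2}{\sqrt2}\,\mathbb E_\pi\int_T^1\sqrt t\,\frac{m_t^2}{2H^4t}\,dt
 =\frac{1}{2\sqrt2H^2}\,\mathbb E_\pi\Big[m_T^2\int_T^1 t^{-1/2}\,dt\Big],
\]
because $m$ is frozen after $T$. On $\{T\le1/2\}$ this is at least $c\,r_\delta^2=c\,\delta^{2\kappa}$. Thus $\mathcal S_H\ge c\,\delta^{2\kappa}\,\mathbb P_\pi(T\le 1/2)$, and the task reduces to a lower bound on the hitting probability.

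\emph{Hitting probability.} I work in the logarithmic time $s=\log(t/\tau)$ of \eqref{crit:log-diffusion}, with $Z=\ell(X)$ following \eqref{crit:lamperti}. While $X$ is small, the drift is close to $-1/4$ and the volatility is $1/\sqrt2$.
\begin{itemize}
\item At the start, $Z_0\approx\tfrac12\log e+N$ with $N$ of order one, and $\tfrac12\log e=\tfrac12(\beta+\tfrac12)\log\delta+O(1)$.
\item The threshold is at $\ell(\eta\delta^\kappa)\approx\kappa\log\delta$.
\item The distance to climb is therefore $d=\tfrac12(\beta+\tfrac12-2\kappa)\log(1/\delta)$, which is positive by the first condition in \eqref{crit:stopping-range}.
\item The available horizon is $S=\log(1/(2\tau))\approx\log(1/\delta)$.
\end{itemize}
The probability that a Brownian motion with drift $-1/4$ and variance $1/2$ per unit time exceeds level $d$ by time $S$ is $\Phi(-(d+S/4)/\sqrt{S/2})$, up to the reflection term. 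Here $d+S/4=\tfrac14(\beta+1-2\kappa)\log(1/\delta)$ because $\tfrac12(\beta+\tfrac12)+\tfrac14=\tfrac12(\beta+1)$. The exponent is then $(d+S/4)^2/S=\tfrac1{16}(\beta+1-2\kappa)^2\log(1/\delta)$. I would need to check the constants so that this matches the stated exponent $(\beta+1-2\kappa)^2/4$, since the normalization of $W$ in \eqref{crit:log-diffusion} matters. The Mills ratio supplies the factor $1/\sqrt{\log(1/\delta)}$.

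\emph{Making it rigorous.} This is a lower bound on a hitting probability, so I need a \emph{lower} bound on the drift of $Z$ before hitting. Before $T$, $X\le\eta\delta^\kappa<1/2$, so the drift $-\tfrac14+\tfrac34X^2$ is at least $-\tfrac14$. A comparison with the driftless-plus-$(-1/4)$ Brownian motion then gives the bound directly, with reflection handled as in Proposition~\ref{crit:prop:fourth-lower}. If $\kappa=0$, the bound $X<\eta<1/2$ still gives drift at least $-1/4$.

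\emph{Conclusion.} The probe-end event $N\in[1,2]$ contributes a constant factor. The resulting deficit is $\mathcal S_H\ge c\,\delta^{2\kappa+(\beta+1-2\kappa)^2/4}/\sqrt{\log(1/\delta)}$. This dominates $\tfrac12\delta^{\beta+1}$ exactly when $2\kappa+(\beta+1-2\kappa)^2/4<\beta+1$. Setting $u=\beta+1-2\kappa$, the condition reads $u^2<4u$, i.e. $0<u<4$, which is equivalent to \eqref{crit:stopping-range}.

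\emph{Consequence.} Since $b_H(a)<0$, the average of the two endpoint risks exceeds $H^2/2$, and therefore so does the supremum over $|h|\le H$.

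\emph{Main obstacle.} The hitting-probability lower bound with the correct constant in the exponent is the delicate step. It requires the Gaussian-tail computation for the time-changed diffusion, including the exact variance normalization, and uniform control of the probe start.
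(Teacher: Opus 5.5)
Your proposal is correct and is essentially the paper's proof. The shared steps are: $Q\le e/\sqrt\tau=\delta^\beta$, the frozen posterior after $T$ giving $S_1\ge c\,r_\delta^2\,\mathbb P_\pi(T\le\text{const})$, the positive-probability probe event, the Lamperti drift bound $\ge-1/4$, and a terminal-value Gaussian tail with its Mills factor. No reflection term is needed for this lower bound. Your ``stopping only removes energy'' is a pathwise-coupling version of the paper's Gronwall bound $M_2'\le M_2/(2t)$; both are valid.

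The discrepancy you flagged is an arithmetic slip, not a normalization issue. By your own identity, $d+S/4=\tfrac12(\beta+1-2\kappa)\log(1/\delta)$, not $\tfrac14(\cdots)$. With variance $S/2$ the exponent is therefore $(d+S/4)^2/S=\tfrac14(\beta+1-2\kappa)^2\log(1/\delta)+O(1)$, exactly as stated.

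The hypotheses \eqref{crit:stopping-range} imply $0<\beta+1-2\kappa<4$ but are not equivalent to it. The first condition is what places the probe end below the threshold.
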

\begin{proof}
After stopping, the posterior remains constant.  Its second moment obeys
\[
 M_2'(t)=\frac{\mathbb E_\pi[
 \mathbf{1}_{\{t<T\}}m_t^2(1-m_t^2/H^2)^2]}{2t}
 \le\frac{M_2(t)}{2t}.
\]
The probe bound and Gronwall therefore imply
$\mathbb E_\pi m_t^2\le H^4e\sqrt{t/\tau}$ and $Q\le e/\sqrt\tau=\delta^\beta$.
On $\{T\le1/4\}$, the post-stopping contribution to \eqref{crit:alignment-part} is
\[
 S_1\ge\frac1{\sqrt2H^2}
 \mathbb E_\pi[m_T^2(1-\sqrt T)\mathbf{1}_{\{T\le1/4\}}]
 \ge\frac{r_\delta^2}{2\sqrt2H^2}\mathbb P_\pi(T\le1/4).
\]
This covers immediate stopping after the probe, when $|m_T|\ge r_\delta$.
Use the positive-probability probe event $E_0$ from Proposition~\ref{crit:prop:fourth-lower}.
Condition first on the probe filtration $\mathcal F_\tau$; subsequent innovation
Brownian increments are independent of this entire filtration.
There $X_0=m_\tau/H$ is between constant multiples of $\sqrt e$ and, by the
first condition in \eqref{crit:stopping-range}, below $\eta\delta^\kappa$ for small $\delta$.
Until the threshold hit, \eqref{crit:lamperti} gives $Z_s\ge Z_0+W_s/\sqrt2-s/4$.
Let $D=\log(1/\delta)$ and $S=\log((1/4)/\tau)=D+O(1)$.
If this Brownian lower bound at $S$ exceeds $\ell(\eta\delta^\kappa)$,
the original process must have hit the threshold by time $1/4$.
This uses the equation only on paths not yet stopped; Brownian motion can be continued after $T$.
Uniformly on $E_0$, since $e=\sqrt{c_\tau}\delta^{\beta+1/2}$,
\[
 \ell(\eta\delta^\kappa)-Z_0+S/4
 \le\frac{\beta+1-2\kappa}{2}D+C.
\]
Here $\ell(\eta\delta^\kappa)=\kappa\log\delta+O(1)$,
including $\kappa=0$, and $\beta+1-2\kappa>1/2$.
The normal lower tail bound with variance $S/2$ gives
\[
 \mathbb P_\pi(T\le1/4)\ge
 cD^{-1/2}\delta^{(\beta+1-2\kappa)^2/4}.
\]
Squaring the normal argument contributes $(\beta+1-2\kappa)^2D/4+O(1)$ to its
density exponent; the Mills factor is bounded below by a constant times $D^{-1/2}$.
Combine this estimate with $b_H=\delta Q/2-S_1-S_2$ and $S_2\ge0$
to obtain the first inequality in \eqref{crit:stopping-failure}.
The loss exponent minus the gain exponent is
\[
 2\kappa+\frac{(\beta+1-2\kappa)^2}{4}-(\beta+1)
 =\frac{(\beta-1-2\kappa)^2-4}{4}<0.
\]
The two strict conditions in \eqref{crit:stopping-range} imply
$-3/2<\beta-1-2\kappa<2$, proving this strict inequality.
Thus the negative term eventually dominates despite its logarithmic denominator.
The worst-case risk is at least the endpoint average $H^2/2-b_H(a)>H^2/2$.
\end{proof}

This proposition concerns permanent cessation, not soft truncation, switching to
a nonzero feedback, or restarting exploration.  At $\beta=3+2\kappa$ the estimate
does not decide the sign of the improvement.

\section{A Uniform Deficit Law and the Critical Upper Bound}
\label{sharp:sec:proof}

We prove Theorem~\ref{thm:energy-new} and the upper bound in
Theorem~\ref{thm:critical-new}. All expectations in this appendix are
under the equal-endpoint mixture, with its complete record filtration.
The proof uses a test function evaluated on each original algorithm;
it requires neither a Markov reduction nor existence of an optimal rule.

\subsection{Normalisation and finite-energy identities}
\label{sharp:subsec:normalise}

For an original rule of finite endpoint risk, put
\[
 X_t=m_t/H,\qquad u_t=Ha_t,\qquad
 q(t)=\mathbb E_\pi\int_0^t u_r^2\,dr,\qquad q=q(1)=H^2Q.
\]
Thus $q$ denotes normalised expected energy, whereas $Q$ keeps its
meaning from the main text. The complete-record posterior equation
\eqref{crit:posterior} gives
\begin{equation}
 dX_t=(1-X_t^2)u_t\,dI_t,\qquad X_0=0,\qquad |X_t|\le1.
 \label{sharp:diffusion}
\end{equation}
The innovation $I$ is Brownian motion relative to the full mixture
filtration. The argument below in fact applies to any predictable $u$
and adapted solution of \eqref{sharp:diffusion} in a Brownian filtration,
provided $q<\infty$. Randomisation is included in that filtration.

In these variables Proposition~\ref{crit:prop:identity} becomes
\begin{align}
 S_1&=\frac1{\sqrt2}\mathbb E_\pi\int_0^1\sqrt t
       \left(u_t+\frac{X_t}{\sqrt{2t}}\right)^2dt,
       \label{sharp:S1}\\
 S_2&=\frac1{\sqrt2}\mathbb E_\pi\int_0^1(1-\sqrt t)
                (2X_t^2-X_t^4)u_t^2dt,\qquad
 S=S_1+S_2=\mathcal S_H(a).
       \label{sharp:S2}
\end{align}
Write $v(t)=\mathbb E_\pi X_t^2$ and $M(t)=\mathbb E_\pi X_t^4$.
It\^o's formula gives, for almost every $t$,
\begin{equation}
 v'(t)=\mathbb E_\pi(1-X_t^2)^2u_t^2,\qquad
 M'(t)=6\mathbb E_\pi X_t^2(1-X_t^2)^2u_t^2.
 \label{sharp:moments}
\end{equation}
The functions $v,M$ are absolutely continuous and nondecreasing,
and $v(t)\le q(t)$. Indeed the stochastic integrands for $X^2,X^4$
are bounded multiples of $u$, so their integrals are square integrable.
No fourth action moment is needed. Tonelli's theorem gives
\[
 \int_0^1\frac{v(t)}{2\sqrt t}\,dt
 =\int_0^1(1-\sqrt t)v'(t)\,dt\le q.
\]
Also $\mathbb E_\pi\int_0^1|X_tu_t|dt\le\sqrt q$. Hence all terms in
the expansion of \eqref{sharp:S1} are integrable at time zero.
Expanding and using $(1-X^2)^2+2X^2-X^4=1$ proves
\begin{equation}
 S=\frac q{\sqrt2}+\mathbb E_\pi\int_0^1X_tu_t\,dt.
 \label{sharp:identity}
\end{equation}
For the original experiment, the corresponding improvement is
\begin{equation}
 b_H(a)=\frac{\delta}{2H^2}q-S,\qquad \delta=\sqrt2H^2-1.
 \label{sharp:improvement}
\end{equation}

\begin{lemma}[Expected-energy shape and fourth moment]
\label{sharp:lem:shape}
For $q>0$, one has
\begin{align}
 |q(t)-q\sqrt t|&\le2\sqrt{Sq},\qquad 0\le t\le1,
       \label{sharp:shape}\\
 0\le q(t)-v(t)&\le\frac{\sqrt2}{1-\sqrt t}S_2,\qquad t<1.
       \label{sharp:gap}
\end{align}
With $T=1/2$ and $K_0=12(1+\sqrt2)$,
\begin{equation}
 M(T)\le K_0S_2.
 \label{sharp:fourth}
\end{equation}
In particular, $q>0$ implies $S>0$.
\end{lemma}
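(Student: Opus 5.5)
The plan is to prove the three inequalities separately, from the cheapest to the most delicate, and to deduce the final claim from them. All of them rest on the decomposition \eqref{sharp:S1}--\eqref{sharp:identity} and the moment equations \eqref{sharp:moments}.

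\emph{Gap \eqref{sharp:gap}.} From \eqref{sharp:moments},
\[
 q(t)-v(t)=\int_0^t\mathbb E_\pi\{1-(1-X_s^2)^2\}u_s^2\,ds
 =\int_0^t\mathbb E_\pi(2X_s^2-X_s^4)u_s^2\,ds\ge0 .
\]
For $s\le t$ the weight satisfies $1\le(1-\sqrt s)/(1-\sqrt t)$. Inserting this weight, extending the integral to $[0,1]$, and comparing with \eqref{sharp:S2} gives $q(t)-v(t)\le\sqrt2S_2/(1-\sqrt t)$.

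\emph{Fourth moment \eqref{sharp:fourth}.} On $[0,1]$ one has $X^2(1-X^2)^2\le X^2\le2X^2-X^4$. Hence $M'(t)\le6\mathbb E_\pi(2X_t^2-X_t^4)u_t^2$. Integrating over $[0,T]$ and inserting the weight $(1-\sqrt s)/(1-\sqrt T)$ as above gives
\[
 M(T)\le\frac{6\sqrt2}{1-2^{-1/2}}S_2=\frac{12}{\sqrt2-1}S_2=12(1+\sqrt2)S_2 .
\]
This is exactly $K_0S_2$.

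\emph{Shape \eqref{sharp:shape}.} I would first reproduce the Cauchy--Schwarz chain of Corollary~\ref{crit:cor:shape}, without assuming positive improvement. Two Cauchy--Schwarz steps give $-\mathbb E_\pi\int X_tu_t\,dt\le\int\sqrt{v q'}\,dt\le\int\sqrt{qq'}\,dt$. With $p=q(t)/q$ and $r=\sqrt{2pp'}$, so that $\|r\|_2=1$, \eqref{sharp:identity} then gives
\[
 S\ge\frac q{\sqrt2}\Bigl(1-\int_0^1r\Bigr)=\frac q{2\sqrt2}\|r-1\|_2^2 .
\]
This controls $p(t)^2-t=\int_0^t(r^2-1)$ only at the order $\sqrt{S/q}$. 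Converting to $p-\sqrt t$ through $|p-\sqrt t|^2\le|p^2-t|$ would lose a square root, which is not enough.

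To reach the stated $2\sqrt{S/q}$ rate, I would instead use $S_1$ pointwise in time. Minkowski's inequality gives
\[
 S_1\ge\frac1{\sqrt2}\int_0^1\sqrt t\,\bigl(\sqrt{q'(t)}-\sqrt{v(t)/(2t)}\bigr)^2dt .
\]
By the gap bound, $v$ may be replaced by $q(t)$ up to an $S_2$ error. This should yield an $L^2$ bound of order $S$ on the defect of the linear relation $q'=q(t)/(2t)$. That relation is solved exactly by $q\sqrt t$, and the endpoint $q(1)=q$ pins down the constant.

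I then integrate the defect, either for $\sqrt{q(t)}$ or for $q(t)/\sqrt t$, from the endpoint $t=1$ and apply Cauchy--Schwarz against $\int(\sqrt{q'}+\sqrt{q/(2t)})^2\lesssim q$. This should give $|q(t)-q\sqrt t|\lesssim\sqrt{Sq}$.

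I expect this step to be the main obstacle. Both singular weights must be handled: $\sqrt t$ near $0$, and $1/(1-\sqrt t)$ from the gap near $1$. I would try to avoid the latter by working directly with the combination $\sqrt{vq'}$ versus $\sqrt{qq'}$, which is a nonnegative Cauchy loss already bounded by $S$. Tracking the constant $2$ will also require care.

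\emph{Final claim.} Suppose $q>0$ and $S=0$. Then $S_1=S_2=0$, so $u_t=-X_t/\sqrt{2t}$ almost everywhere, almost surely. Substituting this into \eqref{sharp:diffusion} with $X_0=0$ gives, by It\^o's formula, $v'(t)\le v(t)/(2t)$. Since $v(t)\le q(t)\to0$ as $t\to0$, comparison with $\sqrt t$ forces $v\equiv0$. Hence $X\equiv0$ and $u\equiv0$, so $q=0$, a contradiction. The same conclusion also follows from the shape bound combined with $q(t)-v(t)=0$.
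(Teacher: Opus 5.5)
\textbf{Gap and fourth-moment bounds.} These two steps are correct. For the fourth moment you integrate the weight $(1-\sqrt s)/(1-\sqrt T)$ directly on $[0,T]$. The paper instead uses Tonelli to get $\int_0^1 M(t)t^{-1/2}\,dt\le12\sqrt2\,S_2$ and then monotonicity of $M$. Both routes give the same constant $K_0=12(1+\sqrt2)$.

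\textbf{The shape bound is the genuine gap.} You correctly derive $\|r-1\|_2^2\le2\sqrt2\,S/q$. You then conclude, wrongly, that this controls $p(t)-\sqrt t$ only at order $(S/q)^{1/4}$. You replace it with an ODE-defect plan that you leave unfinished (``should yield'', ``should give'') and that runs into the singular weights you list. The missing observation is that $p(t)^2=\int_0^t r^2\,ds$, so $p(t)=\|r\|_{L^2(0,t)}$, while $\sqrt t=\|1\|_{L^2(0,t)}$. The reverse triangle inequality in $L^2(0,t)$ then gives
\[
 |p(t)-\sqrt t|\le\|r-1\|_{L^2(0,t)}\le\|r-1\|_{L^2(0,1)}\le\bigl(2\sqrt2\,S/q\bigr)^{1/2}.
\]
Hence $|q(t)-q\sqrt t|\le(2\sqrt2)^{1/2}\sqrt{Sq}\le2\sqrt{Sq}$. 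Your own route also works if you bound $\|r+1\|_{L^2(0,t)}$ by $p(t)+\sqrt t$ instead of by $2$. Dividing $|p^2-t|\le\|r-1\|_{L^2(0,t)}\|r+1\|_{L^2(0,t)}$ by $p+\sqrt t$ then loses nothing. This is the paper's argument; no Minkowski step on $S_1$ and no ODE comparison are needed.

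\textbf{The final claim has a flawed step.} From $u_t=-X_t/\sqrt{2t}$ you obtain $v'\le v/(2t)$ and $v(t)\to0$. Both are satisfied by $v=c\sqrt t$ with $c>0$. Monotonicity of $v(t)/\sqrt t$ gives nothing when its limit at $0$ is positive. Your alternative also fails: shape together with $q(t)=v(t)$ only yields $v(t)=q\sqrt t$, which is consistent.

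Either of two arguments closes it:
\begin{itemize}
\item Use the pathwise content of $S_2=0$. It forces $X_t^2u_t^2=0$ for $dt\otimes d\mathbb P_\pi$-almost every point. Combined with $u_t=-X_t/\sqrt{2t}$, this gives $X_t=0$, hence $u_t=0$ and $q=0$.
\item Follow the paper. Shape and gap give $v(T)=q\sqrt T>0$, while \eqref{sharp:fourth} gives $M(T)=0$, contradicting $v(T)^2\le M(T)$.
\end{itemize}
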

\begin{proof}
Let $r(t)=\sqrt{2q(t)q'(t)}/q$, defined almost everywhere.
The chain rule gives $\int_0^1r^2dt=1$ and
$q(t)/q=\|r\|_{L^2(0,t)}$, including flat intervals of $q(t)$.
By \eqref{sharp:identity}, $v(t)\le q(t)$, and Cauchy--Schwarz,
\[
 S\ge\frac q{\sqrt2}-\int_0^1\sqrt{v(t)q'(t)}\,dt
 \ge\frac q{\sqrt2}\left(1-\int_0^1r(t)\,dt\right).
\]
Therefore $\|r-1\|_2^2\le2\sqrt2 S/q$; the reverse triangle
inequality yields \eqref{sharp:shape}. The exact identity
\[
 q(t)-v(t)=\mathbb E_\pi\int_0^t(2X_r^2-X_r^4)u_r^2\,dr
\]
gives \eqref{sharp:gap}. Since
$M'\le6\mathbb E_\pi X_t^2u_t^2
\le6\mathbb E_\pi(2X_t^2-X_t^4)u_t^2$, Tonelli gives
\[
 S_2\ge\frac1{6\sqrt2}\int_0^1(1-\sqrt t)M'(t)\,dt
 =\frac1{12\sqrt2}\int_0^1\frac{M(t)}{\sqrt t}\,dt.
\]
Monotonicity of $M$ now proves \eqref{sharp:fourth}, with the displayed
$K_0$. If $S=0$ and $q>0$, \eqref{sharp:shape}--\eqref{sharp:gap}
give $v(T)=q\sqrt T>0$, whereas \eqref{sharp:fourth} gives $M(T)=0$.
This contradicts $v(T)^2\le M(T)$.
\end{proof}

\subsection{A profile with compensated threshold drift}
\label{sharp:subsec:profile}

Fix $d_*=1/100$ and the continuous positive function
\begin{equation}
 \rho(z)=\frac{4d_*}{5}
 \begin{cases}
  e^{2z}(1+z^2),&z\le0,\\
  e^{-2z},&z\ge0,
 \end{cases}
 \qquad \int_{\mathbb R}\rho(z)\,dz=d_*.
 \label{sharp:rho}
\end{equation}
For any $R_0\ge1$, set $\nu=d_*/\sqrt{R_0}$ and define
\begin{equation}
 w(z)=\int_z^\infty
       \frac{e^{2\nu(y-z)}-1}{2\nu}\rho(y)\,dy,\qquad
 h(R,z)=\frac{w(z)}{\sqrt{R+w(z)^2}},\quad 1\le R\le R_0.
 \label{sharp:profile}
\end{equation}
The right tail is integrable since $\nu<1$. Differentiation on compact
sets gives $w>0$, $w\in C^2(\mathbb R)$, and
\begin{equation}
 w''+2\nu w'=\rho,\qquad
 -w'=2\nu w+\int_z^\infty\rho(y)\,dy\le2\nu w+d_*.
 \label{sharp:ode}
\end{equation}
For each fixed $R_0$, the parameter $\nu$ is held fixed when taking
partial derivatives in $R$.

\begin{lemma}[Uniform profile estimates]
\label{sharp:lem:profile}
Put $\mathcal D=(R+w^2)^{1/2}$,
$A=-h_R+\nu h_z/2+h_{zz}/4$, and $B=12h+7h_z+h_{zz}$.
Uniformly over $R_0\ge1$, $1\le R\le R_0$, and $z\in\mathbb R$,
\begin{align}
 A&\ge\frac{w+R\rho}{4\mathcal D^3},
       \label{sharp:residual}\\
 e^{2z}\sqrt R\,B^2&\le C_1\frac{w+R\rho}{\mathcal D^3},
       \label{sharp:square}\\
 e^{2z}\sqrt R\,|B|&\le C_2,
       \label{sharp:curvature}
\end{align}
where one may take $C_1=1000$ and $C_2=2$. There is also a fixed
$c_h>0$ such that for every $r>0$,
\begin{equation}
 h(R,\tfrac12\log r)\ge\frac{c_h}{\sqrt R(1+r)}.
 \label{sharp:kernel-lower}
\end{equation}
\end{lemma}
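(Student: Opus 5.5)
The plan is direct calculus on the explicit profile, organised around the quantities $w$, $w'$, $w''$ and the relation \eqref{sharp:ode}. First I would record the elementary properties of $w$. Since $\rho$ is explicit and $\nu=d_*/\sqrt{R_0}\le d_*$ is small, I would compute $\int_z^\infty\rho$ and compare $w$ with $\int_z^\infty(y-z)\rho(y)\,dy$. For $z\ge0$ this gives $w\asymp e^{-2z}$ and $|w'|\asymp e^{-2z}$, using that $e^{2\nu(y-z)}$ is bounded on the relevant range. For $z\le0$ it gives $w\approx d_*|z|+O(1)$, linearly growing, so $-w'\le2\nu w+d_*$ by \eqref{sharp:ode}. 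Next I would write
\[
h=w\mathcal D^{-1},\qquad h_z=R\,w'\mathcal D^{-3},\qquad h_R=-\tfrac12 w\mathcal D^{-3},
\]
\[
h_{zz}=R\,w''\mathcal D^{-3}-3Rww'^2\mathcal D^{-5}.
\]

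For \eqref{sharp:residual}, substituting these formulas gives
\[
A=\mathcal D^{-3}\Bigl[\tfrac12 w+\tfrac R4(w''+2\nu w')-\tfrac34 Rww'^2\mathcal D^{-2}\Bigr].
\]
By the ODE, $w''+2\nu w'=\rho$, so the middle term is exactly $R\rho/4$. This is the compensation of the threshold drift. It remains to absorb the last term into $w/4$, which requires $3Rw'^2\le\mathcal D^2=R+w^2$, i.e.\ $w'^2\le\tfrac13(1+w^2/R)$. This holds because $|w'|\le 2\nu w+d_*\le d_*(1+2w/\sqrt{R_0})$ and $R\le R_0$, with $d_*=1/100$ giving ample room.

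For \eqref{sharp:square} and \eqref{sharp:curvature}, I would bound
\[
B=12w\mathcal D^{-1}+7Rw'\mathcal D^{-3}+R w''\mathcal D^{-3}-3Rww'^2\mathcal D^{-5}
\]
termwise, splitting $z\ge0$ from $z\le0$.
\begin{itemize}
\item For $z\ge0$, the values $w$, $w'$, $w''$ and $\rho$ are all $O(e^{-2z})$ and $w\le1$. Hence $\mathcal D\asymp\sqrt R$ and $|B|\lesssim e^{-2z}/\sqrt R$. This makes $e^{2z}\sqrt R|B|$ bounded, and $e^{2z}\sqrt RB^2\lesssim e^{-2z}/\sqrt R\lesssim (w+R\rho)/\mathcal D^3$ because $\rho\asymp e^{-2z}$.
\item For $z\le0$, the factor $e^{2z}$ is small. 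There $|B|\lesssim w/\mathcal D+(|w'|+|w''|)\sqrt R/\mathcal D^2$, and $w''=\rho-2\nu w'$ is bounded. So $e^{2z}\sqrt R|B|\lesssim e^{2z}(\sqrt R\,w/\mathcal D+1)$ stays bounded.
\item For the squared bound when $z\le0$, I need $e^{2z}\sqrt R\,w^2/\mathcal D^2\lesssim (w+R\rho)/\mathcal D^3$. Because $\rho\ge c\,e^{2z}(1+z^2)$ and $w\lesssim 1+|z|$, the right side is at least $c\,e^{2z}R(1+z^2)/\mathcal D^3$. The required inequality then reduces to $\sqrt R\,w^2\mathcal D\lesssim R(1+z^2)$. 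Since $w^2\lesssim 1+z^2$ and $\mathcal D\lesssim\sqrt R(1+|z|)$, this holds if the polynomial weight $1+z^2$ in $\rho$ controls the product. I expect this is why $\rho$ carries the factor $1+z^2$ on the left.
\end{itemize}
Getting the explicit constants $C_1=1000$ and $C_2=2$ is bookkeeping. This matching of polynomial weights is the main obstacle, and if the crude bound fails I would use the sharper estimate $w\le d_*|z|+O(1)$.

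Finally, for \eqref{sharp:kernel-lower}, set $z=\tfrac12\log r$. Since $w>0$ everywhere, for $r\le1$ (so $z\le0$) we have $w\ge w(0)>0$ because $w$ is decreasing ($w'<0$), and $h\ge w(0)/\sqrt{R+w^2}$. When $w^2\le R$ this is at least $c/\sqrt R$. If instead $w^2>R$, then $h\ge1/\sqrt2\ge c/\sqrt R$ as $R\ge1$. For $r\ge1$, the estimate $w\asymp e^{-2z}=1/r$ gives $h\gtrsim 1/(r\sqrt R)$. Combining the two ranges gives $h\ge c_h/(\sqrt R(1+r))$.
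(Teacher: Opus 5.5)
\paragraph{Verdict.} Your treatment of \eqref{sharp:residual}, of the right half-line $z\ge0$, and of \eqref{sharp:kernel-lower} is essentially the paper's. There is, however, a genuine gap in \eqref{sharp:square} on $z\le0$, at exactly the step you flag. Neither your reduction nor your fallback closes it.

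\paragraph{Why the reduction fails.} You drop $w$ from the denominator $w+R\rho$ and reduce to $\sqrt R\,w^2\mathcal D\lesssim R(1+z^2)$. Your own bounds $w^2\lesssim1+z^2$ and $\mathcal D\lesssim\sqrt R(1+|z|)$ only give $R(1+z^2)(1+|z|)$. Worse, the reduced inequality is false for every fixed $R$ as $z\to-\infty$. On $z\le0$, \eqref{sharp:ode} gives $-w'\ge\int_0^\infty\rho=2d_*/5$, so $w(-r)\ge 2d_*r/5$. Since $\mathcal D\ge w$, the left side is at least $\sqrt R\,w^3$, which is cubic in $r$, while the right side is quadratic.

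\paragraph{The growth bounds are also wrong.} The fallback $w\le d_*|z|+O(1)$ does not help, and it is not uniform either. The relation $-w'=2\nu w+\int_z^\infty\rho$ forces growth like $e^{2\nu|z|}$ once $|z|\gtrsim1/\nu=\sqrt{R_0}/d_*$. The correct uniform bound is $w(-r)\le d_*(1+r)e^{2\nu r}$. Similarly, $w''$ is not bounded on the left half-line. What does hold there, using $w\ge d_*/5$, is $|w'|\le 6w$ and $0<w''\le5w$, which gives $|B|\le167\,w/\mathcal D$. With these corrections your conclusion \eqref{sharp:curvature} survives, because $e^{-2r}w\le d_*(1+r)e^{-(2-2\nu)r}\le d_*$.

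\paragraph{The missing idea.} Split the left half-line at $r=-z=\sqrt R$, where the two terms of $w+R\rho$ exchange roles.
\begin{itemize}
\item For $r\le\sqrt R$, the choice $\nu=d_*/\sqrt{R_0}\le d_*/\sqrt R$ gives $\nu r\le d_*$. Hence $w\le d_*(1+r)e^{2d_*}\le 2d_*e^{2d_*}\sqrt R$, so $\mathcal D\le\sqrt R+w\lesssim\sqrt R$ (not $\sqrt R(1+|z|)$). Then $R\rho$ does control $e^{-2r}\sqrt R\,w^2\mathcal D$, via $(1+r)^2\le2(1+r^2)$.
\item For $r\ge\sqrt R$, keep the $w$ in the denominator and spend the factor $e^{-2r}$. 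The ratio is at most a constant times $e^{-2r}(Rw+\sqrt R\,w^2)\le e^{-2r}(r^2w+rw^2)$. With $w\le d_*(1+r)e^{2\nu r}$ and $\nu\le d_*$, this is $\lesssim r^3e^{-3r/2}$, which is bounded.
\end{itemize}
Without this second regime your estimate is not uniform in $z$, which the statement requires.
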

\begin{proof}
Direct differentiation gives
\[
 h_R=-\frac{w}{2\mathcal D^3},\quad
 h_z=\frac{Rw'}{\mathcal D^3},\quad
 h_{zz}=\frac{Rw''}{\mathcal D^3}
              -\frac{3Rw(w')^2}{\mathcal D^5}.
\]
Using \eqref{sharp:ode},
\[
 A=\frac{w}{2\mathcal D^3}+\frac{R\rho}{4\mathcal D^3}
          -\frac{3Rw(w')^2}{4\mathcal D^5}.
\]
Since $\nu^2R\le d_*^2$,
\[
 \frac{R(w')^2}{R+w^2}\le2d_*^2+8\nu^2R\le10d_*^2<1/3,
\]
which proves \eqref{sharp:residual}.

For $z\ge0$, integration is explicit:
\begin{equation}
 w=b_\nu e^{-2z},\quad b_\nu=\frac{d_*}{5(1-\nu)}
       \in[d_*/5,d_*/4],\quad
 w'=-2w,\quad w''=4w,\quad \rho=4(1-\nu)w.
 \label{sharp:right-tail}
\end{equation}
Thus $|B|\le42w/\sqrt R$ and
$(w+R\rho)/\mathcal D^3\ge3w/\sqrt R$. The latter follows from
\[
 \frac{(w+R\rho)/\mathcal D^3}{w/\sqrt R}
 =\frac{R^{-1}+4(1-\nu)}{(1+w^2/R)^{3/2}}\ge3,
\]
using $R\ge1$, $\nu\le0.01$, and $w\le0.0025$.
The ratios needed for \eqref{sharp:square} and
\eqref{sharp:curvature} are consequently at most $588b_\nu\le1.47$
and $42b_\nu\le0.105$.

For $z=-r\le0$, $w\ge d_*/5$ and $\rho\le4d_*/5$.
Equation~\eqref{sharp:ode} implies
\begin{equation}
 |w'|\le6w,\qquad 0<w''\le5w,\qquad
 w(-r)\le d_*(1+r)e^{2\nu r}.
 \label{sharp:left-tail}
\end{equation}
For the last estimate, integrate
$\frac{d}{dr}w(-r)\le2\nu w(-r)+d_*$ and use
$w(0)=b_\nu\le d_*$. The derivative formulas above give
$|B|\le(12+42+5+108)w/\mathcal D=167w/\mathcal D$.
It follows that
\[
 e^{-2r}\sqrt R\,|B|\le167d_*(1+r)e^{-(2-2\nu)r}
 \le167d_*<2.
\]
For the squared estimate, its left side divided by
$(w+R\rho)/\mathcal D^3$ is at most
\begin{equation}
 167^2e^{-2r}\frac{Rw^2+\sqrt R\,w^3}{w+R\rho},
 \label{sharp:tail-ratio}
\end{equation}
where we used $\mathcal D\le\sqrt R+w$.
If $0\le r\le\sqrt R$, then $\nu r\le d_*$ and
$w\le d_*(1+r)e^{2d_*}\le2d_*e^{2d_*}\sqrt R$. Hence
\[
 e^{-2r}\frac{Rw^2}{w+R\rho}
 \le\frac{w^2}{(4d_*/5)(1+r^2)}
 \le(5d_*/2)e^{4d_*}.
\]
The other summand is this expression multiplied by $w/\sqrt R$.
Thus \eqref{sharp:tail-ratio} is at most
$167^2(5d_*/2)e^{4d_*}(1+2d_*e^{2d_*})<748$.
If $r\ge\sqrt R\ge1$, the denominator is at least $w$ and $R\le r^2$.
The two summands before the factor $167^2$ are bounded by
\[
 d_*r^2(1+r)e^{-(2-2d_*)r},\qquad
 d_*^2r(1+r)^2e^{-(2-4d_*)r}.
\]
Their sum is at most $(2d_*+4d_*^2)r^3e^{-3r/2}$.
Since $\sup_{r\ge0}r^3e^{-3r/2}=(2/e)^3<1$,
\eqref{sharp:tail-ratio} is smaller than $569$ in this region.
This proves \eqref{sharp:square} and \eqref{sharp:curvature} on the
whole line, uniformly even when $R_0\to\infty$.

Finally, if $0<r\le1$, monotonicity of $w$ gives
$w(\frac12\log r)\ge d_*/5$. The function
$w/\sqrt{R+w^2}$ increases with $w$, so it is at least
$d_*/(5\sqrt{2R})$ there. If $r\ge1$,
\eqref{sharp:right-tail} gives $w=b_\nu/r<1$ and
$h\ge d_*/(5r\sqrt{2R})$. Thus \eqref{sharp:kernel-lower}
holds with $c_h=d_*/(5\sqrt2)$.
\end{proof}

\subsection{A test function valid for every action}
\label{sharp:subsec:certificate}

Fix $0<\tau\le1/4$ and $k>0$. Recall $T=1/2$ and define
\begin{align}
 R_0&=1+\log(T/\tau),\quad R(t)=1+\log(T/t),\quad
 \nu=d_*/\sqrt{R_0},\quad k_t=k(t/\tau)^{-\nu},\nonumber\\
 z(t,x)&=\log\frac{|x|}{\sqrt{k_t}\,t^{7/4}},\quad
 K(t)=\frac{k_t}{\sqrt{R(t)}},\quad
 \Phi(t,x)=\varepsilon\frac{x^4}{t^3}h(R(t),z(t,x)),
 \label{sharp:test}
\end{align}
where $\Phi(t,0)=0$ and $\varepsilon=10^{-3}$ is fixed.
For $x\ne0$, differentiation yields
\begin{equation}
 \Phi_{xx}=\varepsilon x^2t^{-3}B,\qquad
 \Phi_t+\frac{x^2}{4t}\Phi_{xx}
       =\varepsilon x^4t^{-4}A.
 \label{sharp:derivatives}
\end{equation}
In particular, $tR'=-1$ and $tz_t=-7/4+\nu/2$; the
$\nu h_z/2$ term is included in $A$.
The derivative bounds in Lemma~\ref{sharp:lem:profile} show that
$h_z,h_{zz},h_R$ are bounded for $R\ge1$.
For fixed $\tau,k$, as $x\to0$ the derivatives
$\Phi_x,\Phi_{xx},\Phi_t$ are respectively $O(x^3),O(x^2),O(x^4)$,
uniformly in $t\in[\tau,T]$. They extend continuously by zero.
Thus $\Phi\in C^{1,2}([\tau,T]\times\mathbb R)$, including the
point where the two formulas for $\rho$ meet.

Because $x^2=k_tt^{7/2}e^{2z}$, \eqref{sharp:curvature} gives
$|\Phi_{xx}|\le K(t)\sqrt t$. For every real action $u$, write
$u_0=-x/\sqrt{2t}$ and $e_u=u-u_0$. Completing the square gives
\begin{equation}
\begin{aligned}
 &\Phi_t+\tfrac12u^2\Phi_{xx}
       +K\sqrt t\left(u+\frac{x}{\sqrt{2t}}\right)^2\\
 &\quad=\varepsilon x^4t^{-4}A+u_0\Phi_{xx}e_u
                    +(K\sqrt t+\Phi_{xx}/2)e_u^2\\
 &\quad\ge\varepsilon x^4t^{-4}A
                    -\frac{x^2\Phi_{xx}^2}{4Kt^{3/2}}
 \ge0.
\end{aligned}
 \label{sharp:all-actions}
\end{equation}
Indeed the quadratic coefficient is at least $K\sqrt t/2$.
The negative term divided by $\varepsilon x^4t^{-4}$ is
$\varepsilon e^{2z}\sqrt R B^2/4\le\varepsilon C_1A\le A$,
by \eqref{sharp:residual}--\eqref{sharp:square}.
At $x=0$ the left side equals $K\sqrt t\,u^2\ge0$.

For the actual generator in \eqref{sharp:diffusion}, the difference
from $\Phi_t+\frac12u^2\Phi_{xx}$ is
\[
 -\tfrac12(2x^2-x^4)u^2\Phi_{xx}
 \ge-\tfrac12K\sqrt t(2x^2-x^4)u^2,\qquad |x|\le1.
\]
For $t\le T$, the two negative terms are paid by
\eqref{sharp:S1}--\eqref{sharp:S2}, since
$\sqrt2\le2$ and $\sqrt t/[\sqrt2(1-\sqrt t)]<2$.
Consequently It\^o's formula gives
\begin{equation}
 \mathbb E_\pi\Phi(\tau,X_\tau)
 \le\mathbb E_\pi\Phi(T,X_T)
              +2\sup_{\tau\le t\le T}K(t)\,S.
 \label{sharp:ito}
\end{equation}
To justify this for the full class, fix $\tau,k$ first.
The required derivatives are bounded on $[\tau,T]\times[-1,1]$,
and the stochastic integral has integrand
$\Phi_x(t,X_t)(1-X_t^2)u_t$. Its second moment is bounded by
a fixed multiple of $q<\infty$; all drift terms are integrable.
Thus expectation can be taken directly, or after energy localisation
and removal of that localisation with the same integrable bounds.
The argument does not assume a uniform amplitude bound, a fourth
action moment, or Markov feedback. If the action stops, the posterior
remains in \eqref{sharp:S1}; that loss is not deleted.

The weight in \eqref{sharp:ito} satisfies a uniform estimate.
Writing $a=\log(t/\tau)$,
$\log(K/k)=-\nu a-\frac12\log(R_0-a)$ is convex, so its maximum
is at an endpoint. With $y=\sqrt{R_0}\ge1$,
\[
 \sqrt{R_0}e^{-\nu(R_0-1)}
 =ye^{-d_*(y-y^{-1})}
 \le e^{d_*}ye^{-d_*y}\le\frac{e^{d_*}}{ed_*}<101.
\]
Hence, with the absolute constant $C_4=101$,
\begin{equation}
 \sup_{\tau\le t\le T}K(t)\le\frac{C_4k}{\sqrt{R_0}}.
 \label{sharp:weight}
\end{equation}

\subsection{Initial mass and absorption}
\label{sharp:subsec:absorption}

\begin{lemma}[Relative-deficit bound]
\label{sharp:lem:relative}
There are absolute constants $C,s_0>0$ such that any process
\eqref{sharp:diffusion} with finite $q>0$ and $s=S/q\le s_0$ satisfies
\begin{equation}
 q\le Cs^3\sqrt{\log(e/s)}.
 \label{sharp:relative}
\end{equation}
\end{lemma}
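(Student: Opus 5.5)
Following the outline in the proof-structure paragraph, I will prove the estimate
\[
 \frac{q^2}{\tau^2}\le A_1\sqrt{1+\log(T/\tau)}\,S+A_2\frac{q}{\tau^3}S,
\]
and then choose the deterministic analysis time $\tau=Ds$. The upper side comes from \eqref{sharp:ito}. Lemma~\ref{sharp:lem:profile} gives $h\le1$, so $\Phi(T,x)\le\varepsilon x^4T^{-3}$ and $\mathbb E_\pi\Phi(T,X_T)\le 8\varepsilon M(T)\le 8\varepsilon K_0S_2$ by \eqref{sharp:fourth}. Combined with \eqref{sharp:weight}, the right side of \eqref{sharp:ito} is therefore at most $(8\varepsilon K_0+2C_4k/\sqrt{R_0})S$. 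The constant $k>0$ is still free. I will take $k$ as a fixed absolute multiple of a power of $q/\tau^{7/2}$ chosen to balance the lower bound, or more simply fix $k$ proportional to the initial-mass scale determined below.

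The lower side (initial mass) must bound $\mathbb E_\pi\Phi(\tau,X_\tau)$ from below. With $r=X_\tau^2/(k\tau^{7/2})$ we have $z=\tfrac12\log r$ at $t=\tau$ with $R=R_0$, and \eqref{sharp:kernel-lower} gives
\[
 \Phi(\tau,X_\tau)\ge\frac{\varepsilon c_h}{\sqrt{R_0}}\,\frac{X_\tau^4}{\tau^3(1+r)}
 =\frac{\varepsilon c_h}{\sqrt{R_0}}\,\frac{k\sqrt{\tau}\,X_\tau^4}{k\tau^{7/2}+X_\tau^2}.
\]
The map $y\mapsto y^2/(k\tau^{7/2}+y)$ is convex on $y\ge0$, so Jensen gives the lower bound $\frac{\varepsilon c_h k\sqrt\tau}{\sqrt{R_0}}\cdot\frac{v(\tau)^2}{k\tau^{7/2}+v(\tau)}$. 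Next, \eqref{sharp:shape} and \eqref{sharp:gap} give $v(\tau)\ge q\sqrt\tau-2\sqrt{Sq}-2\sqrt2S_2$. For $\tau=Ds$ with $D$ large and $s\le s_0$ small, this is at least $\tfrac12q\sqrt\tau$, since $\sqrt{Sq}=q\sqrt s\ll q\sqrt{Ds}$. We may take $v(\tau)\le q\le1$ in the relevant regime. I will now choose $k=\tau^{-5/2}q$, so that $k\tau^{7/2}=q\tau$ is comparable to $v(\tau)\gtrsim q\sqrt\tau$. The lower bound then becomes of order $\frac{k\sqrt\tau\, q\sqrt\tau}{\sqrt{R_0}}=\frac{q^2}{\tau^2\sqrt{R_0}}$. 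In the opposite case the denominator is dominated by $v$, and the same order follows after adjusting constants.

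Combining the two sides yields
\[
 \frac{q^2}{\tau^2\sqrt{R_0}}\lesssim S+\frac{k}{\sqrt{R_0}}S,
\]
which is the displayed estimate with $A_2$ coming from $k=q\tau^{-5/2}$; the precise power of $\tau$ is to be fixed by the bookkeeping. Substituting $S=sq$ and $\tau=Ds$ gives $q\lesssim D^2s^3\sqrt{R_0}+(A_2/D)\,q$ up to the matching powers. Choosing $D$ large absorbs the second term, and since $R_0\asymp\log(e/s)$ this yields $q\le Cs^3\sqrt{\log(e/s)}$. The constraint $\tau\le1/4$ is what forces $s\le s_0$.

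The main obstacle is the initial-mass step: matching the choice of $k$ against the convex-Jensen lower bound so that the $k$-dependent weight on the right, $C_4k/\sqrt{R_0}$, produces exactly the absorbable term $q S/\tau^3$ rather than something larger. I would first try $k$ as a free parameter, optimize the resulting inequality in $k$, and only then check that the $\log$ factor survives as $\sqrt{R_0}$ and not as $R_0$. If the direct choice $k\propto q\tau^{-5/2}$ loses a log factor, I would instead use \eqref{sharp:fourth} at time $\tau$, via monotonicity $M(\tau)\le M(T)$, to control the regime where $X_\tau^2\gg k\tau^{7/2}$ separately.
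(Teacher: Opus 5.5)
Your architecture is the paper's. You lower-bound $\mathbb E_\pi\Phi(\tau,X_\tau)$ via \eqref{sharp:kernel-lower} and Jensen, and upper-bound it via \eqref{sharp:ito}, \eqref{sharp:weight} and $M(T)\le K_0S_2$. You then take $\tau=Ds$ and absorb. Your upper side and the estimate $v(\tau)\ge q\sqrt\tau/2$ are correct.

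\textbf{The gap is the choice of $k$.} The choice $k=q\tau^{-5/2}$ fails, and the arithmetic used to justify it is wrong.
\begin{itemize}
\item $k\tau^{7/2}=q\tau$ is not comparable to $v(\tau)\asymp q\sqrt\tau$; it is smaller by a factor $\sqrt\tau$.
\item $k\sqrt\tau\cdot q\sqrt\tau=kq\tau=q^2\tau^{-3/2}$, not $q^2\tau^{-2}$.
\end{itemize}
In the regime $k\tau^{7/2}\le v(\tau)$, your Jensen bound is linear in $k$: $\mathbb E_\pi\Phi(\tau,X_\tau)\ge\varepsilon c_h kq\tau/(4\sqrt{R_0})$. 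The weight term $2C_4kS/\sqrt{R_0}$ on the right is also linear in $k$. Their ratio is $8C_4s/(\varepsilon c_h\tau)\asymp1/D$, independent of $k$, so absorption works for any such $k$. What survives is $kq\tau\lesssim\sqrt{R_0}\,S$, i.e.\ $k\tau\lesssim\sqrt{R_0}\,s$. With $k=q\tau^{-5/2}$ this gives only $q\lesssim D^{3/2}s^{5/2}\sqrt{\log(e/s)}$, which misses the claim by a factor $\sqrt s$.

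\textbf{The fix.} Your plan to optimize over $k$ would find the right choice: the largest $k$ before your Jensen bound saturates. That is $k\tau^{7/2}\asymp q\sqrt\tau$, i.e.\ $k=q/\tau^3$, exactly the paper's choice.
\begin{itemize}
\item Then $r=X_\tau^2/(k\tau^{7/2})=X_\tau^2/(q\sqrt\tau)$ satisfies $\mathbb E_\pi r\ge1/2$.
\item Jensen for the increasing convex map $r\mapsto r^2/(1+r)$ gives $\mathbb E_\pi\Phi(\tau,X_\tau)\ge\varepsilon c_hq^2/(6\tau^2\sqrt{R_0})$.
\item The weight term becomes $2C_4qS/(\tau^3\sqrt{R_0})$.
\item Multiplying through by $6\sqrt{R_0}/(\varepsilon c_h)$ gives precisely \eqref{sharp:absorb}.
\item This yields $q\le A_1D^2s^3\sqrt{R_0}+(A_2/D)q$, and your absorption step then finishes as you wrote.
\end{itemize}
For larger $k$, your lower bound stops growing (it saturates at order $\varepsilon c_h\tau^{-3}v(\tau)^2/\sqrt{R_0}$), while the weight term keeps growing linearly, so absorption fails. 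Hence $k=q/\tau^3$ is the right balance, and no separate fallback via $M(\tau)\le M(T)$ is needed.

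Finally, drop the restriction $q\le1$. The lemma has no such hypothesis, and with $k=q/\tau^3$ the inequality is homogeneous in $q$ and never uses it.
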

\begin{proof}
By Lemma~\ref{sharp:lem:shape}, $s>0$. Consider
$\tau=Ds\le1/4$, where $D\ge128$ will be fixed after the
test-function constants. Equations~\eqref{sharp:shape}--\eqref{sharp:gap}
imply
\[
 v(\tau)\ge q\sqrt\tau(1-2/\sqrt D-2/D)\ge q\sqrt\tau/2.
\]
Here the gap term divided by $q\sqrt\tau$ is at most
$\sqrt2\sqrt\tau/[D(1-\sqrt\tau)]\le2/D$.
Set $k=q/\tau^3$ and $r=X_\tau^2/(q\sqrt\tau)$. Then
$\mathbb E_\pi r\ge1/2$ and $z(\tau,X_\tau)=\frac12\log r$
away from $X_\tau=0$. By \eqref{sharp:kernel-lower},
\[
 \mathbb E_\pi\Phi(\tau,X_\tau)
 \ge\frac{\varepsilon c_hq^2}{\tau^2\sqrt{R_0}}
           \mathbb E_\pi\frac{r^2}{1+r}
 \ge\frac{\varepsilon c_hq^2}{6\tau^2\sqrt{R_0}}.
\]
The second step is Jensen's inequality for the increasing convex
function $r^2/(1+r)$; it is bounded by $r$, and its value at $1/2$
is $1/6$. The formulas extend by continuity at $r=0$.
At the other endpoint, $0<h\le1$ and \eqref{sharp:fourth} give
\[
 \mathbb E_\pi\Phi(T,X_T)\le\varepsilon T^{-3}M(T)
                  \le\varepsilon T^{-3}K_0S.
\]
Combining with \eqref{sharp:ito} and \eqref{sharp:weight},
\begin{equation}
 \frac{q^2}{\tau^2}
 \le A_1\sqrt{R_0}S+A_2\frac q{\tau^3}S,
 \quad
 A_1=\frac{6T^{-3}K_0}{c_h},\qquad
 A_2=\frac{12C_4}{\varepsilon c_h}.
 \label{sharp:absorb}
\end{equation}
All these constants are independent of $\tau,k,q,s$ and the rule.
Now fix $D=\max\{128,2A_2\}$ and $s_0=1/(4D)$.
For $0<s\le s_0$, the second term in \eqref{sharp:absorb}
divided by its left side is $A_2s/\tau=A_2/D\le1/2$.
Absorption therefore gives
\[
 q\le2A_1D^2s^3\sqrt{1+\log(T/(Ds))}
   \le2A_1D^2s^3\sqrt{\log(e/s)}.
\]
This proves \eqref{sharp:relative}, without assuming $q\le1$.
The chosen $\tau,k$ are deterministic parameters in the analysis
of a fixed rule, not actions or information supplied to the controller.
\end{proof}

\subsection{Uniform energy law and matching minimax order}
\label{sharp:subsec:conclusion}

\begin{proof}[Proof of Theorem~\ref{thm:energy-new}]
Enlarge the constant in \eqref{sharp:relative} to $C\ge1$ and
take $s_0\le1$. First let $0<q\le1$.
If $s\ge s_0$, then
$S=sq\ge s_0q^{4/3}/[\log(e/q)]^{1/6}$.
If $s<s_0$, use $s\log(e/s)\le1$ in \eqref{sharp:relative} to get
\[
 q\le Cs^{5/2},\qquad
 \log(e/s)\le1+\tfrac25\log C+\tfrac25\log(1/q)
              \le C_{\log}\log(e/q),
 \quad C_{\log}=1+\tfrac25\log C.
\]
Substituting this upper bound for the logarithm back into
\eqref{sharp:relative} and taking cube roots gives
\begin{equation}
 S\ge c_q\frac{q^{4/3}}{[\log(e/q)]^{1/6}},
 \qquad c_q=\min\{s_0,(C\sqrt{C_{\log}})^{-1/3}\}>0.
 \label{sharp:inversion}
\end{equation}
For $H\in[H_c,1]$ and $0<Q\le1$,
\[
 Q/\sqrt2\le q=H^2Q\le Q\le1,\qquad
 \log(e/q)\le(1+\log\sqrt2)\log(e/Q).
\]
Since $S=\mathcal S_H(a)$, \eqref{sharp:inversion} proves
\eqref{eq:energy-new}, uniformly over the full stated radius interval.

For the second assertion, if $b_H(a)>0$ then \eqref{sharp:improvement}
gives $q>0$ and $0<s<\delta/(2H^2)\le\delta$.
Choose $\delta_1\le\min\{s_0,1/16\}$, so $H\in[H_c,1]$.
The function $s^3\sqrt{\log(e/s)}$ is increasing on $(0,1]$, with
derivative
$s^2\sqrt{\log(e/s)}[3-1/(2\log(e/s))]>0$.
Thus \eqref{sharp:relative} implies
$q\le C\delta^3\sqrt{\log(e/\delta)}$.
Dividing by $H^2\ge1/\sqrt2$ proves
\eqref{eq:positive-energy-sharp}. This argument does not use
$Q\le1$ or assume the desired critical bound in advance.
\end{proof}

\begin{proof}[Upper bound in Theorem~\ref{thm:critical-new}]
For any original rule with finite endpoint risk and $b_H(a)>0$,
the preceding relative-deficit argument gives
\[
 b_H(a)\le\frac{\delta}{2H^2}q
       \le C\delta^4\sqrt{\log(e/\delta)}.
\]
Rules with nonpositive improvement satisfy the same upper bound;
infinite endpoint risk cannot improve the zero-action baseline.
For each original rule, worst-case risk is at least its endpoint
average. Taking infima and subtracting from $H^2/2$ yields only
the needed direction
\[
 \Delta(H)\le\sup_a b_H(a)
       \le C\delta^4\sqrt{\log(e/\delta)}.
\]
This covers the complete admissible class, without assuming that
the endpoint prior is least favorable. Theorem~\ref{thm:moving},
proved in Appendix~\ref{new:sec:moving}, supplies the lower bound
over the entire parameter interval for $0<\delta\le1/16$.
Choose $\delta_0\le\min\{\delta_1,1/16\}$ and use
$\log(e/\delta)\asymp\log(1/\delta)$ on this interval.
This completes the matching critical law.
\end{proof}

\section{Moving Soft-Threshold Feedback}
\label{new:sec:moving}

This section proves Theorem~\ref{thm:moving}, the constructive lower bound
in Theorem~\ref{thm:critical-new}, and Corollary~\ref{cor:no-four-thirds}.
We first establish a loss-clock estimate and joint energy and loss bounds
for the entire parameter range used below.  The final step separately
locates the maximum risk over the full parameter interval.

\subsection{The Policy, Admissibility, and Probe Costs}
\label{move:subsec:setup}

Consider the scalar experiment
\[
 dY_t=ha_t\,dt+dB_t,\qquad |h|\le H,\qquad
 \mathcal L_h(a)=\frac12\mathbb E_h\int_0^1(a_t+h)^2\,dt.
\]
Write $H_c=2^{-1/4}$, $\delta=\sqrt2H^2-1$, and
$\pi_H=(\delta_{-H}+\delta_H)/2$.  The minimax value and its improvement
over zero action are
\[
 C_1^1(H)=\inf_a\sup_{|h|\le H}\mathcal L_h(a),\qquad
 \Delta(H)=H^2/2-C_1^1(H),
\]
where the infimum is over the original admissible class.
Unless a parameter or a conditional
initial state is specified, expectations are under the endpoint mixture
$\mathbb P_\pi=(\mathbb P_H+\mathbb P_{-H})/2$.
Throughout the joint estimates assume
\begin{equation}
 \begin{gathered}
 H\in[H_c,1],\qquad 0<\tau\le\frac12,\qquad A\ge1,\qquad f>0,\\
 e=f\tau^{7/2},\qquad k=Af,\qquad L=\log(1/\tau).
 \end{gathered}
 \label{move:parameters}
\end{equation}
Let $\rho$ be an independent uniform sign, included in the initial
observation filtration.  The general form of \eqref{eq:moving-policy} is
\begin{equation}
 \begin{gathered}
 \Xi_t=\int_0^t a_u\,dY_u,\qquad
 m_t=H\tanh(H\Xi_t),\qquad X_t=m_t/H,\\
 g_t(x)=\frac1{1+x^2/(kt^{7/2})},\\
 a_t=
 \begin{cases}
  \rho\sqrt{e/\tau},&0\le t\le\tau,\\[1mm]
  -\dfrac{m_t}{H^2\sqrt{2t}}\,g_t(X_t),&\tau<t\le1.
 \end{cases}
 \end{gathered}
 \label{move:policy}
\end{equation}
The design depends on $H$, not on the unknown $h$.  In particular,
$m_t$ denotes the same observable functional for every $h$; its
posterior interpretation is used only under $\mathbb P_\pi$.
The soft threshold for $|X_t|$ is $\sqrt{k}\,t^{7/4}$.
The feedback remains nonzero whenever $m_t\ne0$, including beyond that
threshold.

For fixed design parameters, write the post-probe feedback as
$a_t=b(t,\Xi_t)$.  On $t\in[\tau,1]$, both $b$ and its derivative in
the score are bounded.  Indeed, for $\lambda\ge0$,
\[
 \left|\frac{d}{dx}\frac{x}{1+\lambda x^2}\right|
 =\frac{|1-\lambda x^2|}{(1+\lambda x^2)^2}\le1.
\]
The score equation
$d\Xi_t=h b(t,\Xi_t)^2\,dt+b(t,\Xi_t)\,dB_t$
therefore has a unique nonexplosive strong solution after the explicit
probe.  The feedback is predictable, with the displayed convention at
$t=\tau$.  Stopping at any fixed energy cap and setting the action
permanently to zero preserves uniqueness: the solution agrees with the
original solution up to the cap and its score is constant afterwards.
Thus this is a rule in the original admissible class, including the
common-record and energy-stopped well-posedness requirements.
Its path energy and expected energy satisfy
\begin{equation}
 \mathcal Q:=\int_0^1a_t^2\,dt
 \le\Lambda:=e+\frac{L}{2H^2},\qquad
 Q:=\mathbb E_\pi\mathcal Q,\qquad
 \overline Q:=\frac e{\sqrt\tau}=f\tau^3.
 \label{move:cap}
\end{equation}
The deterministic cap is valid under every parameter law, since
$|X_t|g_t(X_t)\le1$.
Changing $(h,\rho,\Xi,a)$ to $(-h,-\rho,-\Xi,-a)$ preserves the
observation path and its drift $ha$, the feedback rule, and the loss.
Uniqueness and symmetry of the seed imply, for this same rule,
\begin{equation}
 \mathcal L_h(a)=\mathcal L_{-h}(a).
 \label{move:evenness}
\end{equation}

The full-record posterior identity \eqref{crit:posterior} applies and gives
\begin{equation}
 dX_t=H(1-X_t^2)a_t\,dI_t,\qquad X_0=0,
 \label{move:posterior}
\end{equation}
where $I$ is Brownian motion relative to the full mixture filtration,
including the seed.  By Proposition~\ref{crit:prop:identity}, put
\begin{equation}
 \begin{aligned}
 b_H(a)&=\frac{H^2}{2}
       -\frac{\mathcal L_H(a)+\mathcal L_{-H}(a)}2
       =\frac{\delta Q}{2}-S_1-S_2,\\
 S_1&=\frac{H^2}{\sqrt2}\mathbb E_\pi\int_0^1
       \sqrt t\left(a_t+\frac{m_t}{H^2\sqrt{2t}}\right)^2\,dt,\\
 S_2&=\frac{H^2}{\sqrt2}\mathbb E_\pi\int_0^1
       (1-\sqrt t)(2X_t^2-X_t^4)a_t^2\,dt,\qquad
 \mathcal S_H(a)=S_1+S_2.
 \end{aligned}
 \label{move:loss-identity}
\end{equation}
These are exactly the functionals in
\eqref{crit:alignment-part}--\eqref{crit:nonlinear-part}, not losses
of a modified experiment.  The cap ensures their finiteness.
On the feedback interval their contributions are
\begin{align}
 S_{1,\mathrm{fb}}
 &=\frac1{2\sqrt2}\int_\tau^1
       \frac{\mathbb E_\pi[X_t^2(1-g_t(X_t))^2]}{\sqrt t}\,dt,
       \label{move:alignment-feedback}\\
 S_{2,\mathrm{fb}}
 &=\frac1{2\sqrt2}\int_\tau^1
       \frac{1-\sqrt t}{t}
       \mathbb E_\pi[(2X_t^4-X_t^6)g_t(X_t)^2]\,dt.
       \label{move:nonlinear-feedback}
\end{align}

For clarity, the probe moment bounds \eqref{crit:probe-moments}
must be divided by $H^2$ and $H^4$ for the second and fourth moments
of $X=m/H$, respectively.  They give
\begin{equation}
 \begin{gathered}
 H^2e-H^4e^2\le\mathbb E_\pi X_\tau^2\le H^2e,\qquad
 \mathbb E_\pi X_\tau^4\le3H^4e^2,\\
 \mathbb E_\pi X_t^2\le H^2et/\tau,\qquad 0\le t\le\tau.
 \end{gathered}
 \label{move:probe-moments}
\end{equation}
These estimates also follow directly from \eqref{move:posterior}.
With $v(t)=\mathbb E_\pi X_t^2$ and $M(t)=\mathbb E_\pi X_t^4$,
the probe equations are
\[
 v'(t)=\frac{H^2e}{\tau}\mathbb E_\pi(1-X_t^2)^2
       \le\frac{H^2e}{\tau},\qquad
 M'(t)\le\frac{6H^2e}{\tau}v(t).
\]
Moreover,
$H^2e-v(\tau)=(H^2e/\tau)\int_0^\tau(2v(t)-M(t))\,dt
\le H^4e^2$, proving all of \eqref{move:probe-moments}.
Since $\rho$ is in the initial filtration,
$\mathbb E_\pi(\rho m_t)=\mathbb E_\pi(\rho h)=0$.
Thus the probe cross term in $S_1$ vanishes exactly, and
\begin{equation}
 \begin{aligned}
 S_{1,\mathrm{pr}}
 &=\frac{H^2e}{\sqrt2\tau}\int_0^\tau\sqrt t\,dt
   +\frac1{2\sqrt2}\int_0^\tau\frac{v(t)}{\sqrt t}\,dt
   \le\frac{H^2}{\sqrt2}e\sqrt\tau,\\
 S_{2,\mathrm{pr}}
 &\le\frac{\sqrt2H^2e}{\tau}\int_0^\tau v(t)\,dt
   \le\frac{H^4}{\sqrt2}e^2.
 \end{aligned}
 \label{move:probe-costs}
\end{equation}
Both probe costs will be included in the joint estimates.

\subsection{A Loss-Clock Occupation Estimate}
\label{move:subsec:occupation}

The following estimate uses standard Brownian tools.  Half-line
occupation penalisation has classical inverse-square-root decay
\cite{RVY}; the proof below gives the adapted-clock reduction explicitly.
The arcsine distributional input is also classical \cite{SS}.

\begin{lemma}[Adapted-clock occupation estimate]
\label{move:lem:occupation}
Let $(\Omega,\mathcal F,(\mathcal F_s)_{s\ge0},\mathbb P)$ satisfy
the usual filtration conditions, and let $W$ be Brownian motion
relative to this entire filtration.  Let $\alpha$ be predictable
or progressively measurable, with values in $[0,1]$.  For a
deterministic $z\in\mathbb R$, define
\begin{equation}
 K_s=\int_0^s(1-\alpha_u^2)\,du,\qquad
 Z_s=z+\frac1{\sqrt2}\int_0^s\alpha_u\,dW_u-\frac74K_s.
 \label{move:loss-clock}
\end{equation}
Assume
$\mathbf1_{\{Z_u\ge0\}}\alpha_u^2
\le\frac14\mathbf1_{\{Z_u\ge0\}}$
for $du\otimes d\mathbb P$-almost every $(u,\omega)$.
Then, for every finite $s\ge0$, with $z_-=\max\{0,-z\}$,
\begin{equation}
 \mathbb E e^{-3K_s}\le\frac{8(1+z_-)}{\sqrt{1+s}}.
 \label{move:occupation-bound}
\end{equation}
No Markov assumption or independence between a clock and its
time-changed Brownian motion is required.
\end{lemma}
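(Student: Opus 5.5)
The plan is to reduce \eqref{move:occupation-bound} to a deterministic-time Brownian event, via a layer-cake formula in the clock value $K_s$.

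\textbf{Time change.} Put $A_s=\int_0^s\alpha_u^2\,du$, so $A_s+K_s=s$. By the Dambis--Dubins--Schwarz theorem, extended by an independent Brownian motion to cover the flat intervals of $A$ and a finite terminal value $A_\infty$, there is a standard Brownian motion $\beta$ with $\int_0^s\alpha_u\,dW_u=\beta_{A_s}$. Hence
\[
 Z_s=z+\beta_{A_s}/\sqrt2-\tfrac74K_s .
\]
Two pathwise facts follow.
\begin{enumerate}
\item On $\{Z_u\ge0\}$ one has $\alpha_u^2\le1/4$, so $dK_u\ge\tfrac34\,du$ there. Therefore
\[
 \operatorname{Leb}\{u\le s:Z_u\ge0\}\le\tfrac43K_s .
\]
\item $K$ is nondecreasing, so on $\{K_s\le k\}$ we have $Z_u\ge -z_-+\beta_{A_u}/\sqrt2-\tfrac74k$ for all $u\le s$.
\end{enumerate}
Put $b_k=\sqrt2\,(z_-+\tfrac74k)$. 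On $\{K_s\le k\}$, every $A$-time $a\le A_s$ with $\beta_a\ge b_k$ corresponds to physical times with $Z\ge0$. Since $dA\le du$, the $A$-occupation of $\{\beta\ge b_k\}$ up to time $A_s$ is at most $\tfrac43k$. Also $A_s=s-K_s\ge s-k$ on this event. This gives the inclusion
\[
 \{K_s\le k\}\subset E_k:=\Bigl\{\operatorname{Leb}\{a\le s-k:\beta_a\ge b_k\}\le\tfrac43k\Bigr\},
\]
where $E_k$ is an event about a single Brownian motion on a \emph{deterministic} horizon. This inclusion is what removes any Markov or independence requirement on the clock.

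\textbf{Brownian estimate.} Let $t=s-k$. Split $E_k$ according to whether the hitting time $T_{b_k}$ of $b_k$ exceeds $t/2$. By the reflection principle,
\[
 \mathbb P(T_{b_k}>t/2)\le C\,b_k/\sqrt t .
\]
If $T_{b_k}\le t/2$, the strong Markov property at $T_{b_k}$ reduces the question to a fresh Brownian motion started at $0$ spending at most $\tfrac43k$ of a horizon of length at least $t/2$ in $[0,\infty)$. By L\'evy's arcsine law \cite{SS} this has probability at most $C\sqrt{k/t}$. Hence, for $k\le s/2$,
\[
 \mathbb P(K_s\le k)\le C\,\frac{1+z_-+k}{\sqrt{1+s}} .
\]
The $1+s$ in the denominator is harmless, since the right side exceeds one anyway for $s\le1$.

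\textbf{Integration.} Write
\[
 \mathbb E e^{-3K_s}=\int_0^\infty3e^{-3k}\,\mathbb P(K_s\le k)\,dk .
\]
Use the bound above for $k\le s/2$ and bound the remaining range by $e^{-3s/2}$. This gives $C(1+z_-)/\sqrt{1+s}$. Tracking the reflection and arcsine constants should give the explicit factor $8$; if it does not, I would tighten the split of $t$ into halves.

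\textbf{Main obstacle.} The delicate step is the time-change inclusion. $A$ may have flat intervals and may stop growing, so one must check the following.
\begin{itemize}
\item $\beta$ is an honest Brownian motion, enlarged if necessary. This uses only that $W$ is Brownian relative to the entire filtration.
\item The passage from physical time to $A$-time preserves the occupation inequality. This holds because $dA=\alpha^2\,du\le du$ and the level sets are compared pathwise.
\end{itemize}
Once $\{K_s\le k\}\subset E_k$ is established pathwise, the rest is classical Brownian computation.
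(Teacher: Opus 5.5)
Your proposal is correct and follows essentially the same route as the paper's proof. Both use a Dambis--Dubins--Schwarz time change (extended past $s$ by an independent Brownian motion), a pathwise inclusion of $\{K_s\le k\}$ into an occupation event for a single Brownian motion on a deterministic horizon, the reflection principle together with the strong Markov property and the arcsine law, and the layer-cake integral against $3e^{-3k}$. The differences are cosmetic. You bound the occupation by $\tfrac43k$ via $\alpha^2\le1$ and $\operatorname{Leb}\{Z\ge0\}\le\tfrac43K_s$, whereas the paper uses $\alpha^2\le\tfrac13(1-\alpha^2)$ on $\{Z>0\}$ to get $k/3$; you also cite the arcsine law rather than proving it. Your cruder constants still give at most about $4z_-+6<8(1+z_-)$ for $s\ge1$, so no tightening of the split is needed.
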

\begin{proof}
Put $M_u=\int_0^u\alpha_v\,dW_v$ and
$\Theta_u=\langle M\rangle_u=u-K_u$.
Fix a finite terminal time $s$.  To cover finite total quadratic
variation, enlarge the space by an independent Brownian motion
$\widetilde W$ and extend the martingale as
\[
 \widehat M_u=M_{u\wedge s}+\widetilde W_{(u-s)_+}.
\]
With the filtration obtained from $\mathcal F_{u\wedge s}$ and
the independent increments revealed up to $(u-s)_+$, this is a
continuous martingale whose quadratic variation tends to infinity.
The Dambis--Dubins--Schwarz time change gives a standard Brownian
motion $B$ on the enlarged space such that, simultaneously for
$0\le u\le s$,
\begin{equation}
 M_u=B_{\Theta_u},\qquad
 Z_u=z+B_{\Theta_u}/\sqrt2-\frac74K_u.
 \label{move:dds}
\end{equation}
The extension changes neither paths nor probabilities up to $s$;
the coefficient condition is not needed beyond $s$.

Suppose $s\ge1$ and fix $0\le b\le s/2$.  Set
$T=s-b$ and $d=(-z+7b/4)_+$.  On $\{K_s\le b\}$,
$\Theta_s\ge T$ and $K_u\le b$ for all $u\le s$.
Consequently $B_{\Theta_u}/\sqrt2>d$ implies $Z_u>0$.
The change of variables for a continuous nondecreasing clock gives
$\int_0^{\Theta_s}F(v)\,dv
=\int_0^sF(\Theta_u)\,d\Theta_u$
for the nonnegative Borel functions used here.  Thus, on this event,
\begin{equation}
 \begin{aligned}
 O_T^d
 &:=\int_0^T\mathbf1_{\{B_v/\sqrt2>d\}}\,dv\\
 &\le\int_0^s
       \mathbf1_{\{B_{\Theta_u}/\sqrt2>d\}}\alpha_u^2\,du\\
 &\le\int_0^s\mathbf1_{\{Z_u>0\}}\alpha_u^2\,du\\
 &\le\frac13\int_0^s
       \mathbf1_{\{Z_u>0\}}(1-\alpha_u^2)\,du
 \le b/3.
 \end{aligned}
 \label{move:clock-inclusion-calculation}
\end{equation}
The almost-everywhere coefficient assumption suffices by Fubini's
theorem.  Flat portions of the clock have zero $d\Theta$ mass.
In particular, up to a null set this proves the event inclusion
\begin{equation}
 \{K_s\le b\}
 \subseteq
 \{O_{s-b}^{(-z+7b/4)_+}\le b/3\}.
 \label{move:event-inclusion}
\end{equation}
We take the unconditional Brownian probability of the event on the
right, without conditioning on the clock.

Here is the needed Brownian occupation calculation, including its
distributional input.  For a Brownian motion starting at zero,
\begin{equation}
 \mathbb P(O_\ell^0\le r)
 =\frac2\pi\arcsin\sqrt{r/\ell},
 \qquad 0\le r\le\ell,\quad \ell>0.
 \label{move:arcsine}
\end{equation}
To verify this formula, let $p>0$ and $q\ge0$.  A bounded, continuously
differentiable solution of
\[
 \frac12u''(x)-(p+q\mathbf1_{\{x>0\}})u(x)=-1,\qquad x\ne0,
\]
has the forms
$u(x)=(p+q)^{-1}+c_+\exp(-\sqrt{2(p+q)}x)$ for $x\ge0$ and
$u(x)=p^{-1}+c_-\exp(\sqrt{2p}x)$ for $x\le0$.
Continuity of $u$ and $u'$ at zero gives
$u(0)=1/\sqrt{p(p+q)}$.
It\^o's formula for
$\exp(-pt-qO_t^0)u(B_t)$, followed by expectation and passage
to infinite time, proves
\[
 \int_0^\infty e^{-p\ell}\mathbb E e^{-qO_\ell^0}\,d\ell
 =\frac1{\sqrt{p(p+q)}}.
\]
The matching first derivatives remove the local-time term, and bounded
$u,u'$ justify the stochastic integral and the terminal limit.
The candidate density has the same transform:
\[
 \int_0^\infty e^{-p\ell}\int_0^\ell
   \frac{e^{-qr}}{\pi\sqrt{r(\ell-r)}}\,dr\,d\ell
 =\frac1{\sqrt{p(p+q)}}.
\]
Indeed, substituting $\ell=r+w$ factors this into two Gaussian
integrals.  Uniqueness of Laplace transforms, first in $\ell$
and then in $q$, identifies the distribution; continuity in $\ell>0$
removes any exceptional times.  Integrating the density proves
\eqref{move:arcsine}.  The factor $1/\sqrt2$ in $O^0$ does not
change its positive-occupation time.

For $d\ge0$, let $\sigma_d=\inf\{v:B_v/\sqrt2=d\}$, with
$\sigma_0=0$.  The reflection principle, with the Brownian level
$\sqrt2d$, gives
\[
 \mathbb P(\sigma_d>T/2)
 =2\Phi(2d/\sqrt T)-1\le\frac{2d}{\sqrt T},
\]
where $\Phi$ is the standard normal distribution function.
On $\{\sigma_d\le T/2\}$ the remaining length
$\ell=T-\sigma_d$ is at least $T/2$.
The strong Markov property at $\sigma_d$ and
\eqref{move:arcsine}, using
$(2/\pi)\arcsin x\le x$ for $x\in[0,1]$, show that
for $0\le r\le T/3$,
\begin{equation}
 \mathbb P(O_T^d\le r)
 \le\frac{2d+\sqrt{2r}}{\sqrt T}
 \le\frac{2(d+\sqrt r)}{\sqrt T}.
 \label{move:brownian-occupation}
\end{equation}
This includes $d=0$ and $r=0$.
With $r=b/3$ and $T=s-b$, the required $r\le T/3$ holds.
Equations~\eqref{move:event-inclusion} and
\eqref{move:brownian-occupation} imply
\[
 \mathbb P(K_s\le b)
 \le\frac{2\sqrt2}{\sqrt s}
       \left(z_-+\frac74b+\sqrt{b/3}\right),
 \qquad 0\le b\le s/2.
\]
Finally Tonelli's theorem gives, even when $K_s$ has atoms,
\[
 \mathbb E e^{-3K_s}
 =\int_0^\infty3e^{-3b}\mathbb P(K_s\le b)\,db
 \le\frac{2\sqrt2}{\sqrt s}
       \left(z_-+\frac7{12}+\frac13\right)+e^{-3s/2}.
\]
Here the exponential density $3e^{-3b}$ has mean $1/3$,
and Jensen's inequality gives mean at most $1/3$ for $\sqrt{b/3}$.
For $s\ge1$, multiplication by $\sqrt{1+s}$ bounds the last
display by $4z_-+11/3+1<8(1+z_-)$.
For $0\le s\le1$, the bound $\mathbb E e^{-3K_s}\le1$
proves \eqref{move:occupation-bound} directly.
\end{proof}

\subsection{Joint Energy and Loss Bounds}
\label{move:subsec:joint}

\begin{proposition}[Joint bounds for the moving feedback]
\label{move:prop:joint}
For every choice in \eqref{move:parameters}, the policy
\eqref{move:policy} satisfies
\begin{equation}
 \begin{aligned}
 f\tau^3\left[1-H^2\left(e+\frac65f\tau+\frac3A\right)\right]
 &\le Q\le f\tau^3,\\
 S_1&\le
 \left(\frac{H^2}{\sqrt2}+\frac{H^4}{4\sqrt2 A}\right)f\tau^4
 \le f\tau^4,\\
 S_2&\le K_0(1+\log A)
       \frac{f^2\tau^4}{\sqrt{1+L}},\qquad K_0=32.
 \end{aligned}
 \label{move:joint-bounds}
\end{equation}
Both $S_1$ and $S_2$ include their probe contributions.
The first inequality remains valid when its left-hand side is negative.
In particular, if
\begin{equation}
 H^2\left(e+\frac65f\tau+\frac3A\right)\le\frac12,
 \label{move:energy-condition}
\end{equation}
then
\begin{equation}
 b_H(a)\ge f\tau^4
 \left\{\frac{\delta}{4\tau}-1
       -32(1+\log A)\frac f{\sqrt{1+L}}\right\}.
 \label{move:joint-improvement}
\end{equation}
\end{proposition}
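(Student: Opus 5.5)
The argument proceeds through the second and fourth posterior moments $v(t)=\mathbb E_\pi X_t^2$ and $M(t)=\mathbb E_\pi X_t^4$ on the feedback interval. The probe contributions are already controlled by \eqref{move:probe-moments} and \eqref{move:probe-costs}. Write $g=g_t(X_t)$ and $\lambda_t=kt^{7/2}$. After the probe, \eqref{move:posterior} gives
\[
 v'(t)=\frac{1}{2t}\mathbb E_\pi[X_t^2(1-X_t^2)^2g^2].
\]

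\emph{Upper bound on $Q$.} Since $g\le1$, we have $v'\le v/(2t)$, so $v(t)\le H^2e\sqrt{t/\tau}$. The feedback energy is $\int_\tau^1 \mathbb E_\pi[X^2g^2]/(2H^2t)\,dt$. Integrating this bound together with the probe energy $e$ gives $Q\le e/\sqrt\tau=f\tau^3$, exactly as in Lemma~\ref{crit:lem:moments}.

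\emph{Lower bound on $Q$.} Use $g^2\ge1-2X^2/\lambda_t$ and $(1-X^2)^2\ge1-2X^2$. This gives
\[
 \mathbb E[X^2(1-X^2)^2g^2]\ge v-2M-2M/\lambda_t,
\]
which requires a fourth-moment bound. From $M'\le 3M/t$ we get $M(t)\le3H^4e^2(t/\tau)^3$, as in \eqref{crit:joint-moments}. The deficit $D(t)=H^2e\sqrt{t/\tau}-v(t)$ then satisfies a variation-of-constants bound. The $2M$ term reproduces the $\tfrac65 f\tau$ correction in the same way as Lemma~\ref{crit:lem:moments}, since $e^2\tau^{-3}=fe\,\tau^{1/2}$.

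The $M/\lambda_t$ term is where $A$ enters. It contributes
\[
 \frac{3H^4e^2t^3\tau^{-3}}{kt^{7/2}}=\frac{3H^4e\,f\tau^{1/2}t^{-1/2}}{Af},
\]
and after integrating against $\sqrt t\,s^{-1/2}$ and the energy weight $1/(2H^2t)$ this should produce the $3/A$ relative term. The residual $H^4e^2$ from the probe gives the $e$ term. Energy lost through $g<1$ in the action itself is handled the same way, using $g^2\ge1-2X^2/\lambda_t$. Keeping every constant at most the displayed ones is routine bookkeeping that I would do carefully.

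\emph{Bound on $S_1$.} Start from \eqref{move:alignment-feedback} and use $(1-g)^2\le (X^2/\lambda_t)^2\wedge1\le X^2/\lambda_t$. This gives
\[
 S_{1,\mathrm{fb}}\le\frac{1}{2\sqrt2}\int_\tau^1\frac{M(t)}{kt^4}\,dt\le\frac{3H^4e^2}{2\sqrt2\,Af\tau^3}\int_\tau^1t^{-1}\,dt .
\]
The resulting $\log(1/\tau)$ factor is not in the claimed bound, so this step must use the tighter inequality $(1-g)^2\le X^4/\lambda_t^2$ on $X^2\le\lambda_t$, combined with a sixth-moment bound $\mathbb E X^6\lesssim e^3(t/\tau)^{6}$ obtained by the same Gronwall argument. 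That makes the integrand integrable, of order $t^{-1/2}$, with total size $H^4f\tau^4/(4\sqrt2A)$. Adding $S_{1,\mathrm{pr}}\le H^2e\sqrt\tau/\sqrt2=H^2f\tau^4/\sqrt2$ gives the stated bound.

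\emph{Bound on $S_2$ — the main obstacle.} Here $\mathbb E[X^4g^2]/t$ must be bounded by roughly $f^2\tau^4/\sqrt{1+L}$, and a naive fourth-moment bound loses the logarithmic gain. I would pass to logarithmic time $s=\log(t/\tau)$ and apply the fourth-moment change of measure of \S\ref{crit:subsec:tilt}, now with the extra factor $g$. This expresses $\mathbb E X_t^4$ as
\[
 \mathbb E X_t^4 = X_0^4e^{3s}\,\mathbb E^{(4)}e^{-3K_s},
\]
where $K_s$ is a loss clock with $\alpha=g$-dependent volatility. The centered variable
\[
 Z=\log|X|-\tfrac12\log(1-X^2)-\tfrac12\log(k t^{7/2})
\]
then has drift $-\tfrac74$ times the clock, because the threshold moves as $t^{7/4}$. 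Beyond the threshold ($Z\ge0$) the factor $g\le1/2$ gives $\alpha^2\le1/4$, so Lemma~\ref{move:lem:occupation} applies. It yields $\mathbb E e^{-3K_s}\lesssim(1+z_-)/\sqrt{1+s}$ with $z_-\approx\tfrac12\log(k/e\cdots)=O(\log A+\log(\cdot))$; the initial value $Z_0\approx\tfrac12\log(X_0^2/(k\tau^{7/2}))=\tfrac12\log(H^2e/(Af\tau^{7/2}))\approx-\tfrac12\log A$ is what produces the factor $1+\log A$.

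Integrating $t^{-1}\mathbb E[X^4g^2]$ over $s$ with the $(1+s)^{-1/2}$ decay would still leave a $\sqrt L$. The remedy is to weight the integral by $g^2$, which kills contributions past the threshold, and to note that the mass below the threshold at times $t\gtrsim$ constant is bounded by $\lambda_t$. Balancing these should give $f^2\tau^4/\sqrt{1+L}$. This is the delicate step, and I would try it first with this tilt-plus-occupation route.

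\emph{Improvement bound.} Finally, \eqref{move:joint-improvement} follows by inserting these bounds into $b_H=\delta Q/2-S_1-S_2$. Under \eqref{move:energy-condition} the lower bound on $Q$ gives $Q\ge f\tau^3/2$, so $\delta Q/2\ge f\tau^4\cdot\delta/(4\tau)$, while the $S_1$ and $S_2$ bounds supply the terms $-1$ and $-32(1+\log A)f/\sqrt{1+L}$ inside the braces.
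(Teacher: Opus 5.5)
\textbf{The $S_1$ step has a genuine gap.}

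Your two-sided bound on $Q$ is sound. Integrating the deficit of $v$ against the action weight, using $g^2\ge1-2X^2/\lambda_t$, gives constants at least as good as the displayed ones. The paper takes a shortcut instead: $v(1)=H^2\mathbb E_\pi\int_0^1(1-X_t^2)^2a_t^2\,dt\le H^2Q$.

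The problem is the sixth-moment route for $S_1$. Put $\alpha_t=(1-X_t^2)g_t(X_t)$. The Gronwall argument you invoke gives
\[
 \frac{d}{dt}\mathbb E_\pi X_t^6=\frac{15}{2t}\mathbb E_\pi[X_t^6\alpha_t^2]\le\frac{15}{2t}\mathbb E_\pi X_t^6 ,
\]
so it yields $\mathbb E_\pi X_t^6\lesssim e^3(t/\tau)^{15/2}$, not $e^3(t/\tau)^6$. For the linearised dynamics $dX_t=-X_t\,dI_t/\sqrt{2t}$, the $2p$-th moment grows exactly like $(t/\tau)^{p^2-p/2}$.

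With the correct exponent, $\mathbb E_\pi X_t^6/(\lambda_t^2\sqrt t)$ is constant in $t$. Your bound then becomes of order $e^3\tau^{-15/2}k^{-2}=f\tau^3/A^2$. That is a factor $1/(A\tau)$ larger than the claimed $f\tau^4/A$, and it swamps the gain $\delta Q/2\approx2f\tau^4$ at $\tau=\delta/8$ with $A$ fixed.

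Interpolating between $u^2$ and $u^4$ does not help. Even the improved fourth-moment bound from the occupation lemma still leaves a factor $\sqrt L$ in $\int_\tau^1M(t)/(kt^4)\,dt$.

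The missing idea is to pay the misalignment with the \emph{decay} of the normalised fourth moment $N(t)=t^{-3}M(t)$. With $u_t^2=X_t^2/(kt^{7/2})$ and $D_t=1-\alpha_t^2$,
\[
 \frac{(1-g)^2}{u^2}=\frac{u^2}{(1+u^2)^2}\le\frac{1-g^2}{2}\le\frac{D}{2},
 \qquad
 N'(t)=-3t^{-4}\mathbb E_\pi(X_t^4D_t).
\]
Write $X_t^2(1-g_t)^2/\sqrt t=X_t^4(1-g_t)^2/(kt^4u_t^2)$ in \eqref{move:alignment-feedback}. The integral then telescopes:
\[
 S_{1,\mathrm{fb}}\le\frac{N(\tau)-N(1)}{12\sqrt2\,k}\le\frac{H^4e^2}{4\sqrt2\,k\tau^3}.
\]
This is exactly the $1/A$ term, with no logarithm.

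\textbf{Your $S_2$ step is left unfinished, but the obstacle you anticipate is not real.} Once the tilt and Lemma~\ref{move:lem:occupation} give $M(t)\le32(1+\log A)e^2(t/\tau)^3/\sqrt{1+\log(t/\tau)}$, no residual $\sqrt L$ remains. In logarithmic time the integrand of $S_{2,\mathrm{fb}}\le2^{-1/2}\int_\tau^1M(t)t^{-1}\,dt$ grows like $e^{3s}$. The integral is therefore dominated by $s\approx L$, where the decay factor is already $(1+L)^{-1/2}$; splitting at $t=\sqrt\tau$ makes this explicit. Your proposed balancing with $g^2$ is unnecessary.

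Two details need fixing there:
\begin{itemize}
 \item The coordinate fed to the occupation lemma must be $\log|X|$ minus the moving threshold, not $\ell(X)$ minus it. Only then is it exactly $z+2^{-1/2}\int\alpha\,dW^{(4)}-\tfrac74K_s$, and only then does $\{Z\ge0\}$ force $g\le1/2$. With $\ell(X)$, the martingale coefficient is $g/\sqrt2$ and there is an extra negative drift.
 \item The random probe state must be integrated out rather than replaced by a typical value of $Z_0$. Use $x^4\log_+(\sqrt{Ae}/|x|)\le\tfrac12(\log A)x^4+e^2/(4\exp(1))$ together with $\mathbb E_\pi X_\tau^4\le3H^4e^2$.
\end{itemize}
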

\begin{proof}
For $t>\tau$, put
\[
 \alpha_t=(1-X_t^2)g_t(X_t),\qquad D_t=1-\alpha_t^2,\qquad
 v(t)=\mathbb E_\pi X_t^2,\qquad M(t)=\mathbb E_\pi X_t^4.
\]
It\^o's formula gives the exact equations
\begin{equation}
 v'(t)=\frac1{2t}\mathbb E_\pi(X_t^2\alpha_t^2),\qquad
 M'(t)=\frac3t\mathbb E_\pi(X_t^4\alpha_t^2).
 \label{move:moment-equations}
\end{equation}
Since $0\le\alpha_t\le1$, the probe bounds imply
\begin{equation}
 v(t)\le H^2e\sqrt{t/\tau},\qquad
 M(t)\le3H^4e^2(t/\tau)^3,\qquad \tau\le t\le1.
 \label{move:rough-moments}
\end{equation}
In particular, including the probe energy,
\[
 Q=e+\frac1{2H^2}\int_\tau^1
       \frac{\mathbb E_\pi[X_t^2g_t(X_t)^2]}t\,dt
 \le e+\frac e{\sqrt\tau}(1-\sqrt\tau)=\overline Q.
\]

\emph{Actual energy.}
Let $u_t^2=X_t^2/(kt^{7/2})$.  Globally,
\[
 D_t\le2X_t^2+2u_t^2,\qquad
 \left(\frac{v(t)}{\sqrt t}\right)'
 =-\frac{\mathbb E_\pi(X_t^2D_t)}{2t^{3/2}}
 \ge-t^{-3/2}M(t)-k^{-1}t^{-5}M(t).
\]
Integrating and using \eqref{move:probe-moments} and
\eqref{move:rough-moments} yields
\begin{equation}
 \begin{aligned}
 v(1)
 &\ge \frac{v(\tau)}{\sqrt\tau}
       -\int_\tau^1t^{-3/2}M(t)\,dt
       -\frac1k\int_\tau^1t^{-5}M(t)\,dt\\
 &\ge H^2\overline Q-\frac{H^4e^2}{\sqrt\tau}
       -\frac65H^4e^2\tau^{-3}
       -\frac{3H^4e^2}{k\tau^4}.
 \end{aligned}
 \label{move:second-moment-lower}
\end{equation}
The posterior equation on the entire time interval also gives
\[
 v(1)=H^2\mathbb E_\pi\int_0^1
               (1-X_t^2)^2a_t^2\,dt\le H^2Q.
\]
Divide \eqref{move:second-moment-lower} by $H^2$ and use
$e=f\tau^{7/2}$ and $k=Af$.  This proves the lower energy
bound in \eqref{move:joint-bounds}.

\emph{Alignment loss.}
For $g=(1+u^2)^{-1}$, the following deterministic inequalities hold
on the whole state space, with continuous extension at $u=0$:
\begin{equation}
 \frac{(1-g)^2}{u^2}
 =\frac{u^2}{(1+u^2)^2}
 \le\frac{1-g^2}{2}\le\frac D2,
 \qquad D=1-(1-X^2)^2g^2.
 \label{move:alignment-inequality}
\end{equation}
The normalized fourth moment $N(t)=t^{-3}M(t)$ is nonincreasing,
since \eqref{move:moment-equations} gives
\begin{equation}
 N'(t)=-3t^{-4}\mathbb E_\pi(X_t^4D_t).
 \label{move:normalized-fourth}
\end{equation}
Using $X_t^2=kt^{7/2}u_t^2$ in
\eqref{move:alignment-feedback}, including its time weight, we obtain
\begin{equation}
 \begin{aligned}
 S_{1,\mathrm{fb}}
 &=\frac1{2\sqrt2 k}\int_\tau^1
     t^{-4}\mathbb E_\pi\left[
       X_t^4\frac{(1-g_t(X_t))^2}{u_t^2}\right]\,dt\\
 &\le\frac{N(\tau)-N(1)}{12\sqrt2 k}
 \le\frac{H^4e^2}{4\sqrt2 k\tau^3}.
 \end{aligned}
 \label{move:alignment-paid}
\end{equation}
Adding \eqref{move:probe-costs} proves the stated $S_1$ bound:
the coefficient is at most $5/(4\sqrt2)<1$ for $H\le1$, $A\ge1$.

\emph{Fourth-moment change of measure.}
In logarithmic time $s=\log(t/\tau)$ set
$\widetilde X_s=X_{\tau\exp(s)}$.  Reversing the sign of the
time-changed innovation gives, exactly,
\begin{equation}
 d\widetilde X_s=\frac{\widetilde X_s\alpha_s}{\sqrt2}\,dW_s,
 \qquad
 \alpha_s=(1-\widetilde X_s^2)
                g_{\tau\exp(s)}(\widetilde X_s),\qquad 0\le s\le L.
 \label{move:log-diffusion}
\end{equation}
Explicitly, $W_s=-\int_\tau^{\tau\exp(s)}t^{-1/2}\,dI_t$ is Brownian
motion in the full time-changed mixture filtration.
Its future increments are independent of the entire
probe filtration at the deterministic time $\tau$.
Because the coefficients depend only on time and the current state
and strong uniqueness holds, we may first condition on the probe
and then work with any initial state $\widetilde X_0=x\in(-1,1)$.
If $x=0$, the process stays zero.  If $x\ne0$, boundedness of
$\alpha$ and the stochastic-exponential representation of
\eqref{move:log-diffusion} prevent hitting zero in finite time.
Also $|\widetilde X_s|<1$, since the score remains finite.
Thus $\log|\widetilde X_s|$ is legitimate for either sign of $x$.

For such an $x$, define on each finite time interval
\begin{equation}
 \begin{aligned}
 \mathcal M_s
 &:=\frac{\widetilde X_s^4}{x^4}
       \exp\left(-3\int_0^s\alpha_u^2\,du\right)\\
 &=\exp\left(2\sqrt2\int_0^s\alpha_u\,dW_u
                       -4\int_0^s\alpha_u^2\,du\right).
 \end{aligned}
 \label{move:fourth-density}
\end{equation}
Its stochastic-exponential integrand is bounded by $2\sqrt2$,
so Novikov's criterion makes it a true martingale on every finite
interval.  Under the corresponding law $\mathbb P_x^{(4)}$,
$W_s^{(4)}=W_s-\int_0^s2\sqrt2\alpha_u\,du$ is Brownian motion
relative to that same filtration, and
\begin{equation}
 \mathbb E_x\widetilde X_s^4
 =x^4e^{3s}\mathbb E_x^{(4)}e^{-3K_s},\qquad
 K_s=\int_0^s(1-\alpha_u^2)\,du.
 \label{move:exact-fourth-tilt}
\end{equation}
No limiting change of measure as $L\to\infty$ is used.

Define the log distance to the moving threshold by
\begin{equation}
 Z_s=\log\frac{|\widetilde X_s|}
                    {\sqrt{k}(\tau e^s)^{7/4}},\qquad
 z=\log\frac{|x|}{\sqrt{k}\tau^{7/4}}.
 \label{move:threshold-coordinate}
\end{equation}
Under the tilted law, the drift of $\log|\widetilde X_s|$ is
$7\alpha_s^2/4$.  Therefore
\[
 Z_s=z+\frac1{\sqrt2}\int_0^s\alpha_u\,dW_u^{(4)}
               -\frac74\int_0^s(1-\alpha_u^2)\,du.
\]
Moreover,
$g_{\tau e^s}(\widetilde X_s)=(1+e^{2Z_s})^{-1}$.
On $\{Z_s\ge0\}$ this is at most $1/2$, and hence
$\alpha_s^2\le1/4$.  The coefficient is continuous and adapted,
so all hypotheses of Lemma~\ref{move:lem:occupation} hold.
Combining that lemma with \eqref{move:exact-fourth-tilt} gives
\begin{equation}
 \mathbb E_x\widetilde X_s^4
 \le\frac{8x^4e^{3s}}{\sqrt{1+s}}
       \left(1+\log_+\frac{\sqrt{k}\tau^{7/4}}{|x|}\right),
 \label{move:conditional-fourth}
\end{equation}
where $\log_+r=\max\{0,\log r\}$.
All posterior saturation factors have been retained.

\emph{Integration over the probe.}
Since $\sqrt{k}\tau^{7/4}=\sqrt{Ae}$ and $A\ge1$, for every
real $x$, with the product extended continuously at zero,
\begin{equation}
 x^4\log_+\frac{\sqrt{Ae}}{|x|}
 \le\frac{\log A}{2}x^4+\frac{e^2}{4\exp(1)}.
 \label{move:initial-log}
\end{equation}
Indeed, put $r=|x|/\sqrt e$ and use
$\log_+(\sqrt A/r)\le(\log A)/2+\log_+(1/r)$ and
$\sup_{0<r\le1}r^4\log(1/r)=1/(4\exp(1))$.
Apply \eqref{move:conditional-fourth} conditional on the probe,
then use \eqref{move:initial-log} and
$\mathbb E_\pi X_\tau^4\le3H^4e^2$.  As $H\le1$,
$8[3(1+\frac12\log A)+1/(4\exp(1))]\le32(1+\log A)$.
We obtain the uniform estimate
\begin{equation}
 M(t)\le
 \frac{32(1+\log A)e^2(t/\tau)^3}
      {\sqrt{1+\log(t/\tau)}},\qquad \tau\le t\le1.
 \label{move:fourth-bound}
\end{equation}
This integration does not assume that a conditional random
information clock is a Gaussian variance.

\emph{Nonlinear loss.}
Equation~\eqref{move:nonlinear-feedback} implies
$S_{2,\mathrm{fb}}\le2^{-1/2}\int_\tau^1M(t)t^{-1}\,dt$.
Splitting at $\sqrt\tau$ gives
\begin{equation}
 \begin{aligned}
 \int_\tau^1\frac{t^2}{\sqrt{1+\log(t/\tau)}}\,dt
 &\le\frac{\tau^{3/2}}3+\frac1{3\sqrt{1+L/2}}\\
 &\le\frac{1+\sqrt2}{3\sqrt{1+L}}
 \le\frac1{\sqrt{1+L}}.
 \end{aligned}
 \label{move:time-integral}
\end{equation}
Here $\tau^{3/2}\sqrt{1+L}\le1$; likewise
$\tau^3\sqrt{1+L}\le1$.
Consequently \eqref{move:probe-costs} and
\eqref{move:fourth-bound} imply
\[
 S_2\le\frac{H^4e^2}{\sqrt2}
       +\frac{32(1+\log A)e^2\tau^{-3}}{\sqrt{2(1+L)}}
 \le32(1+\log A)\frac{f^2\tau^4}{\sqrt{1+L}}.
\]
For the last inequality, use $e^2\tau^{-3}=f^2\tau^4$,
$H^4\le1$, $1+\log A\ge1$, and $33/\sqrt2<32$.
This includes the probe nonlinear loss and proves $K_0=32$.
Finally \eqref{move:energy-condition} gives $Q\ge f\tau^3/2$.
Since $\delta\ge0$ for $H\ge H_c$, substitution into
\eqref{move:loss-identity} proves \eqref{move:joint-improvement}.
\end{proof}

\subsection{The Explicit Design and the Full Parameter Interval}
\label{move:subsec:interval}

\begin{proof}[Proof of Theorem~\ref{thm:moving}]
Take exactly the design in \eqref{eq:moving-design}:
\begin{equation}
 \begin{gathered}
 0<\delta\le\frac1{16},\qquad H^2=\frac{1+\delta}{\sqrt2},
 \qquad A=24,\qquad
 \kappa=\frac1{128(1+\log24)},\\
 \tau=\delta/8,\qquad L=\log(1/\tau),\qquad
 f=\kappa\sqrt{1+L},\qquad e=f\tau^{7/2},\qquad k=24f.
 \end{gathered}
 \label{move:design}
\end{equation}
This lies in \eqref{move:parameters}, and $4/5<H<1$.
Set $L_0=\log128$.  Then $\tau\le1/128$, $L\ge L_0$,
$1+L_0<6$, and $\kappa\le1$.  The functions
$\sqrt{1+L}e^{-L}$ and $\sqrt{1+L}e^{-7L/2}$
are decreasing on $[L_0,\infty)$.  Hence, over the entire stated
$\delta$ interval,
\begin{equation}
 \begin{gathered}
 f\tau<\frac{\sqrt6}{128}<\frac1{50},\qquad
 e<\frac{\sqrt6}{128^{7/2}}<10^{-6},\\
 H^2\left(e+\frac65f\tau+\frac3A\right)
 <10^{-6}+\frac3{125}+\frac18<\frac3{20}<\frac12.
 \end{gathered}
 \label{move:explicit-energy-check}
\end{equation}
Thus Proposition~\ref{move:prop:joint} gives $Q\ge f\tau^3/2$.
Also $\delta/(4\tau)=2$ and
$32(1+\log24)f/\sqrt{1+L}=1/4$.
In particular, \eqref{move:joint-improvement} gives the convenient bound
\begin{equation}
 b_H(a)\ge\frac12f\tau^4
 =\frac{\kappa}{2\cdot8^4}
       \delta^4\sqrt{1+\log(8/\delta)}.
 \label{move:endpoint-improvement}
\end{equation}
It remains to prove that this endpoint average controls the maximum
risk; no least-favorable-prior assertion is being used.

Fix the design $H,\tau,e,k$ while varying the unknown parameter $h$.
The same observable rule, deterministic energy cap, and evenness
established in \eqref{move:policy}--\eqref{move:evenness} allow
the full-record likelihood argument underlying
\eqref{crit:risk-derivative} to be applied here.  We give the
argument and its constants for this design.
The cap makes
$\exp(h\Xi_1-h^2\mathcal Q/2)$ a true likelihood relative to
$\mathbb P_0$; Girsanov's theorem and uniqueness identify the law
of the same rule under $h$.  Consequently, for $|h|\le H$,
\begin{equation}
 \frac{d\mathbb P_h}{d\mathbb P_\pi}
 =\frac{\exp\{h\Xi_1+(H^2-h^2)\mathcal Q/2\}}
        {\cosh(H\Xi_1)}
 \le2e^{H^2e/2}\tau^{-1/4}.
 \label{move:interval-likelihood}
\end{equation}
We used $e^{hx}/\cosh(Hx)\le2$ and the cap
$\mathcal Q\le e+L/(2H^2)$.
The already proved upper bound $Q\le e/\sqrt\tau$ therefore gives
\begin{equation}
 \sup_{|h|\le H}\mathbb E_h\mathcal Q
 \le U:=2e^{H^2e/2}e\tau^{-3/4}
       =2e^{H^2e/2}f\tau^{11/4}.
 \label{move:interval-energy}
\end{equation}
Here no posterior interpretation under interior parameter laws is needed.

Let $\mathcal A=\int_0^1a_t\,dt$ and
$R_h=\Xi_1-h\mathcal Q=\int_0^1a_t\,dB_t$ under $\mathbb P_h$.
Then
\[
 \mathcal A^2\le\mathcal Q\le\Lambda,\qquad
 \mathbb E_hR_h^2=\mathbb E_h\mathcal Q,\qquad
 \mathbb E_he^{\lambda R_h}\le e^{\lambda^2\Lambda/2}
 \quad(\lambda\in\mathbb R).
\]
For a bounded parameter-independent record functional $F$,
\[
 \mathbb E_{h+u}F
 =\mathbb E_h[F\exp\{uR_h-u^2\mathcal Q/2\}],\qquad
 \partial_h\mathbb E_hF=\mathbb E_h(FR_h).
\]
The exponential moment bound justifies dominated differentiation,
in particular for $F=\mathcal Q$ and $F=\mathcal A$.
The derivative holds with the design fixed, not with the design
varied as a function of $h$.
Expanding the loss as
$\mathcal L_h=h^2/2+\mathbb E_h\mathcal Q/2+h\mathbb E_h\mathcal A$
and applying Cauchy--Schwarz yields
\begin{equation}
 \begin{aligned}
 |\mathcal L_h'-h|
 &\le\frac12|\mathbb E_h(\mathcal Q R_h)|
       +|\mathbb E_h\mathcal A|
       +H|\mathbb E_h(\mathcal A R_h)|\\
 &\le\sqrt U+(H+\tfrac12\sqrt\Lambda)U=:V.
 \end{aligned}
 \label{move:risk-derivative}
\end{equation}
Indeed, $\mathbb E_h\mathcal Q^2\le\Lambda\mathbb E_h\mathcal Q$
and $\mathbb E_h\mathcal A^2\le\mathbb E_h\mathcal Q$.

All constants needed to locate the maximum are uniform on
$0<\delta\le1/16$.  Since $H^2\ge1/\sqrt2$ and
\eqref{move:explicit-energy-check} holds,
\begin{equation}
 \begin{gathered}
 \Lambda\le1+L,\qquad
 U\le3\sqrt{1+L}e^{-11L/4}<10^{-4},\\
 U\sqrt\Lambda\le3(1+L)e^{-11L/4}<10^{-4}.
 \end{gathered}
 \label{move:uniform-smallness}
\end{equation}
The two functions of $L$ decrease on $[L_0,\infty)$.
At $L_0$, their values are smaller than
$3\sqrt6/2^{19}$ and $18/2^{19}$, respectively, each below $10^{-4}$.
It follows that
\[
 V<\frac1{100}+\frac1{10000}+\frac1{20000}
   <\frac1{50}<\frac H2.
\]
Thus the risk is strictly increasing on $[H/2,H]$.
For the remaining central interval, the loss expansion and
\eqref{move:interval-energy} give
\[
 \sup_{|h|\le H/2}\mathcal L_h(a)
 \le\frac{H^2}{8}+\frac U2+H\sqrt U,\qquad
 \mathcal L_H(a)\ge\frac{H^2}{2}-H\sqrt U.
\]
Moreover
$U/2+2H\sqrt U<1/20000+1/50<3H^2/8$, since $H>4/5$.
The central risks are therefore strictly smaller than the endpoint
risk.  Together with evenness, this proves
\begin{equation}
 \sup_{|h|\le H}\mathcal L_h(a)
 =\mathcal L_H(a)=\mathcal L_{-H}(a)
 =\frac{H^2}{2}-b_H(a).
 \label{move:endpoint-maximality}
\end{equation}
The rough bounds involving $\sqrt U$ have only located the maximum;
they have not been subtracted from the endpoint improvement.
Finally, \eqref{move:endpoint-improvement} implies
\begin{equation}
 \begin{gathered}
 \sup_{|h|\le H}\mathcal L_h(a)
 \le\frac{H^2}{2}-c\delta^4\sqrt{\log(1/\delta)},\\
 c=\frac{\kappa}{2\cdot8^4}
   =\frac1{2^{20}(1+\log24)}.
 \end{gathered}
 \label{move:full-interval-guarantee}
\end{equation}
This proves Theorem~\ref{thm:moving}.  Taking the infimum over
admissible policies also proves
$\Delta(H)\ge c\delta^4\sqrt{\log(1/\delta)}$, the constructive
side of Theorem~\ref{thm:critical-new}.
\end{proof}

\subsection{Sharpness of the Energy Order and Logarithmic Deficit}
\label{move:subsec:counterexample}

\begin{proof}[Proof of Corollary~\ref{cor:no-four-thirds}]
For the explicit policies \eqref{move:design},
Proposition~\ref{move:prop:joint} and
\eqref{move:explicit-energy-check} give
\begin{equation}
 \frac12f\tau^3\le Q\le f\tau^3,\qquad
 0\le\mathcal S_H(a)=S_1+S_2\le\frac54f\tau^4.
 \label{move:counterexample-costs}
\end{equation}
As $\delta\downarrow0$, $f=\kappa\sqrt{1+\log(8/\delta)}$
tends to infinity, whereas $f\tau^3$ tends to zero.
Thus the actual energy $Q$ is strictly positive and tends to zero,
and
\begin{equation}
 0\le\frac{\mathcal S_H(a)}{Q^{4/3}}
 \le\frac54\,2^{4/3}f^{-1/3}
 =\frac54\,2^{4/3}\kappa^{-1/3}
       (1+\log(8/\delta))^{-1/6}\longrightarrow0.
 \label{move:counterexample-ratio}
\end{equation}
For example, take $\delta_n=2^{-n-4}$ for $n\ge1$,
$H_n=((1+\delta_n)/\sqrt2)^{1/2}$, and use
\eqref{move:design} at each $H_n$.  Then $H_n\downarrow H_c$,
$0<Q_n\to0$, and the ratio in \eqref{move:counterexample-ratio}
tends to zero.  Given any $c_*>0$, $q_0>0$, and $\eta>0$,
this sequence eventually has
$H_c<H_n<H_c+\eta$, $0<Q_n\le q_0$, and
$\mathcal S_{H_n}(a^{(n)})<c_*Q_n^{4/3}$.
Hence no positive four-thirds bound can hold uniformly over that
critical neighborhood and all admissible algorithms of sufficiently
small positive energy.  These counterexamples are the same
implementable policies with the full-interval improvement already
proved above.

To obtain the matching assertions, \eqref{move:counterexample-costs}
and $\tau=\delta/8$ give
$Q\asymp\delta^3\sqrt{\log(1/\delta)}$.
In particular,
\[
 \log(e/Q)=3\log(1/\delta)
          -\tfrac12\log\log(1/\delta)+O(1)
          \asymp\log(1/\delta).
\]
Thus \eqref{move:counterexample-ratio} also gives
$\mathcal S_H(a)\lesssim Q^{4/3}/[\log(e/Q)]^{1/6}$ along these
policies. For all sufficiently small $\delta$, $0<Q\le1$ and
$H\in[H_c,1]$, so Theorem~\ref{thm:energy-new} supplies the reverse bound.
This proves the two asserted orders and shows that multiplying the
uniform deficit lower bound by any diverging function of $1/Q$ is
impossible along this same sequence.
\end{proof}

\section{Exact finite-episode no-learning regions and task corrections}
\label{fin:sec:proof}

We prove Theorems~\ref{thm:finite-zero} and~\ref{thm:finite-positive},
Corollary~\ref{cor:finite-expansion}, and
Proposition~\ref{prop:finite-nonuniversal}.
Throughout, \(d,N\ge1\), \(g\) is nonconstant, convex, and \(C^4\),
\(\|g'\|_\infty\le L\), \(0\le g''\le M\), and its third and fourth
derivatives are bounded. We retain the full-record, parameter-independent
randomization and energy-stopped uniqueness conventions of the model.
No symmetry or boundedness of \(g\) is imposed. Every parameter used below
must lie in the model's parameter domain.
Write
\[
 \begin{gathered}
 F(t,x)=P_{1-t}g(x),\qquad S(t,x)=F_x(t,x),\qquad
 \mu=\mathbb E g(Z),\quad \xi=g(Z)-\mu,\quad c=\mathbb E\xi^2>0,\\
 \kappa_3=\mathbb E\xi^3,\qquad
 \kappa_4=\mathbb E\xi^4-3c^2,\qquad
 \psi_g(u)=\log\mathbb E e^{-u\xi},\quad Z\sim N(0,1).
 \end{gathered}
\]
The heat equation, It\^o's formula, and isometry give
\begin{equation}\label{fin:heat}
 |S|\le L,\quad |S_x|\le M,\qquad
 g(W_1)-\mu=\int_0^1S(t,W_t)dW_t,\qquad
 \mathbb E\int_0^1S(t,W_t)^2dt=c.
\end{equation}

\subsection{Exact subtraction of the zero-control baseline}

Put \(\theta=N^{-1/4}h\),
\(r_N(\mathcal A,h)=R_N^g(\mathcal A,\theta)/\sqrt N\), and
\(z_N(h)=r_N(0,h)\).
All finite-risk rules have finite expected total energy. Indeed, with
\(A_k=\int_0^1\|U_{k,t}\|^2dt\), linear growth gives
\[
 \frac12A_k+g(X_{k,1})
 \ge\frac14A_k+g(0)-L|W_{k,1}|-L^2\|\theta\|^2.
\]
This is an integrable lower bound, even when \(g\) is unbounded below;
finite expected energy also makes the terminal costs absolutely integrable.
For such a rule, apply It\^o's formula to \(F(t,X_{k,t})\).
The stochastic integral is square integrable and the drift is integrable.
The linear growth of \(F\) and the finite second moment of
\(\sup_{t\le1}|X_{k,t}|\) remove localization. Subtracting the actual
zero-control cost at the same parameter therefore gives the exact identity
\begin{equation}\label{fin:baseline-subtraction}
 r_N(\mathcal A,h)-z_N(h)
 =\frac1{\sqrt N}\mathbb E_\theta\sum_{k=1}^N\int_0^1
 \left\{\frac12\|U_{k,t}\|^2+
 S(t,X_{k,t})\langle\theta,U_{k,t}\rangle\right\}dt .
\end{equation}
In particular the oracle cancels without approximation.

For completeness, the known-parameter value for \(\lambda=\|\theta\|^2>0\) is
\[
 V_\theta(t,x)=-\lambda^{-1}\log P_{1-t}(e^{-\lambda g})(x).
\]
Gaussian exponential integrability follows from linear growth.
Differentiation gives
\(V_t+\tfrac12V_{xx}-\tfrac\lambda2V_x^2=0\),
\(|V_x|\le L\), and \(|V_{xx}|\le M+\lambda L^2\).
Completing the square in It\^o's formula verifies this value for all
finite-energy rules; the bounded Lipschitz feedback \(U=-\theta V_x\)
attains it. At \(\lambda=0\) the value is \(F\).
Consequently, with the quotient continuously extended at zero,
\begin{equation}\label{fin:baseline}
 z_N(h)=\sqrt N\,\frac{\psi_g(\|h\|^2/\sqrt N)}
                              {\|h\|^2/\sqrt N},\qquad
 Z_{N,d}^g(H)=z_N(He_1).
\end{equation}
The latter is exactly the worst normalized regret of zero control.
In fact, under the exponential tilt with density proportional to
\(e^{-u\xi}\), \(\psi_g''(u)=\operatorname{Var}_u(\xi)>0\).
Since \(\psi_g(0)=\psi_g'(0)=0\),
\begin{equation}\label{fin:radial}
 \left(\frac{\psi_g(u)}u\right)'
 =\frac1{u^2}\int_0^u t\psi_g''(t)dt>0\qquad(u>0).
\end{equation}
Thus \(z_N\) is strictly increasing in radius. This assertion concerns
only zero control, not the worst parameter of an arbitrary rule.

\subsection{Energy-weighted lower bounds}

The case \(H=0\) is immediate. Suppose \(H>0\), put
\(r=N^{-1/4}H\), \(b=r/\sqrt d\), and use the sphere-supported prior
\(\theta_i=b\sigma_i\), with independent uniform signs \(\sigma_i\).
In this subsection expectations include this prior and the algorithm's
seed. Let
\[
 Q=\mathbb E\sum_{k=1}^N\int_0^1\|U_{k,t}\|^2dt<\infty .
\]
Concatenate the episode increments into a continuous observation
\(\mathsf X_s\), \(0\le s\le N\); thus
\(\mathsf X_{k-1+t}=\sum_{j<k}X_{j,1}+X_{k,t}\).
The same convention is used for stochastic integrals and controls.
For coordinate \(i\), enlarge the record filtration only in the proof:
\[
 n_{i,s}=\mathbb E[\theta_i\mid\mathcal F_s,\theta_{-i}],\qquad
 m_s=\mathbb E[\theta\mid\mathcal F_s].
\]
No revealed coordinate is supplied to the algorithm.
To justify the posterior formula, fix the revealed coordinates and set
\(\zeta_{i,s}=\sum_{j\ne i}\theta_jU_{j,s}\).
Stop the rule when its cumulative energy reaches a fixed cap.
Relative to the zero-parameter full-record law, the two conditional
densities are the stochastic exponentials with drifts
\(\zeta_i\pm bU_i\). Their drift energies are bounded by \(r^2\)
times the cap, so they are true martingales. Stopped uniqueness
identifies these with the conditional experiment laws. Their log ratio is
\[
 2bT_{i,s},\qquad
 T_{i,s}=\int_0^sU_{i,u}d\mathsf X_u
              -\int_0^sU_{i,u}\zeta_{i,u}du .
\]
Bayes' formula gives \(n_{i,s}=b\tanh(bT_{i,s})\).
Finite path energy removes the cap on the stopped records; bounded
posteriors identify the resulting formula in the original experiment.
Drift projection and L\'evy's characterization give an innovation Brownian
motion \(I^{(i)}\) in each enlarged filtration. It\^o's formula then yields
\begin{equation}\label{fin:posterior}
 dn_{i,s}=(b^2-n_{i,s}^2)U_{i,s}dI_s^{(i)},\qquad n_{i,0}=0,\qquad
 \mathbb E n_{i,s}^2\le b^4\mathbb E\int_0^sU_{i,u}^2du.
\end{equation}
The innovations for different coordinates need not be independent.
The integrands in \eqref{fin:posterior} are square integrable by the
first energy moment. Conditional Jensen gives, for
\(q(s)=\mathbb E\int_0^s\|U_u\|^2du\),
\[
 \mathbb E\|m_s\|^2\le b^4q(s),\qquad q(N)=Q .
\]
Projection in \eqref{fin:baseline-subtraction}, followed by two
Cauchy--Schwarz inequalities, now gives
\[
 \mathbb E\int_0^NS_s\langle\theta,U_s\rangle ds
 \ge-Lb^2\int_0^N\sqrt{q(s)q'(s)}ds
 \ge-Lb^2\sqrt{N/2}\,Q .
\]
Here \(S_s=S(t,X_{k,t})\) on episode \(k\), and
\(\int_0^Nqq'\,ds=Q^2/2\) for the absolutely continuous function \(q\).
All prior support points have the same baseline, so
\begin{equation}\label{fin:global-energy}
 \mathbb E_\pi r_N(\mathcal A,h)
 \ge Z_{N,d}^g(H)+
 \left(\frac12-\frac{H^2L}{d\sqrt2}\right)\frac Q{\sqrt N}.
\end{equation}
This proves the first sufficient condition
\(H^4L^2\le d^2/2\), including equality.

For the sharper task-dependent condition, write
\[
 A_{i,k}=\mathbb E\int_0^1U_{i,k,t}^2dt,\qquad
 Q_{i,k}=\sum_{j\le k}A_{i,j},\qquad Q_{i,0}=0.
\]
Let \(n_{i,k,0}\) be the posterior at the start of episode \(k\).
The actual driver \(W_k\), used only for this proof, is independent of
the parameter, the preceding record, and the seed. Decompose the
projected cross term exactly as \(T_0+T_1+T_2\), where
\begin{align*}
 T_0&=\mathbb E\sum_{i,k}\int_0^1
          n_{i,k,0}U_{i,k,t}S(t,W_{k,t})dt,\\
 T_1&=\mathbb E\sum_{i,k}\int_0^1
          (n_{i,k,t}-n_{i,k,0})U_{i,k,t}S(t,X_{k,t})dt,\\
 T_2&=\mathbb E\sum_{i,k}\int_0^1
          n_{i,k,0}U_{i,k,t}
          [S(t,X_{k,t})-S(t,W_{k,t})]dt .
\end{align*}
Independence and \eqref{fin:heat} imply
\(\mathbb E\int n_{i,k,0}^2S(t,W_{k,t})^2dt
=c\mathbb E n_{i,k,0}^2\le cb^4Q_{i,k-1}\).
No independence of \(U\) and \(W_k\) is needed. Hence
\begin{equation}\label{fin:frozen}
 |T_0|\le b^2\sqrt c\sum_{i,k}\sqrt{Q_{i,k-1}A_{i,k}}
 \le b^2\sqrt{Nc/2}\,Q,
\end{equation}
using
\(\sum_kQ_{i,k-1}A_{i,k}
=(Q_{i,N}^2-\sum_kA_{i,k}^2)/2\le Q_{i,N}^2/2\).
For \(q_{i,k}(t)=\mathbb E\int_0^tU_{i,k,u}^2du\),
\eqref{fin:posterior} gives
\(\mathbb E(n_{i,k,t}-n_{i,k,0})^2\le b^4q_{i,k}(t)\).
The same integral inequality on an interval of length one yields
\begin{equation}\label{fin:within}
 |T_1|\le Lb^2\sum_{i,k}\int_0^1
          \sqrt{q_{i,k}(t)q_{i,k}'(t)}dt
 \le\frac{Lb^2}{\sqrt2}Q .
\end{equation}
Finally, although the coordinates use different enlarged filtrations,
their bounds \(|n_{i,k,0}|\le b\) hold simultaneously. Thus, pathwise,
\[
 \left|\sum_i n_{i,k,0}U_{i,k,t}\right|\le r\|U_{k,t}\|,\qquad
 |S(t,X_{k,t})-S(t,W_{k,t})|
 \le Mr\int_0^t\|U_{k,u}\|du .
\]
Integration and Cauchy--Schwarz on each episode give
\begin{equation}\label{fin:drift}
 |T_2|\le\frac{Mr^2}{2}\mathbb E\sum_k
          \left(\int_0^1\|U_{k,t}\|dt\right)^2
 \le\frac{Mr^2}{2}Q .
\end{equation}
Combining \eqref{fin:frozen}--\eqref{fin:drift} proves
\begin{equation}\label{fin:task-energy}
 \mathbb E_\pi r_N(\mathcal A,h)\ge Z_{N,d}^g(H)+
 \left[\frac12-\frac{H^2}{d}\sqrt{\frac c2}
 -\frac{H^2}{\sqrt N}\left(\frac{L}{d\sqrt2}+\frac M2\right)\right]
 \frac Q{\sqrt N}.
\end{equation}
If the bracket is nonnegative, the Bayes lower bound and zero control
give \(C_{N,d}^g(H)=Z_{N,d}^g(H)\). This completes
Theorem~\ref{thm:finite-zero}. The argument covers the original
unbounded-action class and uses no second moment of control energy.

\subsection{Strictly subcritical minimax expansion}

Put \(\gamma(H)=\tfrac12-\tfrac{H^2}{d}\sqrt{c/2}\).
Whenever \(\gamma(H)>0\) and
\begin{equation}\label{fin:eventual}
 N\ge
 \left[\frac{H^2\{L/(d\sqrt2)+M/2\}}{\gamma(H)}\right]^2,
\end{equation}
\eqref{fin:task-energy} and \eqref{fin:baseline} identify the minimax
value exactly. A common lower bound on \(N\) works for
\(0\le H\le H_*<(d^2/(2c))^{1/4}\).
Since \(g\) is Lipschitz, \(g(Z)\) has all real exponential moments and
\(\psi_g\) is analytic near zero. In particular
\[
 \psi_g(u)=\frac{cu^2}{2}-\frac{\kappa_3u^3}{6}
                    +\frac{\kappa_4u^4}{24}+O_g(u^5).
\]
Substitution into the exact value gives, uniformly on that interval,
\begin{equation}\label{fin:expansion}
 C_{N,d}^g(H)=\frac{cH^2}{2}
 -\frac{\kappa_3H^4}{6\sqrt N}
 +\frac{\kappa_4H^6}{24N}+O_{g,H_*}(N^{-3/2}).
\end{equation}
The formula extends continuously to \(H=0\).
This proves Corollary~\ref{cor:finite-expansion}; it is a minimax
expansion, not merely an expansion of a candidate rule.

\subsection{A full-record second variation}

\begin{lemma}[Uniform second variation]\label{fin:lem:variation}
Consider \(N\ge1\) scalar episodes
\(dX_{k,t}=\theta U_{k,t}dt+dW_{k,t}\), each starting at zero.
Let \(v\) be a fixed parameter-independent predictable scalar path rule,
possibly using an auxiliary seed independent of the drivers.
Assume a deterministic bound \(|v|\le B<\infty\) on its records.
Let \(\rho\) be an independent uniform sign, not supplied as an argument
to \(v\), and set \(U^\varepsilon=\varepsilon\rho v(X)\).
Fix \(r_0<\infty\). Assume these rules and their energy-stopped versions
have nonexplosive unique strong solutions for
\(|\theta|\le r_0\) and all sufficiently small \(|\varepsilon|\).
Under the zero-drift reference law, concatenate the drivers into
\(\mathsf W_s\), \(0\le s\le N\), and define
\[
 M_s=\int_0^sv_u(W)d\mathsf W_u,\quad
 A_s=\int_0^sv_u(W)^2du,\quad
 Q(v)=\mathbb E_0 A_N,\quad
 K(v)=\mathbb E_0\int_0^NS_sM_sv_s(W)ds,
\]
where \(S_s=S(t,W_{k,t})\) on episode \(k\).
For the unnormalized cumulative cost
\[
 J_\theta(U)=\mathbb E_\theta\sum_{k=1}^N
       \left[\frac12\int_0^1U_{k,t}^2dt+g(X_{k,1})\right],
\]
including all randomization in the expectation, one has
\begin{equation}\label{fin:variation}
 J_\theta(U^\varepsilon)-J_\theta(0)
 =\varepsilon^2\left\{\frac12Q(v)+\theta^2K(v)\right\}
       +O_{N,g,B,r_0}(\varepsilon^4),
\end{equation}
uniformly for \(|\theta|\le r_0\). In particular this is a compact-uniform
expansion for every such fixed rule. Subtracting the same oracle from
both costs leaves it unchanged.
\end{lemma}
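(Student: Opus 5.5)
We will prove the expansion by the same exact baseline subtraction as in \eqref{fin:baseline-subtraction}, followed by a change of measure to the zero-drift reference law. For the scalar version of \eqref{fin:baseline-subtraction},
\[
 J_\theta(U^\varepsilon)-J_\theta(0)
 =\mathbb E_\theta\int_0^N\Bigl\{\tfrac12\varepsilon^2v_s(X)^2
   +\theta\varepsilon\rho\,S_s(X)v_s(X)\Bigr\}ds,
\]
where $S_s(X)=S(t,X_{k,t})$ on episode $k$. Since $|U^\varepsilon|\le|\varepsilon|B$, the energy cap is deterministic. Consequently the density
\[
 \Lambda^\varepsilon=\exp\Bigl\{\theta\varepsilon\rho M_N-\tfrac12\theta^2\varepsilon^2A_N\Bigr\}
\]
of the complete record under $\theta$, relative to the reference law, is a true martingale with all exponential moments bounded uniformly in $|\theta|\le r_0$ and $|\varepsilon|\le1$. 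Girsanov's theorem and the assumed strong uniqueness identify the law of the record. Under the reference law the observation is $\mathsf W$, so every functional is evaluated at $W$ and weighted by $\Lambda^\varepsilon$ (the rule $v$ is not fed $\rho$, so $v(W)$ is $\rho$-free). Hence the increment equals
\[
 \mathbb E_0\Bigl[\Lambda^\varepsilon\int_0^N\bigl\{\tfrac12\varepsilon^2v_s^2+\theta\varepsilon\rho S_sv_s\bigr\}ds\Bigr].
\]
We will in fact use the density only up to time $s$ inside the integral, via optional projection, so that it multiplies adapted integrands.

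Next we expand in $\varepsilon$, keeping $\rho$ explicit and averaging over $\rho=\pm1$, which kills all odd powers of $\varepsilon\rho$. For the energy term, $\Lambda_s^\varepsilon=1+O(\varepsilon)$ and the odd part vanishes, so it contributes $\tfrac12\varepsilon^2 Q(v)+O(\varepsilon^4)$. For the cross term we expand $\Lambda_s^\varepsilon=1+\theta\varepsilon\rho M_s+\tfrac12\theta^2\varepsilon^2(M_s^2-A_s)+R_s^\varepsilon$. Then:
\begin{itemize}
 \item the zeroth-order part is killed by $\mathbb E\rho=0$;
 \item the first-order part gives $\theta^2\varepsilon^2\,\mathbb E_0\int S_sM_sv_s\,ds=\theta^2\varepsilon^2K(v)$;
 \item the second-order part is odd in $\rho$ and vanishes.
\end{itemize}
The cubic-in-$\varepsilon\rho$ term of $\Lambda$ is also odd, hence zero after averaging over $\rho$. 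Thus only a fourth-order remainder survives, bounded by
\[
 C\varepsilon^4\,\mathbb E_0\Bigl[\sup_{|\lambda|\le1}e^{\lambda|\theta M|}\,(1+|M|+A)^4\Bigr].
\]

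The main technical point is bounding this Taylor remainder uniformly. We will write $\Lambda_s^\varepsilon=\exp\{x\}$ with $x=\theta\varepsilon\rho M_s-\tfrac12\theta^2\varepsilon^2A_s$ and use the integral form of the remainder, $|e^x-\sum_{j\le3}x^j/j!|\le|x|^4e^{|x|}/24$. Since $|v|\le B$ gives $A_s\le NB^2$, the Burkholder and exponential-martingale bounds give $\mathbb E_0e^{c|M_s|}<\infty$ uniformly, exactly as in \eqref{val:net}. Together with $|S|\le L$ and $|v|\le B$, Cauchy--Schwarz produces a constant depending only on $N,g,B,r_0$. Some care is needed to use the exact algebraic expansion rather than a truncated one: we will expand $\Lambda$ in the variable $x$, collect the terms of exact order $\varepsilon^2$ (the terms in $\varepsilon^3$ are $\rho$-odd), and place all the rest in the $|x|^4e^{|x|}$ bound or in explicit terms with a factor $\varepsilon^4$. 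Finally, the oracle $NV_\theta(0,0)$ does not depend on the rule, so it cancels in the difference.
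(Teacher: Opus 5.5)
Your proof is correct. It uses the same core mechanism as the paper: the Girsanov density relative to the zero-drift reference, sign averaging to remove the odd orders, and an $L^p$ Taylor remainder controlled by exponential moments from $|v|\le B$. The routes differ in where the It\^o calculus is spent.

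\textbf{The paper's route.} It never passes to a running cross term. It writes the whole difference as $\mathbb E_0[G(\overline L-1)]+\tfrac{\varepsilon^2}{2}\mathbb E_0[A_N\overline L]$, where $G=\int_0^N S_s\,d\mathsf W_s$ and $\overline L=e^{-\theta^2\varepsilon^2A_N/2}\cosh(\theta\varepsilon M_N)$. It then expands $\overline L=1+\tfrac12\theta^2\varepsilon^2(M_N^2-A_N)+O_{L^p}(\varepsilon^4)$. Finally it identifies $\mathbb E_0[G(M_N^2-A_N)]=2K(v)$ by writing $M_N^2-A_N=2\int_0^N M_sv_s\,d\mathsf W_s$ and applying isometry.

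\textbf{Your route.} You apply It\^o's formula to $F(t,X_{k,t})$ under the controlled law, via the baseline-subtraction identity. This turns the terminal cost into the bounded running term $\theta\varepsilon\rho S_sv_s$. The martingale property of the density lets you use $\Lambda_s$ instead of $\Lambda_N$. The first-order term $\theta\varepsilon\rho M_s$ then produces $K(v)$ directly in its defining form, with no isometry step.

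\textbf{What each buys.} The paper works with a single terminal random variable and exhibits $K$ as a covariance of $G$ with $M_N^2-A_N$. It reuses exactly that form later, when it passes to the truncated directions $v^R$ through $L^4$ convergence of $M_N^R$. Your version reuses an exact identity the paper has already proved. As a result, only bounded integrands ever appear under the change of measure, not the possibly unbounded terminal cost. The price is a Fubini step and Taylor bounds uniform in $s$, both immediate from $A_s\le NB^2$.

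\textbf{One bookkeeping slip.} Your bullet says the cubic term of $\Lambda_s$ is odd and vanishes after averaging. In the cross term that cubic part is multiplied by $\rho$, so it becomes $\rho$-even and survives. For example, $\theta\varepsilon\rho\cdot(\theta\varepsilon\rho M_s)^3/6=\theta^4\varepsilon^4M_s^3/6$. It is harmless only because the prefactor $\theta\varepsilon$ already makes it $O(\varepsilon^4)$. Your final paragraph has the correct statement: once the prefactors $\tfrac12\varepsilon^2v_s^2$ and $\theta\varepsilon\rho S_sv_s$ are included, every term of exact order $\varepsilon^3$ is $\rho$-odd.
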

\begin{proof}
Work on the reference record containing the path, the auxiliary seed,
and \(\rho\). The deterministic bound on \(A_N\) makes
\[
 L_{\theta,\rho}^\varepsilon
 =\exp\{\theta\varepsilon\rho M_N
                       -\theta^2\varepsilon^2A_N/2\}
\]
a true full-record density. Girsanov and uniqueness identify it with
the controlled law. Its conditional expectation given the initial
seeds is one, so their distribution is unchanged.
Under the reference law \(v\), \(M\), and \(A\) do not depend on \(\rho\).
Averaging this sign gives
\begin{equation}\label{fin:cosh}
 \overline L_\theta^\varepsilon
 =e^{-\theta^2\varepsilon^2A_N/2}\cosh(\theta\varepsilon M_N)
 =1+\frac{\theta^2\varepsilon^2}{2}(M_N^2-A_N)
       +O_{L^p}(\varepsilon^4)
\end{equation}
for every fixed finite \(p\), uniformly for \(|\theta|\le r_0\).
Indeed \(A_N\le NB^2\) and
\(\mathbb E_0e^{aM_N}\le e^{a^2NB^2/2}\) for every real \(a\).
Taylor remainders are therefore bounded in \(L^p\) by polynomials
in \(|M_N|\) times an exponential of a fixed multiple of \(|M_N|\).
No conditional Gaussian distribution of \(M_N\) is asserted.

Let \(G=\sum_k[g(W_{k,1})-\mu]=\int_0^NS_s d\mathsf W_s\).
It has all finite moments. Since \(\mathbb E_0\overline L=1\),
the cost difference equals
\(\mathbb E_0[G(\overline L-1)]
+\tfrac{\varepsilon^2}{2}\mathbb E_0[A_N\overline L]\).
The \(L^p\) remainder in \eqref{fin:cosh} can be multiplied by \(G\)
using H\"older's inequality. Moreover, It\^o's formula and isometry give
\[
 M_N^2-A_N=2\int_0^NM_sv_s\,d\mathsf W_s,\qquad
 \mathbb E_0[G(M_N^2-A_N)]=2K(v).
\]
These stochastic integrals are square integrable because \(v\) is bounded.
The change of measure in the energy term contributes only
\(O(\varepsilon^4)\), proving \eqref{fin:variation}.
\end{proof}

\subsection{A bounded direction giving strict improvement}

Let \(N\ge2\) and define
\[
 \beta_g=\frac{\kappa_3}{3c},\qquad
 a_N=\frac1N\sum_{k=2}^N
 \sqrt{\frac{2\sqrt{k-1}}{\sqrt k+\sqrt{k-1}}}.
\]
We prove Theorem~\ref{thm:finite-positive} under its strict condition
\begin{equation}\label{fin:positive-condition}
 \frac{H^2}{d}
 \left\{\sqrt{\frac c2}\,a_N-\frac{\beta_g}{\sqrt N}\right\}>\frac12.
\end{equation}
First construct an unbounded reference direction solely to compute its
second variation. On independent Brownian episodes let
\(\xi_k=g(W_{k,1})-\mu\), \(v_{1,t}=S(t,W_{1,t})\), and, for \(k\ge2\), set
\begin{equation}\label{fin:direction}
 b_k=\left[\frac{\sqrt{k/(k-1)}-1}{c}\right]^{1/2},\qquad
 v_{k,t}=-b_kM_{k-1}S(t,W_{k,t}),\qquad
 M_k=M_{k-1}(1-b_k\xi_k),\quad M_1=\xi_1.
\end{equation}
Here \(M_{k-1}\) is the stochastic integral of \(v\) over all preceding
episodes; it is not reset to zero. For fixed \(N\), all finite moments
exist. Independence and isometry give, exactly,
\begin{equation}\label{fin:moments}
 q_k:=\mathbb E_0M_k^2=c\sqrt k,\qquad Q(v)=q_N=c\sqrt N,
\end{equation}
since \(q_1=c\) and \(q_k=q_{k-1}(1+cb_k^2)\).
Within one episode put
\(Y_t=F(t,W_t)-\mu=\int_0^tS(u,W_u)dW_u\).
It\^o's formula for \(Y^3\) shows
\[
 j_g:=\mathbb E_0\int_0^1Y_tS(t,W_t)^2dt=\kappa_3/3.
\]
The first episode contributes \(j_g\) to \(K(v)\).
In episode \(k\ge2\), the running score is \(M_{k-1}(1-b_kY_t)\),
so its contribution is \(q_{k-1}(-b_kc+b_k^2j_g)\).
Using \(Q(v)=c+c\sum_{k=2}^Nb_k^2q_{k-1}\) gives
\begin{equation}\label{fin:quotient}
 \frac{K(v)}{Q(v)}
 =\frac{j_g}{c}-\frac{c\sum_{k=2}^Nb_kq_{k-1}}{c\sqrt N}
 =\beta_g-\sqrt{\frac{Nc}{2}}\,a_N .
\end{equation}

To obtain a bounded implementable rule, fix \(R>0\) and let
\(\chi_R(x)=R\tanh(x/R)\). On the observed record define
\begin{equation}\label{fin:truncation}
 \begin{gathered}
 v^R_{1,t}=S(t,X_{1,t}),\qquad M^R_1=g(X_{1,1})-\mu,\\
 v^R_{k,t}=-b_k\chi_R(M^R_{k-1})S(t,X_{k,t}),\\
 M^R_k=M^R_{k-1}
       -b_k\chi_R(M^R_{k-1})[g(X_{k,1})-\mu],\qquad k\ge2 .
 \end{gathered}
\end{equation}
These are observable internal scores, not the unobserved drivers.
Indeed, for any controlled episode, the heat equation implies
\(\int_0^1 S(t,X_t)dX_t=g(X_1)-\mu\);
thus the update is exactly the endpoint update of \(\int v^R d\mathsf X\).
For fixed \(N,R\), the rule has a deterministic amplitude bound.
Its coefficient at the start of each episode is already known, and its
within-episode dependence is through the bounded Lipschitz function \(S\).
The controlled equation has a unique strong solution by successive
construction of episodes. The same construction up to an energy cap,
followed by zero control, gives uniqueness for every stopped rule.

Under the reference law, \eqref{fin:truncation} uses independent new
\(\xi_k\). The inequalities \(|\chi_R(x)|\le|x|\) and
\[
 |\chi_R(x)-y|\le |x-y|+|\chi_R(y)-y|
\]
give, by induction, uniform-in-\(R\) finite \(p\)-moment bounds and
\(M_k^R\to M_k\) in \(L^p\) for every fixed finite \(p\).
For the convergence step, \(\chi_R(M_k)\to M_k\) in \(L^p\) by dominated
convergence, and the independent new \(\xi_{k+1}\) has every finite moment.
Since \(S\) is bounded, the same induction gives convergence of \(v^R\)
in \(L^p(ds\otimes d\mathbb P_0)\). In particular \(A_N^R\to A_N\)
in \(L^2\) and \(M_N^R\to M_N\) in \(L^4\).
The isometry identity used in Lemma~\ref{fin:lem:variation}, also valid
for this finite-moment reference direction, therefore proves
\begin{equation}\label{fin:convergence}
 Q(v^R)\longrightarrow Q(v),\qquad K(v^R)\longrightarrow K(v).
\end{equation}
No exponential-moment claim or change of measure for the unbounded
direction is needed.

Choose \(D\) independently and uniformly from the coordinate vectors
\(e_1,\ldots,e_d\), and let \(\rho\) be an independent uniform sign.
Execute the bounded vector rule \(U=\varepsilon\rho Dv^R(X)\).
Given \(D\), the scalar drift parameter is \(\langle\theta,D\rangle\),
and its squared energy is unchanged. Lemma~\ref{fin:lem:variation}
and \(\mathbb E DD^\top=I_d/d\) imply, uniformly for \(\|h\|\le H\),
\begin{equation}\label{fin:ball-variation}
 r_N(\mathcal A,h)-z_N(h)
 =\frac{\varepsilon^2}{\sqrt N}
 \left\{\frac12Q(v^R)+\frac{\|h\|^2}{d\sqrt N}K(v^R)\right\}
       +O_{N,R,g,H}(\varepsilon^4/\sqrt N).
\end{equation}
By \eqref{fin:quotient} and \eqref{fin:positive-condition}, the brace
at radius \(H\) is strictly negative in the limit \(R\to\infty\).
Fix a sufficiently large finite \(R\) retaining a strict margin.
By continuity of the brace in radius, it is at most \(-\eta\) on
some annulus \(H-\alpha\le\|h\|\le H\), with \(\eta>0\) and
\(0<\alpha<H\). On that annulus, sufficiently small \(\varepsilon>0\)
gives risk at most \(Z_{N,d}^g(H)-\eta\varepsilon^2/(2\sqrt N)\).
On the remaining ball, strict radial monotonicity \eqref{fin:radial}
gives the positive baseline gap
\[
 Z_{N,d}^g(H)-z_N(h)
 \ge Z_{N,d}^g(H)-z_N((H-\alpha)e_1)>0 .
\]
The perturbation in \eqref{fin:ball-variation} is uniformly
\(O(\varepsilon^2)\); reduce \(\varepsilon\) to preserve half this gap.
The resulting rule has worst risk strictly below \(Z_{N,d}^g(H)\),
proving Theorem~\ref{thm:finite-positive} on the entire parameter ball.
The order of choices is fixed \(N,H\), then \(R\), then \(\varepsilon\).
This existence argument does not provide a uniform size of improvement.

To quantify the localization, let
\(\mathsf H_n=\sum_{j=1}^n j^{-1}\).
The squared \(k\)-th summand of \(Na_N\) equals
\(1-(\sqrt k+\sqrt{k-1})^{-2}\). The inequality
\(1-\sqrt{1-x}\le x\) therefore gives
\[
 0<1-a_N\le\frac{1+\mathsf H_{N-1}/4}{N}.
\]
With \(\delta_d=\sqrt{2c}H^2/d-1\), our two conditions read
\begin{align}
 \delta_d&\le-\frac{H^2}{\sqrt N}\left(\frac{\sqrt2L}{d}+M\right)
 &&\Longrightarrow\quad C_{N,d}^g(H)=Z_{N,d}^g(H),
 \label{fin:band-zero}\\
 \delta_d&>\frac{2H^2\beta_g}{d\sqrt N}
                  +(1+\delta_d)(1-a_N)
 &&\Longrightarrow\quad C_{N,d}^g(H)<Z_{N,d}^g(H).
 \label{fin:band-positive}
\end{align}
For fixed \(g,d\) and bounded radii these give an \(O(N^{-1/2})\)
localization band. They do not identify an exact finite-\(N\) boundary,
a sharp displacement constant, or a matching improvement rate.

\subsection{Equal variances but different finite-episode values}

Let \(\ell(x)=\log\cosh x\), and put
\[
 c_\ell=\operatorname{Var}(\ell(Z)),\quad
 \kappa_\ell=\mathbb E[\ell(Z)-\mathbb E\ell(Z)]^3,\quad
 a=\mathbb E\ell''(Z)=\mathbb E\operatorname{sech}^2Z>0.
\]
Choose
\[
 0<\eta<\min\{1,\sqrt{3a/(|\kappa_\ell|+1)}\},\qquad
 g_\eta(x)=\frac{x+\eta\ell(x)}{\sqrt{1+\eta^2c_\ell}},
 \qquad g_0(x)=x .
\]
The task \(g_\eta\) is nonaffine, asymmetric, smooth, convex, and has
all the required bounded derivatives. It is allowed to be unbounded below.
Parity, used only for this explicitly specified example, gives
\(\operatorname{Cov}(Z,\ell(Z))=0\), and hence
\(\operatorname{Var}(g_\eta(Z))=\operatorname{Var}(g_0(Z))=1\).
Gaussian integration by parts gives
\[
 \mathbb E[Z^2(\ell(Z)-\mathbb E\ell(Z))]
 =\mathbb E[(Z^2-1)\ell(Z)]=\mathbb E\ell''(Z)=a.
\]
Expansion of the third centered moment, again using parity, yields
\begin{equation}\label{fin:example-cumulant}
 \kappa_3(g_\eta(Z))
 =\frac{3\eta a+\eta^3\kappa_\ell}{(1+\eta^2c_\ell)^{3/2}}>0.
\end{equation}
The specified bound on \(\eta\) proves the sign without any assumption
on the sign of \(\kappa_\ell\) or of general convex-task cumulants.

Fix \(0<H^4<d^2/2\). For \(g_0\), \eqref{fin:global-energy} with
\(L=1\) gives exact zero-control optimality for every allowed \(N\);
\(\psi_{g_0}(u)=u^2/2\) then gives \(C_{N,d}^{g_0}(H)=H^2/2\).
For \(g_\eta\), \eqref{fin:eventual} gives exact zero-control optimality
for all sufficiently large allowed \(N\). Applying \eqref{fin:expansion},
\begin{equation}\label{fin:nonuniversal}
 C_{N,d}^{g_\eta}(H)-C_{N,d}^{g_0}(H)
 =-\frac{\kappa_3(g_\eta(Z))H^4}{6\sqrt N}+O(N^{-1})<0
\end{equation}
for all sufficiently large \(N\).
This proves Proposition~\ref{prop:finite-nonuniversal}. The comparison
is between regrets relative to each task's own exact oracle, not between
their uncentered costs. Variance determines the leading local value
but not these finite-episode corrections.

\section{The Exact Single-Episode Boundary}
\label{one:sec:proof}

We prove Theorem~\ref{thm:one-episode}. Throughout this section $N=1$,
$d\ge1$ is finite, the parameter satisfies $\|\theta\|\le H$, and
\[
 \ell=\|g'\|_\infty
   =\max\{|b_-|,|b_+|\},\qquad
 b_\pm=\lim_{x\to\pm\infty}g'(x).
\]
The limits exist because $g'$ is bounded and nondecreasing.
Nonconstancy gives $\ell>0$.
The sufficient condition $H^4\ell^2\le d^2/2$, including equality,
is Theorem~\ref{thm:finite-zero} with the smallest Lipschitz constant.
It remains to prove strict improvement whenever $H^4\ell^2>d^2/2$.
Set $A=H/\sqrt d$ for the scalar reference construction below.

\subsection{A Bounded Negative Direction for an Affine Task}

Let $\beta$ be Brownian motion on $[0,r]$, where $r>0$, and put
\[
 M_s^\nu=\int_0^s\nu_u\,d\beta_u,\qquad
 Q(\nu)=\mathbb E\int_0^r\nu_s^2\,ds.
\]
For any nonzero $b$ such that $A^4b^2r>1/2$, there is a bounded
predictable rule $\nu$, constant on a finite deterministic grid,
with
\begin{equation}
 \frac12 Q(\nu)+A^2b\,
       \mathbb E\int_0^rM_s^\nu\nu_s\,ds<0.
 \label{one:affine-direction}
\end{equation}
Here the coefficients of the grid rule are measurable functions of
the Brownian path already observed at the grid points.

To prove this, first take $0<\eta<r$ and use the square-integrable
reference direction
\[
 \nu_s=\eta^{-1/2}\quad(0\le s\le\eta),\qquad
 \nu_s=-\operatorname{sgn}(b)\frac{M_s^\nu}{\sqrt{2s}}
       \quad(\eta<s\le r).
\]
The linear equation starts at a positive deterministic time. Its
moments give, exactly,
\[
 \mathbb E(M_s^\nu)^2=\sqrt{s/\eta}\quad(s\ge\eta),
 \qquad Q(\nu)=\sqrt{r/\eta},\qquad
 \mathbb E\int_0^rM_s^\nu\nu_s\,ds
 =-\operatorname{sgn}(b)\frac{r-\eta}{\sqrt{2\eta}}.
\]
Thus the last quantity divided by $Q(\nu)$ tends to
$-\operatorname{sgn}(b)\sqrt{r/2}$ as $\eta\downarrow0$.
The strict assumption yields \eqref{one:affine-direction} for some
positive $\eta$.

Bounded predictable grid rules are dense in
$L^2(ds\otimes d\mathbb P)$ on the canonical Brownian filtration.
Both terms in \eqref{one:affine-direction} are continuous in this norm:
It\^o's isometry and Cauchy--Schwarz give
\[
 \left|\mathbb E\int_0^r M_s^\nu\nu_s\,ds
       -\mathbb E\int_0^r M_s^\mu\mu_s\,ds\right|
 \le\sqrt r\,(\|\nu\|_2+\|\mu\|_2)\|\nu-\mu\|_2.
\]
For completeness, the density follows by approximating predictable
processes by finite sums of rectangles $(s,t]\times A$,
$A\in\mathcal F_s$, truncating their coefficients, and taking the
union of their finitely many time endpoints as a grid. On the
canonical space each coefficient has a version that is a measurable
function of the past path.
Consequently one such bounded grid rule retains the strict inequality.
When this rule is evaluated on an observed diffusion path, its action
on each grid interval is already fixed by the past. Successive
construction therefore gives a unique strong law. Stopping at any
energy cap and then setting the action to zero also preserves
uniqueness. This establishes the implementability needed below,
without treating an arbitrary weak process as a decision rule.

\subsection{A Rare Observable State Selects the Tail Slope}

Choose $b\in\{b_-,b_+\}$ with $|b|=\ell$.
There is a deterministic $t_0\in(0,1)$ so small that
$A^4b^2(1-t_0)>1/2$. Put $r=1-t_0$, and fix a bounded grid rule
$\nu$ satisfying \eqref{one:affine-direction}.
For a finite $R>0$, define the observable event
\[
 E_R=\{X_{t_0}>R\}\quad\hbox{if }b=b_+,
 \qquad
 E_R=\{X_{t_0}<-R\}\quad\hbox{if }b=b_-.
\]
If both choices are available, either is sufficient.
Define a parameter-independent path rule $v$ by
\begin{equation}
 v_t=0\quad(t\le t_0),\qquad
 v_{t_0+s}=\mathbf1_{E_R}
       \nu_s(X_{t_0+\cdot}-X_{t_0})\quad(0<s\le r).
 \label{one:trigger}
\end{equation}
The grid starts after the event has been observed.
Choose a coordinate vector $D$ uniformly from $e_1,\ldots,e_d$
and an independent uniform sign $\rho$, both independent of the driver.
The actual vector control is
$U^\varepsilon=\varepsilon\rho Dv(X)$.
It is bounded, predictable, and admissible by the successive
construction above. Before $t_0$ it has no drift, so the distribution
of $E_R$ is the same under every parameter and has positive probability.

We now work only under the reference law $\theta=0$, where $X=W$.
Write $p_R=\mathbb P(E_R)>0$,
$M_t=\int_0^tv_s\,dW_s$, and
$S(t,x)=P_{1-t}g'(x)$. Brownian increments after $t_0$ are independent
of the entire past at $t_0$, and hence
\begin{equation}
 Q(v)=p_RQ(\nu),\qquad
 \frac1{p_R}\mathbb E\int_0^1S(t,W_t)M_tv_t\,dt
 \longrightarrow b\,\mathbb E\int_0^rM_s^\nu\nu_s\,ds
 \quad(R\to\infty).
 \label{one:tail-limit}
\end{equation}
To justify the limit, condition first on $W_{t_0}=x$.
For each fixed $s$ and subsequent Brownian increment,
$S(t_0+s,x+\beta_s)\to b$ as $x$ tends to the selected tail.
The bound
$|S(t_0+s,x+\beta_s)M_s^\nu\nu_s|
 \le\ell|M_s^\nu\nu_s|$
is integrable in $(s,\beta)$.
Dominated convergence shows that the conditional integral tends to
the displayed constant as $x$ tends to that tail. Averaging this
function over $W_{t_0}$ conditional on $E_R$ proves
\eqref{one:tail-limit}. No controlled state is asserted to be
conditionally Gaussian.

Combining \eqref{one:tail-limit} with \eqref{one:affine-direction},
choose one sufficiently large but finite $R$ for which
\begin{equation}
 F_H:=\frac12Q(v)+\frac{H^2}{d}K(v)<0,\qquad
 K(v)=\mathbb E\int_0^1S(t,W_t)M_tv_t\,dt.
 \label{one:negative}
\end{equation}
From this point on, $R$, $v$, and the grid remain fixed.

\subsection{Strict Improvement over the Full Parameter Ball}

Conditional on $D$, Lemma~\ref{fin:lem:variation} applies with
scalar parameter $\langle\theta,D\rangle$ in the full interval $[-H,H]$.
Averaging over $D$ and using $\mathbb E DD^\top=I_d/d$ gives,
uniformly for $\|\theta\|\le H$,
\begin{equation}
 R_1^g(U^\varepsilon,\theta)
 = B(\theta)+\varepsilon^2F_\theta+O(\varepsilon^4),
 \qquad
 B(\theta)=\mathbb E g(Z)-V_\theta(0,0),\qquad
 F_\theta=\frac12Q(v)+\frac{\|\theta\|^2}{d}K(v).
 \label{one:risk-expansion}
\end{equation}
The independent sign removes odd powers without requiring symmetry of
$g$. The remainder constant may depend on the fixed $R,v,H,g,d$;
it need not be uniform as $R\to\infty$.

The zero-control baseline $B$ is continuous, radial, and strictly
increasing in $\|\theta\|>0$, as proved in
Appendix~\ref{fin:sec:proof}. By \eqref{one:negative}, $K(v)<0$.
Choose $\zeta\in(0,H)$ and $c>0$ such that
$F_\theta\le-c$ whenever $H-\zeta\le\|\theta\|\le H$.
For sufficiently small positive $\varepsilon$, the risks on that
set are at most $B(He_1)-c\varepsilon^2/2$.
On the remaining ball, the baseline gap is at least
$B(He_1)-B((H-\zeta)e_1)>0$, whereas the perturbation in
\eqref{one:risk-expansion} is uniformly $O(\varepsilon^2)$.
Shrinking $\varepsilon$ again makes those risks strictly less than
$B(He_1)$ as well. Thus
\[
 \sup_{\|\theta\|\le H}R_1^g(U^\varepsilon,\theta)<B(He_1),
\]
which proves the necessary direction of Theorem~\ref{thm:one-episode}.
The argument proves a strict minimax improvement, without claiming that
the sphere-supported prior is least favorable or locating the exact maximum
of this perturbed risk.

Finally, if $g$ is nonaffine, then for every $t<1$ and finite $x$,
$|P_{1-t}g'(x)|<\ell$. Indeed, a continuous nonconstant $g'$ cannot
equal either extremal slope almost everywhere under a Gaussian
measure with full support. Therefore
\[
 c_g=\mathbb E\int_0^1S(t,W_t)^2\,dt<\ell^2.
\]
The single-episode critical radius $\sqrt d\,(2\ell^2)^{-1/4}$ is
consequently strictly smaller than the asymptotic radius
$\sqrt d\,(2c_g)^{-1/4}$.
The strict improvement obtained here can be arbitrarily small;
this difference of exact boundaries is compatible with the
quantitative convergence of minimax values.

\end{document}